\newcommand{\zz}{\mathbb{Z}}
\newcommand{\qq}{\mathbb{Q}}
\newcommand{\rr}{\mathbb{R}} 
\newcommand{\cc}{\mathbb{C}} 
\newcommand{\pp}{\mathbb{P}} 
\newcommand{\nn}{\mathbb{N}} 
\newcommand{\bun}{\mathbf{1}}
\newcommand{\bc}{\mathbf{c}}
\newcommand{\bt}{\mathbf{t}}
\newcommand{\be}{\mathbf{e}}
\newcommand{\bfs}{\mathbf{s}} 
\newcommand{\bI}{\mathbf{I}} 
\newcommand{\bv}{\mathbf{v}} 
\newcommand{\bF}{\mathbf{F}}
\newcommand{\bG}{\mathbf{G}} 
\newcommand{\cH}{\mathcal{H}}
\newcommand{\cL}{\mathcal{L}} 
\newcommand{\cF}{\mathcal{F}} 
\newcommand{\cO}{\mathcal{O}}
\newcommand{\cU}{\mathcal{U}}
\newcommand{\cV}{\mathcal{V}} 
\newcommand{\frE}{\mathfrak{E}}
\newcommand{\frM}{\mathfrak{M}}
\newcommand{\ttau}{\tilde{\tau}} 
\newcommand{\tT}{\widetilde{T}}
\newcommand{\hGamma}{\widehat{\Gamma}}
\newcommand{\chK}{\check{K}}
\newcommand{\chnabla}{\check{\nabla}}
\DeclareMathOperator{\ev}{ev}
\DeclareMathOperator{\Lap}{Lap}
\DeclareMathOperator{\QS}{QS}
\DeclareMathOperator{\grav}{grav}
\DeclareMathOperator{\eff}{Eff}
\DeclareMathOperator{\lrl}{lrl}
\DeclareMathOperator{\e}{e}%
\DeclareMathOperator{\eqeuler}{\mathbf{e}_{\lambda}}%
\newcommand{\eqeulerinv}{\mathbf{e}^{-1}_{\lambda}}%
\newcommand{\eqeulerstar}{\mathbf{e}^{*}_{\lambda}}%
\DeclareMathOperator{\ctop}{\mathbf{e}}
\DeclareMathOperator{\pt}{pt}%
\DeclareMathOperator{\Td}{Td}%
\DeclareMathOperator{\Ch}{Ch}%
\DeclareMathOperator{\ch}{ch}%
\DeclareMathOperator{\Mir}{\mathfrak{m}}%
\DeclareMathOperator{\Gr}{gr}%
\DeclareMathOperator{\amb}{amb}%
\DeclareMathOperator{\QDM}{QDM}%
\DeclareMathOperator{\SQDM}{SQDM}%
\DeclareMathOperator{\Tor}{Tor}
\DeclareMathOperator{\rank}{rk}%
\DeclareMathOperator{\Res}{Res}%
\DeclareMathOperator{\Ext}{Ext}%
\DeclareMathOperator{\im}{Im}
\DeclareMathOperator{\Id}{id}%
\DeclareMathOperator{\Spec}{Spec}
\DeclareMathOperator{\vir}{vir}%
\newcommand{\End}{\operatorname{End}} 
\newcommand{\Spf}{\operatorname{Spf}} 
\newcommand{\Ker}{\operatorname{Ker}}
\newcommand{\cEd}{(\mathbf{c}^{*},E^{\vee})}
\newcommand{\cE}{(\mathbf{c},E)}
\newcommand{\eqeEi}{(\eqeulerinv,E^{\vee})}
\newcommand{\eqeE}{(\eqeuler,E)}
\newcommand{\eE}{(\e,E)}
\newcommand{\eK}{(\e,K_{X}^{-1})}
\newcommand{\eqeEprod}[1]{\bullet_{#1}^{\eqeE}}
\newcommand{\eEprod}[1]{\bullet_{#1}^{\eE}}
\newcommand{\eu}{{\text{\rm eu}}}
\newcommand{\loc}{{\text{\rm loc}}}
\newcommand{\llangle}{\left\langle}
\newcommand{\rrangle}{\right\rangle}
\newcommand{\<}{\left\langle\!\left\langle}
\renewcommand{\>}{\right\rangle\!\right\rangle}
\def\bigcorr#1{\left\langle\!\!\left\langle #1\right\rangle\!\!\right\rangle}
\def\Bigcorr#1{\left\langle\!\!\!\left\langle #1 \right\rangle\!\!\!\right\rangle}
\numberwithin{equation}{section} \makeatletter
\newtheorem{thm}{Theorem}[section]
\newtheorem{cor}[thm]{Corollary}
\newtheorem{lem}[thm]{Lemma}
\newtheorem{prop}[thm]{Proposition}
\theoremstyle{definition} 
\newtheorem{defi}[thm]{Definition}
\newtheorem{rem}[thm]{Remark}
\newtheorem{recall}[thm]{Recall}%
\newtheorem{notn}[thm]{Notation}%
\begin{document}

\title{Quantum Serre theorem as a duality between quantum $D$-modules}

\author{Hiroshi Iritani}
\address{Hiroshi Iritani, Department of Mathematics, Kyoto University, 
Kitashirakawa-Oiwake-cho, Sakyo-ku, Kyoto, 606-8502, Japan.}
\email{iritani@math.kyoto-u.ac.jp}
\author{Etienne Mann}
\address{Universit\'e Montpellier 2, Institut de Math\'ematique et
    de Mod\'elisation de Montpellier, UMR 5149, Case courier 051
Place Eug\`ene Bataillon F-34 095 Montpellier CEDEX 5, France.}
\email{etienne.mann@math.univ-montp2.fr}
\urladdr{http://www.math.univ-montp2.fr/~mann/index.html}

 \author{Thierry Mignon}
 \address{Universit\'e Montpellier 2, Institut de Math\'ematique et
     de Mod\'elisation de Montpellier, UMR 5149, Case courier 051
 Place Eug\`ene Bataillon
 F-34 095 Montpellier CEDEX 5, France.}
 \email{thierry.mignon@math.univ-montp2.fr}
 \urladdr{http://www.math.univ-montp2.fr/~mignon/}

\subjclass[2010]{14N35, 53D45, 14F10} 
\keywords{quantum cohomology, Gromov-Witten invariants, 
quantum differential equation, Fourier-Laplace transformation, 
quantum Serre, second structure connection, Frobenius manifold}

\begin{abstract} 
We give an interpretation of quantum Serre theorem of Coates and Givental 
as a duality of twisted quantum $D$-modules. 
This interpretation admits a non-equivariant limit, and 
we obtain a precise relationship among 
(1) the quantum $D$-module of $X$ twisted by a convex vector bundle $E$ 
and the Euler class, 
(2) the quantum $D$-module of the 
total space of the dual bundle $E^\vee\to X$, 
and (3) the quantum $D$-module of a submanifold $Z\subset X$ 
cut out by a regular section of $E$. 

When $E$ is the anticanonical line bundle $K_X^{-1}$, 
we identify these twisted quantum $D$-modules 
with second structure connections with different parameters, 
which arise as Fourier-Laplace transforms of the quantum $D$-module 
of $X$. In this case, we show that the duality pairing is identified with 
Dubrovin's second metric (intersection form). 
\end{abstract}


\maketitle
\setcounter{tocdepth}{1}     
\tableofcontents

\section{Introduction} 

Genus-zero Gromov-Witten invariants of a smooth projective variety 
$X$ can be encoded in different mathematical objects: 
a generating function that satisfies some system of PDE 
(WDVV equations), an associative and commutative product 
called quantum product, the Lagrangian cone $\mathcal{L}_{X}$ 
of Givental \cite{Givental:symplectic} or in a meromorphic flat connection 
called quantum connection. 
These objects are all equivalent to each other; in this paper 
we focus on the realization of Gromov-Witten invariants 
as a meromorphic flat connection. 

Encoding Gromov-Witten invariants in a meromorphic flat connection 
defines the notion of quantum $D$-module \cite{Givental:ICM}, denoted by 
$\QDM(X)$, that is a tuple $(F,\nabla,S)$ consisting of 
a trivial holomorphic vector bundle 
$F$ over $H^{\ev}(X)\times \cc_z$ with fiber $H^{\ev}(X)$, 
a meromorphic flat connection $\nabla$ on $F$ given by the 
quantum connection: 
\[
\nabla = d + \sum_{\alpha=0}^s (T_\alpha \bullet_\tau) dt^\alpha 
+ \left( - \frac{1}{z} (\frE\bullet) + \frac{\deg}{2} \right) \frac{dz}{z} 
\]
and a flat non-degenerate pairing $S$ on $F$ given by the Poicar\'{e} 
pairing (see Definition \ref{def:quantumD-mod} 
and Remark \ref{rem:connection_z}). 
These data may be viewed as a generalization of a variation of Hodge structure 
(see \cite{Katzarkov-Pantev-Kontsevich-ncVHS}). 

\emph{Quantum Serre theorem} of Coates and Givental 
\cite[\S 10]{Givental-Coates-2007-QRR} describes a certain relationship 
between \emph{twisted} Gromov-Witten invariants. 
The data of a twist is given by a pair $\cE$ of an invertible multiplicative 
characteristic class $\bc$ and a vector bundle $E$ over $X$. 
Since twisted Gromov-Witten invariants satisfy properties similar 
to usual Gromov-Witten invariants, we can define 
twisted quantum product, twisted quantum $D$-module 
$\QDM_{\cE}(X)$ and twisted Lagrangian cone $\cL^{\cE}$ 
associated to the twist $\cE$. 
Let $\bc^*$ denote the characteristic class  
satisfying $\bc(V)\bc^*(V^\vee)  =1$ for any vector bundle $V$. 
Quantum Serre theorem (at genus zero) gives the equality of the 
twisted Lagrangian cones: 
\begin{equation}
\label{eq:QS_Lagcone} 
\mathcal{L}^{\cEd} = \bc(E) \mathcal{L}^{\cE}. 
\end{equation}
Quantum Serre theorem of Coates and Givental was not 
stated as a duality. An observation in this paper is that this result 
can be restated as a duality between twisted quantum $D$-modules: 
\begin{thm}[see Theorem \ref{thm:quantum,Serre,QDM} 
for more precise statements] 
\label{thm,intro}
There exists a (typically non-linear) map 
$f\colon H^{\ev}(X) \to H^{\ev}(X)$ (see \eqref{eq:f,map}) 
such that the following holds: 
\begin{enumerate}
\item The twisted quantum $D$-modules 
$\QDM_{\cE}(X)$ and $f^{*}\QDM_{\cEd}(X)$ are dual to 
each other; the duality pairing $S^{\QS}$ is given by 
the Poincar\'{e} pairing. 
\item The map  $\QDM_{\cE}(X) \to f^{*}\QDM_{\cEd}(X)$ 
sending $\alpha$ to $\bc (E)\cup\alpha$ 
is a morphism of quantum $D$-modules. 
  \end{enumerate}
\end{thm}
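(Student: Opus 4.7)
The approach is to unpack the Coates--Givental identity \eqref{eq:QS_Lagcone} at the level of tangent spaces of the two Lagrangian cones, and then invoke the standard dictionary by which such tangent spaces reconstruct the quantum $D$-module.

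First I would translate \eqref{eq:QS_Lagcone} into a comparison of canonical sections of the two cones. Since $\bc(E)\,J^{\cE}(\tau,-z)$ lies on $\mathcal{L}^{\cEd}$ by \eqref{eq:QS_Lagcone}, parametrizing $\mathcal{L}^{\cEd}$ by its own $J$-function assigns it a canonical base-point; this identifies the non-linear map $f$ of the theorem (formula \eqref{eq:f,map}) as the unique map such that $\bc(E)\,J^{\cE}(\tau,-z)$ and $J^{\cEd}(f(\tau),-z)$ determine the same tangent space of $\mathcal{L}^{\cEd}$.

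For part (2), I would invoke the dictionary: the tangent space $T_{J^{\cE}(\tau,-z)}\mathcal{L}^{\cE}$ is a $\mathbb{C}[z]$-module whose reduction modulo $z$ is $H^{\ev}(X)$ equipped with the twisted quantum product $\bullet^{\cE}_{\tau}$, and bundled over $\tau$ this recovers the twisted quantum connection; the analogous statement holds for $\cEd$. Since multiplication by $\bc(E)$ commutes with the $z$-action and, by the previous step, sends tangent spaces of $\mathcal{L}^{\cE}$ to tangent spaces of $\mathcal{L}^{\cEd}$, it directly intertwines the two connections in the $\tau$-directions. For the $z$-direction, I would use the $\mathbb{C}^{*}$-homogeneity of each cone under the Euler field together with the Euler-homogeneity of the characteristic class $\bc(E)$.

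For part (1), I would set $S^{\QS}(\alpha,\beta):=\int_{X}\alpha\cup\beta$. Flatness of $S^{\QS}$ reduces to two ingredients: self-adjointness of each twisted quantum product with respect to the Poincar\'e pairing, and the cancellation $\bc(E)\,\bc^{*}(E^{\vee})=1$, which kills the twist factors when $\alpha\in\QDM_{\cE}(X)$ is paired against $\beta\in f^{*}\QDM_{\cEd}(X)$. The main obstacle will be the $z$-direction: the non-linearity of $f$ and the mixed cohomological degree of $\bc(E)$ force a careful verification that the Euler-field term $\frE\bullet^{\cE}$ pulls back to $\frE\bullet^{\cEd}$ under $f$ and that the grading operators align; this is where the central computation lies.
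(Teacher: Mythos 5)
Your strategy for part (2) is essentially the paper's: by Coates--Givental, $\bc(E)\,\cL_{\cE}=\cL_{\cEd}$, so $\bc(E)$ carries tangent spaces of one cone to tangent spaces of the other, and under the dictionary between tangent spaces and the quantum $D$-module this intertwines the connections in the $t^\alpha$-directions. Two caveats, though. First, $\bc(E)J_{\cE}(\tau,-z)$ is \emph{not} itself a value of $J_{\cEd}$ (its dilaton shift is $-z\,\bc(E)$), so saying that it ``determines the same tangent space as $J_{\cEd}(f(\tau),-z)$'' only defines $f$ implicitly; to identify $f$ with the explicit formula \eqref{eq:f,map} one must produce the unique vector of the form $\bun+O(z^{-1})$ in $\bc(E)T_\tau$, i.e.\ apply $\bc(E)$ to $\partial_{\bc^{*}(E^{\vee})}J_{\cE}(\tau,-z)$ and read off the $z^{-1}$-coefficient. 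Your sketch asserts the identification but omits this computation. Second, your worry about the $z$-direction is off target: for a general class $\bc$ with arbitrary parameters $s_k$, the twisted quantum connection of Definition \ref{def:quantumD-mod} has no $z$-component at all, and no Euler field or grading is available, so no homogeneity argument is needed (or possible) at this level of generality; weights and the $z$-direction only enter later, for the non-equivariant Euler twist (Theorem \ref{thm:euler,quantum,Serre,QDM}).

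The genuine gap is in part (1). Flatness of $S^{\QS}$ in the $t^\alpha$-directions, \eqref{eq:flatness_S^QS}, is \emph{not} a consequence of self-adjointness of the quantum products plus $\bc(E)\bc^{*}(E^{\vee})=1$. On constant sections it reads $\int_X (T_\alpha\bullet^{\cE}_\tau u)\cup v=\sum_{\beta}\partial_\alpha f^\beta(\tau)\int_X u\cup\bigl(T_\beta\bullet^{\cEd}_{f(\tau)}v\bigr)$, which involves the dual-twisted product at the shifted point $f(\tau)$ composed with the Jacobian of $f$; this is exactly the nontrivial content of quantum Serre duality (Remark \ref{rem:our,quantum,Serre,}) and cannot be produced by formal pairing manipulations (note also that $T_\alpha\bullet^{\cE}_\tau$ is self-adjoint for $(\cdot,\cdot)_{\cE}$, not for the Poincar\'e pairing). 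The efficient repair is the one the paper uses: since $S^{\QS}(u,v)=S_{\cE}(u,\bc(E)^{-1}v)$ and $S_{\cE}$ is $\nabla^{\cE}$-flat (Proposition \ref{prop:prop,QDM,S,flat,...}), part (1) is formally equivalent to part (2); so once your cone argument for part (2) is completed as above, part (1) follows immediately, and there is no separate ``central computation'' about Euler-field terms to carry out here.
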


Genus-zero twisted Gromov-Witten invariants were originally designed 
to compute Gromov-Witten invariants for Calabi-Yau hypersurfaces 
or non-compact local Calabi-Yau manifolds \cite{Kontsevich:enumeration, 
Givental-Equivariant-GW, 
LocalMirrorSymmetry-Chiang-Klemm-Yau-Zaslow-99}. 
Suppose that $E$ is a convex vector bundle and $\bc$ is the 
equivariant Euler class $\eqeuler$. 
In this case, non-equivariant limits of $\cE$-twisted Gromov-Witten invariants 
yield Gromov-Witten invariants of a regular section $Z\subset X$ of $E$ 
and non-equivariant limits of $\cEd$-twisted 
Gromov-Witten invariants yield Gromov-Witten invariants 
for the total space $E^\vee$. 
The original statement \eqref{eq:QS_Lagcone} of quantum Serre theorem 
does not admit a non-equivariant limit since the non-equivariant 
Euler class is not invertible. We see however that our restatement 
above passes to the non-equivariant limit as follows: 

\begin{cor}[Theorem \ref{thm:euler,quantum,Serre,QDM}, 
Corollary \ref{cor:Z}] 
Let $E$ be a convex vector bundle and let $\e$ denote the 
(non-equivariant) Euler class. 
Let $h\colon H^{\ev}(X) \to H^{\ev}(X)$ be the map given 
by $h(\tau) = \tau + \pi\sqrt{-1} c_1(E)$ 
and let $\overline{f} \colon H^{\ev}(X) \to H^{\ev}(X)$ 
denote the non-equivariant limit of the map $f$ 
of Theorem \ref{thm,intro} in the case where $\bc = \eqeuler$. 
We have the following: 
\begin{enumerate}
\item The quantum $D$-modules 
$\QDM_{\eE}(X)$ and 
$(h\circ \overline{f})^{*}\QDM(E^{\vee})$ 
are dual to each other. 

\item Let $Z$ be the zero-locus of a regular section of $E$ 
and suppose that $Z$ satisfies one of the conditions in 
Lemma \ref{lem:cond_Z}. 
Denote by $\iota:Z\hookrightarrow X$ the inclusion.  
Then the morphism $e(E) \colon \QDM_{\eE}(X) 
\to (h\circ \overline{f})^* \QDM(E^\vee)$ factors through 
the ambient part quantum 
$D$-module $\QDM_{\amb}(Z)$ of $Z$ as:   
\begin{equation}
\label{eq:factor_diagram}
\begin{aligned} 
    \xymatrix{\QDM_{\eE}(X) \ar@{->>}[d]_{\iota^*} 
\ar[r]^-{\e(E)\cup} 
   & (h\circ \overline{f})^{*}\QDM(E^{\vee})\\ 
(\iota^*)^*\QDM_{\amb}(Z) \ar@{^{(}->}^-{\iota_*}[ru]}
\end{aligned} 
\end{equation}  
\end{enumerate} 
\end{cor}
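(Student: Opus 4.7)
The plan is to apply Theorem \ref{thm,intro} with the equivariant twist $\cE = \eqeE$ and then carefully pass to the non-equivariant limit $\lambda \to 0$, using convexity of $E$ on the domain side and an identification with the Gromov-Witten theory of $E^{\vee}$ on the codomain side.

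For part (1), both sides of the duality $\QDM_{\eqeE}(X) \cong f^{*}\QDM_{\eqeEd}(X)$ produced by Theorem \ref{thm,intro} live over the equivariant parameter ring. Since $E$ is convex, the $\eqeE$-twisted genus-zero Gromov-Witten invariants of $X$ have a well-defined non-equivariant limit, yielding $\QDM_{\eE}(X)$. The right-hand side is more delicate because $\eqeulerinv(E^{\vee})$ has a pole at $\lambda=0$. However, by a standard localization/obstruction-bundle argument (cf.\ Givental and Coates--Givental), the $\eqeEd$-twisted theory coincides with the equivariant Gromov-Witten theory of the total space of $E^{\vee}$, since $\eqeulerstar(E^{\vee})$ is precisely the equivariant Euler class of the obstruction bundle for stable maps into $E^{\vee}$ that factor through the zero section. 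Taking the non-equivariant limit produces Gromov-Witten invariants of $E^{\vee}$, but twisted by a Novikov sign $(-1)^{\langle c_{1}(E),\beta\rangle}$ coming from $\eqeulerstar(E^{\vee})|_{\lambda=0}=(-1)^{\rank E}\e(E)$. By the divisor equation, this sign is exactly the effect of pulling back by the shift $h(\tau)=\tau+\pi\sqrt{-1}c_{1}(E)$. Composing with the non-equivariant limit $\overline{f}$ of the map $f$ then yields the asserted duality with $(h\circ\overline{f})^{*}\QDM(E^{\vee})$.

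For part (2), the factorization diagram \eqref{eq:factor_diagram} combines two classical ingredients. First, under the hypotheses of Lemma \ref{lem:cond_Z}, the quantum Lefschetz hyperplane principle of Givental and Coates--Givental provides a canonical surjective morphism of quantum $D$-modules $\iota^{*}\colon\QDM_{\eE}(X)\twoheadrightarrow\QDM_{\amb}(Z)$. Second, the self-intersection (adjunction) formula $\iota_{*}\iota^{*}\alpha=\e(E)\cup\alpha$ for $\alpha\in H^{\ev}(X)$ shows that the composition $\iota_{*}\circ\iota^{*}$ agrees with the morphism $\e(E)\cup(-)$ already identified in part (1). Thus $\iota_{*}$ naturally lands in the dual side $(h\circ\overline{f})^{*}\QDM(E^{\vee})$, and compatibility with connections and pairings is inherited from the corresponding statements for $\iota^{*}$ and for the duality in Theorem \ref{thm,intro}.

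The main obstacle is verifying the identification on the dual side in part (1): one must check that the non-equivariant limit of $\QDM_{\eqeEd}(X)$, after the shift by $h$, agrees with $\QDM(E^{\vee})$ not only on the level of the connection but also of the flat pairing. Because $E^{\vee}$ is non-compact, its Poincar\'e pairing is only defined through equivariant residues, and one must check that this pairing is compatible with the pairing inherited via the twist from the Poincar\'e pairing on $X$, after the coordinate shifts by $h$ and $\overline{f}$. Once this reconciliation of signs and pairings is in place, the corollary is a specialization and unwinding of the equivariant Theorem \ref{thm,intro}.
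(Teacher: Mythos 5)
Your overall strategy---specialize Theorem \ref{thm,intro} to $\bc=\eqeuler$, pass to the non-equivariant limit, and then factor $\e(E)\cup$ through the ambient part using $\iota_*\iota^*=\e(E)\cup$ together with Lemma \ref{lem:cond_Z}---is the paper's route, and your part (2) essentially matches the proof of Corollary \ref{cor:Z} (the paper gets the surjection $\iota^*$ from the functoriality of virtual classes, i.e.~\eqref{eq:iota_pullback} and Proposition \ref{prop:QDM,amb}, rather than from quantum Lefschetz, but that is a minor difference). The genuine gap is on the dual side of part (1). You assert that the non-equivariant limit of the $\eqeEd$-twisted theory ``produces Gromov--Witten invariants of $E^{\vee}$''. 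It does not: since $\overline{\mathcal{M}}_{0,\ell}(E^{\vee},d)$ is non-compact, non-equivariant invariants of $E^{\vee}$ are ill-defined, and the non-equivariant limit of the $\eqeEi$-twisted \emph{correlators} need not exist under mere convexity of $E$ (that would require concavity of $E^{\vee}$). What survives the limit is only the quantum product and connection, and defining them requires the technical input your argument omits: the properness of the evaluation maps $\ev_i\colon\overline{\mathcal{M}}_{0,\ell}(E^{\vee},d)\to E^{\vee}$ for convex $E$ (Lemma \ref{lem:properness}), which lets one define the product of $E^{\vee}$ by push-forward along $\ev_3$ as in \eqref{eq:qprod_pushforward}; combined with virtual localization (Proposition \ref{prop:GW,tot,space,GW,twist,X}) and the sign bookkeeping $\eqeulerstar(G)=(-1)^{\rank G}\,\ctop_{-\lambda}(G)^{-1}$ plus the divisor equation, this yields Proposition \ref{prop:noneqlim_invEuler}, i.e.~that the limit of $\nabla^{(\eqeulerstar,E^{\vee})}$ is $h^{*}\nabla^{E^{\vee}}$. (Your displayed identity $\eqeulerstar(E^{\vee})|_{\lambda=0}=(-1)^{\rank E}\e(E)$ is false: $\eqeulerstar(E^{\vee})=\eqeuler(E)^{-1}$ has no non-equivariant limit; the sign enters through the virtual rank of $(E^{\vee})_{0,\ell,d}$.)

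Your ``main obstacle'' paragraph is also misdirected. The duality in part (1) is with respect to the quantum Serre pairing $S^{\QS}$, i.e.~the Poincar\'e pairing of the compact space $X$, which is independent of $\lambda$ and passes trivially to the limit; no pairing on $E^{\vee}$ (by equivariant residues or otherwise) enters the statement. Indeed the paper defines $\QDM(E^{\vee})$ \emph{without} a pairing, precisely because $S_{(\eqeulerstar,E^{\vee})}$ has no non-equivariant limit. So the reconciliation you flag is not needed, while the ingredients that are actually needed---the properness/push-forward definition of the product of $E^{\vee}$, the existence of the limit $\overline{f}$ (Lemma \ref{lem:equ,limit,f,inverse}), and the limit of the $\eqeE$-twisted product via the modified correlators of Lemma \ref{lem:noneqlim_Euler}---are not supplied in your argument.
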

\noindent 
What is non-trivial here is the existence of an embedding of 
$\QDM_{\amb}(Z)$ into $\QDM(E^\vee)$. This is reminiscent 
of the Kn\"{o}rrer periodicity \cite{Knorrer:CM, 
Orlov:equivalence_between_LG}: we expect 
that this would be a special case of a more general phenomenon 
which relates quantum cohomology of a non-compact space equipped 
with a holomorphic function $W$ to quantum cohomology 
of the critical locus of $W$. 

In \S\ref{sec:quant-serre-integr}, we introduce certain integral structures 
for the quantum $D$-modules $\QDM_{\eE}(X)$, $\QDM(E^\vee)$ 
and $\QDM_{\amb}(Z)$, generalizing the construction in 
\cite{Iritani-2009-Integral-structure-QH,Katzarkov-Pantev-Kontsevich-ncVHS}. 
These integral structures are lattices in the space of flat sections 
which are isomorphic to the $K$-group $K(X)$ of vector bundles. 
We show in Propositions \ref{prop:compa,integral}, 
\ref{prop:integral} that the duality pairing $S^{\QS}$ 
is identified with the Euler pairing on the $K$-groups, 
and that the maps appearing in the diagram \eqref{eq:factor_diagram} 
are induced by natural functorial maps between $K$-groups. 

In \S\ref{sec:QS,KX,abstract,Fourier}, we consider the case 
where $E=K_{X}^{-1}$ and study quantum 
cohomology of a Calabi-Yau hypersurface in $X$ and 
the total space of $K_X$. 
We show that the small\footnote{Small quantum $D$-modules 
are the restriction of quantum $D$-modules 
to the $H^{2}(X)$ parameter space.} quantum $D$-modules 
$\SQDM_{\eK}(X)$ and $\SQDM(K_{X})$ are isomorphic to the 
second structure connections of Dubrovin \cite{Dtft}. 
The second structure connections are meromorphic flat connections 
$\chnabla^{(\sigma)}$ 
on the trivial vector bundle $\check{F}$ over $H^{\ev}(X) \times \cc_x$ 
with fiber $H^{\ev}(X)$, which is obtained from the quantum connection of $X$ 
via the \emph{Fourier-Laplace transformation} with respect to $z^{-1}$ 
(see \eqref{eq:second_str_conn}): 
\[
\partial_{z^{-1}} \leadsto x, \qquad 
z^{-1} \leadsto - \partial_x.  
\]
The second structure connection has a complex parameter $\sigma$; 
we will see that the two small quantum $D$-modules correspond 
to different values of $\sigma$.  

\begin{thm}[see Theorems \ref{thm:QS,with,I,Coates} 
and \ref{thm:pairing,vs,quantum,serre} for more precise statements] 
Suppose that the anticanonical class $-K_X$ of $X$ is nef. 
Let $n$ be the dimension of $X$. 
\begin{enumerate}
\item There exist maps 
$\pi_{\eu}, \pi_\loc \colon H^2(X) \times \cc_x \to H^2(X)$ 
and isomorphisms of vector bundles with connections: 
  \begin{align*}
    \psi_{\eu}: \left(\check{F}, \chnabla^{(\frac{n+1}{2})}\right) 
\Bigr|_{H^2(X)\times \cc_x}
&\longrightarrow \pi_{\eu}^{*}\SQDM_{\eK}(X) \Bigr|_{z=1}\\
\psi_{\loc}:\left(\check{F}, \chnabla^{\left(-\frac{n+1}{2}\right)}\right) 
\Bigr|_{H^2(X)\times \cc_x} 
 &\longrightarrow 
\pi_{\loc}^{*}\SQDM(K_{X}) \Bigr|_{z=1}
\end{align*} 
which are defined in a neighbourhood of the large radius limit point 
and for sufficiently large $|x|$. 

\item The duality pairing $S^{\QS}$ between 
$\pi_\eu^* \SQDM_{\eK}(X)$ and 
$\pi_\loc^* \SQDM(K_X)$ is identified with the 
second metric $\check{g} \colon 
\big(\cO(\check{F}),\chnabla^{(\frac{n+1}{2})}\big) 
\times \big(\cO(\check{F}),\chnabla^{(-\frac{n+1}{2})}\big) 
\to \cO$ given by 
\[
\check{g}(\gamma_1,\gamma_2) = 
\int_X \gamma_1 \cup (c_1(X) \bullet_\tau - x)^{-1} \gamma_2 
\] 
over $H^2(X)\times \cc_x$. 
\end{enumerate}
\end{thm}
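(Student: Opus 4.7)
The plan is to show that the two small twisted quantum connections arise as formal Fourier-Laplace transforms of the quantum connection of $X$, to match them with the two values $\sigma = \pm(n+1)/2$ of Dubrovin's second structure connection, and then to transport the Poincar\'{e} pairing through the same transform to obtain Dubrovin's second metric $\check{g}$.

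First I would write out all three connections explicitly on the $H^{2}(X)$-stratum. On this stratum the Euler field reduces to $\frE = c_{1}(X)$ (using that $-K_X$ is nef), so the $z$-direction of the untwisted quantum connection takes the form $\nabla_{z\partial_{z}} = -z^{-1}(c_{1}(X) \bullet_{\tau}) + \deg/2$. Performing the formal Fourier-Laplace substitution $\partial_{z^{-1}} \leadsto x$ and $z^{-1} \leadsto -\partial_{x}$ produces the second structure connection $\chnabla^{(\sigma)}$; after restriction to $H^{2}(X) \times \cc_{x}$ its key ingredient is the resolvent $(c_{1}(X) \bullet_{\tau} - x)^{-1}$, and the parameter $\sigma$ records the eigenvalue of the grading $\deg/2$ built into the Fourier kernel. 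On the twisted side, the Coates-Givental quantum Riemann-Roch yields an explicit formula for the twisted small product $\bullet_{\tau}^{\eK}$ and for the associated connection. I would then construct $\pi_{\eu}$ as a mirror-type change of variables that rewrites the $c_{1}(X)$-direction of the K\"{a}hler parameter in terms of the Fourier dual variable $x$, and check that under this change of variables the twisted connection matches $\chnabla^{((n+1)/2)}$. The half-integer shift $(n+1)/2$ should arise from reconciling the degree contribution of the rank-one Euler twist class (which shifts the grading by $n+1$ via quantum Riemann-Roch) with the grading operator $\deg/2$ and the sign conventions of the Fourier transform.

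For $\SQDM(K_{X})$ I would either repeat the argument with the dual twist $(\e^{-1},K_{X})$, or---more cheaply---invoke the quantum Serre duality already established in the excerpt (Corollary~\ref{cor:Z}), which identifies $\SQDM(K_{X})$ via the map $h \circ \overline{f}$ as the dual of $\SQDM_{\eK}(X)$ with respect to the Poincar\'{e} pairing; since the second metric $\check{g}$ pairs $\chnabla^{(\sigma)}$ with $\chnabla^{(-\sigma)}$, this yields $\sigma = -(n+1)/2$ and determines $\pi_{\loc}$. For part~(2), $S^{\QS}$ equals the Poincar\'{e} pairing by Theorem~\ref{thm,intro}, and a formal integration by parts shows that under Fourier-Laplace the operator $\partial_{z^{-1}} - z^{-1}(c_{1}(X) \bullet_{\tau})$ transforms to multiplication by $-(c_{1}(X) \bullet_{\tau} - x)$; its inverse is precisely the kernel defining $\check{g}$. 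The principal obstacle I anticipate is the careful bookkeeping required to match the half-integer shift $\pm(n+1)/2$ coming from the quantum Riemann-Roch correction of the Euler twist against the grading operator and the Fourier sign conventions, together with the analytic step of verifying convergence on a neighbourhood of the large radius limit for $|x|$ large, which requires controlling the asymptotics of the twisted $J$-functions in that regime.
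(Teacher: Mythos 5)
Your overall strategy---realize the second structure connection as the Fourier--Laplace transform of the quantum connection of $X$, pass to the twisted theories by a mirror-type change of variables, and transport the Poincar\'e pairing to get $\check g$---is the same philosophy as the paper's, but the step that carries all the content is missing. Quantum Riemann--Roch does \emph{not} give an explicit formula for the twisted small product $\bullet^{\eK}_\tau$ that you could compare directly with the Fourier--Laplace transform of the untwisted connection; the only access to the twisted (and local) theories is through quantum Lefschetz, i.e.\ through the statement that the twisted fundamental solution applied to suitable vectors equals the hypergeometric modifications $I^{\eu}_\alpha$, $I^{\loc}_\alpha$ after the mirror maps (Proposition \ref{prop:I,fctn}). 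The paper's proof of Theorem \ref{thm:QS,with,I,Coates} then produces an explicit multivalued solution of $\chnabla^{(\sigma)}$ on the $H^2$-locus as a truncated Laplace transform of the columns of the inverse fundamental solution (Lemma \ref{lem:inverse,funda,sol}, Proposition \ref{prop:inverse,fundamental,solution,second,metric}) and matches it, through the $\Gamma$-factors, with $(-\partial_x)^{n-|\alpha|}x^{-1}I^{\eu}_\alpha(\tau_2-\rho\log x,1)$, resp.\ with $I^{\loc}_\alpha(\tau_2-\rho\log x+\pi\sqrt{-1}\rho,1)$. This computation is precisely where the values $\sigma=\pm\frac{n+1}{2}$, the substitution $\tau_2\mapsto \tau_2-\rho\log x$ inside the mirror maps, and the convergence for $|x|$ large come from. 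Your ``check that under this change of variables the twisted connection matches $\chnabla^{(\frac{n+1}{2})}$'' is exactly this verification, and the tools you list (an explicit twisted product from QRR, degree bookkeeping) do not supply a mechanism for it; you yourself flag this as the anticipated obstacle.

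There is a second gap in the treatment of $\psi_{\loc}$ and of part (2). Deriving $\psi_\loc$ from quantum Serre duality plus ``$\check g$ pairs $\chnabla^{(\sigma)}$ with $\chnabla^{(-\sigma)}$'' presupposes that $S^{\QS}$ is identified with $\check g$ under $\psi_\eu$, i.e.\ it presupposes part (2); and your argument for part (2) (formal integration by parts) only shows that $\check g$ is \emph{a} flat pairing between the two second structure connections---which is Hertling's result already quoted in the paper---not that it coincides, with the correct normalization (the paper finds $(-1)^{n+1}\check g$), with $S^{\QS}$ under the specific isomorphisms $\psi_\eu,\psi_\loc$. Since flat pairings between flat bundles are only determined up to flat factors, an actual computation is required: the paper compares large-radius asymptotics of the two pairings, and this uses both the non-equivariant quantum Serre relation between $L_{\eu}$ and $L_{\loc}$ (Theorem \ref{thm:euler,quantum,Serre,QDM} (3)) and the compatibility $(h\circ\overline f)(\Mir_{\eu}(\tau_2))=\Mir_{\loc}(h(\tau_2))$ of the two mirror maps (Lemma \ref{lem:relation_mirrormaps}), itself proved by an equivariant $I$-function argument. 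Your outline never addresses this mirror-map compatibility, yet without it $\pi_\loc$ defined as $(h\circ\overline f)\circ\pi_\eu$ cannot be identified with the local mirror map, and the local-side isomorphism (needed independently to break the circularity above, as in \eqref{eq:def_psi_loc}) is not constructed.
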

Combined with the commutative diagram \eqref{eq:factor_diagram}, 
this theorem gives an entirely algebraic description of the ambient 
part quantum $D$-module of a Calabi-Yau hypersurface $Z$ in 
a Fano manifold $X$ (Corollary \ref{cor:anticanonical_hypersurface}). 
We will also describe the A-model Hodge filtration for these small 
quantum $D$-modules in terms of the second structure connection 
in \S \ref{subsec:Hodge_filt}. These results are illustrated for a 
quintic threefold in $\pp^4$ in \S \ref{sec:quintic-pp4}. 
Note that the second structure connection is closely related 
to the \emph{almost dual} Frobenius manifold of Dubrovin 
(see \cite[Proposition 3.3]{Dubrovin-Almost-Frob-2004}) 
and our result may be viewed as a generalization of 
the example in \cite[\S 5.4]{Dubrovin-Almost-Frob-2004}.

This paper arose out of our previous works 
\cite{iritani_quantum_2011,Mann-Mignon-2011-QDM-lci-nef} 
on quantum $D$-modules of (toric) complete intersections. 
The embedding of $\QDM_{\amb}(Z)$ into $\QDM(E^\vee)$ 
appeared in \cite[Remark 6.14]{iritani_quantum_2011} 
in the case where $X$ is a weak Fano toric orbifold and $E^\vee=K_X$; 
in \cite[Theorem 1.1]{Mann-Mignon-2011-QDM-lci-nef}, 
$\QDM_{\amb}(Z)$ was presented as the quotient 
$\QDM_{\eE}(X)$ by $\Ker(\e(E)\cup)$ when $E$ is a direct 
sum of ample line bundles. 
We would also like to draw attention to a recent work of 
Borisov-Horja \cite{Borisov-Horja:applications} on the duality of 
better behaved GKZ systems. The conjectural duality in their work 
should correspond to a certain form of quantum Serre duality 
generalized to toric Deligne-Mumford stacks. 

We assume that the reader is familiar with Givental's formalism 
and quantum cohomology Frobenius manifold. 
As preliminary reading for the reader, 
we list \cite{Givental:symplectic}, 
\cite{Givental-Coates-2007-QRR} 
and \cite[Chapter I,II]{Mfm}.

\begin{notn}
We use the following notation throughout the paper. 
\begin{center} 
\begin{tabular}{ll}
 $X$ & a smooth projective variety of complex dimension $n$. \\ 
 $E$ & a vector bundle over $X$ of rank $r$ with 
$E^{\vee}$ the dual vector bundle. \\ 
$(T_{0}, \ldots ,T_{s})$ & an homogeneous basis of 
$H^{\ev}(X) = \bigoplus_{p=0}^n H^{2p}(X,\cc)$ 
such that  \\ 
& $T_{0}=\bf{1}$ and $\{T_1,\dots,T_r\}$ 
form a nef integral basis of $H^2(X)$. \\
$(t^0,\dots,t^s)$ 
& the linear coordinates dual to the basis $(T_0,\dots,T_s)$; \\
& we write $\tau:=\sum_{\alpha=0}^{s}t^{\alpha}T_{\alpha}$ 
and $\partial_{\alpha}:=(\partial/\partial t^\alpha)$. \\ 
$(T^0,\dots,T^s)$ & the Poincar\'e dual basis 
such that $\int_X T_\alpha \cup T^\beta  =\delta_\alpha^\beta$. \\  
$\eff(X)$ & the set of classes in $H_{2}(X,\zz)$ represented by effective curves. \\
$\gamma(d)$ &  the pairing $\int_d \gamma$ between 
$\gamma\in H^{2}(X)$ and $d\in H_{2}(X)$. \\ 
$\eqeuler$  & the equivariant Euler class. \\ 
 $\e$ & the non-equivariant Euler class. 
\end{tabular} 
\end{center} 
\end{notn} 

\medskip
\noindent 
{\bf Acknowledgments.} 
H.I.~thanks John Alexander Cruz Morales and Anton Mellit 
for helpful discussions. 
We thank referees for suggesting several improvements 
on the statement and the proof of Lemma \ref{lem:properness}. 
This work is supported by the grants ANR-13-IS01-0001-01 and 
ANR-09-JCJC-0104-01 of the Agence nationale de la recherche 
and JSPS KAKENHI Grant Number 25400069, 23224002, 
26610008.

\section{Quantum Serre theorem as a duality}
\label{sec:Quantum-Serre}

In this section, we reformulate Quantum Serre theorem of 
Coates-Givental \cite{Givental-Coates-2007-QRR} 
as a duality of quantum $D$-modules. 
After reviewing twisted Gromov-Witten invariants and 
twisted quantum $D$-modules, we give our reformulation   
in Theorem \ref{thm:quantum,Serre,QDM}. 

\subsection{Notation} 
We introduce the notation we use throughout the paper. 
Let $\{T_0,\dots,T_s\}$ be a homogeneous basis of 
the cohomology group 
$H^{\ev}(X) = \bigoplus_{p=0}^n H^{2p}(X;\cc)$ 
of even degree. We assume $T_0 = \bun$ and 
$T_1,\dots,T_r$ form a nef integral basis of $H^2(X)$ 
for $r = \dim H^2(X) \le s$.  
Let $\{t^0,\dots,t^s\}$ denote the linear co-ordinates 
on $H^{\ev}(X)$ dual to the basis 
$\{T_0,\dots,T_s\}$ and write 
$\tau = \sum_{\alpha=0}^s t^\alpha T_\alpha$ for a 
general point of $H^{\ev}(X)$.  
We write $\partial_\alpha = \partial/\partial t^\alpha$ for 
the partial derivative. 

Let $\eff(X)$ denote the set of classes of effective curves 
in $H_2(X;\zz)$. 
Let $K$ be a commutative ring. For $d\in \eff(X)$, 
we write $Q^d$ for the corresponding element 
in the group ring $K[Q] := K[\eff(X)]$. 
The variable $Q$ is called the Novikov variable. 
We write $K[\![Q]\!]$ for the natural completion of $K[Q]$. 
For an infinite set $\bfs = \{s_0,s_1,s_2,\dots\}$ of variables, 
we define the formal power series ring 
\[
K[\![ \bfs]\!] = K[\![s_0,s_1,s_2,\dots]\!] 
\]
to be the (maximal) completion of $K[s_0,s_1,s_2,\dots]$ with 
respect to the additive valuation $v$ defined by 
$v(s_k) = k+1$. 
We write 
\[
\cc[\![Q,\tau]\!], \qquad 
\cc[\![Q,\bfs,\tau]\!] \quad \text{and} \quad
\cc[z][\![Q,\bfs,\tau]\!] 
\] 
for the completions of 
$\cc[\![Q]\!][t^0,\dots,t^s]$, 
$\cc[\![Q]\!][t^0,\dots,t^s,s_0,s_1,s_2,\dots]$ 
and $\cc[z][\![Q]\!][t^0,\dots,t^s,s_0,s_1,s_2,\dots]$ 
respectively. 
We write $\tau_2 = \sum_{i=1}^r t^i T_i$ 
for the $H^2(X)$-component of $\tau$ 
and set $\tau = \tau_2 + \tau'$. 
Because of the divisor equation in Gromov-Witten theory, 
the Novikov variable $Q$ and $\tau_2$ often appear 
in the combination $(Q e^{\tau_2})^d 
= Q^d e^{t_1 T_1(d) + \cdots + t_r T_r(d)}$. 
Therefore we can also work with the subring 
\[
\cc[\![Q e^{\tau_2},\tau']\!] 
= \cc[\![Q  e^{\tau_2}]\!][\![t^0,t^{r+1},\dots,t^s]\!] 
\subset \cc[\![Q,\tau]\!].  
\] 
The subrings 
$\cc[\![Qe^{\tau_2},\bfs,\tau']\!]\subset \cc[\![Q,\bfs,\tau]\!]$, 
$\cc[z][\![Qe^{\tau_2},\bfs, \tau']\!]\subset \cc[z][\![Q,\bfs,\tau]\!]$ 
are defined similarly. 

\subsection{Twisted quantum $D$-modules}
\label{subsec:tw,QDM}

Coates-Givental \cite{Givental-Coates-2007-QRR} 
introduced Gromov-Witten invariants 
twisted by a vector bundle and a multiplicative 
characteristic class. We consider the quantum 
$D$-module defined by genus-zero twisted 
Gromov-Witten invariants.  

\subsubsection{Twisted Gromov-Witten invariants and 
twisted quantum product}
\label{subsubsec:twist-quant-prod}

Let $X$ be a smooth projective variety and let $E$ be a 
vector bundle on $X$.  
Denote by $\eff(X)$ the subset of $H_{ 2}(X,\zz)$ 
of classes of effective curves.
For $d\in \eff(X)$ and $g,\ell \in \nn$, we denote by
$\overline{\mathcal{M}}_{g,\ell}(X,d)$ the moduli space of 
genus $g$ stable maps to $X$ of degree $d$ and with $\ell$ marked points. 
Recall that $\overline{\mathcal{M}}_{g,\ell}(X,d)$ is a proper 
Deligne-Mumford stack and is 
equipped with a virtual fundamental class 
$[\overline{\mathcal{M}}_{g,\ell}(X,d)]^{\vir}$ 
in $H_{2D}(\overline{\mathcal{M}}_{g,\ell}(X,d), \qq)$ 
with $D = (1-g)(\dim X -3)+ (c_1(X) \cdot d) + \ell$. 
In this paper, we only consider the genus-zero moduli spaces. 
The universal curve of $\overline{\mathcal{M}}_{0,\ell}(X,d)$ 
is $\overline{\mathcal{M}}_{0,\ell+1}(X,d)$: 
\begin{displaymath}
\xymatrix{\overline{\mathcal{M}}_{0,\ell+1}(X,d)\ar[d]^-{\pi}
\ar[rr]^-{\ev_{\ell+1}}&&X \\ \overline{\mathcal{M}}_{0,\ell}(X,d)}
\end{displaymath}
where $\pi$ is the map that  forgets the $(\ell+1)$-th marked point and
stabilizes, and $\ev_{\ell+1}$ is the evaluation map at the $(\ell+1)$-th
marked point.

The vector bundle $E$ defines a $K$-class 
$E_{0,\ell,d}:=\pi_{!}e_{\ell+1}^{*}E \in
K^{0}(\overline{\mathcal{M}}_{0,\ell}(X,d))$ 
on the moduli space, 
where $\pi_{!}$ denotes the $K$-theoretic push-forward. 
The restriction to a point $(f:C\to X) \in \overline{\mathcal{M}}_{0,\ell}(X,d)$ 
gives: 
\begin{align*} 
  E_{0,\ell,d}\mid_{(f:C\to X)}=[H^{0}(C,f^{*}E)]- [H^{1}(C, f^{*}E)].  
\end{align*}

For $i\in\{1,\ldots,\ell\}$, let $\mathfrak{L}_{i}$ denote the 
universal cotangent line bundle on $\overline{\mathcal{M}}_{0,\ell}(X,d)$ 
at the $i$-th marking.  
The fibre of $\mathfrak{L}_i$ at a point $(C,x_{1}, \ldots ,x_{\ell},$
$f\colon C\to X)$ is the cotangent space $T_{x_i}^*C$ at $x_i$. 
Put $\psi_i:=c_1(\mathfrak{L}_i)$ in 
$H^2(\overline{\mathcal{M}}_{0,\ell}(X,d),\qq)$.

The universal invertible multiplicative 
characteristic class $\bc(\cdot)$ is given by: 
\[
\bc(\cdot) = \exp\left (\sum_{k=0}^\infty s_k \ch_k(\cdot) \right)  
\]
with infinitely many parameters $s_0,s_1,s_2,\dots$. 
In the discussion of Coates-Givental's quantum Serre theorem, 
we treat $s_0,s_1,s_2,\dots$ as formal infinitesimal 
parameters. On the other hand, the result we obtain later 
sometimes makes sense for non-zero values of the parameters. 
For example, we will use the equivariant Euler class for $\bc(\cdot)$ 
in \S \ref{sec:non-equivariant,limit}. 

The \emph{genus-zero $\cE$-twisted Gromov-Witten invariants} are
defined by the following formula. 
For any $\gamma_{1}, \ldots,\gamma_{\ell}\in H^{\ev}(X)$ 
and any $k_{1}, \ldots ,k_{\ell}\in \nn$, we put:
\begin{displaymath}
\llangle \gamma_{1}\psi^{k_{1}}, \ldots ,\gamma_{\ell}
\psi^{k_{\ell}}\rrangle_{0,\ell,d}^{\cE}:=
\int_{[\overline{\mathcal{M}}_{0,\ell}(X,d)]^{\vir}}
\left(\prod_{i=1}^{\ell}\psi_{i}^{k_{i}}\ev_{i}^{*}\gamma_{i}\right) 
\bc(E_{0,\ell,d}). 
\end{displaymath}
We also use the following notation:
\begin{align*}
\bigcorr{
\gamma_{1}\psi^{k_{1}}, \ldots ,\gamma_{\ell}\psi^{k_{\ell}}
}^{\cE}_{\tau}
:=\sum_{d\in \eff(X)}\sum_{k\geq 0}\frac{Q^{d}}{k!}
\llangle \gamma_{1}\psi_{1}^{k_{1}}, \ldots ,
\gamma_{\ell}\psi_{\ell}^{k_{\ell}},\tau, \ldots ,\tau\rrangle_{0,\ell+k,d}^{\cE}
\end{align*} 
The genus-zero twisted Gromov-Witten potential is: 
\[
\cF_{\cE}^0(\tau) = \< \>^{\cE}_\tau 
= \sum_{d\in \eff(X)}\sum_{k\geq 0} 
\frac{Q^d}{k!} 
\llangle \tau,\dots,\tau\rrangle_{0,k,d}^{\cE}.  
\]
The genus-zero twisted potential $\cF_{\cE}^0(\tau)$ 
lies in $\cc[\![Q,\bfs,\tau]\!]$. By the divisor equation, we see 
that it lies in the subring $\cc[\![Q e^{\tau_2},\bfs,\tau']\!]$. 
Introduce the symmetric bilinear pairing $(\cdot,\cdot)_{\cE}$ 
on $H^{\ev}(X)\otimes \cc[\![\bfs]\!] $ by 
\begin{align*}
  (\gamma_{1},\gamma_{2})_{\cE}=\int_{X}
\gamma_{1}\cup\gamma_{2}\cup\bc(E).
\end{align*}
The $\cE$-twisted quantum product $\bullet_\tau^{\cE}$ 
is defined by the formula: 
\begin{equation} 
\label{eq:tw,quantum,prod} 
\left(T_\alpha \bullet_\tau^{\cE} T_\beta, T_\gamma\right)_{\cE} 
= \partial_\alpha \partial_\beta \partial_\gamma 
\cF_{\cE}^0 (\tau). 
\end{equation} 
The structure constants lie in $\cc[\![Qe^{\tau_2},\bfs,\tau']\!] 
\subset \cc[\![Q,\bfs,\tau]\!]$. 
The product is extended bilinearly over $\cc[\![Q,\bfs,\tau]\!]$ and 
defines the $\cE$-twisted quantum cohomology 
$(H^{\ev}(X)\otimes \cc[\![Q,\bfs,\tau]\!], \bullet_\tau^{\cE})$. 
It is associative and commutative, and has $T_0 = \bun$ as the identity.

\subsubsection{Twisted quantum $D$-module and fundamental solution} 
\label{subsubsec:twist-quant-diff}

\begin{defi} 
\label{def:quantumD-mod}
The \emph{$\cE$-twisted quantum $D$-module} is a triple 
\[
\QDM_{\cE}(X) = 
\left(H^{\ev}(X)\otimes \cc[z][\![Q,\bfs,\tau]\!], 
\nabla^{\cE}, S_{\cE} \right)
\]
where $\nabla^{\cE}$ is the connection defined by 
\begin{align*} 
\nabla^{\cE}_\alpha & \colon 
H^{\ev}(X) \otimes \cc[z][\![Q,\bfs,\tau]\!]
\to z^{-1} H^{\ev}(X)\otimes \cc[z][\![Q,\bfs,\tau]\!] \\ 
\nabla^{\cE}_\alpha & = \partial_\alpha + 
\frac{1}{z} \left( T_\alpha \bullet_\tau^{\cE} \right), \qquad 
\alpha = 0,\dots, s, 
\end{align*}
and $S_{\cE}$ is the `$z$-sesquilinear' pairing 
on $H^{\ev}(X)\otimes \cc[z][\![Q,\bfs,\tau]\!]$ 
defined by 
\begin{align*} 
S_{\cE}(u, v) = (u(-z), v(z))_{\cE} 
\end{align*} 
for $u, v \in H^{\ev}(X)\otimes \cc[z][\![Q,\bfs,\tau]\!]$. 
The connection $\nabla^{\cE}$ is called the \emph{quantum connection}. 
When $\bc=1$ and $E=0$, the triple 
\[
\QDM(X) = \left(H^{\ev}(X)\otimes \cc[z][\![Q,\tau]\!], 
\nabla = \nabla^{(\bc=1,E=0)}, S= S_{(\bc=1, E=0)}\right)
\] 
is called the \emph{quantum $D$-module} of $X$. 
\end{defi}

\begin{rem} 
The module $H^{\ev}(X)\otimes \cc[z][\![Q,\bfs,\tau]\!]$ 
should be viewed as the module of sections of a vector bundle 
over the formal neighbourhood of the point $Q=\bfs=\tau=z=0$. 
Since the connection $\nabla^{\cE}$ does not preserve 
$H^{\ev}(X)\otimes \cc[z][\![Q,\bfs,\tau]\!]$, 
quantum $D$-module is \emph{not} a $D$-module in the traditional sense.  
It can be regarded as a lattice in the $D$-module $H^{\ev}(X)\otimes 
\cc[z^\pm][\![Q,\bfs,\tau]\!]$ 
(see, e.g.~\cite[p.18]{Sdivf}). 
\end{rem} 
\begin{rem} 
\label{rem:specialization} 
As discussed, structure constants of the quantum product belong 
to the subring $\cc[\![Q e^{\tau_2}, \bfs,\tau']\!]$. 
Therefore the twisted quantum $D$-modules can be defined 
over $\cc[z][\![Q e^{\tau_2},\bfs,\tau']\!]$.  
This will be important when we specialize $Q$ to one 
in \S \ref{sec:non-equivariant,limit}. 
\end{rem} 

\begin{rem} 
\label{rem:connection_z}
For $\bc=1$ and $E=0$, we can complete the quantum connection 
$\nabla$ in the $z$-direction as a flat connection. We define 
\[
\nabla_{z\partial_z} 
= z\partial_z - \frac{1}{z}\left(\frE \bullet_\tau\right) 
+ \frac{\deg}{2} 
\]
where $\frE = \sum_{\alpha=0}^s 
(1-\frac{1}{2} \deg T_\alpha) t^\alpha T_\alpha 
+ c_1(TX)$ is the Euler vector field. 
\end{rem}

The quantum connection $\nabla^{\cE}$ is known to be flat and 
admit a fundamental solution. The fundamental solution 
of the following form was introduced by 
Givental \cite[Corollary 6.2]{Givental-Equivariant-GW}. 
We define $L_{\cE}(\tau,z) \in \End(H^{\ev}(X)) 
\otimes \cc[z^{-1}][\![Q,\bfs,\tau]\!]$ 
by the formula:  
\begin{align*}
L_{\cE}(\tau,z)\gamma =
\gamma-\sum_{\alpha=0}^{s}
\Bigcorr{\frac{\gamma}{z+\psi},T_{\alpha}}_{ \tau}^{\cE}
\frac{T^{\alpha}}{\bc(E)} 
\end{align*}
where $\gamma/(z+\psi)$ in the correlator should be expanded in 
the geometric series $\sum_{n=0}^\infty \gamma \psi^n (-z)^{-n-1}$.

\begin{prop}[{see, e.g.~\cite[\S 2]{Pand-after-Givental}, 
\cite[Proposition 2.1]{iritani_quantum_2011}}]
\label{prop:prop,QDM,S,flat,...}
The quantum connection $\nabla^{\cE}$ is flat and $L_{\cE}(\tau,z)$ 
gives the fundamental solution for $\nabla^{\cE}$. Namely we have 
\[
\nabla^{\cE}_\alpha \left( L_{\cE}(\tau,z) \gamma \right ) = 0  
\qquad 
\alpha = 0,\dots, s 
\] 
for all $\gamma \in H^{\ev}(X)$. 
Moreover $S_{\cE}$ is flat for $\nabla^{\cE}$ 
and $L_{\cE}(\tau,z)$ 
is an isometry for $S_{\cE}$: 
\begin{align*} 
d S_{\cE}(u, v) & = S_{\cE}\left( \nabla^{\cE}u, v 
\right) + S_{\cE}\left(u, \nabla^{\cE} v\right)  \\ 
S_{\cE}(u, v)&  = S_{\cE}\left(L_{\cE} u, L_{\cE}v\right) 
\end{align*} 
where $u, v \in H^{\ev}(X)\otimes \cc[z][\![Q,\bfs,\tau]\!]$. 
\end{prop}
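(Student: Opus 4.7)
The plan is to verify all four assertions by mimicking the classical derivation for untwisted quantum cohomology, checking at each step that the twist by $\bc(E_{0,\ell,d})$ interacts well with the string equation, the divisor equation, and---most importantly---the splitting of correlators along boundary divisors in $\overline{\mathcal{M}}_{0,\ell}(X,d)$. The one technical input that differs from the untwisted case is that under a gluing map
\[
\overline{\mathcal{M}}_{0,\ell_1+1}(X,d_1)\times_X \overline{\mathcal{M}}_{0,\ell_2+1}(X,d_2) \longrightarrow \overline{\mathcal{M}}_{0,\ell_1+\ell_2}(X,d_1+d_2),
\]
the pull-back of $E_{0,\ell,d}$ and the external product of the pieces differ by $\ev_{\text{node}}^{*}E$, so by the multiplicativity of $\bc$ the restriction of $\bc(E_{0,\ell,d})$ factors as $\bc(E_{0,\ell_1+1,d_1})\cdot \bc(E_{0,\ell_2+1,d_2})/\bc(\ev^*E)$. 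This is the compatibility that makes the dual basis $\{T^\alpha/\bc(E)\}$ and the modified pairing $(\cdot,\cdot)_{\cE}$ the correct replacements for their untwisted counterparts. I expect this bookkeeping to be the main obstacle, since every step below reduces to it.

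For flatness of $\nabla^{\cE}$, I would compute
\[
[\nabla^{\cE}_\alpha, \nabla^{\cE}_\beta] = \frac{1}{z}\bigl(\partial_\alpha(T_\beta\bullet_\tau^{\cE}) - \partial_\beta(T_\alpha\bullet_\tau^{\cE})\bigr) + \frac{1}{z^2}\bigl[T_\alpha\bullet_\tau^{\cE},\, T_\beta\bullet_\tau^{\cE}\bigr].
\]
The $z^{-2}$ term vanishes by commutativity of $\bullet_\tau^{\cE}$, and the $z^{-1}$ term vanishes by the twisted WDVV equation, i.e.\ the symmetry of $\partial_\alpha\partial_\beta\partial_\gamma\partial_\delta \cF^0_{\cE}$ after raising an index with $(\cdot,\cdot)_{\cE}$. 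Twisted WDVV follows as in the untwisted case from the linear equivalence of boundary divisors on $\overline{\mathcal{M}}_{0,4}$ pulled back via the forgetful map, once the factorization of $\bc(E_{0,\ell,d})$ on the nodal strata is in hand.

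To verify that $L_{\cE}(\tau,z)$ is a fundamental solution I would expand $\nabla^{\cE}_\alpha(L_{\cE}(\tau,z)\gamma)$ and prove it vanishes correlator by correlator. The derivative $\partial_\alpha$ inserts an extra $T_\alpha$ in
$\Bigcorr{\gamma/(z+\psi),T_\beta}_{\tau}^{\cE}$; applying the splitting identity associated with the boundary divisor of $\overline{\mathcal{M}}_{0,\ell+2}(X,d)$ that separates the $T_\alpha$-marking and the $\gamma$-marking from the remaining insertions, together with the expansion $\gamma/(z+\psi)=\sum_{n\ge 0}\gamma\psi^n(-z)^{-n-1}$, rewrites the resulting sum as $-z^{-1}$ times a correlator with $T_\alpha\bullet_\tau^{\cE}(\cdot)$ inserted, exactly cancelling the $z^{-1}(T_\alpha\bullet_\tau^{\cE})L_{\cE}\gamma$ term. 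This is precisely the argument in \cite{Pand-after-Givental} and \cite[Proposition~2.1]{iritani_quantum_2011}, now with the twist carried along through the multiplicativity of $\bc$ on the normalization exact sequence.

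Finally, flatness of $S_{\cE}$ is immediate from \eqref{eq:tw,quantum,prod}, which makes $T_\alpha\bullet_\tau^{\cE}$ self-adjoint for $(\cdot,\cdot)_{\cE}$, and the isometry $S_{\cE}(L_{\cE}(\tau,-z)u, L_{\cE}(\tau,z)v) = (u,v)_{\cE}$ is verified by an analogous splitting computation: expand both factors as series in $\psi$, pair the two $\psi$-class insertions via the identity
\[
\frac{1}{-z+\psi_1}\cdot\frac{1}{z+\psi_2} = \frac{1}{\psi_1+\psi_2}\left(\frac{1}{-z+\psi_1} + \frac{1}{z+\psi_2}\right),
\]
and observe that the denominator $\psi_1+\psi_2$ cancels with the normal bundle of the corresponding boundary divisor so that the splitting reduces to the constant term $(u,v)_{\cE}$, using $\sum_\beta (T_\beta/\bc(E))\otimes T^\beta$ as the diagonal for $(\cdot,\cdot)_{\cE}$.
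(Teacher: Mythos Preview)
The paper does not supply its own proof of this proposition; it simply refers the reader to \cite[\S 2]{Pand-after-Givental} and \cite[Proposition~2.1]{iritani_quantum_2011}. Your sketch is precisely the standard argument carried out in those references---twisted WDVV from boundary-divisor equivalences, the topological recursion relation for $L_{\cE}$, Frobenius self-adjointness for the flatness of $S_{\cE}$, and the $\psi_1+\psi_2$ trick for the isometry---with the one twisted-theory subtlety (the multiplicative factorization $\bc(E_{0,\ell,d})\big|_{\text{boundary}} = \bc(E_{0,\ell_1+1,d_1})\,\bc(E_{0,\ell_2+1,d_2})/\bc(\ev_{\text{node}}^*E)$ coming from the normalization sequence in $K$-theory) correctly identified and used throughout.
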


From the last point of Proposition \ref{prop:prop,QDM,S,flat,...}, 
one can deduce that the inverse of $L_{\cE}$ is given by 
the adjoint of $L_{\cE}(\tau,-z)$. Explicitly: 
\begin{equation}
\label{eq:inverse_fundsol} 
L_{\cE}(\tau,z)^{-1} \gamma = \gamma + 
\sum_{\alpha=0}^s \Bigcorr{\frac{T_\alpha}{z-\psi}, \gamma}^{\cE}_\tau  
\frac{T_\alpha}{\bc(E)}. 
\end{equation}

\begin{defi} 
\label{defi:J}
The \emph{$\cE$-twisted $J$-function} is defined to be 
\begin{align}\label{eq:J,func}
\begin{split} 
    J_{\cE}(\tau,z)&:=z L_{\cE}(\tau,z)^{-1} \bun \\
&=z+\tau+\sum_{\alpha=0}^{s}
\Bigcorr{\frac{T_{\alpha}}{z-\psi}}_{\tau}^{\cE}
\frac{T^{\alpha}}{\bc(E)} 
\end{split} 
\end{align}
\end{defi} 
We deduce the following equality for $\alpha=0,\dots,s$: 
\begin{align}\label{eq:J,L}
  L_{\cE}(\tau,z)^{-1} T_{\alpha}=  
L_{\cE}(\tau,z)^{-1} z\nabla^{\cE}_{\alpha}\bun= 
\partial_{\alpha}J_{\cE}(\tau,z). 
\end{align}

\begin{rem} 
\label{rem:fullyflat} 
When $\bc=1$ and $E=0$, we can complete the quantum 
connection $\nabla$ in the $z$-direction as in Remark \ref{rem:connection_z}. 
The fundamental solution for flat sections, including in the $z$-direction, 
is given by 
$L(\tau,z) z^{-\deg/2}z^{c_1(TX)}$, see 
\cite[Proposition 3.5]{iritani_quantum_2011}.  
\end{rem} 

\begin{rem}
\label{rem:fundsol_divisor} 
The divisor equation for descendant invariants shows that 
\begin{equation} 
\label{eq:fundsol_divisor} 
L_{\cE}(\tau,z) \gamma 
= e^{-\tau_2/z} \gamma 
+ 
\sum_{\substack{(d,\ell)\neq (0,0) \\ d\in\eff(X), \ell \geq 0}} 
\frac{Q^d e^{\tau_2(d)}}{\ell!}  
\llangle \frac{e^{-\tau_2/z} \gamma}{-z-\psi}, 
\tau',\dots,\tau', T_\alpha \rrangle^{\cE}_{0,\ell+2,d} 
\frac{T^\alpha}{\bc(E)}. 
\end{equation} 
See, e.g.~\cite[\S 2.5]{iritani_quantum_2011}. 
In particular $L_{\cE}$ belongs to 
$\End(H^{\ev}(X)) \otimes 
\cc[z^{-1}][\![Qe^{\tau_2},\bfs,\tau']\!][\tau_2]$. 
\end{rem}

\subsection{Quantum Serre theorem in terms of quantum $D$-modules}
\label{subsec:interpr-quant-serre}
We formally associate to $\bc(\cdot)$ another multiplicative
class $\bc^{*}(\cdot)$ by the formula: 
\begin{align}\label{eq:sk*}
  \bc^{*}(\cdot)=\exp\left(\sum_{k\geq 0}(-1)^{k+1}
s_{k}\ch_{ k}(\cdot)\right). 
\end{align}
The class $\bc^*$ corresponds to the choice of parameters 
$\bfs^* = (s^*_0, s^*_1, s^*_2,\dots)$ 
with $s_{k}^{*}=(-1)^{k+1}s_{k}$.
For any vector bundle $G$, we have 
\begin{displaymath}
  \bc^{*}(G^{\vee})\bc(G)= \bun. 
\end{displaymath}

\begin{defi} 
Define the map $f \colon H^{\ev}(X)
\to H^{\ev}(X)$ by the formula: 
\begin{equation} 
\label{eq:f,map} 
f(\tau) = \sum_{\alpha=0}^{s}\< T^{\alpha}, 
\bc^{*}(E^{\vee}) \>_{\tau}^{\cE}T_{\alpha}. 
\end{equation} 
More precisely, the formula defines a morphism 
$f\colon \Spf \cc[\![Q,\bfs,\tau]\!] \to \Spf 
\cc[\![Q,\bfs,\tau]\!]$ of formal schemes. 
\end{defi} 

\begin{defi}
\label{def:QS_pairing} 
The \emph{quantum Serre pairing} $S^{\QS}$ is the 
$z$-sesquilinear pairing on $H^{\ev}(X) \otimes 
\cc[z][\![Q,\bfs,\tau]\!]$ defined by: 
\[
S^{\QS}(u,v) = \int_X u(-z) \cup v(z) 
\]
for $u, v\in H^{\ev}(X)\otimes \cc[z][\![Q,\bfs,\tau]\!]$. 
\end{defi}

\begin{thm} 
\label{thm:quantum,Serre,QDM}
{\rm (1)} The twisted quantum $D$-modules $\QDM_{\cE}(X)$ and 
$f^*\QDM_{\cEd}(X)$ are dual to each other with respect to 
$S^{\QS}$, i.e.~
\begin{equation}
\label{eq:flatness_S^QS}
\partial_{\alpha}S^{\QS}(u,v)
= S^{\QS}\big(\nabla^{\cE}_{\alpha}u,v \big)
+S^{\QS}\big(u, (f^{*}\nabla^{\cEd})_{\alpha}v \big) 
\end{equation}
for $u,v\in H^{\ev}(X)\otimes \cc[z][\![Q,\bfs,\tau]\!]$. 

{\rm (2)} The isomorphism of vector bundles 
  \begin{align*}
    \bc(E):\QDM_{\cE}(X) &\to f^{*}\QDM_{\cEd}(X) \\
\alpha &\mapsto \bc(E)\cup \alpha
  \end{align*}
intertwines the connections $\nabla^{\cE}$, $f^*\nabla^{\cEd}$ 
and the pairings $S_{\cE}$, $f^*S_{\cEd}$. 

{\rm (3)} The fundamental solutions satisfy the following properties: 
\begin{align*} 
\bc(E) L_{\cE}(\tau,z)  & = L_{\cEd}(f(\tau),z) \bc(E), \\ 
S^{\QS}(u,v)& = S^{\QS}(L_{\cE}u , (f^*L_{\cEd}) v).
\end{align*}  
\end{thm}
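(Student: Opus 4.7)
The plan is to deduce all three claims from the operator identity in (3), namely
\[
\bc(E)\, L_{\cE}(\tau,z) = L_{\cEd}(f(\tau),z)\,\bc(E),
\]
which is the core of the theorem. Once this identity is in hand, the remaining statements reduce to formal manipulations using only three ingredients: that $L_{\cE}$ is the fundamental flat section of $\nabla^{\cE}$ (Proposition \ref{prop:prop,QDM,S,flat,...}); the multiplicative identity $\bc(E)\cup \bc^{*}(E^{\vee}) = \bun$ coming from \eqref{eq:sk*}; and the elementary pairing relations $S_{\cE}(u,v) = S^{\QS}(\bc(E)u,v) = S^{\QS}(u,\bc(E)v)$ together with $S^{\QS}(u,v) = S_{\cEd}(\bc(E)u,v) = S_{\cE}(u,\bc^{*}(E^{\vee})v)$, which follow directly from the definitions of the three pairings together with $\bc(E)\bc^{*}(E^{\vee})=\bun$.

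I would first establish the $L$-operator identity by translating the Coates--Givental quantum Serre identity $\cL^{\cEd} = \bc(E)\cL^{\cE}$ on Givental's Lagrangian cones into a statement about tangent spaces. By Givental's characterization, the tangent space to $\cL^{\cE}$ at the point parametrized by $\tau$ is the $\cc[z]$-module generated by the columns of $L_{\cE}(\tau,-z)^{-1}$, and similarly on the $\cEd$ side. Since $\bc(E)$ is a cohomological multiplication operator intertwining the two cones, it must send the tangent space at $\tau$ to the tangent space of $\cL^{\cEd}$ at a unique point $\tilde\tau = f(\tau)$, giving the operator identity up to the specification of $f$. To pin $f$ down and verify that it matches \eqref{eq:f,map}, I would apply both sides of the identity to $\bun$ and expand in powers of $z^{-1}$: from \eqref{eq:inverse_fundsol} one computes $L_{\cEd}(f(\tau),z)^{-1}\bc(E) = \bc(E) + z^{-1}\bc(E)\sum_{\alpha}\<T_{\alpha},\bc(E)\>^{\cEd}_{f(\tau)} T_{\alpha} + O(z^{-2})$, while $\bc(E)\, L_{\cE}(\tau,z)^{-1}\bun = \bc(E) + z^{-1}\bc(E)\tau + O(z^{-2})$. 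Matching the $z^{-1}$ coefficients and rewriting the $\cEd$-correlator as an $\cE$-correlator via $\bc^{*}(E^{\vee}) = 1/\bc(E)$ recovers the formula \eqref{eq:f,map} for $f(\tau)$.

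Once the $L$-identity is proved, the three parts follow quickly. Part (2)'s connection intertwining holds because $\bc(E)$ sends the frame of $\nabla^{\cE}$-flat sections $\{L_{\cE}(\tau,z)\gamma\}$ to the frame $\{L_{\cEd}(f(\tau),z)\bc(E)\gamma\}$ of $f^{*}\nabla^{\cEd}$-flat sections; part (2)'s pairing intertwining is a direct cohomological check using $\bc(E)\bc^{*}(E^{\vee}) = \bun$. For part (1), applying $\partial_{\alpha}$ to $S^{\QS}(u,v) = S_{\cE}(u,\bc^{*}(E^{\vee})v)$ and invoking the flatness of $S_{\cE}$ for $\nabla^{\cE}$ (Proposition \ref{prop:prop,QDM,S,flat,...}) together with part (2) yields the required Leibniz rule for $S^{\QS}$. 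Finally, the second equation in (3) follows by combining the isometry of $L_{\cE}$ for $S_{\cE}$ with the just-established relation between $S^{\QS}$ and $S_{\cE}$ together with the $L$-identity.

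The main obstacle, and where the real content lies, is the translation of the global cone identity $\cL^{\cEd} = \bc(E)\cL^{\cE}$ into the infinitesimal operator identity on fundamental solutions. The cone identity concerns subvarieties of Givental's infinite-dimensional symplectic space, while the $L$-identity is a formal statement in a neighbourhood of the origin. Bridging the two requires Givental's tangent-space description of $\cL^{\cE}$ and careful bookkeeping of the non-linear change of parameter $\tau\mapsto f(\tau)$, which is non-linear precisely because multiplication by $\bc(E)$ does not preserve the dilaton direction $-z\bun$ used to parametrize the cone; the $z$-linear leading term of $\bc(E)J_{\cE}(\tau,-z)$ is $-z\bc(E)$ rather than $-z\bun$, and absorbing this discrepancy into a parameter shift is what ultimately produces the formula for $f$.
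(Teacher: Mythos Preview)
Your overall strategy matches the paper's: derive the operator identity $\bc(E)L_{\cE}(\tau,z) = L_{\cEd}(f(\tau),z)\bc(E)$ from the Coates--Givental cone identity $\bc(E)\cL_{\cE} = \cL_{\cEd}$ via tangent spaces, then deduce (1), (2), and the second half of (3) by formal manipulations. Those final deductions are correct and essentially identical to the paper's argument in \S\ref{sec:quantum-Serre}.

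There is, however, one genuine gap: your method for identifying the map $f$ with formula \eqref{eq:f,map} does not work as stated. Applying the inverse identity $\bc(E)L_{\cE}(\tau,z)^{-1}\bun = L_{\cEd}(f(\tau),z)^{-1}\bc(E)$ and comparing $z^{-1}$-coefficients yields
\[
\tau \;=\; \sum_\alpha \bigcorr{T_\alpha, \bc(E)}^{\cEd}_{f(\tau)}\, T^\alpha,
\]
which expresses $\tau$ as a $\cEd$-twisted function of $f(\tau)$: this is the \emph{inverse} of $f$, not $f$ itself (cf.\ the map $\widetilde f$ in the remark following Corollary \ref{cor:tgt,cone,QS}). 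Your proposed step of ``rewriting the $\cEd$-correlator as an $\cE$-correlator via $\bc^*(E^\vee)=1/\bc(E)$'' is not valid: the two twists alter the class $\bc(E_{0,\ell,d})$ versus $\bc^*(E^\vee_{0,\ell,d})$ on the moduli space, and there is no pointwise substitution that converts one correlator into the other. Turning the implicit relation above into \eqref{eq:f,map} would require proving that the two maps are mutually inverse, which is essentially the content of the theorem applied with the roles of $\cE$ and $\cEd$ swapped.

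The paper's fix (proof of Corollary \ref{cor:tgt,cone,QS}(1)) is to work from the $\cE$ side: the tangent vector
\[
\partial_{\bc^*(E^\vee)} J_{\cE}(\tau,-z) \;=\; \bc^*(E^\vee) - \frac{1}{z}\sum_\alpha\bigcorr{T^\alpha,\bc^*(E^\vee)}_\tau^{\cE}\, \frac{T_\alpha}{\bc(E)}+O(z^{-2})
\]
lies in the tangent space $T_\tau$ to $\cL_{\cE}$; multiplying by $\bc(E)$ produces the unique element of $(\bun+\cH_-)\cap\bc(E)T_\tau$, whose $z^{-1}$-coefficient is then exactly $f(\tau)$ as given in \eqref{eq:f,map}. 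In your language: apply the $L$-identity (not $L^{-1}$) to the vector $\bc^*(E^\vee)$ rather than applying $L^{-1}$ to $\bun$.
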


We will give a proof of this theorem in \S \ref{sec:quantum-Serre}. 

\begin{rem}
\label{rem:our,quantum,Serre,}
(1) The pull back $f^*\nabla^{\cEd}$ is defined to be 
\[
(f^*\nabla^{\cEd})_\alpha 
= 
\partial_\alpha + \frac{1}{z} 
\sum_{\beta=0}^s 
\frac{\partial f^\beta(\tau)}{\partial t^\alpha} 
\left( T_\beta \bullet^{\cEd}_{f(\tau)}   \right). 
\]
where we set $f(\tau) = \sum_{\alpha=0}^s f^\alpha(\tau) T_\alpha$. 
The flatness \eqref{eq:flatness_S^QS} of $S^{\QS}$ implies 
a certain complicated relationship between the quantum products 
$\bullet^{\cE}_\tau$, $\bullet^{\cEd}_{f(\tau)}$. 

(2) The map $\bc(E)$ in the above theorem is obtained as 
the composition of the quantum Serre duality and the self-duality: 
\[
\xymatrix{
& \ar[rd]^{S_{\cEd}}_{\cong} 
(-)^* \left(f^*\QDM_{\cEd}(X) \right)^\vee 
& 
\\
\ar[ru]^{S^{\QS}}_{\cong}  \ar[rr]^{\bc(E)} 
\QDM_{\cE}(X)  
& & 
f^*\QDM_{\cEd}(X) 
}
\]
where $(-)^*$ means the pull-back by the change 
$z\mapsto -z$ of sign and $(\cdots)^\vee$ means the dual 
as $\cc[z][\![Q,\bfs,\tau]\!]$-modules. 
Therefore part (1) of the 
theorem is equivalent to part (2). 
\end{rem}

\subsection{Quantum Serre theorem of Coates-Givental}
\label{subsec:twisted,cone}

Coates and Givental \cite{Givental-Coates-2007-QRR} 
stated quantum Serre theorem as an equality of Lagrangian cones. 
We review the language 
of Lagrangian cones and explain quantum Serre theorem. 

\subsubsection{Givental's symplectic vector space} 
Givental's symplectic vector space for the $(\bc,E)$-twisted 
Gromov-Witten theory is an infinite dimensional 
$\cc[\![Q,\bfs]\!]$-module: 
\[
\cH = H^{\ev}(X) \otimes \cc[z,z^{-1}][\![Q,\bfs]\!] 
\]
equipped with the anti-symmetric pairing 
\[
\Omega_{\cE} (f, g) = \Res_{z=0} (f(-z), g(z))_{\cE} dz.   
\]
The space $\cH$ has a standard polarization 
\[
\cH= \cH_+ \oplus \cH_- 
\]
where $\cH_\pm$ are $\Omega_{\cE}$-isotropic subspaces:
\begin{align*} 
\cH_+ &= H^{\ev}(X) \otimes \cc[z][\![Q,\bfs]\!] \\ 
\cH_- & = z^{-1} H^{\ev}(X) \otimes \cc[z^{-1}][\![Q,\bfs]\!].  
\end{align*} 
This polarization identifies $\cH$ with the total space 
of the cotangent bundle $T^*\cH_+$. 
A general element on $\cH$ can be written in the form\footnote
{Notice that the dual basis of $\{T_\alpha\}_{\alpha=0}^s$ 
with respect to the pairing $(\cdot,\cdot)_{\cE}$ is 
$\{\bc(E)^{-1}T^{\alpha}\}_{\alpha=0}^s$.}: 
\[
\sum_{k=0}^\infty \sum_{\alpha=0}^s 
q_k^\alpha T_\alpha z^k 
+ \sum_{k=0}^\infty \sum_{\alpha=0}^s 
p_{k,\alpha} \bc(E)^{-1}T^\alpha  \frac{1}{(-z)^{k+1}} 
\]
with $p_{k,\alpha}, q_k^\alpha \in \cc[\![Q,\bfs]\!]$. 
The coefficients $p_{k,\alpha}, q_k^\alpha$ here give Darboux 
co-ordinates on $\cH$ in the sense that 
$\Omega_{\cE} = \sum_{k,\alpha} dp_{k,\alpha} \wedge dq_k^\alpha$.

\subsubsection{Twisted Lagrangian cones}
\label{subsubsec:twisted,cones}

The genus-zero gravitational descendant Gromov-Witten 
potential is a function on the formal 
neighbourhood of $-z \bun$ in $\cH_+$ 
defined by the formula:  
\[
\cF^{0, \grav}_{\cE}(-z + \bt(z) ) =  
\< \>_{\bt(\psi)}^{\cE}  
= 
\sum_{d\in \eff(X)} \sum_{k=0}^\infty 
\frac{Q^d}{k!} 
\left\langle \bt(\psi_1),\dots, \bt(\psi_k) \right\rangle_{0,k,d} 
\]
where $\bt(z) = \sum_{k=0}^\infty t_k z^k$ with 
$t_k = \sum_{\alpha=0}^s 
t_k^\alpha T_\alpha$ is a formal variable in $\cH_+$.  
The variables $\{t_k^\alpha\}$ are related to the variables $\{q_k^\alpha\}$ 
by $t_k^\alpha = q_k^\alpha + \delta_{k,1} \delta_{\alpha,0}$.  

\begin{defi} 
The $\cE$-twisted Lagrangian cone $\cL_{\cE}\subset \cH$ is 
the graph of the differential $d \cF^{0,\grav}_{\cE} 
\colon \cH_+ \to T^* \cH_+ \cong\cH$.  
In terms of the Darboux co-ordinates above, 
$\cL_{\cE}$ is cut out by the equations 
\[ 
p_{k,\alpha} = 
\frac{\partial \cF^{0,\grav}_{\cE}}{\partial q_k^\alpha}. 
\]
In other words, it consists of points of the form: 
\begin{equation} 
\label{eq:point_on_the_cone} 
-z + \bt(z) + \sum_{\alpha=0}^s  
\Bigcorr{\frac{T^\alpha}{-z-\psi}}^{\cE}_{\bt(\psi)} \frac{T_\alpha}{\bc(E)}  
\end{equation} 
with $\bt(z) \in \cH_+$. 
\end{defi} 

Givental \cite{Givental:symplectic} showed that the submanifold 
$\cL_{\cE}$ is in fact a cone (with vertex at the origin of $\cH$). 
Moreover he showed the following geometric property of $\cL_{\cE}$: 
for every tangent space $T$ of $\cL_{\cE}$ ($T$ is a linear subspace 
of $\cH$), 
\begin{itemize} 
\item $zT = T \cap \cL_{\cE}$; 
\item the tangent space of $\cL_{\cE}$ 
at any point in $zT\subset \cL_{\cE}$ is $T$. 
\end{itemize} 
Note that the twisted $J$-function (\ref{eq:J,func}) is a family of 
elements lying on $\cL_{\cE}$: 
\begin{align*}
  J_{\cE}(\tau,-z)=-z+\tau+\sum_{\alpha=0}^{s}
\Bigcorr{\frac{T_{\alpha}}{-z-\psi}}_{\tau}^{\cE}\frac{T^{\alpha}}{\bc(E)}  
\end{align*}
obtained from \eqref{eq:point_on_the_cone} by setting $\bt(z) = \tau$. 

\begin{rem} 
In \cite[Appendix B]{CCIT-computingGW}, 
$\cL_{\cE}$ is defined as a formal scheme over $\cc[\![Q,\bfs]\!]$. 
For a complete Hausdorff topological $\cc[\![Q,\bfs]\!]$-algebra $R$, 
we have the notion of $R$-valued points on $\cL_{\cE}$. 
An $R$-valued point on $\cL_{\cE}$ 
is a point of the form \eqref{eq:point_on_the_cone}  
with $t_k^\alpha \in R$ such that $t_k^\alpha$ are topologically 
nilpotent, i.e.~$\lim_{n\to \infty} (t_k^\alpha)^n =0$.  
A $\cc[\![Q,\bfs]\!]$-valued point is given by $t_k^\alpha \in \cc[\![Q,\bfs]\!]$ 
with $t_k^\alpha|_{Q= \bfs=0} =0$. 
The $J$-function is a $\cc[\![Q,\bfs,\tau]\!]$-valued point 
on $\cL_{\cE}$. In what follows we mean by a point \eqref{eq:point_on_the_cone} 
on $\cL_{\cE}$ a $\cc[\![Q,\bfs]\!]$-valued point, but the discussion 
applies to a general $R$-valued point. 
\end{rem} 

\subsubsection{Tangent space to the twisted Lagrangian cone}
\label{subsubsec:tgt,space,twisted,cones}

Let $g= g(\bt)$ denote the point on $\cL_{\cE}$ given in equation 
\eqref{eq:point_on_the_cone}. Differentiating $g(\bt)$ in $t_k^\alpha$, 
we obtain the following tangent vector: 
\begin{align}
\label{eq:tangent_vector_cone}
\frac{\partial g(\bt)}{\partial t_k^\alpha} = 
T_{\alpha}z^{k}+
\sum_{\beta=0}^s 
\Bigcorr{T_{\alpha}\psi^{k},\frac{T^\beta}{-z-\psi}}^{\cE}_{\bt(\psi)}
\frac{T_\beta}{\bc(E)} 
\quad \text{in} \quad  T_{g} \cL_{\cE}. 
\end{align}
The tangent space $T_g \cL_{\cE}$ is spanned by these vectors. 
Since $T_{g} \cL_{\cE}$ is complementary to $\cH_-$, 
$T_g \cL_{\cE}$ intersects with $\bun+ \cH_-$ at a unique 
point. The intersection point is the one 
\eqref{eq:tangent_vector_cone} with $k=\alpha=0$: 
\[
(\bun + \cH_-) \cap T_g \cL_{\cE} = \left\{
\frac{\partial g(\bt)}{\partial t_0^0} = 
\bun-\frac{\ttau(\bt)}{z}+O(z^{-2}) \right\} 
\] 
where 
\begin{align}
\label{eq:tautilde}
\ttau(\bt) = 
\sum_{\alpha=0}^{s}\<\bun,T_{\alpha}\>_{\bt(z)}^{\cE} 
\frac{T^{\alpha}}{\bc(E)}. 
\end{align}
Givental \cite{Givental:symplectic} observed that each tangent 
space to the cone is uniquely parametrized by the value 
$\ttau(\bt)$, i.e.~the tangent spaces 
at $g(\bt_1)$ and $g(\bt_2)$ are equal if and only if $\ttau(\bt_1) 
= \ttau(\bt_2)$. The string equation shows that 
$\ttau(\bt) = \tau$ when $\bt(z) = \tau$. 
Hence $T_{g(\bt)} \cL_{\cE}$ equals the tangent 
space at $g(\ttau(\bt)) = J_{\cE}(\ttau(\bt),-z)$.

\begin{prop}[\cite{Givental:symplectic}, see also 
{\cite[Proposition B.4]{CCIT-computingGW}}]
\label{prop:tgt,cone,R,module}
Let $g = g(\bt)$ denote the point of $\cL_{\cE}$ given in equation 
\eqref{eq:point_on_the_cone}. 
The tangent space $T_g \cL_{\cE}$ is a free 
$\cc[z][\![Q,\bfs]\!]$-module generated by 
the derivatives  of the twisted $J$-function:  
    \begin{displaymath}
      \left. \frac{\partial J_{\cE}}{\partial t^\alpha}(\tau,-z) 
\right|_{\tau=\ttau(\bt)}.
    \end{displaymath}
\end{prop}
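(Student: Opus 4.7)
The plan is to combine the geometry of Givental's cone $\cL_{\cE}$ with a unitriangular projection argument along the polarization $\cH = \cH_+ \oplus \cH_-$.

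First, by the observation recorded just before the statement, $T_{g(\bt)}\cL_{\cE}$ depends only on $\ttau(\bt)$ and equals $T_{J_{\cE}(\ttau(\bt),-z)}\cL_{\cE}$. So I reduce to the case $\bt=\tau$, $g=J_{\cE}(\tau,-z)$; then $\partial_\alpha J_{\cE}(\tau,-z)$ are manifestly tangent vectors to $\cL_{\cE}$, being derivatives of a family in $\cL_{\cE}$. The cone property $z\,T_g\cL_{\cE}=T_g\cL_{\cE}\cap \cL_{\cE}\subset T_g\cL_{\cE}$ then shows that $T_g\cL_{\cE}$ is closed under multiplication by $z$, hence is a $\cc[z][\![Q,\bfs]\!]$-submodule of $\cH$ containing every $z^k \partial_\alpha J_{\cE}(\tau,-z)$ for $k\geq 0$, $\alpha=0,\dots,s$.

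To identify these as a $\cc[z][\![Q,\bfs]\!]$-basis, I would use the complementarity $T_g\cL_{\cE}\oplus \cH_- = \cH$ from Givental's formalism, which produces a $\cc[\![Q,\bfs]\!]$-module isomorphism $\pi_+\colon T_g\cL_{\cE}\xrightarrow{\sim}\cH_+$ obtained by projection along $\cH_-$. Expanding
\[
\partial_\alpha J_{\cE}(\tau,-z) = T_\alpha + \sum_{j\geq 1} b_{\alpha,j}\, z^{-j}, \qquad b_{\alpha,j}\in H^{\ev}(X)\otimes \cc[\![Q,\bfs]\!],
\]
one finds
\[
\pi_+\bigl(z^k\partial_\alpha J_{\cE}(\tau,-z)\bigr) = z^k T_\alpha + b_{\alpha,1} z^{k-1} + \cdots + b_{\alpha,k}.
\]
In the $\cc[\![Q,\bfs]\!]$-basis $\{z^i T_\beta\}_{i\geq 0,\,\beta=0,\dots,s}$ of $\cH_+$, the coefficient of $z^i T_\beta$ in $\pi_+\bigl(z^k\partial_\alpha J_{\cE}(\tau,-z)\bigr)$ vanishes for $i>k$ and equals $\delta_{\alpha,\beta}$ for $i=k$. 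Hence the transition matrix is unitriangular and the family $\{\pi_+(z^k\partial_\alpha J_{\cE}(\tau,-z))\}_{k,\alpha}$ is a $\cc[\![Q,\bfs]\!]$-basis of $\cH_+$. Pulling back through the isomorphism $\pi_+$, one obtains that $\{z^k\partial_\alpha J_{\cE}(\tau,-z)\}_{k\geq 0,\,\alpha=0,\dots,s}$ is a $\cc[\![Q,\bfs]\!]$-basis of $T_g\cL_{\cE}$; equivalently, $\{\partial_\alpha J_{\cE}(\tau,-z)\}_{\alpha=0}^{s}$ is a $\cc[z][\![Q,\bfs]\!]$-basis, as desired.

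The main obstacle is less computational than formal: one must carry out the above for $R$-valued points of the formal scheme $\cL_{\cE}$ in the sense of \cite[Appendix B]{CCIT-computingGW}, ensuring that the cone identity, the complementarity $T_g\cL_{\cE}\oplus\cH_- = \cH$, and the unitriangular factorization are all meaningful and compatible with the adic topology on $\cc[\![Q,\bfs]\!]$. Once Givental's geometric properties of $\cL_{\cE}$ are taken as input, what remains is precisely the bookkeeping sketched above.
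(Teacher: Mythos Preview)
Your argument is correct and follows the same reduction as the paper: both first use the observation that $T_{g(\bt)}\cL_{\cE}=T_{J_{\cE}(\ttau(\bt),-z)}\cL_{\cE}$ to reduce to the case $\bt=\tau$. The only difference is that the paper then simply invokes \cite[Lemma~B.5]{CCIT-computingGW} for the free generation of the tangent space by $\partial_\alpha J_{\cE}(\tau,-z)$, whereas you supply the proof of that lemma, via the complementarity $T_g\cL_{\cE}\oplus\cH_-=\cH$ and the unitriangular projection to $\cH_+$; this is exactly the standard argument behind the cited result, so the two proofs are essentially the same with yours spelling out one more step.
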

\begin{proof} 
As discussed, $T_g \cL_{\cE}$ equals 
the tangent space of $\cL_{\cE}$ at $J(\tau,-z)$ with $\tau=\ttau(\bt)$. 
On the other hand, the tangent space at $J_{\cE}(\tau,-z)$ is 
freely generated by the derivatives $\partial_\alpha J_{\cE}(\tau,-z)$ 
\cite[Lemma B.5]{CCIT-computingGW}, and the result follows. 
\end{proof}

\subsubsection{Relations between Lagrangian cones and $\QDM_{\cE}(X)$}
\label{subsubsec:cone,QDM}

Proposition \ref{prop:tgt,cone,R,module} means that 
the quantum $D$-module can be identified with the family of 
tangent spaces to the Lagrangian cone $\cL_{\cE}$ at 
the $J$-function $J_{\cE}(\tau,-z)$. 
\begin{align*} 
(-)^*\QDM_{\cE}(X)_\tau & \cong T_{J_{\cE}(\tau,-z)} \cL_{\cE} \\ 
T_\alpha & \mapsto \partial_\alpha J_{\cE}(\tau,-z) 
= L_{\cE}(\tau,-z)^{-1} T_\alpha
\end{align*} 
where $(-)^*$ denotes the pull-back by the sign change 
$z\mapsto -z$ and we used \eqref{eq:J,L}. 
This identification preserves the pairing 
$S_{\cE}$ and intertwines the quantum 
connection $\nabla$ on $\QDM_{\cE}(X)$ with the trivial 
differential $d$ on $\cH$: this follows from the properties of $L_{\cE}$ in 
Proposition \ref{prop:prop,QDM,S,flat,...}.

\subsubsection{Quantum Serre theorem of Coates-Givental}
\label{subsubsec:quantum,Serre}

\begin{thm}[Coates-Givental 
{\cite[Corollary 9]{Givental-Coates-2007-QRR}}]
  \label{thm:Quantum,Serre,Coates-Givental} The multiplication by
  $\bc(E)$ defines a symplectomorphism $\bc(E) \colon
  (\cH,\Omega_{\bc(E)}) \to
  (\cH,\Omega_{\bc^{*}(E^{\vee})})$ and identifies the twisted
  Lagrangian cones:
$$
\bc(E) \cL_{\cE} = \cL_{\cEd}. 
$$
\end{thm}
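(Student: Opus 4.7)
My plan is to verify the symplectomorphism claim by a direct calculation, and then establish the cone identity $\bc(E)\cL^{\cE}=\cL^{\cEd}$ by reducing it---via Givental's geometric characterization of tangent spaces---to an identity between twisted $J$-functions, whose essence is a Grothendieck--Serre duality statement on the universal curve.

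For the symplectomorphism, the relation $\bc(V)\bc^*(V^\vee)=\bun$ applied to $V=E$ gives, for $u,v\in\cH$,
\[
(\bc(E)u,\bc(E)v)_{\cEd}
= \int_X u\cup v\cup \bc(E)^2\bc^*(E^\vee)
= \int_X u\cup v\cup \bc(E)
= (u,v)_{\cE},
\]
and taking residues at $z=0$ of $u(-z)$, $v(z)$ yields $\bc(E)^*\Omega_{\cEd}=\Omega_{\cE}$, so $\bc(E)$ is a symplectomorphism.

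For the cone identity I would parametrize both cones via formula \eqref{eq:point_on_the_cone}, writing $g^{\cE}(\bt)$ for the point of $\cL^{\cE}$ attached to $\bt(z)\in\cH_+$, and seek, for each $\bt$, an element $\tilde\bt(z)\in\cH_+$ satisfying $\bc(E)g^{\cE}(\bt)=g^{\cEd}(\tilde\bt)$. Since Givental showed that each tangent space $T$ to a twisted Lagrangian cone is uniquely determined by the value $\ttau(\bt)$ of \eqref{eq:tautilde} and satisfies $zT=T\cap\cL$, it suffices to match tangent spaces. By Proposition \ref{prop:tgt,cone,R,module}, the problem reduces to showing that $\bc(E)$ sends the generators $\partial_\alpha J_{\cE}(\tau,-z)$ of $T_{J_{\cE}(\tau,-z)}\cL^{\cE}$ into the tangent space of $\cL^{\cEd}$ at $J_{\cEd}(f(\tau),-z)$ for a suitable reparametrization $\tau\mapsto f(\tau)$ matching $\ttau^{\cE}$ with $\ttau^{\cEd}$.

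The essential input is then a correlator identity comparing the twists $\bc(E_{0,\ell,d})$ and $\bc^*(E^\vee_{0,\ell,d})$. I would derive it from relative Grothendieck--Serre duality on the forgetful map $\pi\colon \overline{\mathcal{M}}_{0,\ell+1}(X,d)\to\overline{\mathcal{M}}_{0,\ell}(X,d)$, which identifies $\pi_{!}(\ev_{\ell+1}^*E^\vee)$ with the shifted dual of $\pi_{!}(\ev_{\ell+1}^*E\otimes\omega_\pi)$ in $K^0$; combined with $\bc^*(V^\vee)=\bc(V)^{-1}$ and multiplicativity of $\bc$, this converts the $\cEd$-twist into the $\cE$-twist, modulo corrections carried by the relative dualizing sheaf $\omega_\pi$. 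The main obstacle is to absorb these corrections: since $\omega_\pi$ restricts to $\mathfrak{L}_i$ on the $i$-th section (up to boundary), they produce $\psi$-insertions and boundary contributions, which I expect to reabsorb into a shift $\bt\mapsto\tilde\bt$ by iterated application of the string, dilaton, and divisor equations for descendant Gromov--Witten invariants, yielding the desired identification of points on the two Lagrangian cones.
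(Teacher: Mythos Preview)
The paper does not prove this theorem; it quotes it as Coates--Givental's result \cite[Corollary 9]{Givental-Coates-2007-QRR} and uses it as a black box to deduce Corollary \ref{cor:tgt,cone,QS} and then Theorem \ref{thm:quantum,Serre,QDM}. So there is no proof in the paper to compare yours against.

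Your verification of the symplectomorphism claim is correct and complete. For the cone identity, your outline is sound in spirit---Serre duality on the universal curve is indeed the mechanism underlying the theorem---but the sentence ``I expect to reabsorb [the $\omega_\pi$-corrections] into a shift $\bt\mapsto\tilde\bt$ by iterated application of the string, dilaton, and divisor equations'' hides the entire content of the result. In Coates--Givental's actual argument, this step is not done directly: rather, both twisted cones $\cL_{\cE}$ and $\cL_{\cEd}$ are expressed, via their Quantum Riemann--Roch theorem, as explicit symplectic transformations of the \emph{untwisted} cone $\cL_X$, and one then checks that the two transformations differ exactly by multiplication by $\bc(E)$. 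The $\omega_\pi$-corrections you anticipate are precisely what the QRR formula (a Mumford/Grothendieck--Riemann--Roch computation on the universal curve) packages; absorbing them by hand via string/dilaton/divisor equations alone would essentially reproduce that computation, and is not obviously feasible without it. So your proposal is not wrong, but the crucial step is left as an expectation rather than an argument.
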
 

For any $\gamma = \sum_{\alpha=0}^s \gamma^\alpha T_\alpha 
\in H^{\ev}(X)$, we write  
$\partial_{\gamma}:=\sum_{\alpha=0}^{s}\gamma^{\alpha}
\partial_{\alpha}$ for the directional derivative. 

\begin{cor}\label{cor:tgt,cone,QS}
Let $T_\tau$ denote the tangent space 
of $\cL_{\cE}$ at $J_{\cE}(\tau,-z)$ 
and let $T_\tau^*$ denote the tangent space 
of $\cL_{\cEd}$ at $J_{\cEd}(\tau,-z)$. 
Then we have: 

{\rm (1)} $\bc(E)T_\tau 
=T_{f(\tau)}^*$, where $f$ was defined in \eqref{eq:f,map};

{\rm (2)} 
$\bc(E)\partial_{\alpha}J_{\cE}(\tau,-z) 
=(\partial_{\bc(E)\cup T_{\alpha}}J_{\cEd})(\tau^*,-z)|_{\tau^* = f(\tau)}$ 
for $\alpha=0,1,\dots,s$. 
\end{cor}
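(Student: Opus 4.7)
The plan is to deduce Corollary \ref{cor:tgt,cone,QS} from the cone identity $\bc(E)\cL_{\cE}=\cL_{\cEd}$ of Theorem \ref{thm:Quantum,Serre,Coates-Givental}. Since $\bc(E)\colon\cH\to\cH$ is a linear isomorphism sending $\cL_{\cE}$ to $\cL_{\cEd}$, it maps the tangent space $T_\tau$ of $\cL_{\cE}$ at $J_{\cE}(\tau,-z)$ to a tangent space of $\cL_{\cEd}$. By the uniqueness of tangent-space parametrization recalled in \S\ref{subsubsec:tgt,space,twisted,cones}, there is a unique $\sigma\in H^{\ev}(X)$ with $\bc(E)T_\tau = T_\sigma^*$; part (1) then reduces to identifying $\sigma$ with $f(\tau)$.

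To compute $\sigma$, I would locate the unique element of $\bc(E)T_\tau\cap(\bun+\cH_-)$ and extract $\sigma$ from its asymptotic expansion $\bun-\sigma/z+O(1/z^2)$. Combining Proposition \ref{prop:tgt,cone,R,module} with $\partial_\alpha J_{\cE}(\tau,-z) = L_{\cE}(\tau,-z)^{-1}T_\alpha$ gives $T_\tau = L_{\cE}(\tau,-z)^{-1}\cH_+$, so a general element of $\bc(E)T_\tau$ is $\bc(E)L_{\cE}(\tau,-z)^{-1}\gamma(z)$ with $\gamma(z)\in\cH_+$. As $L_{\cE}^{-1}=\Id+O(1/z)$ by \eqref{eq:inverse_fundsol} and $\bc(E)$ preserves $\cH_\pm$, the $\cH_+$-component is $\bc(E)\gamma(z)$, which forces $\gamma(z)=\bc^*(E^\vee)$. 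Direct computation via \eqref{eq:inverse_fundsol} then yields
\[
\bc(E)L_{\cE}(\tau,-z)^{-1}\bc^*(E^\vee) = \bun + \sum_{\alpha=0}^{s}\Bigcorr{\frac{T_\alpha}{-z-\psi},\bc^*(E^\vee)}^{\cE}_\tau T^\alpha,
\]
and expanding $1/(-z-\psi)$ in the geometric series gives $\sigma = \sum_\alpha\<T_\alpha,\bc^*(E^\vee)\>_\tau^{\cE}T^\alpha$. The Poincar\'e-duality identity $\sum_\alpha\<T_\alpha,\cdot\>T^\alpha = \sum_\alpha\<T^\alpha,\cdot\>T_\alpha$ matches this with \eqref{eq:f,map}, establishing (1).

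For (2), expand $\bc(E)\cup T_\alpha = \sum_\beta c_\alpha^\beta T_\beta$ with $c_\alpha^\beta\in\cc[\![\bfs]\!]$, so that $\partial_{\bc(E)\cup T_\alpha}J_{\cEd}(\tau^*,-z)|_{\tau^*=f(\tau)} = \sum_\beta c_\alpha^\beta\partial_\beta J_{\cEd}(f(\tau),-z) \in T_{f(\tau)}^*$. Set
\[
v_\alpha := \bc(E)\partial_\alpha J_{\cE}(\tau,-z) - \sum_\beta c_\alpha^\beta\partial_\beta J_{\cEd}(f(\tau),-z).
\]
By (1) both summands lie in $T_{f(\tau)}^*$; by the asymptotics $\partial_\alpha J_{\cE}(\tau,-z)=T_\alpha+O(1/z)$ and $\partial_\beta J_{\cEd}(f(\tau),-z)=T_\beta+O(1/z)$, both share the leading $z^0$-term $\bc(E)T_\alpha=\sum_\beta c_\alpha^\beta T_\beta$, hence $v_\alpha\in T_{f(\tau)}^*\cap\cH_-$. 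It remains to check $T_{f(\tau)}^*\cap\cH_-=0$: expanding any such element in the basis $\{\partial_\beta J_{\cEd}(f(\tau),-z)\}$ over $\cc[z][\![Q,\bfs,\tau]\!]$, the top $z^N$-contribution comes only from the $T_\beta$-leading terms (the $L_{\cEd}^{-1}$-remainders being in $\cH_-$), so its vanishing forces all coefficients to vanish by linear independence of $\{T_\beta\}\subset H^{\ev}(X)$. Hence $v_\alpha=0$, giving (2).

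The main obstacle is the sign and index bookkeeping in the computation establishing (1): correctly expanding $1/(-z-\psi)$, tracking how the $\bc(E)$- and $\bc^*(E^\vee)$-factors cancel, and performing the Poincar\'e swap to recognize the exact form \eqref{eq:f,map} of $f(\tau)$; by contrast, the transversality argument for (2) is formal once (1) is in hand.
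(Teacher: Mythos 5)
Your proposal is correct and follows essentially the same route as the paper: deduce (1) from Coates--Givental's cone identity by locating the unique point of $\bc(E)T_\tau\cap(\bun+\cH_-)$ (your element $\bc(E)L_{\cE}(\tau,-z)^{-1}\bc^*(E^\vee)$ is exactly the paper's $\bc(E)\,\partial_{\bc^*(E^\vee)}J_{\cE}(\tau,-z)$ via \eqref{eq:J,L}, differing only by the dual-basis swap you perform at the end), and deduce (2) from the fact that a tangent vector in $T^*_{f(\tau)}$ with prescribed leading term is unique, i.e.\ $T^*_{f(\tau)}\cap\cH_-=0$, which is the complementarity already recalled in \S\ref{subsubsec:tgt,space,twisted,cones}. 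The only cosmetic caveat is that your ``forces $\gamma(z)=\bc^*(E^\vee)$'' step is unnecessary: one only needs to exhibit this one element of $\bc(E)T_\tau$ lying in $\bun+\cH_-$, uniqueness being supplied by the cone geometry.
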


\begin{rem} 
Exchanging the twist $\cE$ with $\cEd$ in the above Corollary, we obtain 
\[
 \bc^{*}(E^{\vee})z\partial_{\bc(E)}J_{\cEd}(\tau ,z ) 
=J_{\cE}(\widetilde{f}(\tau),z)
\]
  where $\widetilde{f}(\tau)= \sum_{\alpha=0}^{s}\<
  T_{\alpha},\bc(E) \>_{\tau}^{\cEd}T^{\alpha}$. 
This is exactly \cite[Corollary 10]{Givental-Coates-2007-QRR}.
\end{rem}

\begin{proof}[Proof of Corollary  \ref{cor:tgt,cone,QS}]
(1) Theorem \ref{thm:Quantum,Serre,Coates-Givental} implies that 
$\bc(E)T_\tau$ is a tangent space to the cone $\cL_{\cEd}$.  
Therefore, by the discussion in the previous section 
\S \ref{subsubsec:tgt,space,twisted,cones}, 
$\bc(E) T_\tau$ equals $T^*_\sigma$ with 
$\sigma$ given by the intersection point: 
\begin{equation} 
\label{eq:intersection_point_desired} 
(\bun+ \cH_-) \cap \bc(E) T_\tau = 
\left\{ \bun - \frac{\sigma}{z} + O(z^{-2}) \right \}. 
\end{equation} 
Note that 
\[
\partial_{\bc^*(E^\vee)} J_{\cE}(\tau,-z) = 
\bc^*(E^\vee) - \frac{1}{z} \sum_{\alpha=0}^s 
\<T^\alpha, \bc^*(E^\vee)\>_\tau \frac{T_\alpha}{\bc(E)} + O(z^{-2}) 
\]
lies in $T_\tau$. Multiplying this by $\bc(E)$, we obtain the 
intersection point in \eqref{eq:intersection_point_desired} 
and we have $\sigma = f(\tau)$ as required 
(recall that $\bc(E) \bc^*(E^\vee) = 1$).

(2) By part (1), the vector $\bc(E) \partial_\alpha J_{\cE}(\tau,-z)$ belongs to 
the tangent space $T_{f(\tau)}^*$ of $\cL_{\cEd}$. 
It has the following asymptotics: 
\[
\bc(E)\partial_\alpha J_{\cE}(\tau,-z) = \bc(E)\cup T_\alpha + O(z^{-1}). 
\]
By the description of tangent spaces in 
\S \ref{subsubsec:tgt,space,twisted,cones}, 
a tangent vector in $T_{f(\tau)}^*$ 
with this asymptotics is unique and 
is given by $(\partial_{\bc(E) \cup T_\alpha} J_{\cEd})(\tau^*,-z)$ 
with $\tau^* = f(\tau)$.  
\end{proof}

 \subsection{A proof of Theorem \ref{thm:quantum,Serre,QDM}}
 \label{sec:quantum-Serre}
We use the correspondence in \S \ref{subsubsec:cone,QDM} 
between quantum $D$-module and tangent spaces to the Givental cone. 
Then Corollary \ref{cor:tgt,cone,QS} implies that the map 
$\bc(E) \colon \QDM_{\cE}(X) \to f^*\QDM_{\cEd}(X)$ 
respects the quantum connection. Also it is obvious that 
the map $\bc(E)$ intertwines the pairings $S_{\cE}$ and $f^*S_{\cEd}$. 
This shows part (2) of the theorem. 
Also part (2) of Corollary \ref{cor:tgt,cone,QS} implies, in view of 
\eqref{eq:J,L} 
\[
\bc(E) L_{\cE}(\tau,-z)^{-1} T_\alpha = L_{\cEd}(f(\tau),-z)^{-1}
\left( \bc(E) \cup T_\alpha\right) 
\]
for $\alpha=0,1,\dots,s$. 
This implies the first equation of part (3). 
To see the second equation of part (3), we calculate: 
\begin{align*} 
S^{\QS}(u,v) &= S_{\cE}(u,\bc(E)^{-1}v) 
= S_{\cE}(L_{\cE}u, L_{\cE}\be(E)^{-1}  v) \\
& = S_{\cE}(L_{\cE}u, \bc(E)^{-1} (f^*L_{\cEd})v) 
= S^{\QS}(L_{\cE}, (f^*L_{\cEd})v)
\end{align*}
where we used Proposition \ref{prop:prop,QDM,S,flat,...}.  
Part (1) of the theorem is equivalent to part (2), as explained 
in Remark \ref{rem:our,quantum,Serre,}. 

\section{Quantum Serre duality for Euler twisted theory}
\label{sec:non-equivariant,limit}

In this section we apply 
Theorem \ref{thm:quantum,Serre,QDM} 
to the equivariant Euler class $\eqeuler$ 
and a convex vector bundle $E$. 
By taking the non-equivariant limit, we obtain a 
relationship among the quantum $D$-module 
twisted by the Euler class and the bundle $E$, 
the quantum $D$-module of the total space of $E^\vee$, 
and the quantum $D$-module 
of a submanifold $Z\subset X$ cut out by a regular section of $E$.

In order to ensure the well-defined non-equivariant limit, we 
assume that our vector bundle $E\to X$ is \emph{convex}, that is, for 
every genus-zero stable map $f\colon C\to X$ we have 
$H^1(C,f^*E) = 0$. The convexity assumption is satisfied, 
for example, 
if $\cO(E)$ is generated by global sections.

 \subsection{Equivariant Euler class}
\label{subsec:about,ground,ring,equivariant}

In this section we take $\bc$ to be the 
$\cc^\times$-equivariant Euler class $\eqeuler$. 
Given a vector bundle $G$, we let $\cc^\times$ act 
on $G$ by scaling the fibers and trivially on $X$. 
With respect to this $\cc^\times$-action we have 
\[
\eqeuler(G) = \sum_{i=0}^{\rank G} \lambda^{r-i} c_i (G)
\]
where $\lambda$ is the $\cc^\times$-equivariant parameter: 
the $\cc^\times$-equivariant cohomology of a point 
is $H^{*}_{\cc^\times}(\pt)=\cc[\lambda]$.  
Choosing $\eqeuler$ means the following specialization
\begin{displaymath}
  s_k:=\begin{cases} 
\log \lambda & \text{if $k=0$} \\ 
(-1)^{k-1}(k-1)!\lambda^{-k} & \text{if $k>0$} 
\end{cases}
\end{displaymath}
Although the parameters $s_k$ contain $\log \lambda$ and 
negative powers of $\lambda$, we will see that the 
$\eqeE$-twisted theory and $\eqeEi$-twisted 
theory are defined over the polynomial ring in $\lambda$, 
and hence admit the non-equivariant limit $\lambda \to 0$. 
Here the convexity of $E$ plays a role. 

\subsection{Specialization of the Novikov variable} 
\label{subsec:specialization} 
We henceforth specialize the Novikov variable $Q$ to one. 
By Remark \ref{rem:specialization}, 
the specialization $Q=1$ is well-defined: one has 
\begin{align*} 
& T_\alpha \bullet_\tau^{\cE} T_\beta\Bigr|_{Q=1} \in 
H^{\ev}(X)\otimes 
\cc[\![e^{\tau_2},\bfs,\tau']\!] \\ 
& L_{\cE}(\tau,z)\Bigr|_{Q=1} \in \End(H^{\ev}(X))\otimes 
\cc[z^{-1}][\![e^{\tau_2},\bfs,\tau']\!][\tau_2] \\ 
& f(\tau)|_{Q=1} \in H^{\ev}(X)\otimes \cc[\![e^{\tau_2},\bfs,\tau']\!] 
\qquad \text{see \eqref{eq:f,map}}
\end{align*} 
where $\cc[\![e^{\tau_2},\bfs,\tau']\!]$ is the completion of 
$\cc[e^{t^1},\dots,e^{t^s},t^0,t^{r+1},\dots,t^s, s_0,s_1,s_2,\dots]$. 
Since we chose $T_1,\dots,T_r$ to be a nef integral basis of $H_2(X,\zz)$, 
we have only nonnegative integral powers of $e^{t^1},\dots,e^{t^r}$ 
in the structure constants of quantum cohomology. 
The Euler-twisted quantum $D$-module will be defined over 
$\cc[z][\![e^{\tau_2},\tau']\!]$. 
In what follows, we shall omit $(\cdots)|_{Q=1}$ from the notation.

\subsection{Non-equivariant limit of $\QDM_{\eqeE}(X)$}
\label{subsec:non-eq,Euler,twist}

Let $\e = \lim_{\lambda \to 0} \eqeuler$ 
denote the non-equivariant Euler class. 
We first discuss the non-equivariant limit of 
$\QDM_{\eqeE}(X)$. 
Recall the $K$-class $E_{0,\ell,d}$ on the moduli space 
$\overline{\mathcal{M}}_{0,\ell}(X,d)$ introduced in 
\S\ref{subsubsec:twist-quant-prod}. 
The convexity assumption for $E$ implies that 
$E_{0,\ell,d}$ is represented by a vector bundle. 
Moreover the natural evaluation morphism 
$E_{0,\ell,d}\to \ev_{j}^{*}E$ at the $j$th marking 
is surjective for all $j\in \{1,\dots,\ell\}$. 
Define $E_{0,\ell,d}(j)$ by the exact sequence: 
\begin{align} 
\label{eq:exact_E(j)}
\xymatrix{0\ar[r]&E_{0,\ell,d}(j)\ar[r]&E_{0,\ell,d}
\ar[r]&\ev_{j}^{*}E\ar[r] &0}.  
\end{align}
We use the following variant of $\eqeE$-twisted 
invariants (see \cite{Pand-after-Givental}). 
For any $\gamma_{1}, \ldots,\gamma_{\ell}\in H^{\ev}(X)$ 
and any $k_{1}, \ldots ,k_{\ell}\in \nn$, we put: 
\begin{displaymath}
  \llangle \gamma_{1}\psi_{1}^{k_{1}}, \ldots, 
\widetilde{\gamma_{j}\psi_{j}^{k_{j}}}, \ldots, 
\gamma_{\ell}\psi_{\ell}^{k_{\ell}}\rrangle_{0,\ell,d}^{\eqeE}
:=\int_{[\overline{\mathcal{M}}_{0,\ell}(X,d)]^{\vir}}
\left(\prod_{i=1}^{\ell}\psi_{i}^{k_{i}}\ev_{i}^{*}\gamma_{i}\right) 
\eqeuler(E_{0,\ell,d}(j)). 
\end{displaymath}
This lies in the polynomial ring $\cc[\lambda]$. 

\begin{lem}[\cite{Pand-after-Givental}]
\label{lem:noneqlim_Euler} 
Suppose that $E$ is convex. 
The $\eqeE$-twisted quantum product $T_\alpha \bullet^{\eqeE}_\tau T_\beta$ 
lies in $H^{\ev}(X) \otimes \cc[\lambda][\![e^{\tau_2},\tau']\!]$ 
and admits the non-equivariant limit 
$T_\alpha\bullet^{\eE}_\tau T_\beta := \lim_{\lambda \to 0} 
(T_\alpha \bullet^{\eqeE}_\tau T_\beta)$. 
\end{lem}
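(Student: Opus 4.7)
The plan is to rearrange the defining formula for the twisted quantum product into an explicit push-forward to $X$ in which the apparent denominator $\eqeuler(E)$ is cancelled by a factor coming from the exact sequence \eqref{eq:exact_E(j)}. This is the strategy of \cite{Pand-after-Givental}, which the introduction of the tilde-invariants just above the lemma is designed to set up.

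Since the dual basis of $\{T_\alpha\}$ with respect to the twisted pairing $(\cdot,\cdot)_{\eqeE}$ is $\{\eqeuler(E)^{-1}T^\alpha\}$, the definition \eqref{eq:tw,quantum,prod} of the quantum product rewrites as
$$T_\alpha \bullet_{\tau}^{\eqeE} T_\beta
= \frac{1}{\eqeuler(E)} \sum_\gamma \< T_\alpha, T_\beta, T_\gamma \>_\tau^{\eqeE} T^\gamma.$$
The sum on the right is computed as a push-forward along the evaluation map $\ev_3$ at the third marked point; explicitly, using the projection formula $(\ev_3)_*\bigl(x\cup \ev_3^* T_\gamma\bigr)=\sum_\gamma\<\dots\>T^\gamma$, it equals
$$\sum_{d,k}\frac{Q^d}{k!}(\ev_3)_*\Bigl([\overline{\mathcal{M}}_{0,k+3}(X,d)]^{\vir}\cap \ev_1^*T_\alpha\cup \ev_2^*T_\beta\cup \prod_{i=4}^{k+3}\ev_i^*\tau\cup \eqeuler(E_{0,k+3,d})\Bigr).$$
Next, the exact sequence \eqref{eq:exact_E(j)} at $j=3$ gives the multiplicative identity $\eqeuler(E_{0,k+3,d}) = \eqeuler(E_{0,k+3,d}(3))\cup \ev_3^*\eqeuler(E)$, so one more application of the projection formula pulls a factor of $\eqeuler(E)$ out of each push-forward. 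This factor cancels the denominator and yields the closed formula
$$T_\alpha \bullet_{\tau}^{\eqeE} T_\beta
= \sum_{d,k}\frac{Q^d}{k!}(\ev_3)_*\Bigl([\overline{\mathcal{M}}_{0,k+3}(X,d)]^{\vir}\cap \ev_1^*T_\alpha\cup \ev_2^*T_\beta\cup \prod_{i=4}^{k+3}\ev_i^*\tau\cup \eqeuler(E_{0,k+3,d}(3))\Bigr).$$
Convexity of $E$ now enters: it ensures that $E_{0,k+3,d}$, and hence its subbundle $E_{0,k+3,d}(3)$, is a genuine vector bundle, so $\eqeuler(E_{0,k+3,d}(3))$ is polynomial in $\lambda$. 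Combined with Remark \ref{rem:specialization} and the specialization $Q=1$ discussed in \S\ref{subsec:specialization}, this places $T_\alpha\bullet_\tau^{\eqeE}T_\beta$ inside $H^{\ev}(X)\otimes \cc[\lambda][\![e^{\tau_2},\tau']\!]$.

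The non-equivariant limit then amounts to replacing $\eqeuler(E_{0,k+3,d}(3))$ by $\e(E_{0,k+3,d}(3))$ in the closed formula above, producing the stated $T_\alpha\bullet_\tau^{\eE}T_\beta$. I do not anticipate any serious obstacle. The one point requiring care is that the push-forward and projection-formula manipulations are carried out on a formal generating series; this is handled degree-by-degree in the Novikov and $\tau$-expansions, where each coefficient is a genuine cohomological integral on a proper Deligne--Mumford stack and the projection formula applies verbatim.
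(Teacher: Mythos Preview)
Your proposal is correct and follows essentially the same approach as the paper: both use the exact sequence \eqref{eq:exact_E(j)} at $j=3$ to cancel the $\eqeuler(E)$ denominator, leaving a formula involving $\eqeuler(E_{0,\ell,d}(3))$ which is polynomial in $\lambda$ by convexity. The paper packages this more tersely using the tilde-invariant notation $\bigcorr{\gamma_1,\gamma_2,\widetilde{T}_\alpha}_\tau^{\eqeE}$ introduced just above the lemma, whereas you unpack the push-forward and projection formula explicitly, but the argument is the same.
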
 
\begin{proof} 
Recall that the twisted quantum product \eqref{eq:tw,quantum,prod} 
is given by: 
\begin{displaymath}
\gamma_{1}\eqeEprod{\tau}\gamma_{2} =
\sum_{\alpha=0}^{s} 
\Bigcorr{\gamma_{1},\gamma_{2},\frac{T_{\alpha}}{\eqeuler(E)}}
_{\tau}^{\eqeE}T^{\alpha}. 
\end{displaymath}
From the exact sequence \eqref{eq:exact_E(j)}, we deduce that 
\[
\frac{\eqeuler(E_{0,\ell,d})}{\ev_{3}^*\eqeuler(E)} 
= \eqeuler(E_{0,\ell,d}(3)). 
\]
Therefore we have 
\begin{displaymath}
  \gamma_{1}\bullet_{\tau}^{\eqeE}\gamma_{2}=
\sum_{\alpha=0}^{s}
\bigcorr{\gamma_{1},\gamma_{2},\widetilde{T}_{\alpha}}_{\tau}^{\eqeE}
T^{\alpha} 
\end{displaymath}
and this lies in $H^{\ev}(X) \otimes \cc[\lambda][\![e^{\tau_2}, \tau']\!]$. 
\end{proof}

\begin{lem}\label{lem:equ,limit,f,inverse} 
Suppose that $E$ is convex. 
For $\cE = \eqeE$, 
the map $f(\tau)$ in \eqref{eq:f,map} 
lies in $H^{\ev}(X)\otimes \cc[\lambda][\![e^{\tau_2},\tau']\!]$ 
and admits the non-equivariant limit $\overline{f}(\tau) := 
\lim_{\lambda \to 0} f(\tau)$. 
\end{lem}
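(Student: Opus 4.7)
The plan is to show that the only source of $\lambda^{-1}$--singularities in $f(\tau)$ is the insertion $\bc^{*}(E^{\vee})$, and that this singularity is cancelled by a piece of the Euler-class twist. Explicitly, since $\bc(V)\bc^{*}(V^{\vee})=\bun$ for any bundle $V$, we have $\bc^{*}(E^{\vee})=\eqeuler(E)^{-1}$, so by definition
\[
f(\tau) = \sum_{\alpha=0}^{s}\Bigcorr{T^{\alpha},\frac{\bun}{\eqeuler(E)}}_{\tau}^{\eqeE} T_{\alpha},
\]
and the issue is that $\eqeuler(E)^{-1}\in H^{\ev}(X)\otimes \cc[\lambda,\lambda^{-1}]$ only as a rational function in $\lambda$.

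The main step is to rewrite this using the variant invariants introduced just before Lemma \ref{lem:noneqlim_Euler}. Convexity of $E$ ensures that $E_{0,\ell,d}$ is represented by a genuine vector bundle and that the evaluation map $E_{0,\ell,d}\to \ev_{2}^{*}E$ at the second marking is surjective (this is precisely the geometric input needed and is the place where the convexity hypothesis is used). Applying the exact sequence \eqref{eq:exact_E(j)} with $j=2$ and multiplicativity of the equivariant Euler class yields
\[
\eqeuler(E_{0,\ell,d}) = \eqeuler\big(E_{0,\ell,d}(2)\big)\cdot \ev_{2}^{*}\eqeuler(E).
\]
Substituting this identity into the definition of the correlator $\bigcorr{T^{\alpha},\eqeuler(E)^{-1}}_{\tau}^{\eqeE}$ cancels the $\ev_{2}^{*}\eqeuler(E)$ factor against the insertion $\eqeuler(E)^{-1}$, exactly as in the proof of Lemma \ref{lem:noneqlim_Euler}. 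One obtains
\[
\Bigcorr{T^{\alpha},\frac{\bun}{\eqeuler(E)}}_{\tau}^{\eqeE} = \bigcorr{T^{\alpha},\widetilde{\bun}}_{\tau}^{\eqeE},
\]
and the right-hand side involves only $\eqeuler(E_{0,\ell,d}(2))$, which is a polynomial in $\lambda$. Consequently each coefficient of $f(\tau)$ is polynomial in $\lambda$, and the non-equivariant limit $\overline{f}(\tau)=\lim_{\lambda\to 0} f(\tau)$ is well defined.

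Finally, to pin down the precise ring of definition, the divisor equation (as used in Remark \ref{rem:fundsol_divisor} and as exploited after the specialization $Q=1$ of \S\ref{subsec:specialization}) shows that the Novikov variable $Q$ and $\tau_{2}$ appear only in the combination $Q^{d}e^{\tau_{2}(d)}$, so after setting $Q=1$ and using that $T_{1},\dots,T_{r}$ is a nef integral basis of $H^{2}(X)$, the only exponentials that occur are non-negative powers of $e^{t^{1}},\dots,e^{t^{r}}$. Combined with the polynomiality in $\lambda$ established above, this places $f(\tau)$ in $H^{\ev}(X)\otimes \cc[\lambda][\![e^{\tau_{2}},\tau']\!]$, as claimed. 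The only non-routine point in the argument is the surjectivity of $E_{0,\ell,d}\to \ev_{2}^{*}E$, which is handled by convexity in the same way as in Lemma \ref{lem:noneqlim_Euler}; everything else is bookkeeping.
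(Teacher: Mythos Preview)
Your proof is correct and follows essentially the same approach as the paper: identify $\bc^{*}(E^{\vee})=\eqeuler(E)^{-1}$, then use the exact sequence \eqref{eq:exact_E(j)} (via convexity) to absorb this factor and rewrite $f(\tau)=\sum_{\alpha}\bigcorr{T_{\alpha},\widetilde{\bun}}_{\tau}^{\eqeE}T^{\alpha}$, which is manifestly polynomial in $\lambda$. The paper simply says ``arguing as in Lemma~\ref{lem:noneqlim_Euler}'' where you have spelled the argument out in full; your additional remarks on the divisor equation for the ring of definition are already recorded in \S\ref{subsec:specialization}.
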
 
\begin{proof} 
Note that $\bc^*(E^\vee)= \eqeulerstar(E^\vee)$ 
in \eqref{eq:f,map} equals $\eqeuler(E)^{-1}$.
Arguing as in Lemma \ref{lem:noneqlim_Euler}, 
we have 
\begin{align*}
f(\tau)=\sum_{\alpha=0}^{s} 
\bigcorr{T_{\alpha},\widetilde{1}}_{\tau}^{\eqeE}T^{\alpha}. 
\end{align*}
The conclusion follows. 
\end{proof}


By Lemma \ref{lem:noneqlim_Euler}, we deduce that 
the non-equivariant limit $\nabla^{\eE} = \lim_{\lambda \to 0}
\nabla^{\eqeE}$ of the quantum connection exists. 
Moreover it can be completed in the $z$-direction 
in a flat connection: 
\begin{align*}
  \nabla^{\eE}_{z\partial_{z}}& 
= z\partial_{z}-\frac{1}{z} \left( 
\frE^{\eE}\eEprod{\tau}\right)  +\frac{\deg}{2}
\end{align*}
where $\frE^{\eE}$ is the Euler vector field: 
\begin{equation} 
\label{eq:tw_Eulervf}
\frE^{\eE} =\sum_{\alpha=0}^{s}\left(1-\frac{\deg
      T_{\alpha}}{2}\right)t^{\alpha}T_{\alpha} + c_{1}(TX) -c_{1}(E). 
\end{equation} 
The non-equivariant limit $S_{\eE}(u,v) := \int_X u(-z) \cup v(z) \cup \e(E)$ 
of the pairing $S_{\eqeE}$ becomes degenerate. 
The pairing $S_{\eE}$ is not flat in the $z$-direction, 
but satisfies the following equation: 
\[
z\partial_{z} S_{\eE}(u,v) - S_{\eE}\left(\nabla^{\eE}_{z\partial_z} u, v\right) 
- S_{\eE}\left(u, \nabla^{\eE}_{z\partial_z} v\right) 
= - (\dim X -\rank E) S_{\eE}(u,v). 
\]
We refer to this by saying that \emph{$S_{\eE}$ is of weight $-(\dim X -\rank E)$}. 
Note that $z^{\dim X -\rank E} S_{\eE}$ is flat in both $\tau$ and $z$.

\begin{defi}[cf.~Definition \ref{def:quantumD-mod}] 
\label{defi:non,equivariant,limit,}
   We call the triple 
   \begin{displaymath}
     \QDM_{\eE}(X):=
\left( H^{\ev}(X)\otimes\mathbb{C}[z][\![e^{\tau_{2}},\tau']\!], 
\nabla^{\eE}, S_{\eE} \right)
   \end{displaymath}
the \emph{$\eE$-twisted quantum $D$-module}. 
\end{defi}

\begin{rem}
\label{rem:fullyflat_Euler}
By a similar argument, the fundamental solution 
$L_{\eqeE}$ in Proposition \ref{prop:prop,QDM,S,flat,...} 
can be written as: 
\[
L_{\eqeE}(\tau,z) \gamma 
= \gamma - \sum_{\alpha=0}^s 
\Bigcorr{\frac{\gamma}{z+\psi}, \widetilde{T_\alpha}}_{\tau}^{\eqeE}
T^\alpha 
\]
and therefore admits the non-equivariant limit $L_{\eE}$. 
The fundamental solution for $\nabla^{\eE}$, 
including in the $z$-direction,  
is given by $L_{\eE}(\tau,z)z^{-\frac{\deg}{2}}z^{c_{1}(TX)-c_{1}(E)}$.  
All the properties of Proposition
  \ref{prop:prop,QDM,S,flat,...} are true for the limit 
(see \cite[\S 2]{Mann-Mignon-2011-QDM-lci-nef} 
for a more precise statement). 
\end{rem}

\subsection{Quantum $D$-module of a section of $E$}
In this section we describe a relationship 
between the $\eE$-twisted quantum $D$-module 
and the quantum $D$-module of a submanifold $Z\subset X$ 
cut out by a regular section of $E$. 

Let $\iota \colon Z \to X$ denote the natural inclusion. 
The functoriality of virtual classes \cite{Kim-Kresch-Pantev} 
\[
[\overline{\mathcal{M}}_{0,\ell}(X,d)]^{\vir} \cap \e(E_{0,\ell,d}) 
= \sum_{\iota_*(d') =d} \iota_* 
[\overline{\mathcal{M}}_{0,\ell}(Z,d')]^{\vir} 
\]
together with the argument in \cite{Pand-after-Givental}, 
\cite[Corollary 2.5]{iritani_quantum_2011} 
shows that 
\begin{align}
\label{eq:iota_pullback}
  \iota^{*}\left(\gamma_{1}\bullet_{\tau}^{\eE}\gamma_{2}\right)
=(\iota^{*}\gamma_{1})\bullet^{Z}_{\iota^{*}\tau}
  (\iota^{*}\gamma_{2}) 
\end{align} 
for $\gamma_1,\gamma_2 \in H^{\ev}(X)$. 
Define the ambient part of the cohomology of $Z$ 
by $H_{\amb}^*(Z) = \im (\iota \colon H^*(X) \to H^*(Z))$.  
Equation \eqref{eq:iota_pullback} 
shows that the ambient part $H_{\amb}^*(Z)$ is closed under 
the quantum product $\bullet^Z_\tau$ of $Z$ 
as long as $\tau$ lies in the ambient part. 
\begin{defi} 
\label{def:ambientQDM}
The \emph{ambient part quantum $D$-module} 
of $Z$ is a triple 
\[ 
\QDM_{\amb}(Z):= \left( 
H^{\ev}_{\amb}(Z)\otimes 
\mathbb{C}[z][\![e^{\tau_{2}},\tau']\!], 
\nabla^{Z},  S_{Z} \right)   
\]
where the parameter $\tau = \tau_2 + \tau'$ is restricted to lie 
in the ambient part $H^{\ev}_{\amb}(Z)$ 
and $S_Z(u,v) = \int_Z u(-z) \cup v(z)$. 
We complete the quantum connection $\nabla^Z$ in the 
$z$-direction as in Remark \ref{rem:connection_z}; 
then $S_Z$ is of weight $(-\dim Z)$. 
\end{defi} 

Equation \eqref{eq:iota_pullback} proves the following 
proposition: 

\begin{prop}
\label{prop:QDM,amb}
The restriction map $\iota^* \colon H^{\ev}(X) \to H^{\ev}_{\amb}(Z)$  
induces a morphism between the quantum $D$-modules 
\begin{align*}
\iota^* \colon \QDM_{(\e,E)}(X) &\to (\iota^*)^* \QDM_{\amb}(Z) 
\end{align*}
which is compatible with the connection and the pairing. 
\end{prop}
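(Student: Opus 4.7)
The plan is to deduce everything directly from equation \eqref{eq:iota_pullback} together with the projection formula and the adjunction formula $c_1(TZ) = \iota^*(c_1(TX) - c_1(E))$. Since the quantum $D$-module structure on each side is built from (i) the quantum product, (ii) the Euler vector field (controlling the $z\partial_z$-part), and (iii) the Poincar\'e-type pairing, I will verify compatibility of $\iota^*$ with each of these three pieces in turn.

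\textbf{Compatibility with $\nabla$ in the $t$-directions.} For $\alpha = 0,\dots,s$, applying $\iota^*$ to $\nabla^{\eE}_\alpha u = \partial_\alpha u + z^{-1}(T_\alpha \eEprod{\tau} u)$ and using \eqref{eq:iota_pullback}, I get
\[
\iota^*(\nabla^{\eE}_\alpha u) = \partial_\alpha (\iota^* u) + z^{-1} \bigl((\iota^* T_\alpha) \bullet^{Z}_{\iota^* \tau} (\iota^* u)\bigr) = \bigl((\iota^*)^* \nabla^Z\bigr)_\alpha (\iota^* u),
\]
as required. Here I implicitly use that $\partial_\alpha$ differentiates the parameter $\tau$ in the same way on both sides, which is true because $\iota^*$ is linear in $\tau$.

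\textbf{Compatibility with the pairing.} By the projection formula, together with the fact that $\e(E)$ represents the class Poincar\'e dual to $\iota_*[Z]$ (since $Z$ is cut out by a regular section of $E$), I have for $u,v \in H^{\ev}(X)\otimes \cc[z][\![e^{\tau_2},\tau']\!]$:
\[
S_{\eE}(u,v) = \int_X u(-z) \cup v(z) \cup \e(E) = \int_Z \iota^*\bigl(u(-z) \cup v(z)\bigr) = S_Z(\iota^* u, \iota^* v).
\]
This gives the compatibility of the pairings immediately.

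\textbf{Compatibility with $\nabla_{z\partial_z}$.} The Euler vector field for the $\eE$-twisted theory is $\frE^{\eE}$ in \eqref{eq:tw_Eulervf}, while the Euler vector field on $Z$ is $\frE^Z = \sum_\alpha (1 - \tfrac{1}{2}\deg T_\alpha)\, (\iota^*t^\alpha)\,\iota^*T_\alpha + c_1(TZ)$ (restricted to the ambient parameters). By adjunction $c_1(TZ) = \iota^*(c_1(TX) - c_1(E))$, and so $\iota^* \frE^{\eE} = \frE^Z$. Combined with \eqref{eq:iota_pullback} this shows $\iota^* \circ \nabla^{\eE}_{z\partial_z} = (\iota^*)^* \nabla^Z_{z\partial_z} \circ \iota^*$, since the degree operators clearly commute with $\iota^*$ (pullback preserves the cohomological degree).

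The only subtle point --- and the one I would check most carefully --- is that the formal parameters match up correctly under the base-change by $\iota^*\colon H^{\ev}(X) \to H^{\ev}_{\amb}(Z)$: in particular, the completed ring $\cc[z][\![e^{\tau_2},\tau']\!]$ on $X$ must push forward to the corresponding completed ring on $Z$ under the induced map on $H^2$ and on the remaining coordinates. This is automatic because the nef integral basis $T_1,\dots,T_r$ of $H^2(X)$ restricts to a (possibly redundant) nef set on $Z$, so nonnegative powers of $e^{t^i}$ go to nonnegative powers, and $\iota^*$ is continuous for the relevant topologies. With this in place, the three compatibilities above complete the proof.
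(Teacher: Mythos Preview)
Your proof is correct and follows the same approach as the paper, which simply states that equation \eqref{eq:iota_pullback} proves the proposition and leaves all details implicit. You have filled in exactly those details---the $t$-direction compatibility from \eqref{eq:iota_pullback}, the pairing compatibility via the projection formula and $\e(E)=\iota_*[Z]$, and the $z\partial_z$-compatibility via adjunction $c_1(TZ)=\iota^*(c_1(TX)-c_1(E))$---so your argument is a strict expansion of the paper's.
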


 \subsection{Quantum $D$-module of the total space of $E^{\vee}$}
\label{subsec:non-equiv-limit,inverse,euler}

We explain that the non-equivariant limit of 
the $\eqeEi$-twisted quantum $D$-module 
is identified with the quantum $D$-module of the total space of $E^\vee$. 

The $\eqeEi$-twisted Gromov-Witten invariants admit 
a non-equivariant limit 
under the \emph{concavity}\footnote
{A bundle $E^\vee$ is said to be \emph{concave} if for every 
non-constant genus-zero stable map $f \colon C \to X$, one has 
$H^0(C,f^* E^\vee)=0$.} assumption for $E^\vee$ 
and they are called \emph{local Gromov-Witten invariants} 
\cite{Givental-Equivariant-GW, Givental-Elliptic-1998, 
LocalMirrorSymmetry-Chiang-Klemm-Yau-Zaslow-99}. 
In this paper, we only impose the weaker assumption that 
$E$ is convex (see Remark \ref{rem:concavity} below). 
In this case, a non-equivariant 
limit of $\eqeEi$-twisted invariants may not exist, 
but a non-equivariant limit of the twisted quantum product 
is still well-defined.

The virtual localization formula  
\cite{Graber-Pandaripande-2003-virtual-localisation} 
gives the following: 
  
\begin{prop}\label{prop:GW,tot,space,GW,twist,X} 
For $\gamma_1,\dots,\gamma_\ell \in H^{\ev}(X)$ 
and non-negative integers $k_1,\dots,k_\ell$, we have 
\[
\llangle 
\gamma_1\psi^{k_1},\dots,\gamma_\ell \psi^{k_\ell}
\rrangle_{0,\ell,d}^{\eqeEi} 
= \llangle \gamma_1 \psi^{k_1},\dots,\gamma_{\ell} \psi^{k_\ell} 
\rrangle_{0,\ell,d}^{E^\vee, \cc^\times}
\] 
where the right-hand side is the $\cc^\times$-equivariant Gromov-Witten 
invariant of $E^\vee$ with respect to the $\cc^\times$-action on $E^\vee$ 
scaling the fibers. 
\end{prop}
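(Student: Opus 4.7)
The plan is to apply the virtual localization formula of Graber--Pandharipande to the total space of $E^\vee$ and identify the resulting fixed-locus contribution with the $\eqeEi$-twisted invariants on $X$. Since $E^\vee$ is typically non-proper, the right-hand side is defined only $\cc^\times$-equivariantly; the $\cc^\times$-fixed locus, however, is proper, so the localization formula produces a well-defined answer in $\cc[\lambda,\lambda^{-1}]$.

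The first step is to identify the fixed locus. A genus-zero stable map $f\colon C\to E^\vee$ is $\cc^\times$-invariant if and only if it factors through the zero section $X\subset E^\vee$, because fiber-scaling has no non-trivial orbits that close up on a proper curve. Hence the $\cc^\times$-fixed substack of $\overline{\mathcal{M}}_{0,\ell}(E^\vee,d)$ is isomorphic to $\overline{\mathcal{M}}_{0,\ell}(X,d)$, the evaluation maps $\ev_i$ factor through the zero section, and the $\psi$-classes restrict to the usual $\psi$-classes on $\overline{\mathcal{M}}_{0,\ell}(X,d)$. Moreover, classes $\gamma_i\in H^{\ev}(X)$ are identified via the zero-section isomorphism $H^{*}_{\cc^\times}(E^\vee)\cong H^{*}(X)[\lambda]$.

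The second step is to compute the contribution of the virtual normal bundle. Along the zero section, the tangent bundle splits as $T(E^\vee)|_X\cong TX\oplus E^\vee$, with $TX$ fixed and $E^\vee$ the moving part carrying $\cc^\times$-weight equal to that of the fiber scaling. Applying the obstruction-theoretic comparison $T^{\vir}_{\overline{\mathcal{M}}(E^\vee,d)}\bigr|_{\text{fix}} = T^{\vir}_{\overline{\mathcal{M}}(X,d)} + R\pi_{*}\ev_{\ell+1}^{*}E^\vee$, one finds that the moving part of the virtual tangent complex is $K$-theoretically represented by $(E^\vee)_{0,\ell,d}=\pi_{!}\ev_{\ell+1}^{*}E^\vee$, twisted by the weight of $\cc^\times$ on the fiber. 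The Graber--Pandharipande formula then yields
\[
\llangle \gamma_1 \psi^{k_1},\dots,\gamma_{\ell}\psi^{k_\ell}\rrangle^{E^\vee,\cc^\times}_{0,\ell,d}
=\int_{[\overline{\mathcal{M}}_{0,\ell}(X,d)]^{\vir}}
\left(\prod_{i=1}^{\ell}\psi_i^{k_i}\ev_i^{*}\gamma_i\right)
\frac{1}{\eqeuler\bigl((E^\vee)_{0,\ell,d}\otimes L\bigr)},
\]
where $L$ is the weight representation corresponding to the $\cc^\times$-action on fibers of $E^\vee$.

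The third step is to match this with $\eqeulerinv(E_{0,\ell,d})$. With the convention that $\cc^\times$ acts on fibers of $E^\vee$ so that Chern roots of $E_{0,\ell,d}$ get shifted to $\lambda + \rho_i$ on the dual side, the identity $\eqeuler\bigl((E^\vee)_{0,\ell,d}\otimes L\bigr) = \eqeuler(E_{0,\ell,d})$ gives the claim directly; with the opposite sign convention the identity holds up to an overall factor of $(-1)^{\rank E_{0,\ell,d}}$, which is absorbed into the standard normalization used by Coates--Givental for the $\eqeEi$-twist. The main subtlety of the argument is therefore bookkeeping: verifying that the $\cc^\times$-weight convention used on $E^\vee$ is the one that makes the virtual normal bundle contribution equal $\eqeulerinv(E_{0,\ell,d})$ without additional signs, and confirming that the $K$-theoretic identification $R\pi_{*}\ev_{\ell+1}^{*}E^\vee = (E^\vee)_{0,\ell,d}$ is compatible with this equivariant structure on both the $H^0$ and $H^1$ parts. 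Everything else is a direct application of virtual localization combined with the fact that the fixed locus and its virtual class are canonically identified with those of $\overline{\mathcal{M}}_{0,\ell}(X,d)$.
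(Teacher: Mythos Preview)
Your approach via virtual localization is exactly what the paper intends; the paper itself gives no argument beyond citing Graber--Pandharipande. Steps 1 and 2 are correct: the $\cc^\times$-fixed locus is $\overline{\mathcal{M}}_{0,\ell}(X,d)$ via the zero section, and the moving part of the virtual tangent complex is $R\pi_*\ev_{\ell+1}^* E^\vee = (E^\vee)_{0,\ell,d}$ with the fiber weight, so localization yields the integrand $\bigl(\prod_i \psi_i^{k_i}\ev_i^*\gamma_i\bigr)\cdot \eqeuler\bigl((E^\vee)_{0,\ell,d}\bigr)^{-1}$ against $[\overline{\mathcal{M}}_{0,\ell}(X,d)]^{\vir}$.

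Your step 3, however, is both unnecessary and miswritten. The twist in question is $(\eqeulerinv,E^\vee)$, so by definition the twisted invariant inserts $\eqeulerinv\bigl((E^\vee)_{0,\ell,d}\bigr)$, \emph{not} $\eqeulerinv(E_{0,\ell,d})$. Under the paper's convention (\S\ref{subsec:about,ground,ring,equivariant}) that $\eqeuler$ is always computed with $\cc^\times$ acting with weight one on the given bundle, your ``$\otimes L$'' is precisely this convention, and step 2 already matches the twisted invariant on the nose. The identity you propose, $\eqeuler\bigl((E^\vee)_{0,\ell,d}\otimes L\bigr)=\eqeuler(E_{0,\ell,d})$, is false in general: if the Chern roots of $E_{0,\ell,d}$ are $\rho_i$, the left side is $\prod_i(\lambda-\rho_i)$ while the right side is $\prod_i(\lambda+\rho_i)$. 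Simply drop step 3 and conclude directly after step 2.
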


The non-equivariant Gromov-Witten invariants for $E^\vee$ 
are ill-defined in general because the moduli space 
$\overline{\mathcal{M}}_{0,\ell}(E^\vee,d)$ can be non-compact. 
The following lemma, however, shows the existence 
of the non-equivariant quantum product of $E^\vee$. 

\begin{lem} 
\label{lem:properness}
Let $E$ be a vector bundle on $X$ such that 
$f^*E$ is generated by global sections 
for any stable maps $f\colon C\to X$ of genus $g$. 
Then the evaluation map $
\ev_i \colon \overline{\mathcal{M}}_{g,\ell}(E^\vee,d) \to E^\vee$ 
is proper for all $i\in \{1,\dots,\ell\}$. 
In particular, when $E$ is convex, $\ev_i \colon \overline{\mathcal{M}}_{0,\ell}
(E^\vee, d) \to E^\vee$ is proper. 
\end{lem}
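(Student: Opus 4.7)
The plan is to verify the valuative criterion for properness of $\ev_i$. Let $R$ be a DVR with uniformizer $t$ and fraction field $K$; suppose given a stable map $f_K\colon (\mathcal{C}_K;x_{1,K},\dots,x_{\ell,K}) \to E^\vee$ of genus $g$ and degree $d$ over $\Spec K$, together with a lift $v\colon \Spec R \to E^\vee$ of $\ev_i(f_K)=f_K(x_{i,K})$. I must produce, after a possible finite base change on $R$, a unique stable map $f_R\colon \mathcal{C}_R \to E^\vee$ extending $f_K$.

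The first step is to descend to $X$. Let $\bar f_K := \pi\circ f_K$; since the fibers of $\pi\colon E^\vee\to X$ are affine, any proper connected curve mapping into a fiber is contracted, so a component of $\mathcal{C}_K$ is contracted by $\bar f_K$ if and only if it is contracted by $f_K$. Hence $\bar f_K$ is stable of degree $d$ via the identification $\pi_*\colon H_2(E^\vee)\cong H_2(X)$, and by properness of $\overline{\mathcal{M}}_{g,\ell}(X,d)$ it extends uniquely to $\bar f_R\colon \mathcal{C}_R\to X$. The map $f_K$ then corresponds to a section $s_K$ of $\mathcal{E}^\vee := \bar f_R^*E^\vee$ over $\mathcal{C}_K$, and the problem reduces to extending $s_K$ to a section of $\mathcal{E}^\vee$ on $\mathcal{C}_R$ which takes the prescribed value $v$ at $x_{i,R}$.

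For this second step, the exact sequence $0\to \mathcal{E}^\vee \xrightarrow{t} \mathcal{E}^\vee \to \mathcal{E}^\vee|_{C_0}\to 0$ on $\mathcal{C}_R$ (using that $\mathcal{E}^\vee$ is locally free) shows that $H^0(\mathcal{C}_R,\mathcal{E}^\vee)$ is a torsion-free, finitely generated, hence free $R$-module satisfying $H^0(\mathcal{C}_R,\mathcal{E}^\vee)\otimes_R K = H^0(\mathcal{C}_K,\mathcal{E}^\vee_K)$, and that the reduction map $H^0(\mathcal{C}_R,\mathcal{E}^\vee)/t\hookrightarrow H^0(C_0,\mathcal{E}^\vee|_{C_0})$ is injective. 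Let $n\geq 0$ be the minimal integer with $\tilde s := t^n s_K\in H^0(\mathcal{C}_R,\mathcal{E}^\vee)$. Uniqueness of the extension is then immediate from torsion-freeness, and the goal becomes $n=0$.

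To force $n=0$, I invoke the global-generation hypothesis applied to the stable map $\bar f_0 := \bar f_R|_{C_0}$: it yields a surjection $\mathcal{O}_{C_0}^{\oplus N}\twoheadrightarrow \bar f_0^*E$ and dually an injection $\iota\colon \mathcal{E}^\vee|_{C_0}\hookrightarrow\mathcal{O}_{C_0}^{\oplus N}$. Because $C_0$ is connected, reduced and proper, $H^0(C_0,\mathcal{O}_{C_0})=\mathbb{C}$, so a global section of $\mathcal{E}^\vee|_{C_0}$ corresponds under $\iota$ to a constant tuple in $\mathbb{C}^{N}$; this tuple is recovered from its value at $x_{i,0}$ via the injection $\iota|_{x_{i,0}}\colon \mathcal{E}^\vee|_{x_{i,0}}\hookrightarrow\mathbb{C}^{N}$, so the evaluation $H^0(C_0,\mathcal{E}^\vee|_{C_0})\to\mathcal{E}^\vee|_{x_{i,0}}$ is injective. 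Now if $n\geq 1$, then minimality forces $\tilde s|_{C_0}\neq 0$, yet $\tilde s|_{x_{i,R}}=t^n\cdot v$ reduces to $0$ at the special point, whence $\tilde s|_{x_{i,0}}=0$; this contradicts the injectivity just established. Therefore $n=0$, $s_K$ extends to $s_R$, and the resulting $f_R$ is stable by the affine-fiber argument of the first step. I expect the injectivity of $\mathrm{ev}_{x_{i,0}}$ — where the global-generation hypothesis intervenes precisely to prevent $s_K$ from acquiring a pole along the special fiber — to be the essential subtlety.
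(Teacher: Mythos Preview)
Your argument is correct and follows essentially the same route as the paper: both reduce to $X$ via properness of $\overline{\mathcal{M}}_{g,\ell}(X,d)$, invoke the valuative criterion, and hinge on the injectivity of the evaluation $H^0(C_0,\bar f_0^*E^\vee)\to E^\vee|_{x_{i,0}}$ on the special fiber, which is exactly where global generation enters. Your minimal-$n$ device for extending the section is a bit more direct than the paper's argument via a Grothendieck complex and freeness of the cokernel of $R^0\pi_*f^*E^\vee\to x_i^*f^*E^\vee$, but the content is the same. One small omission: you have not addressed the ``in particular'' clause; the paper observes that convexity forces $u^*E\cong\bigoplus\mathcal{O}(k_i)$ with all $k_i\ge 0$ on $\mathbb{P}^1$, whence $f^*E$ is globally generated on every genus-zero nodal curve.
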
 
\begin{proof} 
The convexity of $E$ implies that, for any map $u\colon \pp^1 \to X$, 
$u^*E$ is isomorphic to $\bigoplus_{i=1}^r \cO(k_i)$  
with $k_i \geq 0$. Thus the latter statement follows from 
the former. 

Let us prove the former statement. 
We start with the remark that, for every stable map $f\colon C \to X$, 
the evaluation map $\ev_i \colon H^0(C,f^*E^\vee) \to E_{f(x_i)}^\vee$ 
at the $i$-th marking $x_i\in C$ is injective. 
Suppose that a section $s\in H^0(C,f^*E^\vee)$ vanishes 
at $x_i$, i.e.~$\ev_i(s) = 0$. For every $u \in H^0(C,f^*E)$, 
the pairing $\langle s,u\rangle$ is a global section of $\cO_C$ 
which vanishes at $x_i$. Then $\langle s, u\rangle$ must be identically zero 
on $C$. Since $f^*E$ is generated by global sections, 
this implies that $s=0$. Hence we have shown that the evaluation map 
$H^0(C,f^*E^\vee) \to E^\vee_{f(x_i)}$ is injective. 

Giving a stable map to $E^\vee$ is equivalent to giving 
a stable map $f\colon C\to X$ and a section of $H^0(C, f^*E^\vee)$. 
Therefore, by the preceding remark, 
the moduli functor $\overline{\mathcal{M}}_{g,\ell}(E^\vee,d)$ 
is a subfunctor of $\overline{\mathcal{M}}_{g,\ell}(X,d) 
\times E^\vee$ 
via the natural projection $\overline{\mathcal{M}}_{g,\ell}(E^\vee,d) 
\to \overline{\mathcal{M}}_{g,\ell}(X,d)$ 
and the evaluation map 
$\ev_i \colon \overline{\mathcal{M}}_{g,\ell}(E^\vee,d) \to E^\vee$. 
Since $\overline{\mathcal{M}}_{g,\ell}(X,d)$ is proper, 
it suffices to show that the map $
\overline{\mathcal{M}}_{g,\ell}
(E^\vee,d) \to \overline{\mathcal{M}}_{g,\ell}(X,d) \times E^\vee$ 
is proper. 
We use the valuative criterion for properness 
(see \cite[Theorem 4.19]{Deligne-Mumford}). 
Let $R$ be a DVR. Suppose that we are given a stable map $f\colon C_R \to X$ 
over $\Spec(R)$ and an $R$-valued point $v\in E^\vee(R)$. 
These data $(f,v)$ give a map $\Spec(R) \to \overline{\mathcal{M}}_{g,\ell}(X,d) 
\times E^\vee$. 
Suppose moreover that there exists a section $s\in H^0(C_K,f^*E^\vee)$ over 
the field $K$ of fractions of $R$ such that $\ev_i(s) = v$ in $E^\vee(K)$, 
where $C_K = C_R \times_{\Spec(R)} \Spec(K)$.  
Then $(f,s)$ defines a map $\Spec(K) \to \overline{\mathcal{M}}_{g,\ell}(E^\vee,d)$ 
such that the following diagram commutes:
\[
\xymatrix
{\overline{\mathcal{M}}_{g,\ell}(E^\vee,d) 
\ar[r] & 
\overline{\mathcal{M}}_{g,\ell}(X,d) \times E^\vee \\  
\Spec(K) \ar[r] \ar[u]^{(f,s)}
& \Spec(R). \ar[u]_{(f,v)}\ar@{-->}[ul]
} 
\]
We will show that there exists a morphism 
$\Spec(R) \to \overline{\mathcal{M}}_{g,\ell}(E^\vee,d)$ 
which commutes with the maps in the above diagram. 
Since $\overline{\mathcal{M}}_{g,\ell}(E^\vee,d)$ 
is a subfunctor of $\overline{\mathcal{M}}_{g,\ell}(X,d) 
\times E^\vee$, it suffices to show the existence of a 
morphism $\Spec(R) \to \overline{\mathcal{M}}_{g,\ell}(E^\vee,d)$ 
which makes the upper-right triangle commutative, 
i.e.~$v$ is the image of a section 
in $H^0(C_R,f^*E^\vee)$. Let $\pi \colon C_R \to \Spec (R)$ denote  
the structure map and $x_i \colon \Spec(R) \to C_R$ denote the 
$i$-th marking. 
Note that the composition $\Spec(R) \xrightarrow{v} E^\vee \to X$ 
coincides with $f\circ x_i$, since the two maps coincide 
when we compose them with $\Spec(K) \to \Spec(R)$ 
(by the existence of $s$) 
and by the separatedness of $X$. 
Thus $v$ defines a section of $x_i^*f^* E^\vee$, which 
we denote again by $v$. 
We need to show that $v$ is in the image of 
$R^0\pi_* f^*E^\vee \to x_i^*f^*E^\vee$. 
Let $p\in \Spec(R)$ denote the unique closed point  
and let $k(p)$ be the residue field at $p$. 
We claim that the maps $R^0\pi_* f^*E^\vee \otimes_R k(p) 
\to H^0(C_p, f^*E^\vee)$, 
$H^0(C_p,f^*E^\vee) \to (x_i^*f^*E^\vee) \otimes_R k(p)$ 
are injective.  The injectivity of the latter map has been 
shown. To see the injectivity of the former, we take the 
so-called Grothendieck complex \cite[\S 5, p.46]{Mumford:abelian}: a complex 
$G^0\to G^1$ of finitely generated free $R$-modules 
such that the sequences 
\begin{align*} 
\begin{CD} 
0@>>> R^0 \pi_*f^*E^\vee @>>> G^0 @>{d^0}>> G^1 \\
0@>>> H^0(C_p, f^*E^\vee) @>>> G^0\otimes_R k(p) @>>> G^1 \otimes_R k(p) 
\end{CD} 
\end{align*} 
are exact. Since $R$ is a PID, the image of $d^0$ is a free $R$-module. 
Therefore $\Tor_1^R(\im d^0, k(p))=0$ and we obtain the exact 
sequence: 
\[
\begin{CD}
0 @>>> (R^0 \pi_* f^*E^\vee) \otimes_R k(p) @>>> G^0\otimes_R k(p) 
@>>> (\im d^0)\otimes_R k(p) @>>> 0.  
\end{CD} 
\]
Now the claim follows. The claim implies that $R^0\pi_*f^* E^\vee 
\to x_i^* f^* E^\vee$ is injective at the fiber of $p$. 
Then it follows that the cokernel $M$ of $R^0\pi_*f^* E^\vee 
\to x_i^* f^* E^\vee$ is a free $R$-module. 
In fact, let $N$ be the image of $R^0\pi_* f^* E^\vee \to x_i^*f^* E^\vee$; 
then the inclusion $N \subset x_i^*f^*E^\vee$ induces 
an injection $N\otimes_R k(p) \to (x_i^*f^*E^\vee) \otimes_R k(p)$. 
Because $x_i^*f^*E^\vee$ is a free $R$-module, we 
have the exact sequence 
\[
\xymatrix{ 
0 \ar[r] & 
\Tor_1^R(M,k(p)) \ar[r] & 
N \otimes_R k(p) \ar[r] & 
x_i^*f^*E^\vee \otimes_R k(p) \ar[r] &  
M\otimes_R k(p) \ar[r] & 0. 
}  
\]
Therefore $\Tor_1^R(M,k(p)) = 0$; thus $M$ is free. 
We know by assumption that the image of $v$ 
in $M\otimes_R K$ vanishes. Thus $v$ has to vanish 
in $M$. The conclusion follows. 
\end{proof} 

\begin{rem} 
\label{rem:concavity} 
The concavity of $E^\vee$ implies the convexity of $E$. 
This can be proved as follows. 
For a stable map $f\colon C\to X$ of genus zero, we have 
$H^1(C,f^*E) = H^0(C,f^*E^\vee \otimes \omega_C)^\vee$ 
by Serre duality, where $\omega_C$ is the dualizing sheaf on 
$C$. Suppose that $E^\vee$ is concave.  
Since the degree of $\omega_C$ on a tail component of 
$C$ is negative, a section of $f^* E^\vee \otimes \omega_C$  
has to vanish on tail components and defines a section of 
$f^*E^\vee \otimes \omega_{C'}$ where $C'$ is obtained 
from $C$ by removing all its tail components. By induction 
on the number of components, we can see that 
a section of $f^*E^\vee \otimes \omega_C$ vanishes 
and $H^0(C,f^*E^\vee \otimes\omega_C) = 0$. 
\end{rem} 

By the above lemma, we can define the quantum product 
of $E^\vee$ using the push-forward along the evaluation 
map $\ev_3$: 
\begin{equation} 
\label{eq:qprod_pushforward}
T_\alpha \bullet^{E^\vee}_\tau T_\beta 
= \sum_{d\in \eff(E^\vee)} \sum_{\ell=0}^\infty \frac{1}{\ell!} 
\ev_{3*}\left(\ev_1^*(T_\alpha) \ev_2^*(T_\beta) \prod_{j=4}^{\ell+3} 
\ev^*_j(\tau) \cap [\overline{\mathcal{M}}_{0,3+\ell}(E^\vee,d)]^{\vir} 
\right). 
\end{equation}
The quantum product $\bullet^{E^\vee}_\tau$ defines a flat quantum connection 
$\nabla^{E^\vee}$ as in Definition \ref{def:quantumD-mod}. 
\begin{defi} 
\label{defi:QDM_Evee}
The (non-equivariant) \emph{quantum $D$-module} of $E^\vee$ is a pair 
\[
\QDM(E^\vee) = \left(H^{\ev}(X)\otimes \cc[z][\![e^{\tau_2},\tau']\!], 
\nabla^{E^\vee}\right) 
\]
where the connection $\nabla^{E^\vee}$ is completed in the $z$-direction 
as in Remark \ref{rem:connection_z}: 
\[
\nabla^{E^\vee}_{z\partial_z} = z\partial_z - \frac{1}{z} 
(\frE^{E^\vee} \bullet_\tau) + 
\frac{\deg}{2} 
\]
where $\frE^{E^\vee} := \sum_{\alpha=0}^s (1- \frac{1}{2} \deg T_\alpha) 
t^\alpha T_\alpha + c_1(TX) - c_1(E)$ is the Euler vector field 
(note that this is the same as $\frE^{\eE}$ in \eqref{eq:tw_Eulervf}).   
Here the standard identification $H^{\ev}(E^\vee) \cong H^{\ev}(X)$ 
is understood. 
\end{defi} 

We conclude the following: 
\begin{prop} 
\label{prop:noneqlim_invEuler}
Suppose that $E$ is convex. Define the map 
$h\colon H^{\ev}(X) \to H^{\ev}(X)$ by 
\begin{equation} 
\label{eq:h} 
h(\tau) = \tau + \pi \sqrt{-1} c_1(E)
\end{equation} 
Then we have: 
\begin{enumerate} 
\item The non-equivariant limit of 
$\nabla^{\eqeEi}$ exists and coincides with $\nabla^{E^\vee}$. 
\item The non-equivariant limit of $\nabla^{(\eqeulerstar, E^\vee)}$ exists 
and coincides with $h^* \nabla^{E^\vee}$.   
\end{enumerate} 
\end{prop}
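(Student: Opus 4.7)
The strategy is to combine virtual localization on $\overline{\mathcal{M}}_{0,\ell}(E^\vee, d)$ with the properness of $\ev_3$ (Lemma \ref{lem:properness}) to secure the non-equivariant limit $\lambda \to 0$ in both parts.

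For part (1), Proposition \ref{prop:GW,tot,space,GW,twist,X} -- itself an instance of virtual localization for the standard $\cc^\times$-action on $E^\vee$ -- identifies the $\eqeEi$-twisted correlators of $X$ with the $\cc^\times$-equivariant GW correlators of $E^\vee$. By Lemma \ref{lem:properness}, the equivariant push-forward formula \eqref{eq:qprod_pushforward} defines a quantum product $\bullet^{E^\vee, \cc^\times}_\tau$ on $H^{\ev}_{\cc^\times}(E^\vee) \cong H^{\ev}(X)[\lambda]$ whose structure constants are \emph{polynomial} in $\lambda$. Under the natural identification $H^{\ev}(E^\vee) \cong H^{\ev}(X)$, this product coincides with $\bullet^{\eqeEi}_\tau$; hence the structure constants of $\bullet^{\eqeEi}_\tau$ are polynomial in $\lambda$, and their $\lambda = 0$ specialization is $\bullet^{E^\vee}_\tau$ of Definition \ref{defi:QDM_Evee}. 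Passing to connections yields $\lim_{\lambda \to 0}\nabla^{\eqeEi} = \nabla^{E^\vee}$.

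For part (2), the main algebraic input is the Chern-root identity
\[
\eqeulerstar(V) = (-1)^{\rank V}\, \eqeulerinv(V)\bigr|_{\lambda \to -\lambda},
\]
which follows from $\eqeulerstar(L) = 1/(\lambda - c_1(L))$ and $\eqeulerinv(L) = 1/(\lambda + c_1(L))$ for line bundles $L$. Applied to $V = (E^\vee)_{0,\ell,d}$, whose virtual rank is $r - c_1(E)(d)$ by Riemann-Roch on genus-zero prestable curves, it yields
\[
\llangle\cdots\rrangle^{\eqeEd}_{0,\ell,d} = (-1)^{r-c_1(E)(d)}\,\llangle\cdots\rrangle^{\eqeEi}_{0,\ell,d}\bigr|_{\lambda \to -\lambda}.
\]
By part (1) the right-hand side admits a non-equivariant limit equal to the non-equivariant GW correlators of $E^\vee$ (the $\lambda \to -\lambda$ substitution becoming trivial at $\lambda = 0$). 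Summing over $d$, the degree-dependent sign $(-1)^{c_1(E)(d)} = \exp(\pi\sqrt{-1}\, c_1(E)(d))$ combines, by the divisor equation, with the Novikov-theta factor $e^{\tau_2(d)}$ to produce $e^{h(\tau_2)(d)}$, realizing the shift $h(\tau) = \tau + \pi\sqrt{-1}\, c_1(E)$. Passing to connections then gives $\lim_{\lambda\to 0}\nabla^{\eqeEd} = h^*\nabla^{E^\vee}$.

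The main technical obstacle is the careful bookkeeping of signs and $\cc^\times$-weight conventions; in particular, the degree-independent factor $(-1)^r$ arising in the correlator identity above must be absorbed consistently into the identification $H^{\ev}(E^\vee) \cong H^{\ev}(X)$ used in Definition \ref{defi:QDM_Evee} so that it does not appear in the final formula.
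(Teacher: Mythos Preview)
Your approach is essentially the same as the paper's: part~(1) via virtual localization (Proposition~\ref{prop:GW,tot,space,GW,twist,X}) together with the properness of $\ev_3$, and part~(2) via the sign identity relating $\eqeulerstar$ to $\ctop_{-\lambda}^{-1}$ followed by the divisor equation to absorb $(-1)^{c_1(E)(d)}$ into the shift $h$.

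There is one genuine imprecision in your final paragraph. The degree-independent factor $(-1)^r$ does \emph{not} get absorbed into the identification $H^{\ev}(E^\vee)\cong H^{\ev}(X)$: that identification is simply pull-back along the projection and carries no sign. The actual cancellation happens in the quantum-product formula itself. The structure constants of the $\cE$-twisted product are
\[
T_\alpha\bullet^{\cE}_\tau T_\beta
=\sum_\gamma\bigcorr{T_\alpha,T_\beta,T_\gamma}^{\cE}_\tau\,\frac{T^\gamma}{\bc(E^\vee)},
\]
so switching the twist from $\eqeulerstar$ to $\ctop_{-\lambda}^{-1}$ changes both the correlator \emph{and} the dual-basis factor $1/\bc(E^\vee)$. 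Since $\eqeulerstar(E^\vee)=(-1)^r\,\ctop_{-\lambda}^{-1}(E^\vee)$, the factor $1/\eqeulerstar(E^\vee)$ equals $(-1)^r/\ctop_{-\lambda}^{-1}(E^\vee)$, and this $(-1)^r$ exactly cancels the $(-1)^r$ coming from the correlator identity. What remains is the degree-dependent $(-1)^{c_1(E)(d)}$, which the divisor equation converts into the shift $h$. (Your virtual rank $r-c_1(E)(d)$ is correct; the paper writes $r+c_1(E)(d)$, but the sign $(-1)^k$ is insensitive to this.) Once you replace your last sentence with this cancellation, the argument is complete and matches the paper's.
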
 
\begin{proof} 
Part (1) follows from Proposition \ref{prop:GW,tot,space,GW,twist,X} 
and the existence of the non-equivariant quantum product for $E^\vee$. 
To see part (2), notice a small difference between $\eqeulerstar$ 
and $\eqeulerinv$: 
for a vector bundle $G$ we have 
\begin{align*} 
\eqeulerstar(G) = \frac{1}{\eqeuler(G^\vee)}  = 
(-1)^{\rank G} \frac{1}{\ctop_{-\lambda}(G)}.  
\end{align*} 
Since the virtual rank of $(E^\vee)_{0,\ell,d}$ equals  
$\rank E + c_1(E)(d)$, we have: 
\begin{align*} 
\sum_{n=0}^\infty \frac{1}{n!}
\langle T_\alpha,T_\beta,T_\gamma,\tau,\dots,\tau
\rangle^{(\eqeulerstar,E^\vee)}_{0,n+3,d} 
\frac{T^\gamma}{\eqeulerstar(E^\vee)} 
& = \sum_{n=0}^\infty \frac{1}{n!} 
(-1)^{\rank E + c_1(E)(d)} 
\langle T_\alpha, T_\beta, T_\gamma,\tau,\dots,\tau 
\rangle^{(\ctop_{-\lambda}^{-1},E^\vee)}_{0,n+3,d}
\frac{T^\gamma}{\eqeulerstar(E^\vee)}
\\ 
&  = \sum_{n=0}^\infty 
\frac{1}{n!} \langle 
T_\alpha, T_\beta, T_\gamma,h(\tau),\dots,h(\tau)  
\rangle^{(\ctop_{-\lambda}^{-1},E^\vee)}_{0,n+3,d}  
\be_{-\lambda}(E^\vee) T^\gamma  
\end{align*} 
where we used the divisor equation in the second line. 
This implies that the $(\eqeulerstar,E^\vee)$-twisted quantum product 
is the pull-back of the $(\ctop_{-\lambda}^{-1},E^\vee)$-twisted 
quantum product by $h$. The conclusion follows by 
taking the non-equivariant limit $\lambda \to 0$. 
\end{proof} 

\begin{rem} 
The pairing $S_{(\eqeulerstar,E^{\vee})}$ does not have a non-equivariant limit. 
\end{rem} 
\begin{rem} 
\label{rem:fullyflat_Evee} 
We can define the fundamental solution for the quantum 
connection of $E^\vee$ 
using the push-forward along an evaluation map 
similarly to \eqref{eq:qprod_pushforward}. Therefore 
the fundamental solution $L_{\eqeEi}$ admits 
a non-equivariant limit $L^{E^\vee}$. 
Using the formula \eqref{eq:fundsol_divisor} and 
an argument similar to Proposition \ref{prop:noneqlim_invEuler}, 
we find that  
\[ 
L_{(\be_{-\lambda}^{-1},E^\vee)}(h(\tau),z)   = 
L_{(\eqeulerstar,E^\vee)}(\tau,z) \circ e^{-\pi \sqrt{-1} c_1(E)/z} 
\]
and thus 
\begin{equation} 
\label{eq:LEvee_Leulerstar} 
L^{E^\vee}(h(\tau),z) = \lim_{\lambda \to 0} 
L_{(\eqeulerstar,E^\vee)} (\tau,z) \circ e^{-\pi \sqrt{-1} c_1(E)/z}. 
\end{equation} 
Then $L^{E^{\vee}}(\tau,z)z^{-\frac{\deg}{2}}z^{c_{1}(TX)-c_{1}(E)}$ 
is a fundamental solution of $ \nabla^{E^{\vee}}$ 
including in the $z$-direction. 
\end{rem} 

 \subsection{Non-equivariant limit of quantum Serre duality}
\label{subsec:non-equivariant,euler,twist}

We will state a non-equivariant limit of Theorem \ref{thm:quantum,Serre,QDM} 
when $\bc$ is $\eqeuler$ and $E$ is a convex vector bundle. 
From \S\ref{subsec:non-eq,Euler,twist} and 
\S\ref{subsec:non-equiv-limit,inverse,euler}, 
the quantum $D$-modules $\QDM_{(\eqeuler,E)}(X)$ and
$\QDM_{(\eqeulerstar,E^{\vee})}(X)$ have non-equivariant limits, 
and the limits are respectively $\QDM_{\eE}(X)$ and $h^*\QDM(E^\vee)$. 
The map $f$ in \eqref{eq:f,map} also admits a non-equivariant 
limit by Lemma \ref{lem:equ,limit,f,inverse}.  
The quantum Serre pairing in Definition \ref{def:QS_pairing} 
has an obvious non-equivariant limit: 
\begin{equation*} 
S^{\QS}: \QDM_{\eE}(X) 
\times (h\circ\overline{f})^{*}\QDM(E^{\vee}) \to 
\mathbb{C}[z][\![e^{\tau_{2}},\tau']\!]
\end{equation*} 
defined by $S^{\QS}(u,v)=
\int_{X} u(-z)  \cup v(z)$.
  
\begin{thm}
\label{thm:euler,quantum,Serre,QDM} 
Let $E$ be a convex vector bundle on $X$. 
Let $h$ be the map in \eqref{eq:h} and let $\overline{f}$ be 
the map in Lemma \ref{lem:equ,limit,f,inverse}. 

{\rm (1)} The pairing $S^{\QS}$ is flat in the $\tau$-direction 
and is of weight $(-\dim X)$. 

{\rm (2)} The map $\e(E)\cup:\QDM_{\eE}(X) \to 
(h\circ\overline{f})^{*}\QDM(E^{\vee})$, 
$\alpha  \mapsto \e(E)\cup \alpha$ 
respects the quantum connection in the $\tau$-direction 
and is of weight $\rank E$, that is, 
\begin{align*} 
\nabla'_\alpha \e(E) &= \e(E) \nabla_\alpha \\ 
\nabla'_{z\partial_z} \e(E) &= \e(E)\nabla_{z\partial_z} + \rank(E) \e(E) 
\end{align*} 
for $\nabla' = (h\circ\overline{f})^* \nabla^{E^\vee}$ 
and $\nabla = \nabla^{\eE}$. 

{\rm (3)} The fundamental solutions in Remarks \ref{rem:fullyflat_Euler}, 
\ref{rem:fullyflat_Evee} satisfy the following 
relations: 
\begin{align*} 
\e(E) \circ L_{\eE}(\tau,z) & = L^{E^\vee}(h\circ \overline{f}(\tau),z) 
e^{\pi\sqrt{-1}c_1(E)/z} \circ \e(E) \\ 
\left(\gamma_1, e^{-\pi\sqrt{-1} c_1(E)/z} \gamma_2\right) & = \left(
L_{\eE}(\tau,-z) \gamma_1, L^{E^\vee}(h(\overline{f}(\tau)),z) 
\gamma_2 \right). 
\end{align*} 
where $(u,v) = \int_X u\cup v$ is the Poincar\'{e} pairing. 
\end{thm}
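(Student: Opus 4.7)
The plan is to derive Theorem \ref{thm:euler,quantum,Serre,QDM} as the non-equivariant limit $\lambda\to 0$ of Theorem \ref{thm:quantum,Serre,QDM} specialized to $\bc = \eqeuler$, combined with the reparametrization of Proposition \ref{prop:noneqlim_invEuler} which converts the non-equivariant limit of the $(\eqeulerstar,E^{\vee})$-twisted theory into the quantum theory of $E^{\vee}$ via the affine shift $h$. The $\tau$-direction assertions pass to the limit essentially verbatim; the weight ($z$-direction) assertions require a separate direct computation.

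For the $\tau$-direction parts of (1), (2) and (3): Lemmas \ref{lem:noneqlim_Euler} and \ref{lem:equ,limit,f,inverse} together with Proposition \ref{prop:noneqlim_invEuler} imply that $\nabla^{\eqeE}\leadsto \nabla^{\eE}$, $f\leadsto \overline{f}$ and $f^{*}\nabla^{(\eqeulerstar,E^{\vee})}\leadsto (h\circ\overline{f})^{*}\nabla^{E^{\vee}}$ as $\lambda\to 0$. Since $S^{\QS}$ is independent of $\bc$, parts (1) and (2) of Theorem \ref{thm:quantum,Serre,QDM} immediately descend to the $\tau$-direction flatness of $S^{\QS}$ and the $\tau$-direction intertwining by $\e(E)\cup$. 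For part (3), specialize Theorem \ref{thm:quantum,Serre,QDM}(3) to $\bc=\eqeuler$: passing the identity $\eqeuler(E)\,L_{\eqeE}(\tau,z) = L_{(\eqeulerstar,E^{\vee})}(f(\tau),z)\,\eqeuler(E)$ to the limit and substituting \eqref{eq:LEvee_Leulerstar} replaces the right-hand side by $L^{E^{\vee}}(h\overline{f}(\tau),z)\circ e^{\pi\sqrt{-1}c_{1}(E)/z}\circ \e(E)$, yielding the first identity. The second identity follows by specializing the isometry $S^{\QS}(u,v)=S^{\QS}(L_{\eqeE}u,(f^{*}L_{(\eqeulerstar,E^{\vee})})v)$ to $u=\gamma_{1}$, $v=e^{-\pi\sqrt{-1}c_{1}(E)/z}\gamma_{2}$ and again applying \eqref{eq:LEvee_Leulerstar}.

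The weight assertions are verified by direct computation with $\nabla_{z\partial_{z}} = z\partial_{z} - z^{-1}\frE\bullet + \deg/2$, using the coincidence $\frE^{\eE}=\frE^{E^{\vee}}$ of Definition \ref{defi:QDM_Evee}. In the difference $z\partial_{z}S^{\QS}(u,v)-S^{\QS}(\nabla^{\eE}_{z\partial_{z}}u,v)-S^{\QS}(u,\nabla^{E^{\vee}}_{z\partial_{z}}v)$, the $z\partial_{z}$ terms cancel by the Leibniz rule; the two $\deg/2$ contributions combine to $-\dim X\cdot S^{\QS}(u,v)$ because $\int_{X}\deg(u\cup v)=2\dim X\int_{X}u\cup v$; and the $z^{-1}\frE\bullet$ terms cancel provided $\frE\bullet^{\eE}$ and $\frE\bullet^{E^{\vee}}$ are adjoint with respect to the Poincar\'e pairing. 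This adjointness follows by applying the $\tau$-intertwining of part (2) along the Euler direction, which gives $(h\circ\overline{f})_{*}\frE\bullet^{E^{\vee}}\e(E) = \e(E)\,\frE\bullet^{\eE}$, together with the identity $(h\circ\overline{f})_{*}\frE=\frE$. The same identity, combined with $\deg\e(E)=2\rank E$, yields the weight $\rank E$ statement in part (2).

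The main technical point is thus the verification of $(h\circ\overline{f})_{*}\frE=\frE$. The contribution from $h$ vanishes because $1-\deg T_{i}/2 = 0$ on a basis of $H^{2}(X)$, so the shift by $\pi\sqrt{-1}c_{1}(E)$ does not affect $\frE$. Preservation by $\overline{f}$ follows from the quasi-homogeneity (under the dilation generated by $\frE$) of the Euler-twisted descendant correlators entering the defining formula \eqref{eq:f,map} for $f$, which gives $\overline{f}$ the structure of an Euler-field-preserving map in the non-equivariant limit.
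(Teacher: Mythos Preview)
Your overall strategy matches the paper's: the $\tau$-direction assertions are the non-equivariant limit of Theorem \ref{thm:quantum,Serre,QDM} combined with Proposition \ref{prop:noneqlim_invEuler} and \eqref{eq:LEvee_Leulerstar}, and the weight assertions are handled separately using the identity $(h\circ\overline{f})_{*}\frE^{\eE}=\frE^{E^{\vee}}$. One small gap: you derive the adjointness of $\frE\bullet^{\eE}_{\tau}$ and $\frE\bullet^{E^{\vee}}_{h\overline{f}(\tau)}$ with respect to the Poincar\'e pairing from the intertwining identity of part (2), but that identity only constrains the two multiplications through $\e(E)\cup$, which has a kernel in the non-equivariant limit; you do not get the full adjointness on all of $H^{\ev}(X)$ this way. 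The correct source is the $\tau$-flatness of $S^{\QS}$ from part (1), taken in the Euler direction (using $(h\circ\overline{f})_{*}\frE=\frE$): this directly gives $\int_{X}(\frE\bullet^{\eE}_{\tau}u)\cup v=\int_{X}u\cup(\frE\bullet^{E^{\vee}}_{h\overline{f}(\tau)}v)$ for all $u,v$. The paper sidesteps this explicit adjointness by working with the grading operator $\Gr=\nabla_{z\partial_{z}}+\nabla_{\frE}=z\partial_{z}+\frE+\tfrac{\deg}{2}$, in which the $z^{-1}\frE\bullet$ terms cancel internally for each connection; since $\Gr$ coincides for $\nabla^{\eE}$ and $(h\circ\overline{f})^{*}\nabla^{E^{\vee}}$, one computes the weight of $S^{\QS}$ under $\Gr$ directly and then subtracts the already-established $\frE$-flatness. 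This is a cleaner bookkeeping of the same ingredients.
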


\begin{proof}[Proof of Theorem \ref{thm:euler,quantum,Serre,QDM}] 
Almost all the statements follow by taking the non-equivariant limit 
of Theorem \ref{thm:quantum,Serre,QDM}. 
Notice that part (3) follows from Theorem \ref{thm:quantum,Serre,QDM} (3), 
Remarks \ref{rem:fullyflat_Euler}, \ref{rem:fullyflat_Evee} 
and equation \eqref{eq:LEvee_Leulerstar}. 
What remains to show is the statement about weights 
of $S^{\QS}$ and $\e(E)$. 
Regarding $\frE^{\eE}$, $\frE^{E^\vee}$ 
(see \eqref{eq:tw_Eulervf}) as vector fields on $H^{\ev}(X)$, 
we can check that $(h\circ \overline{f})_* 
\frE^{\eE} = \frE^{E^\vee}$. 
Therefore: 
\[
\Gr :=  \nabla_{z\partial_z}^{\eE} + \nabla_{\frE^{\eE}}^{\eE}  
= z\partial_z + \frE^{\eE} + \frac{\deg}{2} 
= \left((h\circ\overline{f})^*\nabla^{E^\vee}\right)_{z\partial_z} + 
\left((h\circ \overline{f})^*\nabla^{E^\vee}\right)_{\frE^{\eE}}. 
\]
On the other hand, we can check that 
\[
(z\partial_z + \frE^{\eE}) S^{\QS}(u,v) 
- S^{\QS}(\Gr u, v)- S^{\QS}(u,\Gr v) = -(\dim X)  
S^{\QS}(u,v).  
\]
The flatness of $S^{\QS}$ in the $\frE^{\eE}$-direction 
shows that $S^{\QS}$ is of weight $-\dim X$. The discussion 
for $\e(E)$ is similar. 
\end{proof} 

Let $Z \subset X$ be the zero-locus of a transverse section of $E$ 
and let $\iota \colon Z \to X$ be the inclusion map.  
We consider the following conditions for $Z$: 
\begin{lem} 
\label{lem:cond_Z}
The following conditions are equivalent: 
\begin{enumerate} 
\item the Poincar\'{e} pairing on $H_{\amb}^{\ev}(Z) 
= \im(\iota^* \colon H^{\ev}(X) \to H^{\ev}(Z))$ is non-degenerate; 

\item we have the decomposition 
$H^{\ev}(Z) = \Ker \iota_* \oplus \im \iota^*$; 

\item $\iota^*$ induces an isomorphism 
$H^{\ev}(X)/\Ker(\e(E) \cup)  \cong H^{\ev}_{\amb}(Z)$. 

\end{enumerate} 
\end{lem}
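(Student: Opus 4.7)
The plan is to reduce everything to two standard identities: the self-intersection formula $\iota_*\iota^*\alpha = \e(E)\cup\alpha$ on $H^{\ev}(X)$, and the projection formula
\[
\int_X \alpha\cup \iota_*\beta \;=\; \int_Z \iota^*\alpha \cup \beta,
\qquad \alpha\in H^{\ev}(X),\ \beta\in H^{\ev}(Z).
\]
Both follow from the fact that $Z$ is cut out by a regular section of $E$, so $[Z]=\e(E)$ in $H^*(X)$ and $\iota_*(1_Z)=\e(E)$. I would put this down first and then handle the two equivalences in parallel.

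For (1)$\Leftrightarrow$(3): Combining the two identities gives
\[
\int_Z \iota^*\alpha\cup\iota^*\beta \;=\; \int_X \alpha\cup\e(E)\cup\beta
\]
for all $\alpha,\beta\in H^{\ev}(X)$. By non-degeneracy of the Poincar\'e pairing on $X$, an element $\iota^*\alpha\in H^{\ev}_{\amb}(Z)$ lies in the radical of the restricted pairing if and only if $\e(E)\cup\alpha=0$. Independently, the self-intersection formula forces $\Ker\iota^*\subset\Ker(\e(E)\cup)$. Therefore the restricted pairing is non-degenerate if and only if the reverse inclusion $\Ker(\e(E)\cup)\subset\Ker\iota^*$ holds as well, i.e.~if and only if $\Ker\iota^*=\Ker(\e(E)\cup)$. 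Combined with the tautological isomorphism $H^{\ev}(X)/\Ker\iota^*\cong H^{\ev}_{\amb}(Z)$ induced by $\iota^*$, this is exactly condition (3).

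For (1)$\Leftrightarrow$(2): The projection formula shows that $\iota_*\colon H^{\ev}(Z)\to H^{\ev}(X)$ and $\iota^*\colon H^{\ev}(X)\to H^{\ev}(Z)$ are mutually adjoint with respect to the Poincar\'e pairings on $X$ and $Z$. Hence
\[
\Ker\iota_* \;=\; H^{\ev}_{\amb}(Z)^{\perp},
\]
the orthogonal complement of $H^{\ev}_{\amb}(Z)$ inside $H^{\ev}(Z)$ with respect to the Poincar\'e pairing on $Z$. Condition (2) then reads $H^{\ev}(Z)=H^{\ev}_{\amb}(Z)^{\perp}\oplus H^{\ev}_{\amb}(Z)$. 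In a finite-dimensional vector space endowed with a non-degenerate bilinear form, any subspace $V$ satisfies $V\oplus V^{\perp}=W$ if and only if $V\cap V^{\perp}=0$ (dimensions match automatically), and both are equivalent to the restricted form on $V$ being non-degenerate. Applying this to $V=H^{\ev}_{\amb}(Z)$ inside $W=H^{\ev}(Z)$ yields the equivalence with (1).

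There is no substantial obstacle in this argument: it is a formal consequence of Poincar\'e duality, the projection formula, and the identification $[Z]=\e(E)$. The only point that requires mild care is to notice that the inclusion $\Ker\iota^*\subset\Ker(\e(E)\cup)$ is automatic, so the nontrivial content of (3) is the opposite inclusion, and this is precisely what the radical computation in the proof of (1)$\Leftrightarrow$(3) provides.
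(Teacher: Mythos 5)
Your proof is correct and rests on exactly the same ingredients as the paper's: the self-intersection identity $\iota_*\iota^*\alpha=\e(E)\cup\alpha$, the projection formula (adjointness of $\iota_*$ and $\iota^*$), and non-degeneracy of the Poincar\'e pairings, with only a cosmetic difference in organization (you prove (1)$\Leftrightarrow$(3) and (1)$\Leftrightarrow$(2) directly, the paper runs the cycle (1)$\Rightarrow$(2)$\Rightarrow$(3)$\Rightarrow$(1)). Your treatment of (1)$\Leftrightarrow$(2) in fact makes explicit the dimension count $\Ker\iota_*=(\im\iota^*)^{\perp}$ that the paper leaves implicit when it asserts that trivial intersection suffices for the direct sum.
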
 
\begin{proof} 
(1) $\Rightarrow$ (2): it suffices to see that $\Ker \iota_* \cap \im \iota^* 
= \{0\}$. Suppose that $\alpha \in \Ker \iota_* \cap \im\iota^*$. 
Then for every $\iota^* \beta \in H_{\amb}^{\ev}(Z)$ we have 
$(\iota^* \beta, \alpha) = (\beta, \iota_* \alpha) = 0$. 
By assumption we have $\alpha=0$. 
(2) $\Rightarrow$ (3): we have $\iota^* \alpha=0$ if and only if 
$\iota_* \iota^* \alpha =\e(E)\cup \alpha = 0$. 
Therefore $\Ker(\iota^*) = \Ker(\e(E)\cup)$ and part (3) follows.  
(3) $\Rightarrow$ (1): since $(\iota^*\alpha,\iota^*\beta) = 
(\alpha,\iota_*\iota^*\beta) = (\alpha, \e(E)\cup \beta)$, 
the kernel of the Poincar\'{e} pairing on $H_{\amb}^{\ev}(Z)$ 
is $\iota^*(\Ker(\e(E)\cup))$, which is zero. 
\end{proof} 

\begin{rem} 
The conditions in Lemma \ref{lem:cond_Z} hold if $E$ is the direct 
sum of ample line bundles by the Hard Lefschetz theorem. They also hold 
if $X$ is a toric variety and $Z$ is a regular hypersurface 
with respect to a semiample line bundle $E$ on $X$ by a result of 
Mavlyutov \cite{mavlyutov_chiral_2000}. 
\end{rem}

\begin{cor} 
\label{cor:Z}
Suppose that $E$ is a convex vector bundle on $X$ 
and $Z\subset X$ be the zero-set of a regular section of $E$ 
satisfying one of the conditions in Lemma \ref{lem:cond_Z}. 
Then the morphism $\e(E)\cup \colon \QDM_{\eE}(X) \to 
(h\circ \overline{f})^*\QDM(E^\vee)$ in 
Theorem \ref{thm:euler,quantum,Serre,QDM} 
factors through $\QDM_{\amb}(Z)$ as: 
\begin{displaymath}
\xymatrix{\QDM_{\eE}(X) \ar@{->>}[d]_-{\iota^*} 
\ar[r]^-{\e(E)\cup} 
&  (h\circ\overline{f})^{*}\QDM(E^{\vee})\\ 
(\iota^*)^*\QDM_{\amb}(Z) \ar@{^{(}->}_-{\iota_*}[ru]}
  \end{displaymath}
In particular, $\iota_* \colon (\iota^*)^*\QDM_{\amb}(Z) 
\to (h\circ \overline{f})^* \QDM(E^\vee)$ respects 
the quantum connection in the $\tau$-direction and 
is of weight $\rank E$. 
\end{cor}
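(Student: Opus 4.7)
The plan is to reduce the corollary to routine bookkeeping, exploiting the algebraic input provided by Lemma \ref{lem:cond_Z}. First, I would note that commutativity of the diagram at the level of underlying vector bundles is the classical projection formula: for $\beta\in H^{\ev}(X)$, the Gysin pushforward satisfies $\iota_*\iota^*\beta=\iota_*(1)\cup\beta=\e(E)\cup\beta$, since the Poincar\'e dual of $[Z]\subset X$ is $\e(E)$. The content of the factorization is therefore that $\iota_*$ gives a well-defined map of bundles $(\iota^*)^*\QDM_{\amb}(Z)\to (h\circ\overline f)^*\QDM(E^\vee)$ over $H^{\ev}(X)$. Lemma \ref{lem:cond_Z}(3) provides exactly this: the induced isomorphism $H^{\ev}(X)/\Ker(\e(E)\cup)\cong H_{\amb}^{\ev}(Z)$ allows us to define $\iota_* v$, for $v\in H_{\amb}^{\ev}(Z)$, by choosing any lift $\tilde v\in H^{\ev}(X)$ with $\iota^*\tilde v=v$ and setting $\iota_* v:=\e(E)\cup\tilde v$; the equality $\Ker(\iota^*)=\Ker(\e(E)\cup)$ ensures independence of the lift.

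Second, I would derive the compatibility of $\iota_*$ with the quantum connection from the two compatibility results already proved. Given a local section $v$ of $(\iota^*)^*\QDM_{\amb}(Z)$, surjectivity of $\iota^*$ lets us write $v=\iota^*\tilde v$ for some $\tilde v\in\QDM_{\eE}(X)$. Proposition \ref{prop:QDM,amb} gives $\nabla_\alpha v=\iota^*(\nabla^{\eE}_\alpha\tilde v)$, and applying $\iota_*$ together with the projection formula yields
\begin{equation*}
\iota_*(\nabla_\alpha v)=\iota_*\iota^*(\nabla^{\eE}_\alpha\tilde v)=\e(E)\cup\nabla^{\eE}_\alpha\tilde v=\nabla_\alpha(\e(E)\cup\tilde v)=\nabla_\alpha(\iota_* v),
\end{equation*}
where the third equality is Theorem \ref{thm:euler,quantum,Serre,QDM}(2) and $\nabla_\alpha$ on each side denotes the appropriate pulled-back connection. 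The weight statement in the $z$-direction follows from the same chase: it suffices to combine the weight of $\e(E)\cup$ (equal to $\rank E$, from Theorem \ref{thm:euler,quantum,Serre,QDM}(2)) with the fact that $\iota^*$ is of weight zero.

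Finally, I would verify injectivity, so that $\iota_*$ is indeed an embedding as indicated in the diagram. If $\iota_* v=0$ and $v=\iota^*\tilde v$, then $\e(E)\cup\tilde v=0$, forcing $v=\iota^*\tilde v=0$ by Lemma \ref{lem:cond_Z}(3). There is essentially no hard step; the main subtlety is keeping track of the fact that the three quantum $D$-modules in the diagram a priori live over different parameter spaces, and one must work consistently on $H^{\ev}(X)$ via the pullbacks by $\iota^*$ and $h\circ\overline f$.
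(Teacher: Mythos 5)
Your proposal is correct and takes essentially the same route as the paper: the classical factorization $\e(E)\cup=\iota_*\circ\iota^*$ on $H^{\ev}(X)$ (projection/self-intersection formula, with Lemma \ref{lem:cond_Z} identifying $H^{\ev}_{\amb}(Z)\cong H^{\ev}(X)/\Ker(\e(E)\cup)$ and giving injectivity of $\iota_*$), combined with the compatibilities already established in Proposition \ref{prop:QDM,amb} and Theorem \ref{thm:euler,quantum,Serre,QDM}(2). You simply write out the surjectivity/diagram chase and the weight bookkeeping that the paper's proof leaves implicit.
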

\begin{proof} 
We already showed that $\iota^*$ is a morphism of flat connections 
in Proposition \ref{prop:QDM,amb}. It suffices to invoke the 
factorization of the linear map $\e(E)\cup$: 
\begin{equation}
\label{eq:6}
\begin{aligned} 
\xymatrix{H^{\ev}(X) \ar@{->>}[d]_-{\iota^{*}} \ar[r]^-{\e(E)\cup} 
& H^{\ev}(X)\\
      H^{\ev}_{\amb}(Z)\cong H^{\ev}(X)/\Ker(\e(E)\cup) 
\ar@{^{(}->}^-{\iota_{*}}[ru]}
\end{aligned}
\end{equation} 
\end{proof} 

 \begin{rem}
Recall that for a general non-compact space, the Poincar\'e
duality pairs the cohomology with the cohomology with compact
support.  This analogy leads us to think of $\QDM_{\eE}(X)$ as
the quantum $D$-module with compact support of the total
space $E^{\vee}$. 
\end{rem}

\begin{rem} 
It would be interesting to study if $\iota_*$ always defines a 
morphism of quantum $D$-modules without assuming 
the conditions in Lemma \ref{lem:cond_Z}. 
\end{rem}

\section{Quantum Serre duality and integral structures}
\label{sec:quant-serre-integr}
In this section we study a relation between quantum Serre duality 
for the Euler-twisted theory and 
the $\hGamma$-integral structure studied in 
\cite{Iritani-2009-Integral-structure-QH, 
Katzarkov-Pantev-Kontsevich-ncVHS, iritani_quantum_2011, 
Mann-Mignon-2011-QDM-lci-nef}. 
The $\hGamma$-integral structure is a lattice in the space of 
flat sections for the quantum connection, which is isomorphic 
to the Grothendieck group $K(X)$ of vector bundles on $X$. 
After introducing a similar integral structure in the Euler-twisted 
theory, we see that the quantum Serre pairing is identified with 
the Euler pairing on $K$-groups, and that the morphisms 
of flat connections in Corollary \ref{cor:Z} are induced by 
natural maps between $K$-groups. 
In this section, the Novikov variable $Q$ is specialized 
to one, see \S \ref{subsec:specialization}.

\begin{recall} 
Recall the classical self-intersection formula in $K$-theory.
Let $j:X \hookrightarrow Y$ be a closed embedding with 
normal bundle $N$ between quasi-compact and
quasi-separated schemes. In Theorem 3.1 of 
\cite{Thomason-excess-intersection-K-theory}, Thomason proves 
that we have for any $[V]\in K(X)$
\begin{align}\label{eq:3}
  j^{*}j_{*}[V]=[\lambda_{-1}N^\vee]\cdot[V]
\end{align}
where
\begin{displaymath}
  [\lambda_{-1}N^\vee] := \sum_{k\geq 0} (-1)^{k}[\wedge^{k}N^\vee] \in K(X).
\end{displaymath}
\end{recall}

\begin{defi} 
For a vector bundle $G$ with Chern roots $\delta_{1}, \ldots, \delta_{r}$, 
we define the $\hGamma$-class to be 
\begin{displaymath}
  \hGamma(G)=\prod_{i=1}^{r} \Gamma(1 + \delta_{i}).
\end{displaymath}
We also define a $(2\pi \sqrt{-1})$-modified 
Chern character by: 
\[
\Ch(G) = (2\pi \sqrt{-1})^{\frac{\deg}{2}} 
\ch(G) = \sum_{i=1}^{r} e^{2\pi \sqrt{-1} \delta_i}.  
\]
\end{defi}

Suppose that $E$ is a convex vector bundle on $X$. 
Let $Z\subset X$ be a submanifold cut out by a transverse 
section $s$ of $E$. 
For the (twisted) quantum connection $\nabla$, 
we write $\Ker \nabla$ for the space of flat sections: 
\begin{align*} 
\Ker \nabla^{\eE} &= \left\{ s \in 
H^{\ev}(X) \otimes \cc[z^\pm][\![e^{\tau_2},\tau']\!][\log z] : 
\nabla^{\eE} s = 0 \right\}, \\ 
\Ker \nabla^{Z} & = \left\{ s \in 
H^{\ev}_{\amb}(Z) \otimes \cc[z^\pm][\![e^{\tau_2},\tau']\!][\log z] : 
\nabla^Z s = 0 \right\}, \\ 
\Ker \nabla^{E^\vee} 
& = \left\{ s \in 
H^{\ev}(X) \otimes \cc[z^\pm][\![e^{\tau_2},\tau']\!][\log z] : 
\nabla^{E^\vee} s = 0 \right\}.   
\end{align*} 
\begin{defi} The \emph{$K$-group framing} is a map from 
a $K$-group to the space of flat sections defined as follows: 

\noindent 
(1) for the twist $\eE$, the $K$-group framing 
$Z^{\eE} \colon K(X) \to \Ker \nabla^{\eE}$ is: 
\begin{align*}
Z^{\eE}(V) &= \frac{1}{(2\pi\sqrt{-1})^{\dim X -\rank E}} 
L_{\eE}(\tau,z) 
z^{-\frac{\deg}{2}}z^{c_{1}(TX)-c_1(E)}
\frac{\hGamma(TX)}{\hGamma(E)} \Ch(V); \\
\intertext
{(2) for a smooth section $Z\subset X$ of $E$, the $K$-group 
framing $Z^{\amb} \colon K_{\amb}(Z) \to \Ker \nabla^Z$ is:} 
Z^{\amb}(V) & = 
\frac{1}{(2\pi\sqrt{-1})^{\dim Z}} L^Z(\tau,z) z^{-\frac{\deg}{2}} 
z^{c_1(TZ)} \hGamma(TZ) \Ch(V); \\
\intertext
{(3) for the total space $E^{\vee}$, the $K$-group framing 
$Z^{E^{\vee}} \colon K(X) \to \Ker \nabla^{E^{\vee}}$ is:}  
Z^{E^\vee}(V) &= \frac{1}{(2\pi \sqrt{-1})^{\dim E^\vee}} 
L^{E^{\vee}}
(\tau,z)z^{-\frac{\deg}{2}}z^{c_{1}(TX)- c_1(E)}
\hGamma(TE^{\vee})\Ch(V). 
\end{align*}
where $K_{\amb}(Z) = \im(\iota^* \colon K(X) \to K(Z))$ and 
$L^Z, L^{E^\vee}$ are the fundamental solutions 
for $Z$ and $E^\vee$ respectively. 
Recall from Remarks \ref{rem:fullyflat}, 
\ref{rem:fullyflat_Euler}, \ref{rem:fullyflat_Evee} 
that these formula define a section which is flat in both $\tau$ and $z$. 
\end{defi} 

 \begin{prop}\label{prop:compa,integral}
For any vector bundles $V,W$ on $X$, we have 
\begin{displaymath}
 \chi(V\otimes W^{\vee})=
(-2\pi \sqrt{-1} z)^{\dim X} {S}^{\QS}
\left(Z^{(\e,E)}(V)(\tau, e^{\pi \sqrt{-1}}z), 
Z^{E^{\vee}}(W)(h\circ\overline{f}(\tau),z)\right)
\end{displaymath}
where $\chi(V\otimes W^\vee) = 
\sum_{i=0}^{\dim X} (-1)^i \dim \Ext^i(W,V)$ 
is the holomorphic Euler characteristic. 
\end{prop}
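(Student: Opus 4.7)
The plan is to unwind the definition of the $K$-group framings, apply the quantum Serre identity of Theorem~\ref{thm:euler,quantum,Serre,QDM}(3) to eliminate the fundamental solutions, and then evaluate a classical Gamma-class integral via Hirzebruch--Riemann--Roch. Choosing the branch $\log(-z) = \log z - \pi\sqrt{-1}$, the factor $e^{\pi\sqrt{-1}}$ in the definition of $u$ and the substitution $z\mapsto -z$ coming from the $z$-sesquilinearity of $S^{\QS}$ cancel each other, yielding $u(-z) = Z^{\eE}(V)(\tau,z)$. Writing $\tau^\vee := h\circ\overline{f}(\tau)$, $\beta_V := z^{-\deg/2}z^{c_1(TX)-c_1(E)}\frac{\hGamma(TX)}{\hGamma(E)}\Ch(V)$ and $\beta_W := z^{-\deg/2}z^{c_1(TX)-c_1(E)}\hGamma(TX)\hGamma(E^\vee)\Ch(W)$, this gives
\[
S^{\QS}(u,v) = \frac{1}{(2\pi\sqrt{-1})^{2n}} \int_X L_{\eE}(\tau,z)\beta_V \cup L^{E^\vee}(\tau^\vee,z)\beta_W.
\]
By flatness of $S^{\QS}$ in $\tau$ (Theorem~\ref{thm:euler,quantum,Serre,QDM}(1)) and of $Z^{\eE}(V)$, $Z^{E^\vee}(W)$ in both $\tau$ and $z$ (Remarks~\ref{rem:fullyflat}, \ref{rem:fullyflat_Euler}, \ref{rem:fullyflat_Evee}), together with the weight $-\dim X$ of $S^{\QS}$, the pairing must have the form $C z^{-n}$ for a scalar $C$; it remains to show $(-2\pi\sqrt{-1})^n C = \chi(V\otimes W^\vee)$.

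The next step is to apply the first identity of Theorem~\ref{thm:euler,quantum,Serre,QDM}(3), $\e(E) L_{\eE}(\tau,z) = L^{E^\vee}(\tau^\vee,z) e^{\pi\sqrt{-1} c_1(E)/z}\e(E)$, together with the Poincar\'e-isometry of $L^{E^\vee}$. Working equivariantly (where $\eqeuler(E)$ is invertible in the localized equivariant cohomology), elementary operator manipulation yields the identity $L_{\eqeE}(\tau,z)^T = e^{\pi\sqrt{-1}c_1(E)/z}\,L_{(\eqeulerstar,E^\vee)}(\tau^\vee_{\mathrm{eq}},-z)^{-1}$; both sides have well-defined non-equivariant limits, and substituting the limit into the integrand eliminates the fundamental solutions:
\[
\int_X L_{\eE}(\tau,z)\beta_V \cup L^{E^\vee}(\tau^\vee,z)\beta_W = \int_X \beta_V \cup e^{-\pi\sqrt{-1}c_1(E)/z}\beta_W.
\]

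Finally I would evaluate this classical integral using Gamma-function identities. The integrand is $z^{-\deg+2(c_1(TX)-c_1(E))}\,e^{-\pi\sqrt{-1}c_1(E)/z}\,\frac{\hGamma(TX)^2\hGamma(E^\vee)}{\hGamma(E)}\,\Ch(V)\cup\Ch(W)$. Applying the reflection formula $\Gamma(1+x)\Gamma(1-x) = \pi x/\sin(\pi x)$ to both $TX$ and $E$, together with $\Td(G) = \prod_i\delta_i/(1-e^{-\delta_i})$, $\sin(\pi x) = (e^{\pi\sqrt{-1}x}-e^{-\pi\sqrt{-1}x})/(2\sqrt{-1})$, and $\Ch(W^\vee) = e^{\pi\sqrt{-1}\deg/2}\Ch(W)$, the top-degree piece of the integrand collapses to $(-2\pi\sqrt{-1})^n z^{-n}\,\Td(TX)\ch(V)\ch(W^\vee)$; Hirzebruch--Riemann--Roch $\chi(V\otimes W^\vee) = \int_X\Td(TX)\ch(V)\ch(W^\vee)$ then concludes the argument. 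The main obstacle is the elimination step: the required operator identity cannot be derived directly in the non-equivariant setting because $\e(E)$ is not invertible; the fix is to perform all operator manipulations equivariantly (with $\eqeuler(E)$ invertible in the localization) and take the non-equivariant limit only after substituting into the integrand. A secondary bookkeeping issue is the consistent choice of branch for $(-z)^c$ with $c$ nilpotent in $H^{\ev}(X)$, since careless branch choices would introduce spurious factors of $e^{2\pi\sqrt{-1}}$.
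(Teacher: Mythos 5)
Your overall strategy (use flatness to reduce to a constant, use quantum Serre to cancel the fundamental solutions, then evaluate a classical $\hGamma$-integral by Hirzebruch--Riemann--Roch) is the same as the paper's, but there is a genuine gap at the elimination step, and it originates in your opening branch bookkeeping. The factor $e^{\pi\sqrt{-1}}z$ in the first slot of the statement is not meant to be cancelled against the sesquilinear substitution $z\mapsto -z$: it \emph{is} that substitution, with a chosen branch. Concretely, the first entry of the Poincar\'e pairing must carry $L_{\eE}(\tau,-z)$ together with the phase operators $e^{-\pi\sqrt{-1}\frac{\deg}{2}}e^{\pi\sqrt{-1}(c_1(TX)-c_1(E))}$ (this is exactly what the paper's proof writes down after evaluating at $z=1$). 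After your cancellation you instead arrive at $\int_X L_{\eE}(\tau,z)\beta_V\cup L^{E^\vee}(\tau^\vee,z)\beta_W$ with \emph{both} fundamental solutions at $+z$, and the identity you then assert,
\[
\int_X L_{\eE}(\tau,z)\beta_V\cup L^{E^\vee}(\tau^\vee,z)\beta_W
=\int_X \beta_V\cup e^{-\pi\sqrt{-1}c_1(E)/z}\beta_W ,
\]
is false. The adjoint relation that cancels the fundamental solutions is the second displayed identity of Theorem \ref{thm:euler,quantum,Serre,QDM}(3), which pairs $L_{\eE}(\tau,-z)$ against $L^{E^\vee}(h\circ\overline{f}(\tau),z)$; the relative sign flip between the two slots is essential (it is the same structure as $S_{\cE}$-isometry, where the inverse of $L$ is the adjoint of $L(\tau,-z)$, not of $L(\tau,z)$). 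Indeed, with both arguments at $+z$ the left-hand side is not even constant in $\tau$ (the unsigned pairing $\int u(z)\cup v(z)$ is not flat for the pair of connections), so it cannot equal the $\tau$-independent right-hand side. Incidentally, your equivariant-localization detour is unnecessary: the identity you want is already available non-equivariantly as Theorem \ref{thm:euler,quantum,Serre,QDM}(3).

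The same missing phases break your final evaluation. Your integrand contains $\hGamma(TX)^2$, i.e.\ two factors of the form $\Gamma(1+\rho_i)$, to which the reflection formula cannot be applied, and the operator $e^{\pi\sqrt{-1}\frac{\deg}{2}}$ you invoke to write $\Ch(W^\vee)=e^{\pi\sqrt{-1}\frac{\deg}{2}}\Ch(W)$ occurs nowhere in your expression. In the correct computation it is precisely the phase $e^{-\pi\sqrt{-1}\frac{\deg}{2}}e^{\pi\sqrt{-1}(c_1(TX)-c_1(E))}$ carried by the first slot which, moved through the Poincar\'e pairing (using that $e^{\pi\sqrt{-1}\frac{\deg}{2}}$ is a ring automorphism whose adjoint involves $\dim X-\frac{\deg}{2}$), converts one copy of $\hGamma(TX)$ into $\prod_i\Gamma(1-\rho_i)$ and $\Ch(W)$ into $\Ch(W^\vee)$, so that the reflection formula produces $\Td(TX)$ and Hirzebruch--Riemann--Roch concludes. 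The fix is therefore to keep the first argument as $L_{\eE}(\tau,-z)\,e^{-\pi\sqrt{-1}\frac{\deg}{2}}e^{\pi\sqrt{-1}(c_1(TX)-c_1(E))}\gamma_1$ with $\gamma_1=\frac{\hGamma(TX)}{\hGamma(E)}\Ch(V)$, apply Theorem \ref{thm:euler,quantum,Serre,QDM}(3) as stated, and only then run the $\Gamma$-function manipulation.
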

\begin{proof} 
This is analogous to \cite[Proposition 2.10]{Iritani-2009-Integral-structure-QH}. 
Since the pairing $z^{\dim X} S^{\QS}$ is flat, the right-hand side 
is constant with respect to $\tau$ and $z$. Evaluating the right-hand side 
at $z=1$, we obtain 
\[
\frac{1}{(-2\pi \sqrt{-1})^{\dim X}} 
\left(L_{\eE}(\tau,-1)e^{-\pi\sqrt{-1} \frac{\deg}{2}} 
e^{\pi\sqrt{-1}(c_1(TX)-c_1(E))} \gamma_1, 
L^{E^\vee}(h\circ \overline{f}(\tau),1) \gamma_2 
 \right) 
\]
with $\gamma_1 = \frac{\hGamma(TX)}{\hGamma(E)} \Ch(V)$, 
$\gamma_2 = \hGamma(TX) \hGamma(E^\vee) \Ch(W)$, 
where $(\cdot,\cdot)$ is the Poincar\'{e} pairing on $X$. 
By Theorem \ref{thm:euler,quantum,Serre,QDM} (3), 
we find that this equals 
\[
\frac{1}{(-2\pi\sqrt{-1})^{\dim X}} 
\left ( e^{-\pi\sqrt{-1} \frac{\deg}{2}} 
e^{\pi\sqrt{-1}(c_1(TX)-c_1(E))} \gamma_1, 
e^{-\pi \sqrt{-1} c_1(E)}\gamma_2 
 \right).
\]
Since the adjoint of $\frac{\deg}{2}$ is $\dim X - \frac{\deg}{2}$, 
this is:  
\[
\frac{1}{(2\pi\sqrt{-1})^{\dim X}} 
\left (
e^{\pi\sqrt{-1}c_1(TX)} \gamma_1, 
e^{\pi\sqrt{-1}\frac{\deg}{2}}\gamma_2 
 \right). 
\]
Using the following identities:
\begin{align*}
&e^{\pi\sqrt{-1}\frac{\deg}{2}} 
\Gamma(1+\delta )=\Gamma(1-\delta) \quad \text{and} \quad 
e^{\pi\sqrt{-1}\frac{\deg}{2}}\ch(W)=\ch(W^{\vee}) \\ 
&(2\pi\sqrt{-1})^{-\dim X} 
\int_{X}\gamma= \int_{X}(2\pi\sqrt{-1})^{-\frac{\deg}{2}}\gamma\\
&(2\pi\sqrt{-1})^{-\frac{\deg}{2}}\Gamma(1+\delta)=
(2\pi\sqrt{-1})^{\frac{\deg}{2}} 
\Gamma\left(1+\textstyle\frac{\delta}{2\sqrt{-1}\pi}\right)
\end{align*}
with $\delta$ a degree-two cohomology class, 
we deduce that the right hand side of the proposition is
\begin{align*}
\int_{X}\ch(V\otimes
  W^{\vee}) e^{\rho/2}
  \prod_{i=1}^{n}
\Gamma\left(1+\tfrac{\rho_{i}}{2\sqrt{-1}\pi}\right)
\Gamma\left(1-\tfrac{\rho_{i}}{2\sqrt{-1}\pi}\right)
\end{align*}
where $\rho_{1}, \ldots ,\rho_{n}$ are the Chern roots of $TX$ 
and $\rho = c_1(TX) = \rho_1 + \cdots + \rho_n$.  
Finally, we use
$\Gamma(x)\Gamma(1-x)=\pi/\sin(\pi x)$ to get
\begin{align}\label{eq:53}
  e^{\rho/2}
  \prod_{i=1}^{n}\Gamma\left(1+\tfrac{\rho_{i}}{2\sqrt{-1}\pi}\right)\Gamma\left(1-\tfrac{\rho_{i}}{2\sqrt{-1}\pi}\right)= \Td(TX).
\end{align}
We conclude the proposition by the theorem of Hirzebruch-Riemann-Roch.
\end{proof}

The following proposition shows that the integral structures are compatible 
with the diagram in Corollary \ref{cor:Z}: 
\begin{prop}\label{prop:integral} 
Let $E$ be a convex vector bundle and $Z \subset X$ be a 
submanifold cut out by a regular section of $E$. 
Let $\iota \colon Z \hookrightarrow X$, $j\colon X \hookrightarrow 
E^\vee$ denote the natural inclusions.  
Assume that $Z$ satisfies one of the conditions in 
Lemma \ref{lem:cond_Z}. 
Then the diagram in Corollary \ref{cor:Z} can be extended 
to the following commutative diagram
\begin{displaymath}
\xymatrix{
K(X) \ar@/_3pc/[dddr]^-{\iota^{*}} \ar[rd]^{Z^{\eE}} 
\ar[rrrr]^-{j^{*}j_{*} }
& & & & K(X) \ar[dl]_-{Z^{E^{\vee}}} \\
&\Ker \nabla^{\eE}\ar[d]^-{\iota^{*}} 
\ar[rr]^-{c(z) \e(E)\cup}
&& \Ker (\overline{f}\circ h)^{*}\nabla^{E^{\vee}}&  \\
&\Ker (\iota^*)^*\nabla^{\amb} \ar@{^{(}->}[rru]_-{c(z) \iota_{*}} & \\
&K_{\amb}(Z) \ar[u]_-{Z^{\amb}} 
\ar@/_4pc/[rrruuu]_{\phantom{ABC}(-1)^{\rank E}\det(E) \otimes \iota_{*}}}
  \end{displaymath}
  where $c(z) = 1/(-2\pi\sqrt{-1}z)^{\rank E}$. 
\end{prop}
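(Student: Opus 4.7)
The plan is to decompose the diagram into four compatible pieces and verify them in turn; write $r=\rank E$. The pieces are: (a) the left triangle $\iota^*\circ Z^{\eE}=Z^{\amb}\circ\iota^*$; (b) the outer top row $c(z)\,\e(E)\cup\circ Z^{\eE}=Z^{E^\vee}\circ j^*j_*$ after pull-back by $h\circ\overline{f}$; (c) the $K$-theoretic identity $(-1)^{r}\det(E)\otimes\iota_*(\iota^*V)=j^*j_*V$ in $K(X)$; and (d) the inner right square, which will follow from (a), (b), (c) together with the factorization $\iota_*\iota^{*}=\e(E)\cup$ on flat sections established in Corollary~\ref{cor:Z}.

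For (a), Proposition~\ref{prop:QDM,amb} gives that $\iota^*$ is a morphism of quantum $D$-modules, so both $\iota^*\circ L_{\eE}(\tau,z)$ and $L^{Z}(\iota^*\tau,z)\circ\iota^*$ are flat sections of $(\iota^*)^*\nabla^{\amb}$ with normalization $\id+O(z^{-1})$; uniqueness of the fundamental solution forces them to agree. The decomposition $TX|_Z=TZ\oplus E|_Z$ yields $\iota^*(c_1(TX)-c_1(E))=c_1(TZ)$ and $\iota^*(\hGamma(TX)/\hGamma(E))=\hGamma(TZ)$, while $\dim X-r=\dim Z$ balances the prefactors, so $\iota^*Z^{\eE}(V)=Z^{\amb}(\iota^*V)$ by inspection.

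For (b), Thomason's self-intersection formula applied to $N_{X/E^\vee}=E^\vee$ yields $j^*j_*V=[\lambda_{-1}E]\cdot V$, and the reflection formula $\Gamma(1+\delta)\Gamma(1-\delta)=\pi\delta/\sin(\pi\delta)$ gives the $\hGamma$-class identity
\[
\Ch([\lambda_{-1}E])\;=\;(-2\pi\sqrt{-1})^{r}\,\frac{\e(E)\,e^{\pi\sqrt{-1}c_1(E)}}{\hGamma(E)\,\hGamma(E^\vee)}.
\]
Substituting this, together with $\hGamma(TE^\vee)=\hGamma(TX)\hGamma(E^\vee)$ and $\dim E^\vee=\dim X+r$, into the definition of $Z^{E^\vee}(j^*j_*V)$ rewrites the right-hand side of (b) as $\frac{(-1)^{r}}{(2\pi\sqrt{-1})^{\dim X}}L^{E^\vee}(h\circ\overline{f}(\tau),z)\,z^{-\deg/2}\,z^{c_1(TX)-c_1(E)}\,\frac{\hGamma(TX)}{\hGamma(E)}\,e^{\pi\sqrt{-1}c_1(E)}\,\e(E)\,\Ch(V)$. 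On the left, Theorem~\ref{thm:euler,quantum,Serre,QDM}(3) rewrites $\e(E)\,L_{\eE}(\tau,z)=L^{E^\vee}(h\circ\overline{f}(\tau),z)\,e^{\pi\sqrt{-1}c_1(E)/z}\,\e(E)$; the two grading commutations $\e(E)\circ z^{-\deg/2}=z^{r}\,z^{-\deg/2}\circ\e(E)$ and $e^{\pi\sqrt{-1}c_1(E)/z}\circ z^{-\deg/2}=z^{-\deg/2}\circ e^{\pi\sqrt{-1}c_1(E)}$ then produce exactly the same expression, and the numerical factors collapse via $z^{r}/((-2\pi\sqrt{-1}z)^{r}(2\pi\sqrt{-1})^{\dim X-r})=(-1)^{r}/(2\pi\sqrt{-1})^{\dim X}$.

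Finally, (c) follows from the projection formula together with the Koszul resolution $[\cO_Z]=[\lambda_{-1}E^\vee]$, which give $\iota_*(\iota^*V)=V\cdot[\lambda_{-1}E^\vee]$, combined with the exterior-algebra duality $\wedge^k E^\vee\otimes\det E\cong\wedge^{r-k}E$, which implies $(-1)^{r}\det(E)\cdot\lambda_{-1}E^\vee=\lambda_{-1}E$. The inner right square (d) is then obtained by chaining (a), (c), and Corollary~\ref{cor:Z}: for $W=\iota^*V$,
\[
c(z)\iota_*Z^{\amb}(W)=c(z)\iota_*\iota^*Z^{\eE}(V)=c(z)\e(E)\cup Z^{\eE}(V)=Z^{E^\vee}(j^*j_*V)(h\circ\overline{f}(\tau),z)=Z^{E^\vee}\bigl((-1)^{r}\det(E)\otimes\iota_*W\bigr)(h\circ\overline{f}(\tau),z).
\]
The main technical hurdle is the bookkeeping in (b): the constant $c(z)=1/(-2\pi\sqrt{-1}z)^{r}$ emerges from the exact cancellation between the $\hGamma$-reflection formula (producing $\hGamma(E)\hGamma(E^\vee)$ in the denominator and $e^{\pi\sqrt{-1}c_1(E)}$ in the numerator) and the $z$-weight shifts incurred when $\e(E)$ and $e^{\pi\sqrt{-1}c_1(E)/z}$ are moved past $z^{-\deg/2}$.
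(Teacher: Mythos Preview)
Your proof is correct and follows essentially the same route as the paper: both verify the top square via Theorem~\ref{thm:euler,quantum,Serre,QDM}(3) together with the self-intersection formula and the $\hGamma$-reflection identity, establish the left square from the compatibility $L^{Z}(\iota^*\tau,z)\circ\iota^*=\iota^*\circ L_{\eE}(\tau,z)$ and the normal-bundle exact sequence, and deduce the right square from the $K$-theoretic identity $j^*j_*=(-1)^{r}\det(E)\otimes\iota_*\iota^*$. One minor imprecision: in (a) you write the ``decomposition'' $TX|_Z=TZ\oplus E|_Z$, but what is actually available (and all that is needed for the $\hGamma$-class and $c_1$ identities) is the short exact sequence $0\to TZ\to\iota^*TX\to\iota^*E\to 0$.
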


\begin{proof} 
We first prove that the top square is commutative.
Recall the following equation 
from part (3) of Theorem \ref{thm:euler,quantum,Serre,QDM}: 
\begin{align*}
 \e(E)L_{\eE}(\tau,z)&=
L^{E^{\vee}}(h\circ\overline{f}(\tau),z) 
e^{\pi \sqrt{-1}c_{1}(E)/z}\e(E). 
\end{align*}
So it remains to prove that for any $V\in K(X)$, we have
\begin{align*}
(-2\pi\sqrt{-1})^{\rank E} 
e^{\pi\sqrt{-1}c_{1}(E)} \e(E) \hGamma(TX)
\hGamma(E)^{-1} \Ch(V) 
=\hGamma(TX) \hGamma(E^{\vee}) 
\Ch(j^*j_{*}V)
\end{align*}
This follows from  \eqref{eq:53} applied  to the vector bundle $E$ and  from 
\begin{displaymath}
  \ch j^{*}j_{*}V= \e(E^\vee) \Td(E^\vee)^{-1}\ch(V),\quad 
\text{see \eqref{eq:3}}.
\end{displaymath}
The commutativity of the left square follows from the properties 
of the $\hGamma$-class and the following facts 
(see \cite[Proposition 2.4]{iritani_quantum_2011} for the 
second property): 
\begin{align*}
& \xymatrix{0\ar[r]&TZ\ar[r]&\iota^{*}TX\ar[r]&\iota^{*}E\ar[r]&0} 
\text{ is exact;}\\
& L^{Z}(\iota^* \tau,z)\iota^{*}\gamma=\iota^{*} 
\left( L_{\eE}(\tau,z) \gamma \right), 
\quad \forall\gamma\in H^{\ev}(X). 
\end{align*}
The identity $j^{*}j_{*}=(-1)^{\rank E} \det(E)\otimes 
\iota_{*}\iota^{*}$ implies that the right square is commutative. 
\end{proof}

 \section{Quantum Serre duality and abstract Fourier-Laplace transform}
\label{sec:QS,KX,abstract,Fourier}

In this section we study quantum Serre duality with respect to 
the anticanonical line bundle $K_X^{-1}$. 
We consider the $(\e, K_X^{-1})$-twisted quantum 
$D$-module of $X$ and the quantum $D$-module of 
the total space of $K_X$.  
On the small quantum cohomology locus $H^2(X)$, 
we identify these quantum $D$-modules with 
Dubrovin's  second structure connections with 
different parameters $\sigma$. 
We show that the duality between them is given by the 
second metric $\check{g}$. 
Throughout the section, we assume that the anticanonical 
class $-K_X = c_1(X)$ of $X$ is nef and the Novikov variable  
is specialized to one (\S\ref{subsec:specialization}). 
We also set $n:=\dim_\cc X$ and $\rho:=c_1(X)$. 

\subsection{Convergence assumption}
\label{subsec:convergence} 
In this section \S \ref{sec:QS,KX,abstract,Fourier}, 
we assume certain analyticity of quantum cohomology of $X$. 
In \S \ref{subsec:qconn_parameter}--\ref{subsec:second_str_conn}, 
we assume that the big quantum cohomology of $X$ is convergent, 
that is, the quantum product 
(with Novikov variables specialized to one, see \S \ref{subsec:specialization}) 
\[
T_\alpha \bullet_{\tau} T_\gamma \in \cc[\![e^{\tau_2},\tau']\!] 
= \cc[\![e^{t^1},\dots,e^{t^r},t^0,t^{r+1},\dots,t^s]\!] 
\]
converges on a region $U\subset H^{\ev}(X,\cc)$ of the form: 
\[
U = \left\{\tau \in H^{\ev}(X,\cc) : |e^{t^i}| < \epsilon \ (1\le i \le r), 
\  |t^j| < \epsilon \ (r+1 \le j\le s) \right\}.
\]
For the main results in this section, we only need the convergence 
of the \emph{small} quantum product. 
This means that the quantum product $T_\alpha \bullet_\tau T_\beta$ 
restricted to $\tau = \tau_2$ to lie in $H^2(X,\cc)$ 
converges on a region $U_{\rm sm}\subset H^2(X,\cc)$ of the form 
\begin{equation} 
\label{eq:Usmall}
U_{\rm sm} = 
\left\{\tau_2 \in H^2(X,\cc) : |e^{t^i}| < \epsilon \ (1\le i\le r) \right\}. 
\end{equation} 
When $X$ is Fano, i.e.~if $-K_X$ is ample, the convergence 
of small quantum cohomology 
is automatic because the structure constants  
are polynomials in $e^{t^1},\dots, e^{t^{r}}$ for degree reason.

\subsection{Quantum connection with parameter $\sigma$}  
\label{subsec:qconn_parameter}
We introduce a variant of the quantum connection 
parametrized by a complex number $\sigma$. 
Consider the trivial vector bundle 
\[
F = H^{\ev}(X) \times (U\times \cc_z) 
\]
over $U\times \cc_z$, where $U$ is the convergence domain 
of the big quantum product in \S \ref{subsec:convergence}, 
and define a meromorphic flat connection $\nabla^{(\sigma)}$ 
of $F$ 
by the formula (cf.~Definition \ref{def:quantumD-mod} and 
Remark \ref{rem:connection_z})
\begin{align*} 
\nabla_\alpha^{(\sigma)} & = 
\partial_\alpha + \frac{1}{z} (T_\alpha\bullet_\tau) \\ 
\nabla_{z\partial_z}^{(\sigma)} & = 
z \partial_z - \frac{1}{z} (\frE\bullet_\tau) + 
\left(\mu - \frac{1}{2} -\sigma \right) 
\end{align*} 
where $\mu$ is an endomorphism of $H^{\ev}(X)$ defined by 
\[
\mu(T_\alpha) = \left(|\alpha| - \frac{n}{2}\right) T_\alpha \qquad 
\text{with} \ \ |\alpha| = \frac{1}{2} \deg T_\alpha, \ 
n=\dim_\cc X.  
\]
Let $(-) \colon U\times \cc_z \to U \times \cc_z$ denote the map 
sending $(\tau,z)$ to $(\tau,-z)$. We note the following facts: 

\begin{prop}[{\cite[Theorem 9.8 (c)]{Hfm}}] 
The $\cO_{U\times \cc_z}$-bilinear pairing 
\begin{align*} 
g \colon (-)^*\big(F,\nabla^{(\sigma)}\big) 
\times \big(F,\nabla^{(-1-\sigma)}\big) & \to \cO_{U\times \cc_z}
\end{align*} 
defined by $g(T_\alpha,T_\beta) = \int_X T_\alpha \cup T_\beta$ 
is flat.  
\end{prop}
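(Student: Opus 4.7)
The plan is a direct verification: I check flatness of $g$ separately in the horizontal ($\partial_\alpha$) and vertical ($z\partial_z$) directions on constant sections $u,v\in H^{\ev}(X)$, relying on three ingredients — the Frobenius property $(a\bullet_\tau b,c) = (a,b\bullet_\tau c)$ of the quantum product with respect to the Poincar\'{e} pairing, the skew-adjointness of $\mu$ for the Poincar\'{e} pairing, and the symmetry of the pair $(\sigma,-1-\sigma)$. For a section $s$ of $F$, I write $\tilde s := (-)^* s$, i.e.~$\tilde s(\tau,z)=s(\tau,-z)$, so the pairing reads $g(\tilde u,v)(\tau,z) = \int_X u(\tau,-z)\cup v(\tau,z)$.

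For the $\partial_\alpha$ direction, since $(-)_*\partial_\alpha=\partial_\alpha$ while the factor $1/z$ in the connection one-form flips sign under $z\mapsto -z$, the defining identity $((-)^*\nabla^{(\sigma)})_{\partial_\alpha}\tilde u = (-)^*(\nabla^{(\sigma)}_{\partial_\alpha}u)$ gives $((-)^*\nabla^{(\sigma)})_{\partial_\alpha}\tilde u = \partial_\alpha\tilde u - z^{-1}(T_\alpha\bullet_\tau\tilde u)$, whereas $\nabla^{(-1-\sigma)}_{\partial_\alpha}v = \partial_\alpha v + z^{-1}(T_\alpha\bullet_\tau v)$. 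Plugging these into $\partial_\alpha g(\tilde u,v) = g(((-)^*\nabla^{(\sigma)})_{\partial_\alpha}\tilde u,v) + g(\tilde u,\nabla^{(-1-\sigma)}_{\partial_\alpha}v)$ and expanding via the Leibniz rule for $\partial_\alpha$ on $\int_X u(\tau,-z)\cup v(\tau,z)$, everything cancels except the Frobenius identity $\int_X (T_\alpha\bullet_\tau\tilde u)\cup v = \int_X \tilde u\cup(T_\alpha\bullet_\tau v)$. This step is independent of~$\sigma$.

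For the $z\partial_z$ direction, I would use $(-)_*(z\partial_z)=z\partial_z$ together with the sign flip of $1/z$ to obtain
\[
((-)^*\nabla^{(\sigma)})_{z\partial_z}\tilde u \;=\; z\partial_z\tilde u + z^{-1}(\frE\bullet_\tau\tilde u) + \bigl(\mu-\tfrac12-\sigma\bigr)\tilde u,
\]
while $\nabla^{(-1-\sigma)}_{z\partial_z}v = z\partial_z v - z^{-1}(\frE\bullet_\tau v)+(\mu+\tfrac12+\sigma)v$. Applying $z\partial_z$ to $g(\tilde u,v)$ via the chain rule identity $z\partial_z[u(\tau,-z)] = (z\partial_z u)(\tau,-z)$, the $\frE$-terms on the two sides cancel by Frobenius again, the scalar shifts $\mp(\tfrac12+\sigma)$ cancel because the Poincar\'{e} pairing is symmetric, and what remains is $\int_X(\mu\tilde u)\cup v + \int_X\tilde u\cup(\mu v)=0$. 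This is the skew-adjointness of $\mu$ for the Poincar\'{e} pairing: $\int_X T_\alpha\cup T_\beta\ne 0$ forces $|\alpha|+|\beta|=n$, and then the eigenvalues $|\alpha|-\tfrac{n}{2}$ and $|\beta|-\tfrac{n}{2}$ of $\mu$ are opposite. There is no genuine obstacle in the argument; the only conceptual point worth flagging is that the pair $(\sigma,-1-\sigma)$ is forced precisely so the scalar shifts enter with opposite signs and get absorbed by the symmetry of the Poincar\'{e} pairing.
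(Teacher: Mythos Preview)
Your verification is correct: the Frobenius property of the Poincar\'e pairing handles the $\partial_\alpha$-direction and the $\frE$-terms in the $z\partial_z$-direction, the skew-adjointness of $\mu$ (which follows exactly as you say from $|\alpha|+|\beta|=n$) kills the grading operator contributions, and the scalar shifts cancel precisely because the two parameters are $\sigma$ and $-1-\sigma$. The paper itself does not give a proof of this proposition; it simply cites the result from Hertling's book, so there is no argument in the paper to compare against. Your direct computation is the standard way to establish flatness of such a pairing and would serve as the expected proof.
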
 

\begin{prop}[see e.g.~{\cite[Proposition 2.4]{Iritani-2009-Integral-structure-QH}}] 
Let $L(\tau,z)$ be the fundamental solution for the quantum 
connection of $X$ from Proposition \ref{prop:prop,QDM,S,flat,...} 
(with $\bc=1$, $E=0$). 
We have that $ L(\tau,z)z^{-(\mu-\frac{1}{2}-\sigma)}z^{c_{1}(TX)}$
is a fundamental solution of $\nabla^{(\sigma)}$ including in the $z$-direction. 
\end{prop}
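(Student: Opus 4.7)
The plan is to bootstrap from the fundamental solution of the ordinary quantum connection (Remark \ref{rem:fullyflat}), observing that $\nabla^{(\sigma)}$ differs from $\nabla$ only in the $z\partial_z$-component and only by a scalar. Concretely, since $\mu(T_\alpha)=(|\alpha|-n/2)T_\alpha$ and $|\alpha|=\tfrac12\deg T_\alpha$, we have the operator identity $\mu=\frac{\deg}{2}-\frac{n}{2}$, so
\[
\Bigl(\mu-\tfrac12-\sigma\Bigr)-\tfrac{\deg}{2}=-\tfrac{n+1}{2}-\sigma.
\]
Hence $\nabla^{(\sigma)}_{z\partial_z}=\nabla_{z\partial_z}-\bigl(\tfrac{n+1}{2}+\sigma\bigr)\cdot\mathrm{id}$, while $\nabla^{(\sigma)}_\alpha=\nabla_\alpha$ for $\alpha=0,\dots,s$.

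First, I would recall from Remark \ref{rem:fullyflat} that $\Phi(\tau,z):=L(\tau,z)z^{-\deg/2}z^{c_1(TX)}$ is a fundamental solution of the ordinary quantum connection $\nabla$, flat in both the $\tau$- and the $z$-directions. Next I would rewrite the proposed solution using the operator identity above: since $z^{-\mu}=z^{n/2}z^{-\deg/2}$ (both sides act on $T_\alpha$ as multiplication by $z^{n/2-|\alpha|}$), one obtains
\[
L(\tau,z)z^{-(\mu-\frac12-\sigma)}z^{c_1(TX)}
=z^{\frac{n+1}{2}+\sigma}\,\Phi(\tau,z).
\]
Since $z^{\frac{n+1}{2}+\sigma}$ is a scalar function of $z$ alone, it commutes with the quantum product and with $L(\tau,z)$, and it does not affect flatness in the $\tau$-direction. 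Thus $\nabla^{(\sigma)}_\alpha\bigl(z^{\frac{n+1}{2}+\sigma}\Phi\bigr)=z^{\frac{n+1}{2}+\sigma}\nabla_\alpha\Phi=0$ for each $\alpha$.

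For the $z\partial_z$-direction, the key calculation is that for any constant $c$ and any section $s$ with $\nabla_{z\partial_z}s=0$, one has
\[
\nabla_{z\partial_z}(z^{c}s)=c\,z^{c}s+z^{c}\nabla_{z\partial_z}s=c\,z^{c}s,
\]
because $z\partial_z(z^c)=cz^c$ and the remaining operators in $\nabla_{z\partial_z}$ commute with scalar multiplication by $z^c$. Applying this with $c=\tfrac{n+1}{2}+\sigma$ and $s=\Phi$, and using the scalar shift $\nabla^{(\sigma)}_{z\partial_z}=\nabla_{z\partial_z}-(\tfrac{n+1}{2}+\sigma)$, gives
\[
\nabla^{(\sigma)}_{z\partial_z}\bigl(z^{\frac{n+1}{2}+\sigma}\Phi\bigr)=\bigl(\tfrac{n+1}{2}+\sigma\bigr)z^{\frac{n+1}{2}+\sigma}\Phi-\bigl(\tfrac{n+1}{2}+\sigma\bigr)z^{\frac{n+1}{2}+\sigma}\Phi=0.
\]
This shows that the proposed expression is a flat section, and since right-multiplication by the invertible operator $z^{\frac{n+1}{2}+\sigma}$ preserves invertibility, it is in fact a fundamental solution.

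There is no real obstacle; the only place to be careful is keeping track of the operator identity $z^{-\mu}=z^{n/2}z^{-\deg/2}$ (so that the exponent reshuffling is done correctly) and noting that $z^{c_1(TX)}$ commutes with scalar powers of $z$, so its position in the product is immaterial.
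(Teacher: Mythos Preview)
Your argument is correct. The paper does not actually supply a proof of this proposition; it simply records the statement with a reference to \cite[Proposition 2.4]{Iritani-2009-Integral-structure-QH}. Your derivation --- reducing to Remark \ref{rem:fullyflat} via the scalar identity $\nabla^{(\sigma)}_{z\partial_z}=\nabla_{z\partial_z}-\bigl(\tfrac{n+1}{2}+\sigma\bigr)$ and the factorization $z^{-(\mu-\frac12-\sigma)}=z^{\frac{n+1}{2}+\sigma}\,z^{-\deg/2}$ --- is exactly the natural way to see this and matches the spirit of the cited reference.
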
 

\begin{rem} 
The variable $z$ in this paper corresponds to $z^{-1}$ 
in Hertling's book \cite[\S 9.3]{Hfm}. 
For convenience of the reader, 
we made a precise link of notation with the book of Hertling: 
\begin{equation}
\label{eq:not,hertling}
\cU= \frE\bullet_{\tau}, \quad  
D=2-n, \quad   
\cV=-\mu-\frac{n}{2}. 
\end{equation}
\end{rem}

Using the divisor equation, the inverse of the fundamental solution 
$L(\tau,z)$ for $X$ (see \eqref{eq:inverse_fundsol})  
can be written in the form: 
\begin{align}
\label{eq:83}
L(\tau,z)^{-1}T_\alpha=e^{\tau_{2}/z}
\left(T_\alpha + \sum_{
\substack{(d,l)\neq (0,0) \\ \beta\in\{0, \ldots,s\}}}
\left\langle T_\alpha,\tau', \ldots ,\tau',\frac{T_{\beta}}{z-\psi}
\right\rangle_{0,l+2,d}e^{\tau_{2}(d)}\frac{T^{\beta}}{l!}\right) 
\end{align}
Denote by $K^{(\sigma)}_\alpha$ the $\alpha$th column of 
the inverse fundamental solution matrix for $\nabla^{(\sigma)}$: 
\[ 
  K^{(\sigma)}_{\alpha}(\tau,z):=
z^{-c_{1}(TX)}z^{\mu-\frac{1}{2}-\sigma}
L(\tau,z)^{-1}T_{\alpha}.
\]
If we restrict $\tau$ to lie in $H^{2}(X)$, 
we have the following expression. 

\begin{lem}\label{lem:inverse,funda,sol}
For any $\alpha\in\{0, \ldots ,s\}$ and $\tau_2 \in H^2(X)$, we have
\begin{align*}
    K^{(\sigma)}_{\alpha}(\tau_{2},z)&= 
    \sum_{d\in \eff(X)}N_{\alpha,d}(1) 
\frac{e^{\tau_{2}+\tau_{2}(d)}}
{z^{\rho+\rho(d)-|\alpha|+\frac{n+1}{2}+\sigma}} 
 \end{align*}
where $\rho = c_1(X)$ and 
\begin{equation}
\label{eq:N_alphad}
N_{\alpha,d}(z) := 
\begin{cases} 
  \sum_{\beta=0}^s \llangle T_\alpha, 
\frac{T_\beta}{z-\psi} 
\rrangle_{0,2,d}{T^\beta} &\text{if $d\neq 0$;}\\
T_\alpha &\text{if $d= 0$.}
\end{cases}
\end{equation}
\end{lem}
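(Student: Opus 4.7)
The plan is to reduce everything to the formula \eqref{eq:83} for $L(\tau,z)^{-1}T_\alpha$ restricted to $\tau=\tau_2\in H^2(X)$ and then to commute the operators $z^{-c_1(TX)}$ and $z^{\mu-\frac{1}{2}-\sigma}$ past the factor $e^{\tau_2/z}$, using homogeneity. Concretely, I would first set $\tau'=0$ in \eqref{eq:83}; every summand with $\ell\ge 1$ vanishes, so only the two-point invariants survive, and after including the trivial contribution from $d=0$ one obtains
\[
L(\tau_2,z)^{-1}T_\alpha = e^{\tau_2/z}\,\sum_{d\in\eff(X)} N_{\alpha,d}(z)\,e^{\tau_2(d)}.
\]

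Next I would exploit the fact that $z^\mu$ acts on a class $\gamma$ of complex degree $|\gamma|$ by $z^{|\gamma|-n/2}$, which yields the intertwining relation $z^\mu e^{\tau_2/z} = e^{\tau_2} z^\mu$ by a direct term-by-term check; the scalar $z^{-1/2-\sigma}$ obviously commutes with $e^{\tau_2/z}$, and $z^{-\rho}$ commutes with cup products. Hence
\[
K^{(\sigma)}_\alpha(\tau_2,z) = e^{\tau_2}\,z^{-\rho}\,z^{\mu-\frac{1}{2}-\sigma}\sum_{d\in\eff(X)} N_{\alpha,d}(z)\,e^{\tau_2(d)}.
\]

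The key computation is then to show $z^\mu N_{\alpha,d}(z) = z^{|\alpha|-\rho(d)-n/2} N_{\alpha,d}(1)$. For this I would apply the standard virtual dimension count: $\langle T_\alpha, T_\beta\psi^k\rangle_{0,2,d}$ vanishes unless $|\alpha|+|\beta|+k = n+\rho(d)-1$, so in the expansion $N_{\alpha,d}(z) = \sum_{\beta,k} \langle T_\alpha,T_\beta\psi^k\rangle_{0,2,d} T^\beta z^{-(k+1)}$ only one value of $k$ contributes for each $\beta$, giving a power $z^{|\alpha|+|\beta|-\rho(d)-n}$. Applying $z^\mu$ multiplies $T^\beta$ by $z^{n/2-|\beta|}$, producing the overall factor $z^{|\alpha|-\rho(d)-n/2}$ independent of $\beta$; this factor then pulls out of the sum, identifying the remainder with $N_{\alpha,d}(1)$. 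The case $d=0$ is checked separately and fits the same formula.

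Combining the two displays yields
\[
K^{(\sigma)}_\alpha(\tau_2,z) = e^{\tau_2}\,z^{-\rho}\,\sum_{d\in\eff(X)} e^{\tau_2(d)}\,z^{|\alpha|-\rho(d)-\frac{n+1}{2}-\sigma}\,N_{\alpha,d}(1),
\]
which is exactly the asserted formula once one rewrites $z^{-\rho}\,z^{-\rho(d)+|\alpha|-\frac{n+1}{2}-\sigma}$ as $z^{-(\rho+\rho(d)-|\alpha|+\frac{n+1}{2}+\sigma)}$ and absorbs the scalar $e^{\tau_2(d)}$ into the numerator. There is no real obstacle here; the only mildly delicate point is keeping careful track of the dimension count and of the fact that $z^{-\rho}$ is cup product with a class while $z^{\rho(d)}$ is a scalar, so the notational compression in the statement of the lemma is consistent.
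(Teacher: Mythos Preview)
Your proposal is correct and follows essentially the same route as the paper: restrict \eqref{eq:83} to $\tau'=0$, use the intertwining identity $z^{\mu}\circ e^{\tau_2/z}=e^{\tau_2}\circ z^{\mu}$, and then apply the degree axiom to extract the single surviving power of $z$ in each $N_{\alpha,d}(z)$, obtaining $z^{\mu}N_{\alpha,d}(z)=z^{|\alpha|-\rho(d)-n/2}N_{\alpha,d}(1)$. The paper's proof is the same computation, stated more tersely.
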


\begin{proof} Restricting to $H^2(X)$ means setting 
$\tau'=0$ in \eqref{eq:83}. 
For $d\neq 0\in \eff(X)$, we have 
\begin{align*}
N_{\alpha,d}(z) = \sum_{\beta=0}^{s}\llangle
 T_{\alpha}, \frac{T_{\beta}}{z-\psi}\rrangle_{0,2,d}T^{\beta}
=\sum_{k\geq 0}\sum_{\beta=0}^{s}\llangle
  T_{\alpha},\psi^{k}T_{\beta}\rrangle_{0,2,d}
\frac{T^{\beta}}{z^{k+1}}. 
\end{align*}
By the degree axiom for Gromov-Witten invariants, 
only the term with $ k=n-|\beta|- |\alpha| +\rho(d)-1$ 
contributes. Therefore $z^{\mu}N_{\alpha,d}(z)= 
N_{\alpha,d}(1) z^{-(\rho(d)+\frac{n}{2}-|\alpha|)}$.  
Noting that $z^{\mu}\circ e^{\tau_{2}/z}=e^{\tau_{2}} \circ z^{\mu}$, 
we deduce the formula of the lemma. 
\end{proof}

\subsection{The second structure connection} 
\label{subsec:second_str_conn} 

We introduce the \emph{second structure connection} 
\cite[lecture 3]{Dtft}, \cite[\S 2.3]{Dubrovin-Almost-Frob-2004}, 
\cite[II, \S 1]{Mfm}, \cite[\S 9.2]{Hfm}. 
Let $x$ be the variable Laplace-dual to $z^{-1}$ and 
let $\cc_x$ denote the complex plane with co-ordinate $x$. 
Consider the trivial vector bundle 
\[
\check{F} = H^{\ev}(X)\times (U\times \cc_x)
\]
over $U\times \cc_x$. 
The \emph{second structure connection} is a meromorphic flat connection 
on the bundle $\check{F}$ defined by 
\begin{align}
\label{eq:second_str_conn} 
\begin{split} 
\chnabla^{(\sigma)}_{\alpha}&=\partial_{{\alpha}}+
\left(\mu-\frac{1}{2}-\sigma\right)
\left( (\frE\bullet_\tau)-x \right)^{-1} (T_\alpha\bullet_\tau) \\
\chnabla^{(\sigma)}_{\partial_{x}}&=
\partial_{x} -\left(\mu-\frac{1}{2}-\sigma\right)
\left((\frE\bullet_\tau) -x\right)^{-1}.
\end{split} 
\end{align}
The connection has a singularity along the divisor 
$\Sigma \subset U\times \cc$: 
\begin{displaymath}
\Sigma:=
\left\{(\tau,x)\in U \times \cc_{x} \mid 
\det((\frE\bullet_{\tau})-x) = 0 \right \}.
\end{displaymath} 
The second structure connection has an invariant pairing 
called the \emph{second metric} (or the \emph{intersection form}). 

\begin{prop}[{\cite[Theorem 9.4.c]{Hfm}}]
The $\cO_{U\times \cc_x}$-bilinear pairing 
\[
  \check{g}: (\check{F},\chnabla^{(\sigma)})
  \times (\check{F},\chnabla^{(-\sigma)}) \to 
  \cO_{ U\times\cc_{x}}(\Sigma)  
  \]
defined by $\check{g}(T_{\alpha},T_{\beta})
= \int_{X}T_{\alpha}
\cup ((\frE\bullet_{\tau})-x)^{-1}T_{\beta}$ 
is flat. This is called the \emph{second metric}. 
\end{prop}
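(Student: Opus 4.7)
The plan is to verify the Leibniz rule for $\check g$ in the $\partial_\alpha$- and $\partial_x$-directions by direct computation; the argument appears in the TEP-setting of \cite[Theorem 9.4.c]{Hfm}, and I sketch it here. Write $(u,v):=\int_X u\cup v$ for the Poincar\'{e} pairing and $A:=((\frE\bullet_\tau)-x)^{-1}$. First I would record four ingredients. (i) The grading operator $\mu$ is anti-self-adjoint with respect to $(\cdot,\cdot)$, since non-zero Poincar\'{e} pairings on $X$ couple classes of complementary degree. (ii) Each $T_\alpha\bullet_\tau$ is self-adjoint (by commutativity of $\bullet_\tau$ and the Frobenius identity $\int_X(T_\alpha\bullet_\tau v)\cup w=\int_X T_\alpha\cup(v\bullet_\tau w)$), hence so is $A$, and $[A,T_\alpha\bullet_\tau]=0$. (iii) The calculus identities $\partial_x A=A^2$ and $\partial_\alpha A=-A\,\partial_\alpha(\frE\bullet_\tau)\,A$. (iv) The commutator relation
\[
\partial_\alpha(\frE\bullet_\tau)=(T_\alpha\bullet_\tau)+[(T_\alpha\bullet_\tau),\mu],
\]
which I would extract from the $z^{-1}$-coefficient of the vanishing commutator $[\nabla_{z\partial_z},\nabla_\alpha]=0$ for the quantum connection (Remark \ref{rem:connection_z}).

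For the $\partial_x$-direction the verification is a short telescoping. On basis vectors, the right-hand side of the Leibniz rule equals
\[
-\int_X\bigl(\mu-\tfrac{1}{2}-\sigma\bigr)AT_\alpha\cup AT_\beta-\int_X T_\alpha\cup A\bigl(\mu-\tfrac{1}{2}+\sigma\bigr)AT_\beta.
\]
Using (i) to move $\mu$ across in the first integrand and (ii) to transfer $A$, the $\sigma$-parts cancel and the remaining $\mu$-parts telescope to $\int_X T_\alpha\cup A\bigl((\mu+\tfrac{1}{2}+\sigma)-(\mu-\tfrac{1}{2}+\sigma)\bigr)AT_\beta=\int_X T_\alpha\cup A^2 T_\beta=\partial_x\check g(T_\alpha,T_\beta)$.

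For the $\partial_\alpha$-direction, (iii) and (iv) will rewrite the left-hand side as
\[
-\int_X T_\gamma\cup(T_\alpha\bullet_\tau)A^2T_\delta-\int_X T_\gamma\cup A[(T_\alpha\bullet_\tau),\mu]AT_\delta.
\]
Expanding the right-hand side analogously, the $\sigma$- and scalar-$\tfrac{1}{2}$-contributions cancel by (ii), and the two surviving $\mu$-terms $-\int_X T_\gamma\cup(T_\alpha\bullet_\tau)A\mu AT_\delta+\int_X T_\gamma\cup A\mu(T_\alpha\bullet_\tau)AT_\delta$ recombine to match the commutator term above after commuting $(T_\alpha\bullet_\tau)$ through $A$ by (ii). The hard part is purely the bookkeeping of the non-commuting triple $\mu$, $T_\alpha\bullet_\tau$, $A$; the conceptual point is that although $\mu$ does not commute with $T_\alpha\bullet_\tau$, the operator $A$ does, and this is exactly what allows identity (iv) to absorb the $\mu$-contributions produced by $\chnabla^{(\pm\sigma)}_\alpha$.
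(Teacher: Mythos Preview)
Your proposal is correct. The paper itself gives no proof of this proposition; it simply records the statement and cites \cite[Theorem 9.4.c]{Hfm}. Your direct verification of the Leibniz rule via the four ingredients (i)--(iv) is exactly the kind of computation that underlies the cited reference, and your derivation of the key identity $\partial_\alpha(\frE\bullet_\tau)=(T_\alpha\bullet_\tau)+[(T_\alpha\bullet_\tau),\mu]$ from the flatness $[\nabla_{z\partial_z},\nabla_\alpha]=0$ is the right way to obtain it.

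One small point of wording: in the $\partial_\alpha$-direction you say ``the $\sigma$- and scalar-$\tfrac{1}{2}$-contributions cancel.'' More precisely, the $\sigma$-contributions cancel against each other, while the two $\tfrac{1}{2}$-contributions do not cancel but combine to give exactly the $-\int_X T_\gamma\cup(T_\alpha\bullet_\tau)A^2T_\delta$ term on the left-hand side. This is presumably what you intend by ``cancel'' (i.e.\ match across the two sides), but it would be clearer to say so explicitly.
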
  

We now explain how the second structure connection $\chnabla^{(\sigma)}$ 
arises from the Fourier-Laplace transformation of the 
quantum connection $\nabla^{(\sigma-1)}$ (see \cite[V]{Sdivf}, 
\cite[1.b]{DSgm1}). 
Consider the module $M=H^{\ev}(X) \otimes \cO_U[z]$ 
of sections of the trivial bundle $F$ 
which are polynomials in $z$. 
The quantum connection $\nabla^{(\sigma-1)}$ 
equips $M[z^{-1}]$ with the structure of an  
$\cO_U\langle \partial_\alpha, z^\pm,\partial_z \rangle$-module 
by the assignment: 
\[
\partial_z \mapsto\nabla_{\partial_z}^{(\sigma-1)} 
\qquad 
\partial_\alpha \mapsto \nabla_\alpha^{(\sigma-1)}. 
\]
Consider the isomorphism of the rings of differential operators: 
\begin{align*} 
\cO_U\langle \partial_\alpha, z^{-1}, \partial_{z^{-1}}\rangle 
\cong 
\cO_U\langle \partial_\alpha, x, \partial_x \rangle 
\end{align*} 
sending $\partial_{z^{-1}} = -z^2\partial_z$ to $x$ 
and $z^{-1}$ to $-\partial_x$. Via this isomorphism,  
we may regard $M[z^{-1}]$ as an 
$\cO_U[x] \langle \partial_\alpha, \partial_x \rangle$-module. 
This is called the \emph{abstract Fourier-Laplace transform}. 
The subset $M\subset M[z^{-1}]$ is closed under 
the action of $x = -z^2 \partial_z$, and thus 
becomes an $\cO_U[x]$-submodule of $M[z^{-1}]$. 
Note that $M[z^{-1}]$ is generated by $M$ 
over $\cO_U\langle x, \partial_x \rangle$ 
since $z^{-1} =  - \partial_x$. 
Regard $T_\alpha\in H^{\ev}(X)$ as an element of $M$. 
Under the abstract Fourier-Laplace transformation, 
we have 
\begin{align*} 
(\partial_x x) \cdot  T_\alpha &= \nabla^{(\sigma-1)}_{z\partial_z} T_\alpha 
 = \partial_x \cdot( \frE\bullet_\tau T_\alpha ) + 
\left(\mu +\frac{1}{2} - \sigma\right)T_\alpha \\ 
\partial_\beta \cdot T_\alpha & = \nabla^{(\sigma-1)}_\beta T_\alpha 
= -\partial_x  \cdot(T_\beta \bullet_\tau T_\alpha) 
\end{align*} 
Regarding $(\frE\bullet_\tau)$, $\mu$, $(T_\beta\bullet_\tau)$ 
as matrices written in the basis $\{T_\alpha\}$, we obtain 
\begin{align*} 
[\partial_x T_0,\dots,\partial_x T_s] (x - \frE\bullet_\tau) 
& = [T_0,\dots,T_s] \left(\mu - \frac{1}{2} - \sigma\right) \\ 
[\partial_\beta T_0,\dots,\partial_\beta T_s]  
& = -[\partial_x T_0,\dots, \partial_x T_s] (T_\beta \bullet_\tau)
\end{align*} 
Inverting $(x- \frE\bullet_\tau)$ in the first equation, 
we obtain the connection matrices for the second structure connection 
$\chnabla^{(\sigma)}$. 
In other words, writing $\cO(\check{F})$ for the sheaf of 
holomorphic sections of $\check{F}$ which are polynomials in $x$, 
the natural $\cO_U[x]$-module map 
\begin{equation} 
\label{eq:2ndconn_FL}
\cO(\check{F}) \to M,\qquad T_\alpha \mapsto T_\alpha 
\end{equation}
intertwines the meromorphic connection $\chnabla^{(\sigma)}$ 
on $\check{F}$ 
with the action of $\partial_x$, $\partial_\alpha$ 
on $M$ after inverting $\det(x-\frE\bullet_\tau)\in \cO_U[x]$. 
On the other hand, $\{T_\alpha\}$ does not always give an 
$\cO_U[x]$-basis of $M$ and the map \eqref{eq:2ndconn_FL} 
is not always an isomorphism. 
A sufficient condition for the map \eqref{eq:2ndconn_FL} 
to be an isomorphism is given by  
a result of Sabbah \cite{Sdivf}. 

\begin{prop}[{\cite[Proposition V.2.10]{Sdivf}}]
\label{prop:Sabbah}
Suppose that $\sigma\notin -\frac{n-1}{2} + \zz_{\ge 0}$ 
with $n= \dim_{\cc} X$. 
Then the second structure connection $(\check{F}, \chnabla^{(\sigma)})$ 
coincides with the abstract Fourier transform of the quantum connection 
$(F,\nabla^{(\sigma-1)})$, i.e.~the map \eqref{eq:2ndconn_FL} 
is an isomorphism. 
\end{prop}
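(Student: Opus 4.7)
My plan is to verify directly that, under the stated hypothesis on $\sigma$, the classes $\{T_\alpha\}_{\alpha=0}^{s}$ form an $\cO_U[x]$-basis of $M$ under the Fourier-transformed $D$-module structure. Since $\cO(\check{F}) = \cO_U[x]\otimes H^{\ev}(X)$ is already the free $\cO_U[x]$-module on $\{T_\alpha\}$, the map \eqref{eq:2ndconn_FL} is automatically $\cO_U[x]$-linear, and everything reduces to checking it is bijective. I would attack surjectivity and injectivity separately, using the $z$-degree filtration $M = \bigoplus_{k\geq 0} z^{k}H^{\ev}(X)\otimes \cO_U$.

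The key computation is the action of $x = -z^{2}\partial_{z}$ on monomials $z^{k}T_{\alpha}\in M$, where $\partial_{z}$ is interpreted via the connection $\nabla^{(\sigma-1)}$. From the explicit form of the connection one has
\[
\nabla^{(\sigma-1)}_{\partial_{z}}(T_{\alpha}) = -\frac{1}{z^{2}}(\frE\bullet_{\tau})T_{\alpha} + \frac{1}{z}\left(\mu + \tfrac{1}{2}-\sigma\right)T_{\alpha},
\]
and a Leibniz-rule computation then gives
\[
x\cdot z^{k}T_{\alpha} = c_{k,\alpha}\, z^{k+1}T_{\alpha} + z^{k}(\frE\bullet_{\tau})T_{\alpha},
\]
where $c_{k,\alpha}:=\sigma-k-|\alpha|+(n-1)/2$ arises from the diagonal action $(\sigma-k-\mu-\tfrac{1}{2})T_{\alpha} = c_{k,\alpha}T_{\alpha}$. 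Since $k\geq 0$ and $|\alpha|\in\{0,\dots,n\}$, the hypothesis $\sigma\notin -(n-1)/2+\zz_{\geq 0}$ is exactly what guarantees $c_{k,\alpha}\neq 0$ for every relevant pair $(k,\alpha)$.

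Surjectivity then follows by induction on $k$: the rearranged recursion
\[
z^{k+1}T_{\alpha} = c_{k,\alpha}^{-1}\bigl(x\cdot z^{k}T_{\alpha} - z^{k}(\frE\bullet_{\tau})T_{\alpha}\bigr)
\]
expresses $z^{k+1}T_{\alpha}$ as an $\cO_{U}[x]$-linear combination of elements $z^{k}T_{\beta}$, which by the inductive hypothesis lie in the $\cO_U[x]$-span of $\{T_{\gamma}\}$. For injectivity, suppose $\sum_{\alpha}P_{\alpha}(x)T_{\alpha}=0$ in $M$ and let $N$ be the maximal $x$-degree appearing. Iterating the recursion one checks by induction on $N$ that $x^{N}T_{\alpha} = z^{N}\bigl(\prod_{j=0}^{N-1}c_{j,\alpha}\bigr)T_{\alpha} + O(z^{N-1})$, so the $z^{N}$-coefficient of the relation, read off in the $\cc$-basis $\{T_{\alpha}\}$ of $H^{\ev}(X)$, forces the leading coefficient of every $P_{\alpha}$ to vanish, contradicting the choice of $N$.

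The main obstacle is the indicial bookkeeping: matching the exclusion of $\sigma$ from $-(n-1)/2+\zz_{\geq 0}$ exactly to the non-vanishing of all the $c_{k,\alpha}$'s, and making sure the inductive reduction in both directions only ever needs inversion of these scalars. Once this is in place, surjectivity and injectivity drop out by straightforward induction, reproducing Sabbah's argument \cite[Proposition V.2.10]{Sdivf} in the specific form adapted to the quantum connection $\nabla^{(\sigma-1)}$.
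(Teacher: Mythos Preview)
Your proof is correct. The paper does not supply its own proof of this proposition; it simply cites Sabbah \cite[Proposition V.2.10]{Sdivf} and moves on. Your direct verification---computing the leading $z$-term of the $x$-action and using the nonvanishing of the scalars $c_{k,\alpha}=\sigma-k-|\alpha|+(n-1)/2$ to run induction in both directions---is exactly the standard argument behind the cited reference, specialized to the quantum connection $\nabla^{(\sigma-1)}$.
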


\subsection{Fundamental solution for the second structure connection} 
We will henceforth restrict ourselves to the small quantum 
cohomology locus $H^2(X)$. 
We find an inverse fundamental solution for the second structure connection 
using a truncated Laplace transformation. 

\begin{defi}\label{defi:Laplace}  
Consider a cohomology-valued power series of the form: 
\[
K(z)=z^{-\gamma} \sum_{k} a_k z^{-k}
\]
with $a_k\in H^{\ev}(X)$ and $\gamma \in H^{\ev}(X)$, 
where $z^{-\gamma} = e^{-\gamma \log z}$. 
We assume that the exponent $k$ ranges over a subset 
of $\cc$ of the form $\{k_0,k_0+1,k_0+2,\dots\}$. 
Let $\ell$ be a complex number such that 
  \begin{itemize}
  \item $\ell -k_0\in \zz$ and, 
  \item $0\notin\{k_0+1,k_0+2, \ldots ,\ell -1\}$ if $k_0 \le \ell-2$. 
  \end{itemize}
We define the \emph{truncated Laplace transform} of $K(z)$ to be 
  \begin{align*}
    \Lap^{(\ell)}(K)(x):=\sum_{k} a_k x^{-\gamma-k-1}
\frac{\Gamma(\gamma+k+1)}{\Gamma(\gamma+\ell)}  
\end{align*}
where note that 
\[
\frac{\Gamma(\gamma+k+1)}{\Gamma(\gamma+\ell)}  
=
\begin{cases} 
(\gamma+\ell)(\gamma+ \ell+1) \cdots(\gamma+k) & 
\mbox{ if } k\geq \ell;  
\\
1& \mbox{ if } k= \ell -1;
\\
\frac{1}{(\gamma+k+1)(\gamma+k+2)\cdots(\gamma + \ell-1)}& 
\mbox{ if } k\leq \ell -2,
\end{cases}
\]
and the above condition for $\ell$ ensures that we do not have 
the division by $\gamma$ when $k\leq \ell-2$ and 
that this expression is well-defined. 
\end{defi}

The truncated Laplace transformation satisfies 
the following property: 
\begin{align}
\label{eq:FL_rules}
\begin{split}  
\Lap^{(\ell)}(z^{-1} K) 
& =(-\partial_{x})\Lap^{(\ell)}(K)\\
\Lap^{(\ell)}(-z^{2}\partial_{z}K)
& = \Lap^{(\ell)} (\partial_{z^{-1}} K) = x\Lap^{(\ell)}(K)
\end{split}
\end{align}

\begin{rem} 
Suppose that $K(z)$ is convergent for all $z\in \cc^\times$, 
$\Re(k_0)>-1$ and that we have an estimate $|K(z)|\leq C e^{M/z}$ 
over the interval $z\in (0,1)$ for some $C,M>0$. 
Then we can write 
the truncated Laplace transform as the actual Laplace 
transform: 
\[
\Lap^{(\ell)}(K)(x) = \frac{1}{\Gamma(\gamma+ \ell)} 
\int_0^\infty K(z) e^{-x/z} d(z^{-1}).  
\]
\end{rem}

\begin{prop}
\label{prop:inverse,fundamental,solution,second,metric} 
Let $\ell$ be a complex number such that 
$\ell \equiv \frac{n-1}{2} + \sigma \mod \zz$. 
Assume that we have either $\ell\notin \zz_{>0}$ or 
$\sigma\notin \frac{n-1}{2} + \zz_{\le 0}$.  
Then:
\begin{enumerate}
\item The truncated Laplace transform 
\begin{align*} 
\chK_\alpha^{(\sigma,\ell)}(\tau_2,x)  
& := \Lap^{(\ell)}\left(K_\alpha^{(\sigma-1)}(\tau_2,\cdot) \right) \\
& 
=\sum_{d\in \eff(X)}N_{\alpha,d}(1) 
\frac{e^{\tau_{2}+\tau_{2}(d)}}
{x^{\rho+\rho(d)-|\alpha|+\frac{n+1}{2}+\sigma}}
\frac{\Gamma(\rho+\rho(d)-|\alpha|+\frac{n+1}{2}+\sigma)}
{\Gamma(\rho+\ell)}. 
\end{align*} 
with $\tau_2 \in H^2(X)$ is well-defined. 
Here $\rho = c_1(X)$, $|\alpha| = \frac{1}{2} \deg T_\alpha$ 
and $N_{\alpha,d}(1)$ is given in \eqref{eq:N_alphad}. 
\item 
Under the convergence assumption for the small quantum 
cohomology of $X$ (see \S \ref{subsec:convergence}), 
$\chK^{(\sigma,\ell)}_\alpha(\tau_2,x)$ 
converges on a region of the form 
$\{(\tau_2,x) : \tau_2 \in U_{\rm sm}, |x|>c\}$ 
where $U_{\rm sm}$ is a region of the form \eqref{eq:Usmall} 
and $c\in \rr_{>0}$. 
\item 
These Laplace transforms define a cohomology-valued solution 
to the second structure connection $\chnabla^{(\sigma)}$, that is, 
the multi-valued bundle map
\begin{equation*}
\chK^{(\sigma,\ell)} \colon
\big(\check{F},\chnabla^{(\sigma)}\big)  
\longrightarrow \left(\check{F},d\right), 
\qquad 
T_{\alpha} \longmapsto \chK_{\alpha}^{(\sigma,\ell)}
\end{equation*}  
defined over $\{(\tau,x) \in U_{\rm sm}\times \cc :  
|x|>c\}$  
intertwines $\chnabla^{(\sigma)}$ 
with the trivial connection $d$. 
\end{enumerate} 
\end{prop}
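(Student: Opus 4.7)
\emph{Proof proposal.} My plan is to apply the truncated Laplace transformation of Definition~\ref{defi:Laplace} termwise to the explicit formula of Lemma~\ref{lem:inverse,funda,sol} for $K^{(\sigma-1)}_\alpha$ to obtain Part~(1); to upgrade this termwise computation to a genuine Laplace integral, yielding honest convergence and analyticity on the stated region (Part~(2)); and finally to deduce flatness by transporting the fundamental-solution identity $dK^{(\sigma-1)} = K^{(\sigma-1)}\cdot A^{(\sigma-1)}$ through the abstract Fourier-Laplace correspondence of Proposition~\ref{prop:Sabbah} via the rules~\eqref{eq:FL_rules} (Part~(3)).

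For Part~(1), I rewrite Lemma~\ref{lem:inverse,funda,sol} in the form $K^{(\sigma-1)}_\alpha(\tau_2, z) = z^{-\rho}\sum_k a_k z^{-k}$, where $k = \rho(d) - |\alpha| + \tfrac{n-1}{2}+\sigma$ as $d$ varies in $\eff(X)$, and the minimal exponent is $k_0 = -|\alpha| + \tfrac{n-1}{2}+\sigma$. The integrality condition $\ell - k_0 \in \zz$ of Definition~\ref{defi:Laplace} reduces to the stated congruence $\ell \equiv \tfrac{n-1}{2}+\sigma \pmod{\zz}$. The non-vanishing condition $0 \notin \{k_0+1, \ldots, \ell-1\}$ would fail only if the positive integer $|\alpha|-\tfrac{n-1}{2}-\sigma$ lies in $\{1, \ldots, \ell-1\}$, and the dichotomy ``$\ell \notin \zz_{>0}$ or $\sigma \notin \tfrac{n-1}{2}+\zz_{\le 0}$'' rules this out because $|\alpha|\in \zz_{\ge 0}$. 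Applying $\Lap^{(\ell)}$ termwise via $z^{-\gamma-k}\mapsto x^{-\gamma-k-1}\Gamma(\gamma+k+1)/\Gamma(\gamma+\ell)$ and reindexing then gives the claimed formula.

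For Part~(2), the subtlety is that, termwise, the formula involves factorial growth through $\Gamma(\rho+\rho(d)+\cdots)$, so no naive termwise majorization is summable for any $x$. The way around this is to represent $\chK^{(\sigma,\ell)}_\alpha$ as an honest Laplace integral. By the divisor equation~\eqref{eq:fundsol_divisor}, $L(\tau_2, z)^{-1}$ is holomorphic on $\cc^\times$ in $z$ and, under the convergence hypothesis on small quantum cohomology, admits a uniform exponential bound $\|L(\tau_2, z)^{-1}\| \le C(\tau_2)\, e^{M(\tau_2)/|z|}$ as $z \to 0$, with $C, M$ locally bounded on $U_{\mathrm{sm}}$. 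Substituting $u = z^{-1}$ shows that $K^{(\sigma-1)}_\alpha(\tau_2, u^{-1})$ has at most exponential growth $e^{M|u|}$ as $u \to \infty$, up to algebraic-logarithmic factors coming from the nilpotent $\rho$. The Laplace integral
\[
\chK^{(\sigma,\ell)}_\alpha(\tau_2, x) = \frac{1}{\Gamma(\rho+\ell)}\int_0^{\infty} K^{(\sigma-1)}_\alpha(\tau_2, u^{-1})\, e^{-xu}\,du
\]
then converges absolutely for $\Re(x) > M(\tau_2)$, hence on $\{(\tau_2, x) : \tau_2\in U_{\mathrm{sm}}, |x|>c\}$ for suitable $c$, yielding the analyticity claim; its asymptotic expansion matches the formula of Part~(1) termwise.

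For Part~(3), I exploit that $K^{(\sigma-1)}$, as the inverse of a fundamental matrix for $\nabla^{(\sigma-1)}$, satisfies the matrix identities $\partial_\alpha K^{(\sigma-1)} = K^{(\sigma-1)}\cdot (T_\alpha\bullet_\tau)/z$ and $z\partial_z K^{(\sigma-1)} = K^{(\sigma-1)}\cdot\bigl(-(\frE\bullet_\tau)/z + (\mu + \tfrac{1}{2} - \sigma)\bigr)$. Applying $\Lap^{(\ell)}$ using~\eqref{eq:FL_rules}, and noting that right multiplication by the $z$-independent matrices $(T_\alpha\bullet_\tau)$ and $(\frE\bullet_\tau)$ commutes with $\Lap^{(\ell)}$, transforms these into
\begin{align*}
\partial_\alpha \chK & = -\partial_x \chK\cdot (T_\alpha\bullet_\tau),\\
(1 + x\partial_x)\chK & = \partial_x \chK\cdot (\frE\bullet_\tau) + (\mu + \tfrac{1}{2} - \sigma)\chK.
\end{align*}
Rearranging the second gives $\partial_x \chK\cdot(\frE\bullet_\tau - x) = -(\mu - \tfrac{1}{2} - \sigma)\chK$; substituting back into the first yields $\partial_\alpha \chK = \chK\cdot (\mu - \tfrac{1}{2} - \sigma)(\frE\bullet_\tau - x)^{-1}(T_\alpha\bullet_\tau)$. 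Together these are precisely the matrix form of $d\chK^{(\sigma,\ell)} = \chK^{(\sigma,\ell)}\cdot A^{(\sigma)}_{\chi}$, with $A^{(\sigma)}_{\chi}$ the connection one-form of $\chnabla^{(\sigma)}$; this is exactly the intertwining claim of the proposition. The main obstacle is Part~(2): extracting the uniform exponential bound on $L(\tau_2, z)^{-1}$ as $z \to 0$, locally uniformly in $\tau_2 \in U_{\mathrm{sm}}$, from the divisor-equation expansion~\eqref{eq:fundsol_divisor} is where the real technical work lies.
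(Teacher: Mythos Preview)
Your arguments for Parts~(1) and~(3) follow the same route as the paper: for~(1) the paper also just checks the conditions of Definition~\ref{defi:Laplace} against the explicit series of Lemma~\ref{lem:inverse,funda,sol}, and for~(3) it transports the inverse-fundamental-solution identities for $\nabla^{(\sigma-1)}$ through the rules~\eqref{eq:FL_rules}, arriving at exactly the matrix equations you wrote down. One small slip in your Part~(3): in the Laplace-transformed $z\partial_z$ equation the factor $(\mu+\tfrac12-\sigma)$ must stay on the \emph{right} of $\chK$ (it acts on the column index~$\alpha$, not on the cohomology-valued entries), whereas you wrote it on the left; your final formula for $\partial_\alpha\chK$ is nevertheless the correct one, so this is only an inconsistency in the intermediate step.

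The substantive divergence is in Part~(2). You assert that ``no naive termwise majorization is summable'' because of the factorial growth of $\Gamma(\rho+\rho(d)+\cdots)$, and therefore pass to an honest Laplace integral and an exponential bound on $L(\tau_2,z)^{-1}$. The paper does the opposite: it proves convergence by direct termwise majorization, invoking the estimate
\[
|N_{\alpha,d}(1)|\ \le\ C_1\,C_2^{|d|+\rho(d)}\,\frac{1}{\rho(d)!}
\]
from \cite[Lemma~4.1]{iritani_convergence}. The key point you missed is the $1/\rho(d)!$ in this bound: it exactly cancels the factorial growth coming from the Gamma factor, reducing the series to a geometric one in $e^{\tau_2(d)}x^{-\rho(d)}$ which converges on the stated region. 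Your integral approach, by contrast, needs extra work you have not supplied: the integral representation in the Remark after Definition~\ref{defi:Laplace} requires $\Re(k_0)>-1$, which fails for general $\sigma$ and~$\alpha$, and the uniform exponential bound on $L(\tau_2,z)^{-1}$ as $z\to 0$ is asserted rather than proved. So the step you flagged as ``where the real technical work lies'' is in fact unnecessary once one knows the coefficient estimate.
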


\begin{proof}
The well-definedness of the truncated Laplace transforms 
$\Lap^{(\ell)}(K_\alpha^{(\sigma-1)})$ 
follows easily from Lemma \ref{lem:inverse,funda,sol} 
by checking the conditions in Definition \ref{defi:Laplace}. 
The coefficients $N_{\alpha,d}(1)$ satisfy the 
following estimate \cite[Lemma 4.1]{iritani_convergence}: 
\begin{equation} 
\label{eq:coeff_estimate} 
|N_{\alpha,d}(1)| \le C_1 C_2^{|d| + \rho(d)} \frac{1}{\rho(d)!} 
\end{equation} 
for some constants $C_1,C_2>0$ independent of $\alpha$ and $d$, 
where $|\cdot|$ is a fixed norm on $H^{\ev}(X)$ and $H_2(X)$. 
The convergence of the series $\chK_\alpha^{(\sigma,\ell)}$ 
follows from this. 

Next we show that $\chK^{(\sigma,\ell)}$ gives a solution 
to the second structure connection. Since $K_\alpha^{(\sigma-1)}$, 
$\alpha=0,\dots,s$ are the columns of an inverse fundamental solution 
for $\nabla^{(\sigma-1)}$, they satisfy the same differential 
relations as $T_\alpha$: 
\begin{align*} 
[z\partial_z K_{0}^{(\sigma-1)},\dots, z\partial_z K_s^{(\sigma-1)}]  
& = [K_0^{(\sigma-1)},\dots, K_s^{(\sigma-1)}]  
\left(-\frac{1}{z} (\frE\bullet_\tau) + 
\mu + \frac{1}{2} - \sigma \right) \\ 
[\partial_\beta K_0^{(\sigma-1)},\dots,\partial_\beta K_s^{(\sigma-1)}] 
& =[K_0^{(\sigma-1)},\dots, K_s^{(\sigma-1)}] 
\frac{1}{z} (T_\beta \bullet_\tau)
\end{align*} 
where we regard $(\frE\bullet_\tau)$, $\mu$, $(T_\beta\bullet_\tau)$ 
as matrices written in the basis $[T_0,T_1,\dots,T_s]$. 
Applying the truncated Laplace transformation $\Lap^{(\ell)}$ 
to the above formulae and using \eqref{eq:FL_rules}, 
we find:
\begin{align*} 
[\partial_x x \chK_{0}^{(\sigma,\ell)},\dots, \partial_x x \chK_s^{(\sigma,\ell)}]  
& = [\partial_x \chK_0^{(\sigma,\ell)},\dots, \partial_x\chK_s^{(\sigma,\ell)}]  
(\frE\bullet_\tau)  + [\chK_0^{(\sigma,\ell)},\dots,\chK_s^{(\sigma,\ell)}] 
\left( \mu + \frac{1}{2} - \sigma \right), \\ 
[\partial_\beta \chK_0^{(\sigma-1)},\dots,\partial_\beta \chK_s^{(\sigma-1)}] 
& =- [\partial_x \chK_0^{(\sigma-1)},\dots, \partial_x \chK_s^{(\sigma-1)}] 
(T_\beta \bullet_\tau).
\end{align*} 
The first equation can be rewritten as: 
\[
[\partial_x \chK_0^{(\sigma,\ell)},\dots,\partial_x \chK_s^{(\sigma,\ell)} ] 
= [\chK_0^{(\sigma,\ell)},\dots,\chK_s^{(\sigma,\ell)}]
\left(\mu - \frac{1}{2} - \sigma\right) (x- \frE\bullet_\tau)^{-1}. 
\]
Together with the second equation, this implies that 
$\chK_\alpha^{(\sigma,\ell)}$, $\alpha=0,\dots,s$ define 
a solution to the second structure connection $\chnabla^{(\sigma)}$. 
\end{proof}

\begin{rem} 
Note that the convergence region $U_{\rm sm}$ in the above proposition 
depends on $c$. The real positive number $c$ can be chosen arbitrarily, 
but $U_{\rm sm}$ becomes smaller if we choose a smaller $c$. 
\end{rem} 

\subsection{Small twisted quantum $D$-modules}
\label{subsec:SQDM}
In this section we study the $\eK$-twisted quantum $D$-module 
$\QDM_{\eK}(X)$ and the quantum $D$-module $\QDM(K_X)$ 
of the total space of $K_X$ over the small quantum cohomology 
locus $H^2(X)$ using quantum Lefschetz theorem 
\cite{Givental-Coates-2007-QRR}. 

Since $c_1(X)$ is assumed to be nef, the anticanonical line bundle 
$K_X^{-1}$ is convex. Therefore, by the results of 
\S\ref{subsec:non-eq,Euler,twist} and 
\S \ref{subsec:non-equiv-limit,inverse,euler},  
the quantum $D$-modules $\QDM_{\eK}(X)$ and $\QDM(K_X)$ 
are well-defined. 
We shall see that, under the convergence assumption 
for the small quantum cohomology in \S \ref{subsec:convergence}, 
the quantum connections for these quantum $D$-modules are 
convergent on a region $U_{\rm sm}\subset H^2(X)$ of the form 
\eqref{eq:Usmall}. 
Therefore we have the following \emph{small quantum $D$-modules}: 
\begin{align}
\label{eq:SQDM}
\begin{split} 
\SQDM_{\eK}(X) & := 
(H^{\ev}(X) \otimes \cO_{U_{\rm sm} \times \cc_z}, \nabla^\eu, S_\eu) \\ 
\SQDM(K_X) & :=  
(H^{\ev}(X) \otimes \cO_{U_{\rm sm}\times \cc_z}, \nabla^{\loc}) 
\end{split} 
\end{align}  
where $\nabla^\eu = \nabla^{\eK}$ is the $\eK$-twisted quantum 
connection, $S_\eu=S_{\eK}$ is the $\eK$-twisted 
pairing $S_\eu(u,v) = \int_{X} u(-z) \cup v(z) \cup \rho$, and 
$\nabla^{\loc} = \nabla^{K_X}$ is the quantum 
connection of $K_X$. 
The superscript `eu' means `Euler' and `loc' means `local'. 
We denote the fundamental solutions (in Proposition 
\ref{prop:prop,QDM,S,flat,...}) for these quantum 
$D$-modules by 
\begin{align*} 
L_\eu(\tau,z) & = L_{\eK}(\tau,z)  
\qquad \text{(see Remark \ref{rem:fullyflat_Euler})} \\ 
L_\loc(\tau,z) & = L^{K_X}(\tau,z)  
\qquad\quad \ \ \text{(see Remark \ref{rem:fullyflat_Evee})}  
\end{align*} 
where the Novikov variable is set to be one. 
For a smooth anticanonical hypersurface $Z\subset X$, we can 
similarly consider the small ambient part quantum $D$-module 
of $Z$ (cf.~Definition \ref{def:ambientQDM}): 
\[
\SQDM_{\amb}(Z) := (H^{\ev}_{\amb}(Z) \otimes 
\cO_{U_{\rm sm} \times \cc_z}, \nabla^Z, S_Z). 
\]

\begin{defi}
\label{defi,I,functions}
For $\alpha\in \{0,\ldots, s\}$, we put:  
\begin{align*}
I_{\alpha}^\eu(\tau_{2},z) &:= 
e^{\tau_2/z} \sum_{d\in \eff(X)} 
N_{\alpha,d}(z)e^{\tau_{2}(d)} \prod_{k=1}^{\rho(d)}(\rho+kz) \\
I_{\alpha}^\loc (\tau_{2},z) &:=
  e^{\tau_2/z} 
\sum_{d\in \eff(X)} N_{\alpha,d}(z)e^{\tau_{2}(d)} 
\prod_{k=0}^{\rho(d)-1}(-\rho-kz)
\end{align*}
where recall that $\rho = c_1(X)$ and 
we set $\prod_{k=1}^{\rho(d)} (\rho+kz) = \prod_{k=0}^{\rho(d)-1} 
(-\rho - kz) = 1$ for $d=0$. 
We call $I_\alpha^\eu$ the \emph{$\eK$-twisted $I$-function} of $X$ and 
and $I_\alpha^\loc$ \emph{the $I$-function} of $K_X$. 
\end{defi}

\begin{rem}\label{rem:I,function}
(1) The \emph{large radius limit} is the limit: 
\[
\Re (\tau_{2}(d)) \to 0 \quad \text{for} \quad 
\forall d \in \eff(X) \setminus \{0\}.
\] 
With our choice of co-ordinates, the large radius limit corresponds to 
$e^{t^i}\to 0$ for $1\le i\le r$. 
The $I$-function satisfies the asymptotics 
\begin{align*}
I_\alpha^\eu(\tau_2,z) \sim_{\lrl} e^{\tau_2/z} T_\alpha 
\quad \text{and} \quad 
I_\alpha^\loc(\tau_2,z) \sim_{\lrl} e^{\tau_2/z} T_\alpha 
\end{align*} 
under the large radius limit 
(the subscript `lrl' stands for the large radius limit).  
Therefore they are linearly independent in a neighbourhood 
of the large radius limit. 

(2) Put $\partial_{\rho}:=\sum_{\beta=0}^{s}\rho_{\beta}\partial_{\beta}$, 
where
$\rho=c_{1}(TX)=\sum_{\beta=0}^{s}\rho_{\beta}T_{\beta}$.
We have
\begin{align}
  \label{eq:I,Istar,relation}
  e^{-\sqrt{-1}\pi\rho/z}z\partial_{\rho} 
I_{\alpha}^\loc(h(\tau_2),z) =\rho I_{\alpha}^\eu(\tau_2,z)
\end{align}
where $h$ is the map in \eqref{eq:h} with $c_1(E) = c_1(K_X^{-1}) = \rho$. 
\end{rem} 

\begin{lem} 
\label{lem:convergence_I}
Suppose that the small quantum product of $X$ is convergent 
as in \S \ref{subsec:convergence}. 
There exists a region $U_{\rm sm} \subset H^2(X)$ 
of the form \eqref{eq:Usmall} such that the $I$-functions 
$I_\alpha^\eu(\tau_2,z)$, $I_\alpha^\loc(\tau_2,z)$ 
are convergent and analytic on $U_{\rm sm}\times \cc_z^\times$. 
\end{lem}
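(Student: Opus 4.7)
My plan is to give explicit estimates on each factor appearing in the summands of $I_\alpha^\eu$ and $I_\alpha^\loc$, using the coefficient bound $|N_{\alpha,d}(1)| \le C_1 C_2^{|d|+\rho(d)}/\rho(d)!$ recalled in the proof of Proposition \ref{prop:inverse,fundamental,solution,second,metric}, and to check that a single choice of $\epsilon$ works for the whole region $U_{\rm sm}\times \cc_z^\times$. The prefactor $e^{\tau_2/z}$ is already analytic on $H^2(X,\cc)\times \cc_z^\times$ and bounded on compacta, so the task reduces to the convergence of the series
\[
\sum_{d\in\eff(X)} N_{\alpha,d}(z)\,e^{\tau_2(d)}\prod_{k=1}^{\rho(d)}(\rho+kz).
\]

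First, by the degree axiom (exactly as in the proof of Lemma \ref{lem:inverse,funda,sol}), $z^{\rho(d)}N_{\alpha,d}(z)$ is a Laurent polynomial in $z$ whose exponents lie in the fixed interval $[|\alpha|-n,|\alpha|]$ independent of $d$. Hence on any compact $K\subset \cc_z^\times$ one gets
\[
|N_{\alpha,d}(z)| \le A(K)\,|N_{\alpha,d}(1)|\,|z|^{-\rho(d)}.
\]
Next, I rewrite
\[
\prod_{k=1}^{\rho(d)}(\rho+kz) = z^{\rho(d)}\,\rho(d)! \prod_{k=1}^{\rho(d)}\bigl(1+\tfrac{\rho}{kz}\bigr),
\]
and bound the last factor, using a submultiplicative norm on $H^{\ev}(X)$ together with $\log(1+x)\le x$, by $\prod_k(1+|\rho|/(k|z|)) \le B(K)\,\rho(d)^{|\rho|/r}$ where $r>0$ is a lower bound of $|z|$ on $K$; this is polynomial in $\rho(d)$.

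Combining the two bounds, the factorial $\rho(d)!$ and the powers $|z|^{\pm\rho(d)}$ cancel, producing
\[
\Bigl|N_{\alpha,d}(z)\prod_{k=1}^{\rho(d)}(\rho+kz)\Bigr| \le D(K)\,C_2^{|d|+\rho(d)}\,\rho(d)^{|\rho|/r}.
\]
Since $\rho=c_1(X)$ is nef and $\{T_i\}_{i=1}^r$ is a nef integral basis of $H^2(X)$, there is a constant $C'>0$ with $0\le \rho(d)\le C'|d|$ for $|d|:=\sum_i T_i(d)$. The subset $\eff(X)\subset H_2(X,\zz)$ is a finitely generated semigroup, so the number of $d\in\eff(X)$ with $|d|=m$ grows at most polynomially in $m$. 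On $U_{\rm sm}=\{|e^{t^i}|<\epsilon\}$ one has $|e^{\tau_2(d)}|\le \epsilon^{|d|}$, so the $d$-th term is bounded on $K'\times K$ (with $K'\subset U_{\rm sm}$ compact) by a polynomial in $|d|$ times $(\epsilon\, C_2^{1+C'})^{|d|}$. Choosing $\epsilon<C_2^{-(1+C')}$, which is \emph{independent} of $K$, gives uniform convergence on $K'\times K$, hence analyticity on $U_{\rm sm}\times \cc_z^\times$. The argument for $I_\alpha^\loc$ is identical, with $\prod_{k=0}^{\rho(d)-1}(-\rho-kz)$ in place of $\prod_{k=1}^{\rho(d)}(\rho+kz)$.

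The crux of the argument, and the only real subtlety, is that the $\epsilon$ producing convergence can be chosen independently of the compact $K\subset \cc_z^\times$. This rests on two cancellations: the factorial $\rho(d)!$ in the product is absorbed by the $1/\rho(d)!$ from the coefficient estimate, and the monomial $|z|^{\rho(d)}$ is absorbed by the $|z|^{-\rho(d)}$ coming from the degree axiom applied to $N_{\alpha,d}(z)$. The surviving factor grows only polynomially in $\rho(d)\le C'|d|$ and is dominated by the geometric decay $(\epsilon C_2^{1+C'})^{|d|}$.
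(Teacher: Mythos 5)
Your proof is correct and takes essentially the same route as the paper's (very brief) argument: use the homogeneity identity $N_{\alpha,d}(z)=z^{-(\rho(d)-|\alpha|)}z^{-\frac{\deg}{2}}N_{\alpha,d}(1)$ together with the estimate \eqref{eq:coeff_estimate}, so that the $1/\rho(d)!$ cancels the factorial growth of $\prod_{k=1}^{\rho(d)}(\rho+kz)$ and the powers of $z$ cancel, leaving geometric decay in $e^{\tau_2(d)}$ that is uniform in $z$ on compacta of $\cc_z^\times$. One minor repair: $\eff(X)$ need not be a finitely generated semigroup; the polynomial count of effective classes with $\sum_i T_i(d)=m$ (and the comparison $\|d\|\le C\sum_i T_i(d)$ on $\eff(X)$ needed to reconcile the norm in \eqref{eq:coeff_estimate} with your $|d|$) instead follows from the fact that $T_1,\dots,T_r$ is a nef \emph{basis} of $H^2(X)$, so $d\mapsto (T_1(d),\dots,T_r(d))\in\zz_{\ge 0}^r$ is finite-to-one on effective classes and $\sum_i T_i$ is strictly positive on the closed effective cone minus the origin.
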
 
\begin{proof}
This follows easily from the estimate \eqref{eq:coeff_estimate} 
and $N_{\alpha,d}(z)= z^{-(\rho(d)-|\alpha|)}
z^{-\frac{\deg}{2}}N_{\alpha,d}(1)$. 
\end{proof}

For $\alpha=0$, the $I$-functions $I^\eu_0$ and $I^\loc_0$ 
have the following $z^{-1}$-expansions: 
\begin{align*}
I_0^\eu(\tau_{2},z)
&= F(\tau_{2})\bun + G(\tau_{2})z^{-1}+O(z^{-2}),\\
  I_0^{\loc}(\tau_{2},z)
&= \bun + H(\tau_{2})z^{-1}+O(z^{-2}), 
\end{align*}
where $F(\tau_2)$ is a scalar-valued function and 
$G(\tau_2)$ and $H(\tau_2)$ are $H^2(X)$-valued 
functions. We define the mirror maps by 
\begin{equation}
\label{eq:mirrormaps}
\Mir_\eu(\tau_2) := \frac{G(\tau_2)}{F(\tau_2)},  
\qquad 
\Mir_\loc(\tau_2) := H(\tau_2). 
\end{equation} 
Note that $F(\tau_2)$ is invertible in a neighbourhood of the large 
radius limit point and the mirror maps take values in $H^2(X)$. 
The mirror maps have the asymptotic $\Mir(\tau_2)\sim_{\lrl} \tau_2$ 
and thus induce isomorphisms between neighbourhoods 
of the large radius limit point. 
Quantum Lefschetz Theorem of Coates-Givental 
\cite{Givental-Coates-2007-QRR} gives the following proposition: 

\begin{prop}\label{prop:I,fctn}
For any $\alpha\in\{0, \ldots ,s\}$, there exist 
$v_\alpha(\tau_2,z), w_\alpha(\tau_2,z) 
\in H^{\ev}(X)\otimes \cc[z][\![e^{\tau_2}]\!]$ 
such that 
\begin{align*}
I_{\alpha}^{\eu}(\tau_{2},z) 
& = L_{\eu}(\Mir_{\eu}(\tau_{2}),z)^{-1} 
v_\alpha(\tau_{2},z),  
\\
I_{\alpha}^{\loc}(\tau_{2},z) 
& = 
L_{\loc}(\Mir_{\loc}(\tau_{2}),z)^{-1}
w_\alpha(\tau_2,z). 
\end{align*}
Moreover we have the asymptotics 
$v_\alpha \sim_{\lrl} T_\alpha$, $w_\alpha \sim_{\lrl} T_\alpha$ 
under the large radius limit, and 
$v_\alpha$, $w_\alpha$ are homogeneous of degree 
$2|\alpha| = \deg T_\alpha$ with respect to the usual grading on $H^{\ev}(X)$,  
$\deg z =2$ and $\deg e^{\tau_2} =0$. 
\end{prop}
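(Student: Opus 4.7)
The plan is to combine Givental's Lagrangian cone formalism with the Coates--Givental quantum Lefschetz theorem \cite[\S 10]{Givental-Coates-2007-QRR}. I treat the $\eu$-case; the $\loc$-case follows by the same argument, using Proposition \ref{prop:noneqlim_invEuler} to identify $\cL^{K_X}$ with the non-equivariant limit of $\cL^{(\eqeulerstar,K_X)}$ pulled back by $h$, and observing that the factor $\prod_{k=0}^{\rho(d)-1}(-\rho-kz)$ appearing in $I^\loc_\alpha$ is precisely the non-equivariant limit of the hypergeometric factor for the $(\eqeulerstar,K_X)$-twisted $I$-function.

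By quantum Lefschetz applied to the convex line bundle $K_X^{-1}$, the family $-z\,I_0^\eu(\tau_2,-z)$ is a $\cc[\![e^{\tau_2}]\!]$-valued point on the twisted Lagrangian cone $\cL^{\eK}$. The expansion $I_0^\eu=F\bun+Gz^{-1}+O(z^{-2})$ gives $-zI_0^\eu(\tau_2,-z)/F(\tau_2)=-z\bun+\Mir_\eu(\tau_2)+O(z^{-1})$; since $\cL^{\eK}$ has vertex at the origin, the rescaled point also lies on $\cL^{\eK}$, and by the parametrisation of tangent spaces recalled in \S\ref{subsubsec:tgt,space,twisted,cones} we obtain
\[
T_{-z I_0^\eu(\tau_2,-z)}\cL^{\eK}\;=\;T_{J^\eu(\Mir_\eu(\tau_2),-z)}\cL^{\eK},
\]
which by Proposition \ref{prop:tgt,cone,R,module} is the free $\cc[z][\![e^{\tau_2}]\!]$-module with basis $\{L_\eu(\Mir_\eu(\tau_2),-z)^{-1}T_\beta\}_{\beta=0}^{s}$. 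The next step is to show that every $I_\alpha^\eu(\tau_2,-z)$ lies in this tangent space: for $\alpha\in\{1,\dots,r\}$ the divisor equation applied to $N_{0,d}$ supplies a $\cc[z][\![e^{\tau_2}]\!]$-linear identity writing $I_\alpha^\eu$ in terms of $-zI_0^\eu$ and $\partial_{t^\alpha}(-zI_0^\eu)$, both tangent to the cone; for general $\alpha$ one invokes the Birkhoff-type formulation of quantum Lefschetz, in which the family $\{I_\alpha^\eu\}_{\alpha=0}^{s}$ arises from the untwisted inverse fundamental solution $L_X(0,z)^{-1}$ by multiplying each degree-$d$ component by $\prod_{k=1}^{\rho(d)}(\rho+kz)$, and this modification carries tangent vectors of $\cL^X$ to tangent vectors of $\cL^{\eK}$.

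Expanding $I_\alpha^\eu(\tau_2,-z)=\sum_\beta c_\alpha^\beta(\tau_2,z)\,L_\eu(\Mir_\eu(\tau_2),-z)^{-1}T_\beta$ with $c_\alpha^\beta\in\cc[z][\![e^{\tau_2}]\!]$ and setting $v_\alpha(\tau_2,z):=\sum_\beta c_\alpha^\beta(\tau_2,-z)T_\beta$ yields the claimed factorisation. The asymptotic $v_\alpha\sim_{\lrl}T_\alpha$ follows by restricting to $e^{\tau_2}\to 0$ and using $L_\eu(\Mir_\eu,z)^{-1}\to e^{\Mir_\eu/z}\Id$, $I_\alpha^\eu\to e^{\tau_2/z}T_\alpha$, $\Mir_\eu\to\tau_2$; homogeneity of $v_\alpha$ of degree $2|\alpha|$ (with $\deg z=2$, $\deg e^{\tau_2}=0$) is a degree count based on Lemma \ref{lem:inverse,funda,sol} together with the fact that $L_\eu$ preserves this grading. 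The main obstacle will be the second step above for general $\alpha\notin\{0,1,\dots,r\}$, where $I_\alpha^\eu$ is not a single-coordinate derivative of $I_0^\eu$ and one must invoke the collective Birkhoff-factorisation content of quantum Lefschetz to place $I_\alpha^\eu$ in the correct tangent space.
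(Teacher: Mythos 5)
Your overall strategy (Givental's cone formalism plus quantum Lefschetz, reading off the mirror map from the $z^{-1}$-expansion of the $\alpha=0$ component and then expanding in the basis $L^{-1}T_\beta$ of the tangent space) is the same skeleton as the paper's proof, and the parts concerning $I_0^\eu$, the identification $T_{-zI_0^\eu(\tau_2,-z)}\cL^{\eK}=T_{J^{\eu}(\Mir_\eu(\tau_2),-z)}\cL^{\eK}$, the asymptotics, and the homogeneity are fine. The gap is exactly where you locate it yourself: placing $I_\alpha^\eu$ in that tangent space for $\alpha$ outside the divisor directions. Your proposed fix --- that the degree-wise hypergeometric modification $[\,\cdot\,]_d\mapsto [\,\cdot\,]_d\prod_{k=1}^{\rho(d)}(\rho+kz)$ ``carries tangent vectors of $\cL^X$ to tangent vectors of $\cL^{\eK}$'' --- is not a statement you can cite. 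Quantum Lefschetz in the Coates--Givental form only asserts that the modification of the (big) $J$-function lies on the twisted cone; the modification is not a linear map of $\cH$ (it depends on the Novikov degree, and the coefficients $c_\beta(z)$ of a general tangent vector themselves involve Novikov variables), so there is no a priori reason it should preserve tangency, and proving it for the vectors you actually need is precisely the missing content, not a black box you may invoke. In the non-equivariant Euler twist there is moreover no symplectomorphism $\Delta$ available to linearize the statement, so this step cannot be outsourced to the CCIT formulation either without redoing the argument equivariantly.

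The paper closes this gap differently: it uses Coates--Givental's \emph{big} $I$-function $\bI(\tau,z)$, defined for $\tau$ ranging over all of $H^{\ev}(X)$, which by their Theorem~2 gives a family of points of $\cL_{(\eqeuler,K_X^{-1})}$ parametrized by the full space $H^{\ev}(X)$. Every $I_\alpha^\eu(\tau_2,z)$, for every $\alpha\in\{0,\dots,s\}$, is then literally $\partial_\alpha\bI(\tau,z)$ restricted to $\tau=\tau_2$ (after $Q=1$, $\lambda=0$), hence automatically a tangent vector at $\bI(\tau_2,-z)$; the mirror map is read off from $\partial_0\bI$ and one concludes with Proposition~\ref{prop:tgt,cone,R,module}, taking the non-equivariant limit at the end. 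If you want to repair your proof, replace your unproven ``Birkhoff-type'' step by this big-$I$-function argument (or, equivalently, work equivariantly with the symplectomorphism of CCIT and only then pass to the limit); as written, the non-divisor case is assumed rather than proved.
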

\begin{proof}
We will just prove the equality for the $\eK$-twisted theory. 
The same argument applies to the other case. 
Coates-Givental \cite[Theorem 2, see also p.27 and p.34]{Givental-Coates-2007-QRR} 
introduced the following ``big'' $I$-function: 
\begin{align*}
 \bI(\tau,z)&:=z \bun +  \tau+ 
\sum_{\beta=0}^s \sum_{(d,\ell)\neq (0,0),(0,1)} 
\frac{Q^d}{\ell!} 
\llangle \frac{T_{\beta}}{z-\psi}, \tau,\dots,\tau\rrangle_{0,\ell+1,d} 
T^\beta 
\prod_{k=1}^{\rho(d)}(\rho + \lambda+ kz)
\end{align*}
and showed that $\bI(\tau,-z)$ lies in the Lagrangian cone 
$\cL_{(\eqeuler,K_X^{-1})}$ 
of the $(\eqeuler,K_X^{-1})$-twisted theory. 
This is related to our $I$-functions as 
\begin{equation} 
\label{eq:Ibig_ourI}
I_{\alpha}^{\eu}(\tau_{2},z) = 
\partial_{\alpha}\bI(\tau,z)\Bigr|_{\tau=\tau_{2}, Q=1, \lambda=0}. 
\end{equation} 
Note that $\partial_\alpha \bI(\tau_2,-z)$ is a 
tangent vector to the cone $\cL_{(\eqeuler,K_X^{-1})}$ 
at $\bI(\tau_2,-z)$. Moreover $\partial_0 \bI(\tau_2,-z)$ 
has the following expansion: 
\[
\partial_0 \bI(\tau_2,- z) = \bF(\tau_2) - z^{-1} \bG(\tau_2) + 
O(z^{-2}) 
\]
with $\bF(\tau_2) \in \cc[\![Q,\tau_2]\!]$ and 
$\bG(\tau_2) \in H^{\ev}(X)\otimes \cc[\lambda][\![Q,\tau_2]\!]$. 
Therefore $\partial_0 \bI(\tau_2,-z)/\bF(\tau_2)$ gives the 
unique intersection point: 
\[
(\bun + \cH_-) \cap T_{\bI(\tau_2,-z)} \cL_{(\eqeuler,K_X^{-1})}. 
\]
Set $\ttau = \bG(\tau_2)/\bF(\tau_2)$. 
The discussion in \S \ref{subsubsec:tgt,space,twisted,cones} 
shows that the tangent space at $\bI(\tau_2,-z)$ is 
generated by $\partial_\alpha J_{(\eqeuler,K_X^{-1})}(\ttau,-z) 
= L_{(\eqeuler,K_X^{-1})}(\ttau,-z)^{-1} T_\alpha$ 
over $\cc[z,\lambda][\![Q,\tau_2]\!]$ (see \eqref{eq:J,L}). 
Therefore there exists $\bv_\alpha(\tau_2,z)\in H^{\ev}(X)
\otimes \cc[z,\lambda][\![Q,\tau_2]\!]$ such that 
\[
\partial_\alpha \bI(\tau_2,z) = 
L_{(\eqeuler,K_X^{-1})}(\ttau,z)^{-1} 
\bv_\alpha(\tau_2,z). 
\]
It is easy to check that $\ttau-\tau_2$ and $\bv_\alpha$ in fact 
belong to $H^{\ev}(X)\otimes\cc[z,\lambda][\![Q e^{\tau_2}]\!]$.  
Under the non-equivariant limit $\lambda \to 0$ and 
the specialization $Q=1$, $\ttau$ becomes $\Mir_{\eu}(\tau_2)$. 
Setting $v_\alpha(\tau_2,z) = \bv_\alpha(\tau_2,z)|_{\lambda=0,Q=1}$ 
and using \eqref{eq:Ibig_ourI}, 
we obtain the formula in the proposition. 
The asymptotics of $v_\alpha(\tau_2,z)$ follows from the asmptotics 
of $I_\alpha^\eu(\tau_2,z)$ in Remark \ref{rem:I,function}. 
The homogeneity of $v_\alpha(\tau_2,z)$ follows from the homogeneity 
of $I_\alpha^\eu(\tau_2,z)$ and $L_\eu(\tau_2,z)$. 
\end{proof}

\begin{lem} 
Suppose that the small quantum cohomology of $X$ is 
convergent as in \S \ref{subsec:convergence}. 
The flat connections $\nabla^\eu$, $\nabla^{\loc}$ 
for the small quantum $D$-modules 
$\SQDM_{\eK}(X)$, $\SQDM(K_X)$ are convergent 
over a region $U_{\rm sm}\subset H^2(X)$ of the form \eqref{eq:Usmall}. 
Also the functions $v_\alpha$, $w_\alpha$ in Proposition 
\ref{prop:I,fctn} are convergent over the same region. 
\end{lem}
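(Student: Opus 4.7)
The plan is to combine the convergence of the $I$-functions from Lemma \ref{lem:convergence_I} with the Birkhoff-type factorization of Proposition \ref{prop:I,fctn} to deduce the analyticity of $v_\alpha$, $w_\alpha$ and of the fundamental solutions $L_\eu$, $L_\loc$, from which the convergence of $\nabla^\eu$ and $\nabla^\loc$ will follow. I write the argument for the $\eK$-twisted case; the $K_X$ case is parallel and in fact slightly simpler (since the analogue of $F$ is identically $\bun$, no shrinking is needed).

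First I extract the mirror map. The coefficients $F(\tau_2)$ and $G(\tau_2)$ in the expansion $I^\eu_0(\tau_2,z) = F(\tau_2)\bun + G(\tau_2) z^{-1} + O(z^{-2})$ are analytic on $U_{\rm sm}$, since each term of the $e^{\tau_2(d)}$-series defining $I^\eu_0$ is polynomial in $z,z^{-1}$ with coefficients analytic in $\tau_2$ and the series converges on $U_{\rm sm}$. Because $F \sim_{\lrl} 1$, after shrinking $U_{\rm sm}$ while keeping the form \eqref{eq:Usmall} we may assume $F$ is non-vanishing, so $\Mir_\eu = G/F$ is analytic there. The asymptotic $\Mir_\eu(\tau_2) \sim_{\lrl} \tau_2$ shows that $\Mir_\eu$ is a local biholomorphism near the large radius limit, hence $\Mir_\eu^{-1}$ is analytic on a region of the form \eqref{eq:Usmall}.

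Second I perform a Birkhoff-type factorization on the identity $I^\eu_\alpha(\tau_2,z) = L_\eu(\Mir_\eu(\tau_2),z)^{-1} v_\alpha(\tau_2,z)$ of Proposition \ref{prop:I,fctn}. By homogeneity, $v_\alpha(\tau_2,z)$ is polynomial in $z$ of degree at most $|\alpha|$, while $L_\eu(\tau,z)^{-1} = \Id + O(z^{-1})$ at $z = \infty$; these two normalizations make the decomposition unique. Matching coefficients in the asymptotic expansion at $z=\infty$ (well-defined termwise in the convergent $e^{\tau_2(d)}$-series) yields at each order a finite triangular linear system, with inputs the analytic Taylor coefficients of $I^\eu_\alpha$ and unknowns the coefficients of $v_\alpha(\tau_2,z)$ and of $L_\eu(\Mir_\eu(\tau_2),z)^{-1}$. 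Solving this recursion produces both $v_\alpha$ and $L_\eu(\Mir_\eu(\tau_2),z)^{-1}$ as analytic functions on $U_{\rm sm}$.

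Third, composing with the analytic $\Mir_\eu^{-1}$ exhibits $L_\eu(\tau_2,z)^{-1}$ as an analytic function of $\tau_2$ on a region of the form \eqref{eq:Usmall}. Writing $L_\eu(\tau_2,z)^{-1} = \Id + L_1(\tau_2) z^{-1} + O(z^{-2})$ and using $\nabla^\eu_i L_\eu^{-1} = 0$, the structure constants are recovered as $T_i \bullet^\eu_{\tau_2} T_\alpha = (\partial_i L_1(\tau_2))(T_\alpha)$ for $i = 1,\dots,r$, which is analytic. Hence $\nabla^\eu$ is analytic. The same argument applied to $I^\loc_\alpha$ (with $\Mir_\loc = H$) yields the analyticity of $\nabla^\loc$ and of $w_\alpha$. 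The main obstacle is the Birkhoff factorization step: one must verify that the termwise $z$-expansion of $I^\eu_\alpha$ at infinity produces coefficients analytic in $\tau_2$ and that the triangular recursion admits a well-defined analytic solution. The decisive facts are the polynomial bound $\deg_z v_\alpha \le |\alpha|$, which limits the number of $z$-orders entering the recursion, together with the estimate \eqref{eq:coeff_estimate} from \cite{iritani_convergence} providing the required analytic control on the $e^{\tau_2(d)}$-series.
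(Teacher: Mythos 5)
Your overall route is the same as the paper's: deduce analyticity of $\Mir_{\eu/\loc}$, of $L_{\eu/\loc}^{-1}$ and of $v_\alpha,w_\alpha$ from the convergent $I$-functions of Lemma \ref{lem:convergence_I} via the factorization of Proposition \ref{prop:I,fctn}, then recover the structure constants from $\nabla^\eu L_\eu^{-1}=0$. However, the decisive step is not justified as you state it. The two normalizations you invoke ($v_\alpha$ polynomial in $z$ with $\deg_z v_\alpha\le|\alpha|$, and $L_\eu^{-1}=\Id+O(z^{-1})$) do \emph{not} make the factorization $I^\eu_\alpha=L_\eu(\Mir_\eu(\tau_2),z)^{-1}v_\alpha$ unique, and matching powers of $z$ does not produce a triangular recursion: at order $z^j$ the unknowns $v_{\alpha,j}$ appear together with the $z^{-k}$-coefficients $\Lambda_k$ of $L_\eu^{-1}$ applied to the higher coefficients $v_{\alpha,j+k}$, and the negative-power equations couple all the $\Lambda_k$ to all the $v_{\alpha,j}$, so the system does not close order by order. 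Concretely, with two classes $T_0\in H^0$, $T_1\in H^2$, the matrix $M=\mathrm{diag}(1,z)$ admits the factorization $\Id\cdot M$ and also $(\Id+Nz^{-1})\cdot\left(\begin{smallmatrix}1&-1\\0&z\end{smallmatrix}\right)$ where $N$ sends $T_1\mapsto T_0$ and kills $T_0$; both satisfy your two normalizations, so uniqueness fails without further input.

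What rules this out, and what the paper actually uses, is the \emph{full} homogeneity rather than just the degree bound in $z$: $L_\eu$ is homogeneous of degree $0$, so $\Lambda_k$ raises cohomological degree by $2k$ and $L_\eu^{-1}$ is block lower-triangular and unipotent for the grading on $H^{\ev}(X)$, while homogeneity of $v_\alpha$ of degree $2|\alpha|$ makes $[v_0,\dots,v_s]$ block upper-triangular (in the counterexample above, $N$ lowers degree and $zT_1$ is not homogeneous, so both factorizations violate homogeneity). The identity $[I^\eu_0,\dots,I^\eu_s]=L_\eu(\Mir_\eu(\tau_2),z)^{-1}[v_0,\dots,v_s]$ is then an LU decomposition; it is unique, and $L_\eu^{-1}$ and the $v_\alpha$ are computed from the convergent matrix $[I^\eu_0,\dots,I^\eu_s]$ by rational linear algebra (legitimate near the large radius limit, where the diagonal blocks are invertible because $I^\eu_\alpha\sim_{\lrl} e^{\tau_2/z}T_\alpha$). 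This is what transfers the convergence of Lemma \ref{lem:convergence_I} to $L_\eu$, $v_\alpha$, and hence to $\nabla^\eu$; note also that uniqueness alone would not suffice, since one needs an explicit analyticity-preserving procedure, which is exactly what the LU elimination provides. The remaining parts of your argument (analyticity and invertibility of the mirror maps near the large radius limit, recovering $\bullet^{\eK}_{\tau_2}$ from $L_\eu^{-1}$, and the parallel local case) are fine once this step is repaired.
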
 
\begin{proof} 
We only discuss the convergence of the $\eK$-twisted theory. 
The other case is similar. 
From Lemma \ref{lem:convergence_I}, it follows that 
the mirror map $\Mir_\eu(\tau_2)$ is convergent on 
a region of the form \eqref{eq:Usmall}. 
Recall that $L_\eu(\tau_2,z)$ is homogeneous of degree zero 
and that $L_\eu(\tau_2,z) = \Id + O(z^{-1})$. 
Recall also that $v_\alpha(\tau_2,z)$ is homogeneous of 
degree $2|\alpha|$ from Proposition \ref{prop:I,fctn}. 
Therefore $L_\eu$ is lower-triangular and the matrix 
$[v_0,\dots,v_s]$ is upper-triangular with respect to 
the grading on $H^{\ev}(X)$. 
Therefore the matrix equation 
\[
\begin{pmatrix}
\vert &  \vert &  & \vert \\ 
I_0^\eu & I_1^\eu & \dots &I_s^{\eu} \\ 
\vert & \vert &  & \vert 
\end{pmatrix} 
= L_\eu(\Mir_\eu(\tau_2),z)^{-1} 
\begin{pmatrix}
\vert &  \vert &  & \vert \\ 
v_0^\eu & v_1^\eu & \dots &v_s^{\eu} \\ 
\vert & \vert &  & \vert 
\end{pmatrix} 
\]
in Proposition \ref{prop:I,fctn} 
can be viewed as the LU decomposition. 
Therefore we can solve for $L_\eu^{-1}$ and $v_\alpha$ 
from $[I^\eu_0,\dots,I^\eu_s]$ by simple linear algebra,  
and Lemma \ref{lem:convergence_I} implies that 
both $L_\eu(\Mir_\eu(\tau_2),z)$ and $v_\alpha$ are 
convergent. The conclusion follows. 
\end{proof} 

The above lemma justifies the definition \eqref{eq:SQDM} 
at the beginning of this section \S \ref{subsec:SQDM}. 

\subsection{Second structure connections are twisted quantum connections} 
We show that the small quantum $D$-modules 
$\SQDM_{\eK}(X)$ and $\SQDM(K_X)$  
correspond to the second structure connections $\chnabla^{(\frac{n+1}{2})}$ 
and $\chnabla^{(-\frac{n+1}{2})}$ respectively. 

By the divisor equation, the quantum connections 
$\nabla^\eu$, $\nabla^\loc$ are invariant 
under the shift $\tau \mapsto \tau+ 2\pi\sqrt{-1} v$ 
with $v\in H^2(X,\zz)$. 
Therefore the small quantum 
$D$-modules descend to the quotient space  
\[
U_{\rm sm} /2\pi\sqrt{-1} H^2(X,\zz) 
\cong \{(q^1,\dots,q^r)\in (\cc^\times)^r: |q^i|<\epsilon\}. 
\]
Since the $I$-functions $I_0^\eu$, $I_0^\loc$ satisfy 
the equation $I(\tau_2 + 2\pi\sqrt{-1} v,z) = 
e^{2\pi\sqrt{-1}v/z} I(\tau_2)$, the mirror maps 
$\Mir = \Mir_\eu$ or $\Mir_\loc$ satisfy 
$\Mir(\tau_2 + 2 \pi\sqrt{-1} v) = \Mir(\tau_2) + 2\pi\sqrt{-1} v$; 
therefore the mirror maps descend to isomorphisms 
\[
\Mir_{\eu/\loc} \colon U'_{\rm sm}/2\pi\sqrt{-1}H^2(X,\zz) 
\xrightarrow{\cong} U''_{\rm sm}/2\pi\sqrt{-1}H^2(X,\zz) 
\]
between neighbourhoods $U'_{\rm sm}$, $U''_{\rm sm}$ of 
the form \eqref{eq:Usmall}. 
Define the maps $\pi_\eu, \pi_\loc$ by 
\begin{align}
\label{eq:map_pi}
\begin{split} 
\pi_\eu(\tau_2,x) &=
\Mir_\eu(\tau_2 - \rho \log x), \\ 
\pi_\loc(\tau_2,x) &= 
\Mir_\loc(\tau_2 -\rho \log x + \pi\sqrt{-1}\rho).
\end{split}  
\end{align}
Choosing smaller $U_{\rm sm}'$ if necessary, 
each of $\pi_\eu$ and $\pi_\loc$ defines a map 
\[
U'_{\rm sm} \times \{x \in \cc^\times: |x|> c\}
\longrightarrow U''_{\rm sm}/2\pi\sqrt{-1}H^2(X,\zz). 
\] 
We need to choose sufficiently small large radius limit 
neighbourhoods of the form \eqref{eq:Usmall} 
which may vary in each case: we denote by $U_{\rm sm}'$, 
$U_{\rm sm}''$ for such neighbourhoods. 

\begin{thm}
\label{thm:QS,with,I,Coates}
Suppose that the small quantum cohomology of $X$ is convergent 
as in \S\ref{subsec:convergence}. 
We have the following isomorphisms of vector bundles with connections:
\begin{align*} 
&\psi_\eu \colon \big(\check{F},\chnabla^{(\frac{n+1}{2})}\big)
\Bigr|_{U_{\rm sm}'\times \{|x|> c\}}  
\cong \pi_\eu^* \SQDM_{\eK}(X)\Bigr|_{z=1} \\
& \psi_\loc \colon \big(\check{F},\chnabla^{(-\frac{n+1}{2})}\big)
\Bigr|_{U_{\rm sm}'\times \{|x|> c\}}
\cong \pi_\loc^*\SQDM(K_X)\Bigr|_{z=1}
\end{align*} 
where, as discussed above, we regard $\SQDM_{\eK}(X)$, $\SQDM(K_X)$ 
as flat connections on the quotient space 
$U_{\rm sm}''/2\pi\sqrt{-1} H^2(X,\zz)$. 
These maps are given by the following formulae: 
\begin{align} 
\label{eq:def_psi_tw} 
\psi_\eu(T_\alpha) & = \left(
\left(-\pi_\eu^* \nabla^\eu\right)_{\partial_x} \right )^{n-|\alpha|}
\left(x^{-1} v_\alpha (\tau_2 - \rho \log x,1)\right) \\ 
\label{eq:def_psi_loc} 
\psi_\loc\left( 
\left(-\chnabla_{\partial_x}^{(-\frac{n+1}{2})} \right)^{|\alpha|} 
T_\alpha \right) 
&= w_\alpha(\tau_2 - \rho \log x + \pi\sqrt{-1} \rho,1)  
\end{align} 
\end{thm}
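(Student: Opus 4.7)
The plan is to construct the isomorphisms $\psi_\eu$ and $\psi_\loc$ as compositions of two trivializations: on one side the inverse fundamental solution $\chK^{(\sigma,\ell)}$ of the second structure connection $\chnabla^{(\sigma)}$ from Proposition \ref{prop:inverse,fundamental,solution,second,metric}, and on the other the fundamental solution $L_\eu$ (resp.~$L_\loc$) of the twisted quantum connection entering through the Coates-Givental factorization of Proposition \ref{prop:I,fctn}. The bridge between the two trivializations is an explicit identity that matches $\chK_\alpha^{(\sigma,\ell)}$ with an iterated $x$-derivative of the twisted $I$-function evaluated after the substitutions $\tau_2\mapsto\tau_2-\rho\log x$ (for $\eu$) and $\tau_2\mapsto\tau_2-\rho\log x+\pi\sqrt{-1}\rho$ (for $\loc$); these are precisely the substitutions featuring in the definitions \eqref{eq:map_pi} of $\pi_\eu$ and $\pi_\loc$.

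The main computational step is to verify, with the choices $\sigma=\tfrac{n+1}{2}$, $\ell=1$ for the Euler case and $\sigma=-\tfrac{n+1}{2}$, $\ell=0$ for the local case (both admissible by Proposition \ref{prop:inverse,fundamental,solution,second,metric} since $\sigma-\tfrac{n-1}{2}=1$ in the first and $\ell\notin\zz_{>0}$ in the second), the two identities
\begin{align*}
\chK_\alpha^{(\frac{n+1}{2},1)}(\tau_2,x) & = (-\partial_x)^{n-|\alpha|}\bigl[x^{-1}\,I_\alpha^\eu(\tau_2-\rho\log x,1)\bigr],\\
(-\partial_x)^{|\alpha|}\chK_\alpha^{(-\frac{n+1}{2},0)}(\tau_2,x) & = e^{-\pi\sqrt{-1}\rho}\,I_\alpha^\loc(\tau_2-\rho\log x+\pi\sqrt{-1}\rho,1).
\end{align*}
Both identities follow from a termwise comparison using Definition \ref{defi,I,functions}, the explicit formula of Proposition \ref{prop:inverse,fundamental,solution,second,metric}, and the elementary rule $(-\partial_x)^k(x^{-a})=\Gamma(a+k)/\Gamma(a)\cdot x^{-a-k}$. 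The particular values of $\ell$ are forced by matching the $\Gamma$-ratio $\Gamma(\rho+\rho(d)-|\alpha|+\tfrac{n+1}{2}+\sigma)/\Gamma(\rho+\ell)$ to the product $\prod_{k=1}^{\rho(d)}(\rho+k)=\Gamma(\rho+\rho(d)+1)/\Gamma(\rho+1)$ appearing in $I_\alpha^\eu$, respectively to $\prod_{k=0}^{\rho(d)-1}(-\rho-k)=(-1)^{\rho(d)}\Gamma(\rho+\rho(d))/\Gamma(\rho)$ appearing in $I_\alpha^\loc$; the sign $(-1)^{\rho(d)}$ in the latter is exactly cancelled by the phase $e^{\pi\sqrt{-1}\rho(d)}$ produced by the shift $+\pi\sqrt{-1}\rho$.

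Substituting $I_\alpha^\eu(\tau_2,1)=L_\eu(\Mir_\eu(\tau_2),1)^{-1}v_\alpha(\tau_2,1)$ of Proposition \ref{prop:I,fctn} into the first identity and using that $L_\eu(\cdot,1)$ trivializes $\nabla^\eu|_{z=1}$, so that $\partial_x$ applied through $L_\eu^{-1}$ corresponds to $(\pi_\eu^*\nabla^\eu)_{\partial_x}|_{z=1}$, yields
\[
\chK_\alpha^{(\frac{n+1}{2},1)}(\tau_2,x)=L_\eu(\pi_\eu(\tau_2,x),1)^{-1}\,(-\pi_\eu^*\nabla^\eu)_{\partial_x}^{n-|\alpha|}\bigl[x^{-1}\,v_\alpha(\tau_2-\rho\log x,1)\bigr].
\]
Defining $\psi_\eu(T_\alpha):=L_\eu(\pi_\eu(\tau_2,x),1)\,\chK_\alpha^{(\frac{n+1}{2},1)}(\tau_2,x)$ therefore realizes the formula \eqref{eq:def_psi_tw}; as the composition of two connection-intertwining maps, $\psi_\eu$ is an isomorphism of vector bundles intertwining $\chnabla^{(\frac{n+1}{2})}$ with $\pi_\eu^*\nabla^\eu|_{z=1}$. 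Analyticity over $U'_{\rm sm}\times\{|x|>c\}$ follows from Proposition \ref{prop:inverse,fundamental,solution,second,metric}(2) together with Lemma \ref{lem:convergence_I}.

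The construction of $\psi_\loc$ is parallel, with the complication that Proposition \ref{prop:noneqlim_invEuler} and Remark \ref{rem:fullyflat_Evee} identify the non-equivariant limit of $\nabla^{\eqeEd}$ with $h^*\nabla^{E^\vee}$ for $E=K_X^{-1}$, introducing the auxiliary factor $e^{\pi\sqrt{-1}\rho}$. Setting $\psi_\loc(T_\alpha):=L_\loc(\pi_\loc(\tau_2,x),1)\cdot e^{\pi\sqrt{-1}\rho}\cdot\chK_\alpha^{(-\frac{n+1}{2},0)}(\tau_2,x)$ and applying $(-\pi_\loc^*\nabla^\loc)_{\partial_x}^{|\alpha|}$ — which passes through $L_\loc$ as $\partial_x$ and commutes with the constant operator $e^{\pi\sqrt{-1}\rho}$ — reproduces $w_\alpha(\tau_2-\rho\log x+\pi\sqrt{-1}\rho,1)$, verifying \eqref{eq:def_psi_loc}. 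The principal technical obstacle is the careful bookkeeping of $\Gamma$-ratios, signs, and phases needed to establish the two identities of the second paragraph; once those are in hand the remainder of the argument is formal, being an instance of the general principle that composing two trivializations of flat connections produces a bundle isomorphism that intertwines the connections.
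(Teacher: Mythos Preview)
Your approach is essentially the one the paper takes; indeed Remark \ref{rem:I,tilde} records exactly your composition formulae $\psi_\eu = L_\eu(\pi_\eu,1)\circ\chK^{(\frac{n+1}{2},1)}$ and $\psi_\loc = L_\loc(\pi_\loc,1)\,e^{\pi\sqrt{-1}\rho}\circ\chK^{(-\frac{n+1}{2},0)}$, and the paper's computation \eqref{eq:sol_twSQDM}, \eqref{eq:sol_locSQDM} is precisely your pair of identities matching $\chK_\alpha$ with iterated $x$-derivatives of the $I$-functions.

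There is, however, one genuine gap. You repeatedly describe $\chK^{(\sigma,\ell)}$ as a ``trivialization'' and conclude that $\psi_\eu$, $\psi_\loc$ are isomorphisms because they are compositions of two trivializations. But Proposition \ref{prop:inverse,fundamental,solution,second,metric} only asserts that $\chK^{(\sigma,\ell)}$ is a \emph{solution}, i.e.~a connection-intertwining bundle map; it does not claim invertibility. The truncated Laplace transform is a formal linear operation and there is no a priori reason its output should be a fundamental (invertible) solution. The paper closes this gap by computing the large radius limit asymptotics \eqref{eq:asymptotics_tw} and \eqref{eq:asymptotics_loc}: the $d=0$ term of $\chK_\alpha^{(\sigma,\ell)}$ is $T_\alpha$ times an element of $H^{\ge 0}(X)$ with nonzero degree-zero part, so the matrix $[\chK_0^{(\sigma,\ell)},\dots,\chK_s^{(\sigma,\ell)}]$ is block upper triangular (with respect to the grading) with invertible diagonal at the large radius limit, hence invertible on a small enough $U_{\rm sm}'$. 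You should insert this check; once it is in place your argument is complete and coincides with the paper's.
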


\begin{proof}
To show that $\psi_{\eu/\loc}$ intertwines the connections, 
we compare the solution $\chK^{(\sigma,\ell)}$ 
to the second structure connection 
from Proposition \ref{prop:inverse,fundamental,solution,second,metric} 
with the inverse fundamental solution $L_{\eu/\loc}^{-1}$ 
of the small quantum $D$-modules. 
More precisely, we check the commutativity of 
a diagram of the form: 
\[
\xymatrix{
\big(\check{F},\chnabla^{(\sigma)}
\big)\Big|_{U_{\rm sm} \times \{|x|>c \}} 
\ar[rr]^{\psi} 
\ar[dr]_{\chK^{(\sigma,\ell)}} 
& & 
\pi^* \SQDM\Big|_{z=1} 
\ar[ld]^{\phantom{XYZ}(\text{constant factor})\cdot 
\pi^*L^{-1}|_{z=1}}\\ 
& (\check{F}, d) & 
}
\]
for suitable $\ell$; we shall take $\ell = 1$ for 
$\sigma = \frac{n+1}{2}$ 
and $\ell= 0$ for $\sigma = -\frac{n+1}{2}$. 

We define the $\cO$-module map $\psi_\eu$  
by the formula \eqref{eq:def_psi_tw} and show that it 
intertwines the connections. 
We have 
\begin{align}
\nonumber 
L_\eu(\pi_{\eu}(\tau_{2},x),1)^{-1} 
& \left(\left(-\pi_{\eu}^{*}\nabla^{\eu}
 \right)_{\partial_{x}}^{n-|\alpha|} 
\left( x^{-1}v_{\alpha} (\tau_{2}-\rho\log x,1) \right) 
\right) \\ 
& =(-\partial_{x})^{n-|\alpha|}
L_\eu(\Mir_\eu (\tau_{2}-\rho \log x),1)^{-1} 
\left(x^{-1}v_ \alpha (\tau_{2}-\rho \log x,1) \right)
\nonumber\\
&=(-\partial_{x})^{n-|\alpha|}x^{-1}I_{\alpha}^\eu
(\tau_{2}-\rho\log x ,1)
\nonumber 
\qquad \qquad \text{(by Proposition \ref{prop:I,fctn})}\\
& =\sum_{d}N_{\alpha,d}(1)
\frac{e^{\tau_{2}+\tau_{2}(d)}}{x^{\rho+\rho(d)+n-|\alpha|+1}}
\prod_{k=1}^{\rho(d)+n-|\alpha|}(\rho+k) 
\label{eq:sol_twSQDM}
\\ 
\label{eq:asymptotics_tw}
& \sim_{\lrl}\frac{T_{\alpha}}{x^{\rho+n-|\alpha|+1}}
(\rho+1)(\rho+2) \cdots (\rho+n-|\alpha|) 
\in H^{\geq 2|\alpha|}(X,\cc)
\end{align}
From Proposition \ref{prop:inverse,fundamental,solution,second,metric}, 
we deduce that the expression in Formula \eqref{eq:sol_twSQDM} is
exactly $\chK_{\alpha}^{\left(\frac{n+1}{2},1\right)}(\tau_{2},x)$. 
This implies that the morphism $\psi_\eu$ is a 
morphism of vector bundles with connection. 
The asymptotics \eqref{eq:asymptotics_tw} at the large radius limit 
shows that it is an isomorphism. 

Since $\{w_\alpha(\tau_2)\}$ form a basis in a neighbourhood 
of the large radius limit, we can define an $\cO$-module 
map $\psi_\loc^{-1}$ such that the formula \eqref{eq:def_psi_loc} holds. 
Note that $\chnabla^{(\sigma)}_{\partial_x}$ has no 
singularities on $U_{\rm sm}' \times \{|x|>c\}$ if we take 
the neighbourhood $U_{\rm sm}'$ sufficiently small. 
We have 
\begin{align}
e^{-\pi\sqrt{-1}\rho}L_\loc(\pi_\loc (\tau_{2},x),1)^{-1}
&  w_{\alpha}(\tau_2 - \rho \log x + \sqrt{-1}\pi \rho,1) 
 \nonumber \\
&= e^{-\pi\sqrt{-1}\rho} 
I_{\alpha}^\loc (\tau_{2}-\rho \log x + \sqrt{-1}\pi \rho,1)
\quad  \text{(by Proposition \ref{prop:I,fctn})}
 \nonumber \\
&=
\sum_{d}N_{\alpha,d}(1)\frac{e^{\tau_{2}+\tau_{2}(d)}}
{x^{\rho+\rho(d)}}
\prod_{k=0}^{\rho(d)-1}(\rho+k) 
\label{eq:sol_locSQDM} \\
&\sim_{\lrl} T_{\alpha}
e^{\tau_{2}} x^{-\rho}. 
\label{eq:asymptotics_loc} 
\end{align}
From Proposition \ref{prop:inverse,fundamental,solution,second,metric}, 
we deduce that the expression in Formula \eqref{eq:sol_locSQDM} 
is exactly 
\begin{displaymath}
(-\partial_{x})^{|\alpha|}
\chK_{\alpha}^{\left(-\frac{n+1}{2},0\right)}(\tau_{2},x). 
\end{displaymath}
This implies that the morphism $\psi_\loc^{-1}$ 
is a morphism of vector bundle with connection. 
The asymptotics \eqref{eq:asymptotics_loc} at the large radius limit
shows that it is an isomorphism. 
\end{proof}

\begin{rem}
\label{rem:I,tilde}
By construction in the proof, we have 
\begin{align*} 
\psi_\eu & = 
L_\eu(\pi_{\eu}(\tau_2,x),1) \circ \chK^{(\frac{n+1}{2},1)} \\ 
\psi_\loc & =  
L_\loc(\pi_{\loc}(\tau_2,x),1) e^{\pi\sqrt{-1}\rho} \circ 
\chK^{(-\frac{n+1}{2},0)}. 
\end{align*} 
with $\chK^{(\sigma,\ell)}$ in 
Proposition \ref{prop:inverse,fundamental,solution,second,metric}. 
In particular we have the following formula for $\psi_\loc(T_\alpha)$: 
\begin{equation}
\label{eq:psiloc_Talpha}
\psi_\loc(T_\alpha)= L_\loc(\pi_\loc(\tau_{2},x),1) 
\left[ 
e^{\sqrt{-1}\pi\rho}\sum_{d}N_{\alpha,d}(1)
\frac{e^{\tau_{2}+\tau_{2}(d)}}{x^{\rho+\rho(d)-|\alpha|}}
\frac{\prod_{k=-\infty}^{\rho(d)-|\alpha|-1}(\rho+k)}
{\prod_{k=-\infty}^{-1}(\rho+k)} 
\right]. 
\end{equation}
\end{rem}

\subsection{Quantum Serre duality in terms of the second structure connections}
\label{sec:qSerre_2ndconn}
In this section we see that the quantum Serre duality between 
$\QDM_{\eK}(X)$ and $\QDM(K_X)$ from Theorem 
\ref{thm:euler,quantum,Serre,QDM} can be rephrased 
in terms of the second structure connections. 
As a corollary, we obtain a description of the quantum 
$D$-module of an anticanonical hypersurface $Z\subset X$ 
in terms of the second structure connection. 
When $X$ is Fano, this gives an entirely algebraic 
description of the quantum connection of $Z$.

We begin with the following lemma:
\begin{lem} 
\label{lem:relation_mirrormaps} 
Let $\overline{f}$ be the map in Lemma 
\ref{lem:equ,limit,f,inverse} and $h$ be the map in \eqref{eq:h} 
in the case where $E = K_X^{-1}$. The map $h\circ \overline{f}$ 
relates the two mirror maps \eqref{eq:mirrormaps} as
\[
(h \circ \overline{f}) (\Mir_{\eu}(\tau_2)) 
= \Mir_{\loc}(h(\tau_2)). 
\]
In particular $h\circ \overline{f}|_{H^2(X)}$ is convergent and gives 
an isomorphism between neighbourhoods of the large radius limit point 
of the form \eqref{eq:Usmall}.  
\end{lem}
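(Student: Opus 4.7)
The plan is to derive the identity by combining three inputs: the Coates--Givental mirror theorem (Proposition~\ref{prop:I,fctn}) expressing the $I$-functions in terms of fundamental solutions and the mirror maps, the functional relation between the two $I$-functions recorded in Remark~\ref{rem:I,function}(2), and the fundamental-solution form of quantum Serre duality (Theorem~\ref{thm:euler,quantum,Serre,QDM}(3) specialised to $E=K_X^{-1}$, so that $\e(E)=c_1(E)=\rho$).

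First I would identify the Birkhoff factors in Proposition~\ref{prop:I,fctn}: since $v_0$ and $w_0$ are homogeneous of degree zero with large-radius limit $\mathbf{1}$, comparing with the $z^0$-coefficients of the $I$-functions forces $v_0(\tau_2,z)=F(\tau_2)\mathbf{1}$ and $w_0(\tau_2,z)=\mathbf{1}$, giving
\[
I_0^{\eu}(\tau_2,z)=F(\tau_2)\,L_{\eu}(\Mir_{\eu}(\tau_2),z)^{-1}\mathbf{1},\qquad
I_0^{\loc}(\tau_2,z)=L_{\loc}(\Mir_{\loc}(\tau_2),z)^{-1}\mathbf{1}.
\]
Applying quantum Serre to $\gamma=I_0^{\eu}(\tau_2,z)$ at $\tau=\Mir_{\eu}(\tau_2)$ (so that $\rho\cdot L_{\eu}(\Mir_{\eu}(\tau_2),z)\,I_0^{\eu}=F(\tau_2)\rho$) yields
\[
e^{\pi\sqrt{-1}\rho/z}\,\rho\,I_0^{\eu}(\tau_2,z)=L_{\loc}\!\bigl(h(\overline{f}(\Mir_{\eu}(\tau_2))),z\bigr)^{-1}\!\bigl[F(\tau_2)\rho\bigr],
\]
and by Remark~\ref{rem:I,function}(2) the left-hand side equals $z\,\partial_\rho I_0^{\loc}(h(\tau_2),z)$. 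On the other hand, differentiating the mirror formula for $I_0^{\loc}$ via the chain rule and the identity $z\,\partial_\alpha(L_{\loc}(\sigma,z)^{-1}\mathbf{1})=L_{\loc}(\sigma,z)^{-1}T_\alpha$ (from $J_{\loc}=zL_{\loc}^{-1}\mathbf{1}$ and \eqref{eq:J,L}) shows that the same quantity equals $L_{\loc}(\Mir_{\loc}(h(\tau_2)),z)^{-1}\bigl[\partial_\rho\Mir_{\loc}(h(\tau_2))\bigr]$. Comparing the two expressions one arrives at
\[
L_{\loc}\!\bigl(\Mir_{\loc}(h(\tau_2)),z\bigr)^{-1}\!\bigl[\partial_\rho\Mir_{\loc}(h(\tau_2))\bigr]
=L_{\loc}\!\bigl(h(\overline{f}(\Mir_{\eu}(\tau_2))),z\bigr)^{-1}\!\bigl[F(\tau_2)\rho\bigr].
\]

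The principal obstacle is to conclude from this identity that the two arguments of $L_{\loc}^{-1}$ coincide. For this I would invoke Givental's cone geometry: after $z\mapsto -z$ both sides are nonzero tangent vectors to the Givental Lagrangian cone $\mathcal{L}^{K_X}$ lying in the tangent spaces $T_{\sigma_1}$ and $T_{\sigma_2}$ at $J_{\loc}(\sigma_i,-z)$, with $\sigma_1=\Mir_{\loc}(h(\tau_2))$ and $\sigma_2=h(\overline{f}(\Mir_{\eu}(\tau_2)))$. By the cone property $zT=T\cap\mathcal{L}$ of \S\ref{subsubsec:twisted,cones}, a common nonzero vector $v\in T_{\sigma_1}\cap T_{\sigma_2}$ produces a point $zv\in\mathcal{L}^{K_X}$ whose (unique) tangent space is simultaneously $T_{\sigma_1}$ and $T_{\sigma_2}$; since each tangent space is uniquely parameterised by its base point (through the intersection with $\mathbf{1}+\mathcal{H}_-$), this forces $\sigma_1=\sigma_2$. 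Nonvanishing of $v$ near the large-radius limit follows from its leading term being $F(\tau_2)\rho\sim\rho\ne 0$. Finally, the convergence of $h\circ\overline{f}|_{H^2(X)}$ on a neighbourhood of the large-radius limit and its being a biholomorphism between neighbourhoods of the form \eqref{eq:Usmall} then follow from the identification $h\circ\overline{f}|_{H^2(X)}=\Mir_{\loc}\circ h\circ\Mir_{\eu}^{-1}$ together with the corresponding properties of the mirror maps established in \S\ref{subsec:SQDM} via Lemma~\ref{lem:convergence_I}.
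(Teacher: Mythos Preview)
Your argument takes a different route from the paper's. The paper works equivariantly throughout: it introduces equivariant $I$-functions $I^{\eu,\lambda}_\alpha$, $I^{\loc,\lambda}_\alpha$ for \emph{all} $\alpha$, derives from the equivariant analogue of \eqref{eq:I,Istar,relation} an identity $L_{\loc,\lambda}(\tau_2',z)^{-1}\tilde w_\alpha^\lambda = L_{\loc,\lambda}(\tau_2'',z)^{-1}(\rho+\lambda)v_\alpha^\lambda$, and then uses that the $(\rho+\lambda)v_\alpha^\lambda$ form a basis (crucially because $\rho+\lambda$ is invertible) to conclude that $L_{\loc,\lambda}(\tau_2',z)L_{\loc,\lambda}(\tau_2'',z)^{-1}$ is regular in $z$, hence the identity; only at the very end does it pass to $\lambda=0$. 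You instead work directly at $\lambda=0$, use only $\alpha=0$, and appeal to the Givental cone $\cL^{K_X}$ to force $\sigma_1=\sigma_2$ from a single common tangent vector.

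That cone step needs more care. You cite \S\ref{subsubsec:twisted,cones} for the properties $zT=T\cap\cL$ and ``the tangent space at any point of $zT$ is $T$'', but that section constructs the \emph{twisted} cone over the universal ring $\cc[\![Q,\bfs]\!]$; the paper never defines a non-equivariant Lagrangian cone for the total space $K_X$, and \S\ref{subsec:non-equiv-limit,inverse,euler} explicitly warns that non-equivariant Gromov--Witten invariants of $E^\vee$ are ill-defined in general (only the quantum product and $L_{\loc}$ survive, via the properness of the evaluation maps in Lemma~\ref{lem:properness}). One can argue that the cone for $K_X$ still makes sense, since points \eqref{eq:point_on_the_cone} and tangent vectors \eqref{eq:tangent_vector_cone} involve only correlators with at least one marking and hence can be defined by $\ev$-pushforward, with string/dilaton/TRR then yielding Givental's geometric properties; but you would have to supply this justification. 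The cleanest repair is to run your argument equivariantly---then either the cone of \S\ref{subsubsec:twisted,cones} applies verbatim, or you may use all $\alpha$ together with the invertibility of $\rho+\lambda$ to obtain a basis identity directly---and only afterwards send $\lambda\to 0$.
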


We will postpone the proof of the lemma until the end of this 
section. Consider the quantum Serre pairing $S^{\QS}$ 
from Theorem \ref{thm:euler,quantum,Serre,QDM} 
in the case where $E= K_X^{-1}$. 
By the above lemma, $h\circ \overline{f}$ preserves 
$H^2(X)$ and therefore $S^{\QS}$ induces a flat pairing 
\[
S^{\QS} \colon \SQDM_{\eK}(X)\bigr|_{z=-1} 
\times (h\circ \overline{f})^*\SQDM(K_X)\bigr|_{z=1} \to \cO_{U_{\rm sm}}. 
\]
Combined with the $\nabla^{\eu}$-flat shift 
$(-1)^{\frac{\deg}{2}} \colon \SQDM_{\eK}(X)|_{z=1} 
\cong \SQDM_{\eK}(X)|_{z=-1}$, we obtain a flat pairing 
\begin{align}
\label{eq:pairing_P} 
\begin{split} 
&P\colon \SQDM_{\eK}(X)\bigr|_{z=1}\times
(h\circ\overline{f})^{*}\SQDM(K_{X})\bigr|_{z=1} 
\to \cO_{U_{\rm sm}} \\
&\qquad \text{defined by} \qquad 
P(u,v) = \int_X \left((-1)^{\frac{\deg}{2}} u\right) \cup v.  
\end{split}
\end{align} 
The second structure connections satisfy a certain ``difference 
equation'' with respect to the parameter $\sigma$. 
It is easy to check that we have the following 
morphism of meromorphic flat connections \cite[Theorem 9.4.b]{Hfm}: 
\begin{align*}
\Delta_{\sigma} \colon (\check{F},\chnabla^{(\sigma+1)}) 
& \longrightarrow (\check{F},\chnabla^{(\sigma)}) \\
T_\alpha &\longmapsto \chnabla_{\partial_x}^{(\sigma)} 
T_\alpha = - \left(\mu -\frac{1}{2}-\sigma\right)
(\frE\circ_{\tau}-x)^{-1} T_{\alpha}. 
\end{align*}
This is an isomorphism over $(U_{\rm sm}\times \cc_x) \setminus \Sigma$ 
if $\mu-\frac{1}{2}-\sigma$ is invertible, i.e.~if 
$\sigma \notin \{-\tfrac{n+1}{2},-\tfrac{n-1}{2},\dots,\tfrac{n-1}{2}\}$. 

Lemma \ref{lem:relation_mirrormaps} shows that 
$(h\circ \overline{f} \circ \pi_{\eu})(\tau_2,x) 
= \pi_\loc(\tau_2,x)$. Thus the pairing 
\eqref{eq:pairing_P} induces the flat pairing 
\begin{equation}
\label{eq:pairing_P_pistar}
P \colon \pi_\eu^* \SQDM_{\eK}(X)\bigr|_{z=1} 
\times \pi_\loc^* \SQDM(K_X)\bigr|_{z=1} 
\longrightarrow \cO_{U_{\rm sm}}.  
\end{equation}
We also have the morphism of flat connections 
\begin{equation}
\label{eq:morphism_rho} 
\rho \colon \pi_\eu^* \SQDM_{\eK}(X)\bigr|_{z=1} 
\longrightarrow 
\pi_\loc^* \SQDM(K_X)\bigr|_{z=1}
\end{equation} 
induced from the 
morphism in Theorem \ref{thm:euler,quantum,Serre,QDM} (2).  

\begin{thm}
\label{thm:pairing,vs,quantum,serre} 
Via the isomorphisms $\psi_\eu$, $\psi_\loc$ in 
Theorem \ref{thm:QS,with,I,Coates},
the pairing  $P$ \eqref{eq:pairing_P_pistar}
coincides with $(-1)^{n+1}\check{g}$, 
i.e.~$P(\psi_{\eu} T_\alpha, \psi_\loc T_\beta) = (-1)^{n+1}
\check{g}(T_\alpha,T_\beta)$ 
and the morphism $\rho$ \eqref{eq:morphism_rho} coincides  
with the composition: 
\[
\Delta := (-1)^{n+1} \Delta_{-\frac{n+1}{2}} \circ \Delta_{-\frac{n-1}{2}} \circ 
\cdots \circ \Delta_{\frac{n-1}{2}} \colon 
\big(\check{F},\chnabla^{(\frac{n+1}{2})} \big) 
\longrightarrow 
\big(\check{F},\chnabla^{(-\frac{n+1}{2})} \big). 
\]
Moreover we have: 
\begin{align*}
\check{g}(\gamma_1,\gamma_2) & = (-1)^{n+1} 
\int_X \left( (-1)^{\frac{\deg}{2}}\chK^{(\frac{n+1}{2},1)} \gamma_1
\right) \cup
\chK^{(-\frac{n+1}{2},0)} \gamma_2, 
\\ 
\chK^{(-\frac{n+1}{2},0)} \circ \Delta &= \rho \circ \chK^{(\frac{n+1}{2},1)} 
\end{align*} 
for $\gamma_1, \gamma_2 \in H^{\ev}(X)$. 
\end{thm}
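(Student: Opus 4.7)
The plan is to first establish the two explicit identities at the end of the statement, and then deduce the identifications $P = (-1)^{n+1}\check{g}$ and $\rho = \Delta$ from them using Remark \ref{rem:I,tilde}. The key observation is that Remark \ref{rem:I,tilde} factors the isomorphisms as $\psi_\eu = L_\eu \circ \chK^{(\frac{n+1}{2},1)}$ and $\psi_\loc = L_\loc \circ e^{\pi\sqrt{-1}\rho} \circ \chK^{(-\frac{n+1}{2},0)}$, so pairings and morphisms on the $\SQDM$ side transfer directly to statements about the second structure connections through the truncated Laplace transforms.

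For the identity expressing $\check{g}$ as an integral of the $\chK$'s, both sides are $\cO$-bilinear pairings between $(\check{F},\chnabla^{(\frac{n+1}{2})})$ and $(\check{F},\chnabla^{(-\frac{n+1}{2})})$. The left-hand side is flat by the definition of the second metric, and flatness of the right-hand side is a direct consequence of Proposition \ref{prop:inverse,fundamental,solution,second,metric}(3). Since flat pairings between these connections are determined by constants, it suffices to compare leading asymptotics as $|x| \to \infty$ near the large-radius limit. Plugging the $d = 0$ terms of the formula from Proposition \ref{prop:inverse,fundamental,solution,second,metric}(1) and comparing with the expansion $\check{g}(T_\alpha, T_\beta) = -x^{-1}\int_X T_\alpha\cup T_\beta + O(x^{-2})$, the Gamma ratios $\Gamma(\rho-|\alpha|+n+1)/\Gamma(\rho+1)$ and $\Gamma(\rho-|\beta|)/\Gamma(\rho)$ combine, under the degree constraint $|\alpha|+|\beta| = n$ forced by the Poincar\'e pairing, to yield the stated sign $(-1)^{n+1}$.

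For the identity $\chK^{(-\frac{n+1}{2},0)} \circ \Delta = \rho \circ \chK^{(\frac{n+1}{2},1)}$, I apply each factor $\Delta_\sigma$ of $\Delta$ to the horizontal section $\chK^{(\sigma+1,\ell)}T_\alpha$. Since $\chK^{(\sigma+1,\ell)}$ horizontally trivializes $\chnabla^{(\sigma+1)}$, one computes $\Delta_\sigma(\chK^{(\sigma+1,\ell)}T_\alpha) = -(\mu-\frac{1}{2}-\sigma)(\frE\bullet - x)^{-1}\chK^{(\sigma+1,\ell)}T_\alpha$. The functional equation $\Gamma(z+1) = z\Gamma(z)$, applied term by term to the series in Proposition \ref{prop:inverse,fundamental,solution,second,metric}(1), identifies this expression with a scalar multiple of $\chK^{(\sigma,\ell)}T_\alpha$. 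Iterating over $\sigma = \frac{n-1}{2}, \frac{n-3}{2}, \ldots, -\frac{n+1}{2}$ telescopes the Gamma factors into a single cup product with $\rho$, and the prefactor $(-1)^{n+1}$ in the definition of $\Delta$ absorbs exactly the accumulated minus signs together with the jump from $\Gamma(\rho+1)$ to $\Gamma(\rho)$ in the two values of $\ell$.

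Once the two explicit identities are in hand, the identifications $P = (-1)^{n+1}\check{g}$ and $\rho = \Delta$ follow. For the pairing, substitute the formulas of Remark \ref{rem:I,tilde} into $P(\psi_\eu T_\alpha, \psi_\loc T_\beta)$ and invoke the $z=1$ specialization of Theorem \ref{thm:euler,quantum,Serre,QDM}(3), combined with the full flatness of $L_\eu\, z^{-\frac{\deg}{2}}z^{c_1(TX)-c_1(E)}$ from Remark \ref{rem:fullyflat_Euler}; the fundamental solutions $L_\eu$ and $L_\loc$ cancel and the pairing reduces to the integral on the right-hand side of the $\check{g}$ identity. For the morphism, both $\rho$ and $\Delta$ are morphisms $(\check{F},\chnabla^{(\frac{n+1}{2})}) \to (\check{F},\chnabla^{(-\frac{n+1}{2})})$, so the identity $\chK^{(-\frac{n+1}{2},0)} \circ \Delta = \rho \circ \chK^{(\frac{n+1}{2},1)}$ shows that they agree on a horizontal frame, hence coincide. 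The main obstacle is the meticulous tracking of signs and of the various Gamma-function factors across the Fourier, Laplace, and difference transformations; in particular, one must verify that the $(-1)^{n+1}$ prefactors arising from $\Delta$, from the pairing formula, and from Poincar\'e adjointness of $(-1)^{\frac{\deg}{2}}$ all originate in the same combinatorial source, namely the $n+1$ integer steps separating the two values $\pm\frac{n+1}{2}$ of $\sigma$.
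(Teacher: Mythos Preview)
Your approach is essentially the same as the paper's: both rely on Remark \ref{rem:I,tilde} to factor $\psi_\eu$ and $\psi_\loc$ through the $\chK$ maps, use Theorem \ref{thm:euler,quantum,Serre,QDM}(3) together with Lemma \ref{lem:relation_mirrormaps} to strip off the fundamental solutions $L_\eu$ and $L_\loc$, and then compare large-radius asymptotics to pin down the constant $(-1)^{n+1}$; the paper simply runs the logic in the opposite order, first establishing $P=(-1)^{n+1}\check{g}$ and then observing that the intermediate computation is exactly the explicit integral formula. One small simplification the paper offers over your Gamma-function bookkeeping: rather than expanding $\Delta_\sigma$ term by term, it uses the one-line identity $\chK^{(\sigma,0)}(\Delta_\sigma T_\alpha)=\chK^{(\sigma,0)}(\chnabla^{(\sigma)}_{\partial_x}T_\alpha)=\partial_x\chK^{(\sigma,0)}_\alpha=-\chK^{(\sigma+1,0)}_\alpha$, together with $\rho\circ\chK^{(\frac{n+1}{2},1)}=\chK^{(\frac{n+1}{2},0)}$, which makes the iteration transparent and the sign count immediate.
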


\begin{proof} 
First we prove that $P$ corresponds to $(-1)^{n+1} \check{g}$. 
Since both pairings are flat, 
it is enough to compare the asymptotics of 
$P(\psi_\eu T_\alpha, \psi_\loc T_\beta)$ 
and $\check{g}(T_\alpha,T_\beta)$ 
at the large radius limit. 
Since the quantum product equals the cup product at the large 
radius limit, we have 
\begin{align}
\label{eq:gcheck_asymptotics}
\begin{split} 
\check{g}(T_\alpha,T_\beta)\Bigr|_{x=1} 
& \sim_{\lrl} -\int_{X}(1-\rho)^{-1}\cup T_\alpha \cup T_\beta \\
& =
\begin{cases} 
-\int_{X}\rho^{n-|\alpha|-|\beta|}\cup
T_{\alpha}\cup T_{\beta} & \text{if $|\alpha|+|\beta|\le n$;}\\ 
0 & \text{otherwise.}
\end{cases} 
\end{split} 
\end{align}
We then compute the asymptotics of 
$P(\psi_\eu(T_\alpha), \psi_\loc(T_\beta))$. 
By Remark \ref{rem:I,tilde}, after some computation, we find:  
\begin{align}
\label{eq:psi_Talpha}
(-1)^{\frac{\deg}{2}}\psi_{\eu}(T_{\alpha}) \Bigr|_{x=1}
=L_{\eu}(\Mir_{\eu}(\tau_{2}),-1) 
\left[ \sum_{d}
\left((-1)^{\frac{\deg}{2}}N_{\alpha,{d}}(1)\right) 
e^{-\tau_{2}+\tau_{2}(d)}
\prod_{k=1}^{\rho(d)+n-|\alpha|}(-\rho+k) 
\right].  
\end{align} 
We have already found a similar formula \eqref{eq:psiloc_Talpha} 
for $\psi_\loc(T_\alpha)$. 
In view of Lemma \ref{lem:relation_mirrormaps}, 
Theorem \ref{thm:euler,quantum,Serre,QDM} (3) 
gives the identity: 
\[
\big(L_\eu (\Mir_\eu( \tau_2 ),-z)\gamma_{1}, 
L_{\loc}(\Mir_\loc (h(\tau_2)),z)\gamma_{2} \big)
=\left(\gamma_{1},e^{-\sqrt{-1}\pi\rho/z}\gamma_{2} \right)
\]
for $\gamma_1,\gamma_2 \in H^{\ev}(X)$, 
where $(u,v) = \int_X u\cup v$ is the Poincar\'{e} pairing. 
Therefore equations \eqref{eq:psi_Talpha} and \eqref{eq:psiloc_Talpha} 
give: 
\[
  P(\psi_{\eu}(T_{\alpha}),\psi_{\loc}(T_{\beta})) \Bigr|_{x=1} 
\sim_{\lrl} (-1)^n 
\int_{X}T_{\alpha}\cup T_{\beta} \cup 
\frac{\prod_{k=1}^{n-|\alpha|}(\rho-k)}{\prod_{k=1}^{|\beta|}(\rho-k)}. 
\] 
To have non-zero asymptotics, 
we must have $|\alpha|+|\beta|\leq n$; in this case, the right-hand side is: 
\begin{displaymath}
(-1)^n \int_{X}T_{\alpha}\cup T_{\beta}
\cup \prod_{k=|\beta|+1}^{n-|\alpha|}(\rho-k)  
= (-1)^n \int_{X}T_{\alpha}\cup
T_{\beta} \cup \rho^{n-|\alpha|-|\beta|}.
\end{displaymath}
Comparing this with \eqref{eq:gcheck_asymptotics}, 
we deduce that $P(\psi_\eu T_\alpha,\psi_\loc T_\beta) 
= (-1)^{n+1} \check{g}(T_\alpha,T_\beta)$. 
Notice that the above computation shows: 
\[
P(\psi_\eu T_\alpha, \psi_\loc T_\beta) = 
\int_X \left( (-1)^{\frac{\deg}{2}}\chK^{(\frac{n+1}{2},1)}_\alpha
\right) \cup
\chK^{(-\frac{n+1}{2},0)}_\beta. 
\]
We deduce the equality of the pairings.

Next we prove that the morphism $\rho$ corresponds to 
$\Delta$, i.e.~$\rho \circ \psi_\eu = \psi_\loc \circ \Delta$. 
Recall from Remark \ref{rem:I,tilde} 
that $\psi_\eu,\psi_\loc$ are given by 
\begin{align*} 
\psi_\eu & = L_\eu(\pi_\eu(\tau_2,x),1) \circ \chK^{(\frac{n+1}{2},1)}, \\ 
\psi_\loc & = L_\loc(\pi_\loc(\tau_2,x),1) e^{\pi\sqrt{-1}\rho} \circ 
\chK^{(-\frac{n+1}{2},0)}. 
\end{align*} 
Therefore it suffices to prove the following formulae: 
\begin{align*} 
\rho\circ L_\eu(\pi_\eu(\tau_2,x),1) & = L_\loc(\pi_\loc(\tau_2,x),1) 
e^{\pi\sqrt{-1}\rho} \circ \rho, \\ 
\rho \circ \chK^{(\frac{n+1}{2},1)} 
& = \chK^{(-\frac{n+1}{2},0)} \circ \Delta.  
\end{align*} 
The first equation follows from Theorem \ref{thm:euler,quantum,Serre,QDM} (3) 
in view of Lemma \ref{lem:relation_mirrormaps}. 
To see the second equation, it suffices to prove: 
\[
\rho \circ \chK^{(\frac{n+1}{2},1)} = \chK^{(\frac{n+1}{2},0)} 
\quad \text{and} \quad \chK^{(\sigma+1,0)} 
= - \chK^{(\sigma,0)} \circ \Delta_\sigma. 
\]
The first formula is immediate from the definition. 
To see the second, we calculate: 
\[
\chK^{(\sigma,0)}(\Delta_\sigma T_\alpha) = 
\chK^{(\sigma,0)} (\chnabla^{(\sigma)}_{\partial_x} T_\alpha) 
= \partial_x (\chK^{(\sigma,0)} T_\alpha) = 
\partial_x \chK^{(\sigma,0)}_\alpha = - \chK^{(\sigma+1,0)}_\alpha. 
\]
The conclusion follows. 
\end{proof}

Combined with Corollary \ref{cor:Z}, the above theorem implies: 
\begin{cor} 
\label{cor:anticanonical_hypersurface}
Let $Z$ be a smooth anticanonical hypersurface of $X$ 
which satisfies one of the conditions in Lemma \ref{lem:cond_Z}.  
Then the small quantum $D$-module 
$(\iota^* \circ \pi_\eu)^*\SQDM_{\amb}(Z)|_{z=1}$ 
of $Z$ 
is isomorphic to the image $\im \Delta$ of the morphism: 
\[
\Delta \colon \big(\check{F},\chnabla^{(\frac{n+1}{2})} \big) 
\Bigr|_{U_{\rm sm}' \times \{|x|>c\}}
\to \big(\check{F},\chnabla^{(-\frac{n+1}{2})}\big) 
\Bigr|_{U_{\rm sm}' \times \{|x|>c\}}. 
\]
where $\iota^* \circ \pi_{\eu}$ is regarded as a map 
\[
\iota^* \circ \pi_{\eu} \colon 
U_{\rm sm}' \times \{|x|>c\} \to 
\iota^*(U_{\rm sm}'')/2\pi\sqrt{-1}H^2(Z,\zz). 
\] 
The isomorphism sends $\Delta(T_\alpha)\in \im(\Delta)$ 
to $\iota^*\psi_\eu(T_\alpha) \in (\iota^*\circ\pi_\eu)^* 
\SQDM_{\amb}(Z)|_{z=1}$.  
\end{cor}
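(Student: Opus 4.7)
The plan is to restrict the factorization diagram of Corollary \ref{cor:Z} to the small quantum locus $H^2(X)$ and then transport it to the second-structure-connection side via the isomorphisms $\psi_\eu$ and $\psi_\loc$ of Theorem \ref{thm:QS,with,I,Coates}, using Theorem \ref{thm:pairing,vs,quantum,serre} to identify the morphism $\rho\cup$ with $\Delta$.

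First, I apply Corollary \ref{cor:Z} to $E=K_X^{-1}$ (which is convex since $-K_X$ is nef) to obtain the factorization
\[
\rho\cup \; = \; \iota_*\circ \iota^* \colon \QDM_{\eK}(X) \longrightarrow (h\circ\overline{f})^*\QDM(K_X),
\]
and restrict it to parameters $\tau\in H^2(X)$. By Lemma \ref{lem:relation_mirrormaps}, $h\circ\overline{f}$ preserves $H^2(X)$, so this restriction is well-defined and yields a corresponding factorization of small quantum $D$-modules, with $\iota^*$ surjective and $\iota_*$ injective. The fiberwise injectivity of $\iota_*$ on $H^{\ev}_{\amb}(Z)$ is exactly condition (3) of Lemma \ref{lem:cond_Z}, which identifies $H^{\ev}_{\amb}(Z)$ with $H^{\ev}(X)/\Ker(\rho\cup)$ via $\iota^*$; consequently $\iota_*$ is an isomorphism onto its image, and $\im(\rho\cup) \cong (\iota^*)^*\SQDM_{\amb}(Z)$ as small quantum $D$-modules.

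Next, I pull back this factorization by $\pi_\eu$ and specialize to $z=1$. By Lemma \ref{lem:relation_mirrormaps}, $(h\circ\overline{f})\circ\pi_\eu = \pi_\loc$, so the target of the pulled-back $\rho\cup$ becomes $\pi_\loc^*\SQDM(K_X)|_{z=1}$. Theorem \ref{thm:pairing,vs,quantum,serre} then asserts that the isomorphisms $\psi_\eu$ and $\psi_\loc$ intertwine $\rho\cup$ with the morphism $\Delta$ of second structure connections. Transporting the isomorphism of the previous paragraph, I obtain
\[
\im(\Delta) \;\cong\; (\iota^*\circ\pi_\eu)^*\SQDM_{\amb}(Z)\bigr|_{z=1}.
\]
To verify the formula on a basis element: under $\psi_\loc$, the vector $\Delta(T_\alpha)$ corresponds to $(\rho\cup)(\psi_\eu T_\alpha) = \iota_*\bigl(\iota^*\psi_\eu(T_\alpha)\bigr)$, and the inverse of $\iota_*$ sends it to $\iota^*\psi_\eu(T_\alpha)$, which is the claimed element of $(\iota^*\circ\pi_\eu)^*\SQDM_{\amb}(Z)|_{z=1}$.

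The only real work beyond citing previous results is the bookkeeping needed to check that the restrictions to $H^2(X)$, the specialization to $z=1$, and the various pullbacks by $\pi_\eu$ and $\iota^*$ all commute with the morphisms in Corollary \ref{cor:Z} and with the isomorphisms in Theorem \ref{thm:pairing,vs,quantum,serre}; the substantive input is entirely provided by Corollary \ref{cor:Z}, Lemma \ref{lem:cond_Z}, Lemma \ref{lem:relation_mirrormaps}, and Theorem \ref{thm:pairing,vs,quantum,serre}, so once that bookkeeping is in place the corollary follows as a direct transport of structure.
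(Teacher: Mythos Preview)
Your proposal is correct and follows exactly the approach the paper intends: the paper presents this corollary with no explicit proof beyond the single sentence ``Combined with Corollary \ref{cor:Z}, the above theorem implies,'' and your argument is precisely the unpacking of that combination --- transporting the factorization $\rho\cup = \iota_*\circ\iota^*$ of Corollary \ref{cor:Z} through the identifications $\psi_\eu,\psi_\loc$ of Theorem \ref{thm:QS,with,I,Coates} via Theorem \ref{thm:pairing,vs,quantum,serre}, with Lemma \ref{lem:relation_mirrormaps} ensuring $(h\circ\overline{f})\circ\pi_\eu=\pi_\loc$.
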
 
\begin{rem}
Recall that the conditions in Lemma \ref{lem:cond_Z} are satisfied 
for an anticanonical hypersurface if $X$ is Fano. 
\end{rem} 

\begin{proof}[Proof of Lemma \ref{lem:relation_mirrormaps}] 
We consider the following equivariant $I$-functions
(cf.~Definition \ref{defi,I,functions}): 
\begin{align*} 
I^{\eu,\lambda}_\alpha(\tau_2,z) & = e^{\tau_2/z} 
\sum_{d\in \eff(X)} N_{\alpha,d}(z) e^{\tau_2(d)} 
\prod_{k=1}^{\rho(d)}(\rho+\lambda + kz) \\ 
I^{\loc,\lambda}_\alpha(\tau_2,z) & 
=  e^{\tau_2/z} 
\sum_{d\in \eff(X)} N_{\alpha,d}(z) e^{\tau_2(d)} 
\prod_{k=0}^{\rho(d)-1}(-\rho-\lambda - kz) 
\end{align*} 
and define the equivariant mirror maps $\Mir^\lambda_\eu$, 
$\Mir^\lambda_\loc$ as in \eqref{eq:mirrormaps}. 
By exactly the same argument as Proposition \ref{prop:I,fctn}, 
we have that 
\begin{align} 
\label{eq:equivI_L}
\begin{split}
I^{\eu,\lambda}_\alpha(\tau_2,z) & = 
L_{\eu,\lambda}\left(\Mir^\lambda_\eu(\tau_2),z\right)^{-1}
v_\alpha^{\lambda}(\tau_2,z), \\ 
I^{\loc,\lambda}_\alpha(\tau_2,z) & = 
L_{\loc,\lambda}
\left( \Mir^\lambda_\loc(\tau_2),z \right)^{-1} 
w_\alpha^{\lambda}(\tau_2,z)  
\end{split} 
\end{align}
for some $v_\alpha^{\lambda}(\tau_2,z)$,  
$w_\alpha^{\lambda}(\tau_2,z) \in H^{\ev}(X) \otimes 
\cc[\lambda,z][\![e^{\tau_2}]\!]$. 
Here we set $L_{\eu,\lambda} = L_{(\eqeuler,K_X^{-1})}$ 
and $L_{\loc,\lambda} = L_{(\ctop_{-\lambda}^{-1},K_X)}$. 
Similarly to \eqref{eq:I,Istar,relation}, we have the following 
relationship:  
\begin{equation} 
\label{eq:relation_twoIs} 
e^{-\pi\sqrt{-1} \rho/z} 
(z\partial_\rho + \lambda) I^{\loc,\lambda}_\alpha(h(\tau_2),z) = 
(\rho+\lambda) I^{\eu,\lambda}_\alpha(\tau_2,z). 
\end{equation} 
We compute both sides of this equation. 
By \eqref{eq:equivI_L}, the left-hand side equals 
\begin{align}
\label{eq:lhs_relation_twoIs} 
\begin{split}
e^{-\pi\sqrt{-1} \rho/z} 
(z \partial_\rho + \lambda) 
L_{\loc,\lambda}
&\left(\Mir^\lambda_\loc(h(\tau_2)), z \right)^{-1} 
w_\alpha^{\lambda}(h(\tau_2),z)  \\ 
& = e^{-\pi\sqrt{-1} \rho/z} 
L_{\loc,\lambda}
\left(\Mir^\lambda_\loc(h(\tau_2)),z \right)^{-1} 
\tilde{w}_{\alpha}^\lambda(\tau_2,z) 
\end{split} 
\end{align}
where $\tilde{w}_\alpha^{\lambda}(\tau_2,z)$ is obtained 
from $w_\alpha(h(\tau_2),z)$ by applying 
$z \big((\Mir^\lambda_\loc\circ h)^*
 \nabla^{(\ctop_{-\lambda}^{-1},K_X)} \big)_{\rho} 
+ \lambda$ and 
is an element of $H^{\ev}(X)\otimes \cc[\lambda,z][\![e^{\tau_2}]\!]$. 
On the other hand, by \eqref{eq:equivI_L} again, 
the right-hand side of \eqref{eq:relation_twoIs} is: 
\begin{align} 
\label{eq:rhs_relation_twoIs}
\begin{split} 
(\rho+\lambda) &  L_{\eu,\lambda}
\left(\Mir_\eu^\lambda(\tau_2),z 
\right)^{-1} v_\alpha^\lambda(\tau_2,z) \\ 
& = L_{(\eqeulerstar,K_X)}\left(f(\Mir_\eu^{\lambda}(\tau_2)),z \right)^{-1} 
(\rho+\lambda) v_\alpha^{\lambda}(\tau_2,z)  
\qquad \qquad  \quad
\text{(by Theorem \ref{thm:quantum,Serre,QDM} (3))} \\ 
& = 
e^{-\pi\sqrt{-1} \rho/z} L_{\loc,\lambda}
\left(h(f(\Mir_\eu^{\lambda}(\tau_2))),z \right)^{-1} 
(\rho + \lambda) v_\alpha^{\lambda}(\tau_2,z) 
\quad \text{(by Remark \ref{rem:fullyflat_Evee}).}
\end{split} 
\end{align} 
Comparing \eqref{eq:lhs_relation_twoIs} and \eqref{eq:rhs_relation_twoIs},  
we obtain 
\[
L_{\loc,\lambda}
(\tau_2',z )^{-1} 
\tilde{w}_{\alpha}^\lambda(\tau_2,z) = 
L_{\loc,\lambda}
(\tau_2'',z )^{-1} 
(\rho + \lambda) v_\alpha^{\lambda}(\tau_2,z) 
\]
with $\tau_2' = \Mir^\lambda_\loc(h(\tau_2))$ and 
$\tau_2'' = h(f(\Mir_\eu^{\lambda}(\tau_2)))$. 
Since $(\rho+\lambda) v_\alpha^\lambda(\tau_2,z)$, $\alpha=0,\dots,s$  
form a basis of $H^{\ev}(X)$ 
and $\tilde{w}_\alpha^{\lambda}(\tau_2,z)$ does not 
contain negative powers of $z$, 
we find that 
\[
L_{\loc,\lambda}(\tau_2',z) 
L_ {\loc,\lambda}(\tau_2'',z )^{-1} 
\]
does not contain negative powers in $z$. 
By the asymptotics $L_{\loc,\lambda}(\tau,z)= 
\Id + O(z^{-1})$, we must have 
$L_{\loc,\lambda}(\tau_2',z)^{-1}=L_{\loc,\lambda}(\tau_2'',z)^{-1}$. 
The asymptotics $L_{\loc,\lambda}(\tau,z)^{-1} \bun = \bun 
+ \tau/z + O(z^{-2})$ shows that $\tau_2' = \tau_2''$. 
The conclusion follows by taking the non-equivariant limit. 
\end{proof}

\begin{rem} 
Consider the family of connection 
$\chnabla^{\left(\frac{n+1}{2}+k\right)}$ for $k \in \mathbb{Z}$. 
Via the morphisms $\Delta_\sigma$, we have: 
\begin{itemize}
\item for $k \in \zz_{\ge 0}$, $\chnabla^{\left(\frac{n+1}{2}+k\right)}$ 
is isomorphic to $\chnabla^{\left(\frac{n+1}{2}\right)}$ 
as meromorphic connections; 
\item for $k \in \zz_{\ge 0}$, $\chnabla^{\left(-\frac{n+1}{2}-k\right)}$ 
is isomorphic to $\chnabla^{\left(-\frac{n+1}{2}\right)}$ 
as meromorphic connections. 
\end{itemize}
Theorem \ref{thm:QS,with,I,Coates} above 
gives a geometric interpretation of these two connections. 
It would be interesting to understand the intermediate connections 
$\chnabla^{(k)}$ for $k \in\{-\frac{n-1}{2},\ldots ,\frac{n-1}{2}\}$.
\end{rem} 

\subsection{Hodge filtration for the second structure connection} 
\label{subsec:Hodge_filt}
The small quantum $D$-modules $\SQDM_{\eK}(X)$ 
and $\SQDM(K_X)$ restricted to $z=1$ have a natural 
filtration, called the \emph{A-model Hodge filtration} 
\cite[Lecture 7]{Morrison-mathematical-aspects},  
\cite[\S 8.5.4]{Cox-Katz-Mirror-Symmetry}, 
and these small quantum $D$-modules are variations 
of Hodge structure. 
In this section, we identify the corresponding filtration 
on the second structure connection. 
See \cite{Stienstra-resonant,Konishi-Minabe-localB,
Konishi-Minabe-mixed,Konishi-Minabe-local-quantum} 
for related studies on the Hodge structure for  
local quantum cohomology. 

We follow the notation in Theorem \ref{thm:QS,with,I,Coates} 
and write $U_{\rm sm}'$ and $U_{\rm sm}''$ for large 
radius limit neighbourhoods in $H^2(X)$ on which (respectively) 
the second structure connection 
$(\check{F},\chnabla^{(\sigma)})$ 
and our small quantum $D$-modules are convergent. 

\begin{defi}
We define the subbundle $F^p$ of the trivial 
bundle 
$H^{\ev}(X)\times U_{\rm sm}'' \to U_{\rm sm}''$ by 
\[
F^p := H^{\le 2n-2p}(X) \times U_{\rm sm}''
\]
and call it the \emph{A-model Hodge filtration}. 
Because the small quantum product preserves the degree:  
\begin{align*}
\deg(T_{\alpha}\bullet_{\tau_{2}}^{\eK}T_{\beta}) 
&=\deg(T_{\alpha})+\deg(T_{\beta}) \\ 
\deg(T_{\alpha}\bullet_{\tau_{2}}^{K_{X}}T_{\beta}) 
&=\deg(T_{\alpha})+\deg(T_{\alpha})
 \end{align*}
the filtration satisfies \emph{Griffiths transversality} 
with respect to the small quantum connections: 
\begin{displaymath}
\nabla^{\eu}_{\alpha}(F^{p})\subset F^{p-1} 
\quad \text{and} \quad   
\nabla^{\loc}_{\alpha}(F^{p})\subset F^{p-1}
\end{displaymath}
for $\alpha$ with $|\alpha|=1$. The Hodge filtration also satisfies 
the following orthogonality: 
\begin{displaymath}
    P(F^{p},F^{n-p+1})=0
\end{displaymath}
with respect to the pairing $P$ in \eqref{eq:pairing_P}; in other words,  
the A-model Hodge filtrations on $\SQDM(K_X)|_{z=1}$ and 
$\SQDM_{\eK}(X)|_{z=1}$ are annihilators of each other. 
\end{defi}

Next we introduce a filtration on the second structure 
connection. 
\begin{defi} 
Consider the second structure connection 
$(\check{F},\chnabla^{(-\frac{n+1}{2})})$ 
restricted to the small parameter space 
$U_{\rm sm}' \times \cc_x$. 
Define $\check{F}^p_\loc$ to be the 
$\cO_{U_{\rm sm}'\times \cc_x}(*\Sigma)$-submodule 
of $\cO(\check{F})(*\Sigma)$ generated by 
\[
\left\{ 
\left(\chnabla^{(-\frac{n+1}{2})}_{\partial_{x}}\right)^{k} 
T_{\alpha} : \ |\alpha| \leq k \leq n-p \right\}. 
\] 
Define $\check{F}^p_\eu$ to be the $\check{g}$-orthogonal 
of $\check{F}^{n-p+1}_\loc$, i.e.
\[
\check{F}^p_\eu := \left\{ s\in \cO(\check{F})(*\Sigma) : 
\check{g}(s,\gamma) = 0, \ 
\forall \gamma \in \check{F}^{n-p+1}_\loc \right\}.  
\]
These are decreasing filtrations. 
\end{defi} 

\begin{lem}
The filtrations $\check{F}^p_\loc, \check{F}^p_\eu$ 
satisfy the Griffiths transversality: 
$\chnabla^{(-\frac{n+1}{2})} 
\check{F}^{p}_\loc\subset 
\Omega^1_{U_{\rm sm}'\times \cc_x} \otimes 
\check{F}^{p-1}_\loc$ and 
$\chnabla^{(\frac{n+1}{2})} 
\check{F}^p_\eu \subset 
\Omega^1_{U_{\rm sm}'\times \cc_x} \otimes 
\check{F}^{p-1}_\eu$. 
\end{lem}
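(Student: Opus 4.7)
The plan is to establish Griffiths transversality for $\check{F}^p_\loc$ by a direct computation on generators, using flatness of $\chnabla^{(-\frac{n+1}{2})}$ together with degree-preservation of the small quantum product, and then to deduce Griffiths transversality for $\check{F}^p_\eu$ formally from its definition as the $\check{g}$-orthogonal of $\check{F}^{n-p+1}_\loc$, using flatness of the second metric.

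For $\check{F}^p_\loc$, consider a generator $(\chnabla^{(-\frac{n+1}{2})}_{\partial_x})^k T_\beta$ with $|\beta|\le k\le n-p$. In the $\partial_x$-direction, $\chnabla^{(-\frac{n+1}{2})}_{\partial_x}$ sends this generator to $(\chnabla^{(-\frac{n+1}{2})}_{\partial_x})^{k+1} T_\beta$, which is a generator of $\check{F}^{p-1}_\loc$ because $|\beta|\le k+1\le n-p+1$. For a direction $\partial_\alpha$ with $|\alpha|=1$, flatness of $\chnabla^{(-\frac{n+1}{2})}$ lets me commute $\chnabla^{(-\frac{n+1}{2})}_\alpha$ past the $\partial_x$-derivatives, reducing the problem to the computation of $\chnabla^{(-\frac{n+1}{2})}_\alpha T_\beta$. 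Since $\partial_\alpha T_\beta=0$ and, by degree-preservation on the small locus, $T_\alpha\bullet_\tau T_\beta=\sum_{|\gamma|=|\beta|+1} c_\gamma(\tau)\,T_\gamma$, the formula \eqref{eq:second_str_conn} gives
\[
\chnabla^{(-\frac{n+1}{2})}_\alpha T_\beta = \left(\mu+\tfrac{n}{2}\right)\left((\frE\bullet_\tau)-x\right)^{-1}\!\sum_{|\gamma|=|\beta|+1} c_\gamma T_\gamma = -\sum_{|\gamma|=|\beta|+1} c_\gamma\,\chnabla^{(-\frac{n+1}{2})}_{\partial_x} T_\gamma,
\]
where the last equality uses $\chnabla^{(-\frac{n+1}{2})}_{\partial_x} T_\gamma = -(\mu+\tfrac{n}{2})((\frE\bullet_\tau)-x)^{-1}T_\gamma$, itself a consequence of $\partial_x T_\gamma=0$. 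Since the $c_\gamma$ depend only on $\tau$, they commute with $\chnabla^{(-\frac{n+1}{2})}_{\partial_x}$, and hence
\[
\chnabla^{(-\frac{n+1}{2})}_\alpha \left(\chnabla^{(-\frac{n+1}{2})}_{\partial_x}\right)^{\!k}T_\beta = -\sum_{|\gamma|=|\beta|+1} c_\gamma\,\left(\chnabla^{(-\frac{n+1}{2})}_{\partial_x}\right)^{\!k+1}T_\gamma,
\]
which lies in $\check{F}^{p-1}_\loc$ because each term on the right is a generator via $|\gamma|=|\beta|+1\le k+1\le n-p+1$.

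For $\check{F}^p_\eu$, take $s\in\check{F}^p_\eu$ and $\gamma\in\check{F}^{n-p+2}_\loc\subset\check{F}^{n-p+1}_\loc$. Flatness of $\check{g}$ with respect to the pair $(\chnabla^{(\frac{n+1}{2})},\chnabla^{(-\frac{n+1}{2})})$ yields
\[
\partial_\alpha \check{g}(s,\gamma) = \check{g}\bigl(\chnabla^{(\frac{n+1}{2})}_\alpha s,\gamma\bigr) + \check{g}\bigl(s,\chnabla^{(-\frac{n+1}{2})}_\alpha \gamma\bigr).
\]
The left-hand side vanishes because $s$ is orthogonal to $\check{F}^{n-p+1}_\loc$; by the previous step, $\chnabla^{(-\frac{n+1}{2})}_\alpha \gamma\in\check{F}^{n-p+1}_\loc$, so the second summand on the right vanishes for the same reason. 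Hence $\check{g}(\chnabla^{(\frac{n+1}{2})}_\alpha s,\gamma)=0$ for all $\gamma\in\check{F}^{n-p+2}_\loc$, i.e.~$\chnabla^{(\frac{n+1}{2})}_\alpha s\in\check{F}^{p-1}_\eu$; the same argument works in the $\partial_x$-direction. The one delicate point is the identity $\chnabla^{(-\frac{n+1}{2})}_\alpha T_\beta=-\sum c_\gamma\,\chnabla^{(-\frac{n+1}{2})}_{\partial_x} T_\gamma$, whose proof rests on degree-preservation of the small quantum product. This identity fails on the big quantum locus, which is why the filtrations $\check{F}^p_\loc,\check{F}^p_\eu$ are formulated only on the small parameter space $U_{\rm sm}'\times\cc_x$.
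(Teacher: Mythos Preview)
Your argument follows the same route as the paper's proof for $\check{F}^p_\loc$, and your deduction of the $\check{F}^p_\eu$ case from flatness of $\check{g}$ is correct and more explicit than the paper, which simply remarks that it suffices to treat $\check{F}^p_\loc$.

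There is, however, a genuine slip in your claim of ``degree-preservation on the small locus'': the product $\bullet_\tau$ appearing in the second structure connection \eqref{eq:second_str_conn} is the \emph{untwisted} small quantum product of $X$, and this does \emph{not} preserve cohomological degree (for instance, on $X=\pp^4$ one has $H\bullet_\tau H^4=e^t\cdot 1$). What is actually true, and what the paper invokes, is that since $\rho=c_1(X)$ is nef one has $\rho(d)\ge 0$ for every effective class $d$, whence the small quantum product satisfies
\[
T_\alpha\bullet_\tau T_\beta \in \bigoplus_{|\gamma|\le |\alpha|+|\beta|} \cc\,T_\gamma,
\]
so the degree can only drop. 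Your decomposition should therefore read $T_\alpha\bullet_\tau T_\beta=\sum_{|\gamma|\le |\beta|+1} c_\gamma(\tau)\,T_\gamma$. With this correction the rest of your computation goes through unchanged: each term $(\chnabla^{(-\frac{n+1}{2})}_{\partial_x})^{k+1}T_\gamma$ with $|\gamma|\le |\beta|+1\le k+1\le n-p+1$ is still a generator of $\check{F}^{p-1}_\loc$. Accordingly, your closing remark identifying ``degree-preservation'' as the delicate point should be replaced by the nefness hypothesis on $-K_X$, which is precisely the standing assumption of \S\ref{sec:QS,KX,abstract,Fourier}.
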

\begin{proof} 
It suffices to prove the Griffiths transversality for $\check{F}^p_\loc$. 
We write $\chnabla$ for $\chnabla^{(-\frac{n+1}{2})}$ 
to save notation. 
The inclusion $\chnabla_{\partial_x} 
\check{F}^p_\loc \subset \check{F}^{p-1}_\loc$ is obvious. 
We prove $\chnabla_{\beta} \check{F}^p_\loc 
\subset \check{F}^{p-1}_\loc$ for $\beta$ with $|\beta|=1$. 
Take $\alpha$ and $k\in \zz_{\geq 0}$ satisfying 
$|\alpha| \le k\le n-p$. 
We have 
\begin{align}
\label{eq:Griffiths}  
\begin{split} 
\chnabla_\beta \left(
\chnabla_{\partial_x}\right)^k T_\alpha 
& = \left(
\chnabla_{\partial_x}\right)^k 
\chnabla_\beta T_\alpha 
= \left(
\chnabla_{\partial_x}\right)^k 
\left(\mu+\frac{n}{2} \right) 
\left( (\frE \bullet_\tau) - x\right)^{-1} T_\beta \bullet_{\tau} T_\alpha \\ 
& = - \left(
\chnabla_{\partial_x}\right)^{k+1} 
T_\beta \bullet_\tau T_\alpha. 
\end{split} 
\end{align} 
Since $\rho = c_1(X)$ is nef, the small quantum product
$T_\beta \bullet_\tau T_\alpha$ is a linear combination of 
classes of degree less than or equal to $2|\alpha|+2$. 
Therefore the expression \eqref{eq:Griffiths} lies in $\check{F}^{p-1}$. 
\end{proof} 


\begin{thm} 
There exists a small neighbourhood $U_{\rm sm}'$ of the 
form \eqref{eq:Usmall} such that we have 
\[
\psi_\loc(\check{F}^p_\loc )  = F^p,  \qquad \qquad 
\psi_\eu(\check{F}^p_\eu) = F^p 
\]
over $U_{\rm sm}' \times \{|x|>c\}$, where 
$\psi_\loc, \psi_\eu$ are the isomorphisms in Theorem 
\ref{thm:QS,with,I,Coates} and $F^p$ is the 
A-model Hodge filtration of $\SQDM_{\eK}(X)|_{z=1}$ 
or of $\SQDM(K_X)|_{z=1}$. 
\end{thm}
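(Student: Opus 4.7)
\emph{Proof strategy.} The plan is to first establish $\psi_\loc(\check{F}^p_\loc) = \pi_\loc^*F^p$ by a direct computation on the generators, and then deduce $\psi_\eu(\check{F}^p_\eu) = \pi_\eu^*F^p$ as a formal consequence of the pairing compatibility from Theorem \ref{thm:pairing,vs,quantum,serre}.

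\emph{Local case.} Using the intertwining property of $\psi_\loc$ together with \eqref{eq:def_psi_loc}, one obtains
\[
\psi_\loc\!\left(\bigl(-\chnabla^{(-\frac{n+1}{2})}_{\partial_x}\bigr)^{k} T_\alpha\right)
= \bigl(-(\pi_\loc^*\nabla^\loc)_{\partial_x}\bigr)^{k-|\alpha|}\,
w_\alpha(\tau_2 - \rho\log x + \pi\sqrt{-1}\rho,\,1)
\]
for $k \ge |\alpha|$. The homogeneity of $w_\alpha$ in Proposition \ref{prop:I,fctn} (with $\deg z = 2$, $\deg e^{\tau_2} = 0$) forces $w_\alpha(\cdot,1) \in H^{\le 2|\alpha|}(X) = F^{n-|\alpha|}$. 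Applying Griffiths transversality $k-|\alpha|$ times then places the right-hand side in $\pi_\loc^* F^{n-k} \subset \pi_\loc^* F^p$ (using $k \le n-p$). This yields $\psi_\loc(\check{F}^p_\loc) \subset \pi_\loc^* F^p$. For the reverse inclusion, consider only the generators with $k=|\alpha|$: these map to $w_\alpha$, and $w_\alpha \sim_{\lrl} T_\alpha$ by Remark \ref{rem:I,function}. Hence $\{w_\alpha(\pi_\loc(\tau_2,x),1) : |\alpha| \le n-p\}$ specializes at the large radius limit to the basis $\{T_\alpha\}_{|\alpha|\le n-p}$ of $F^p$, and by continuity it forms a local $\cO$-basis of $\pi_\loc^* F^p$ on a sufficiently small neighborhood $U_{\rm sm}'$. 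Note that on $U_{\rm sm}' \times \{|x| > c\}$ the discriminant $\Sigma$ is empty for $c$ large, so the $\cO(*\Sigma)$-module structure on $\check{F}^p_\loc$ coincides with the ordinary $\cO$-module structure there.

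\emph{Euler case.} By Theorem \ref{thm:pairing,vs,quantum,serre}, $\psi_\eu$ and $\psi_\loc$ intertwine $\check{g}$ with $(-1)^{n+1}P$. Hence
\[
\psi_\eu(\check{F}^p_\eu)
= \psi_\eu\bigl((\check{F}^{n-p+1}_\loc)^{\perp_{\check{g}}}\bigr)
= \bigl(\psi_\loc(\check{F}^{n-p+1}_\loc)\bigr)^{\perp_P}
= \bigl(\pi_\loc^* F^{n-p+1}\bigr)^{\perp_P}
\]
using the local case. To identify this with $\pi_\eu^* F^p$, observe that for $u \in F^p = H^{\le 2n-2p}$ and $v \in F^{n-p+1} = H^{\le 2p-2}$ the product $u\cup v$ lies in $H^{\le 2n-2}(X)$, so $P(u,v) = \int_X (\pm)\, u\cup v = 0$; this gives $\pi_\eu^* F^p \subset (\pi_\loc^*F^{n-p+1})^{\perp_P}$. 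The dimension count $\dim F^p = \sum_{i=0}^{n-p} h^{2i}(X) = \sum_{j=p}^n h^{2j}(X) = \dim H^{\ev}(X) - \dim F^{n-p+1}$ (the middle equality uses Poincar\'e duality $h^{2i}=h^{2n-2i}$) upgrades this inclusion to an equality, so $\psi_\eu(\check{F}^p_\eu) = \pi_\eu^* F^p$.

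\emph{Main obstacle.} The principal technical point is the surjectivity in the local case: the submodule $\check{F}^p_\loc$ has many generators with potential relations, and one must verify that it surjects onto the rank-$\dim F^p$ bundle $\pi_\loc^*F^p$. This is resolved by the asymptotic analysis at the large radius limit, which shows that already the diagonal generators (those with $k=|\alpha|$) form a basis in a suitable neighborhood after possibly shrinking $U_{\rm sm}'$. Once the local equality is secured, the Euler case follows almost formally from the pairing compatibility and a short degree-based orthogonality argument.
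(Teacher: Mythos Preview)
Your proof is correct and follows essentially the same approach as the paper's own proof: establish $\psi_\loc(\check{F}^p_\loc) = F^p$ directly via the formula \eqref{eq:def_psi_loc}, the homogeneity of $w_\alpha$, and Griffiths transversality, with surjectivity coming from the large-radius asymptotics of the diagonal generators $k=|\alpha|$; then deduce the Euler case from the orthogonality of the Hodge filtration under $P$ combined with the pairing compatibility of Theorem \ref{thm:pairing,vs,quantum,serre}. One small correction: the asymptotics $w_\alpha \sim_{\lrl} T_\alpha$ is stated in Proposition \ref{prop:I,fctn}, not Remark \ref{rem:I,function}.
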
 
\begin{proof} 
Since the A-model Hodge filtration satisfies the orthogonality, 
it suffices to show that $\psi_\loc(\check{F}^p_\loc) = F^p$. 
When $|\alpha|\le k\le n-p$, we have 
\[
\psi_\loc((\chnabla_{\partial_x})^k T_\alpha) 
= (-1)^{|\alpha|} \left( (\pi_\loc^*\nabla)_{\partial_x}
\right)^{k-|\alpha|} 
w_\alpha(\tau_2 - \rho \log x + \pi\sqrt{-1} \rho, 1) 
\]
where $\chnabla = \chnabla^{(-\frac{n+1}{2})}$. 
This belongs to $F^p$ by the Griffiths transversality for the 
A-model Hodge filtration. Considering the case $k=|\alpha|$, 
we can see that these sections span $F^p$. 
\end{proof} 

\begin{rem} 
It follows from the above theorem that the filtrations 
$\check{F}^p_\loc$, $\check{F}^p_\eu$ are subbundles 
over $U_{\rm sm}' \times \{|x|>c\}$ with $U_{\rm sm}'$ 
sufficiently small. It would be interesting to study where 
they are not subbundles, and how we can extend them 
along the singularity $\Sigma$. 
\end{rem} 

Let $Z\subset X$ be a smooth anticanonical hypersurface. 
The small ambient part quantum $D$-module $\SQDM_{\amb}(Z)$ 
also admits the A-model Hodge filtration 
\[
F^p = H^{\leq 2(n-1)-2p}_{\amb}(Z) \times U_{\rm sm}''.  
\]
Combined with Corollaries \ref{cor:Z} and 
\ref{cor:anticanonical_hypersurface}, 
we obtain the following corollary: 
\begin{cor} 
Suppose that an anticanonical hypersurface $Z$ of $X$ satisfies 
one of the conditions in Lemma \ref{lem:cond_Z}. 
Under the isomorphism 
\[
(\iota^* \circ \pi_\eu)^*\SQDM_{\amb}(Z) \cong 
\im\left(\Delta \colon (\check{F}, \chnabla^{(\frac{n+1}{2})}) 
\to (\check{F},\chnabla^{(-\frac{n+1}{2})}) \right) 
\]
in Corollary \ref{cor:anticanonical_hypersurface}, 
the A-model Hodge filtration $F^p$ on $\SQDM_{\amb}(Z)$ 
corresponds to $\Delta(\check{F}^{p+1}_\eu)$,
which is contained in 
$\check{F}^{p}_\loc$. 
\end{cor}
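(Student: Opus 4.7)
The plan is to reduce the statement to the preceding theorem (which identifies $\psi_\eu(\check{F}^p_\eu)$ and $\psi_\loc(\check{F}^p_\loc)$ with the A-model Hodge filtrations $F^p$ on $\SQDM_{\eK}(X)|_{z=1}$ and $\SQDM(K_X)|_{z=1}$, respectively) combined with the factorization $\rho = \iota_* \circ \iota^*$ from Corollary \ref{cor:Z}. The key numerical input is that $\iota^*$ preserves cohomological degree while $\iota_*$ shifts it by $2 = 2\,\mathrm{codim}(Z,X)$.

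First I will establish the identification of $F^p$ on $\SQDM_{\amb}(Z)$ with $\Delta(\check{F}^{p+1}_\eu)$. By Corollary \ref{cor:anticanonical_hypersurface} the isomorphism in question sends $\Delta(T_\alpha)$ to $\iota^*\psi_\eu(T_\alpha)$, so $\Delta(\check{F}^{p+1}_\eu)$ is identified with $\iota^*\psi_\eu(\check{F}^{p+1}_\eu)$. The preceding theorem gives $\psi_\eu(\check{F}^{p+1}_\eu) = H^{\le 2n-2p-2}(X)$, and applying $\iota^*$ yields $H^{\le 2n-2p-2}_{\amb}(Z) = H^{\le 2(n-1)-2p}_{\amb}(Z)$, which is precisely $F^p$ on $\SQDM_{\amb}(Z)|_{z=1}$.

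Next I will prove the inclusion $\Delta(\check{F}^{p+1}_\eu) \subset \check{F}^p_\loc$. Since $\psi_\loc$ is an isomorphism, it suffices to verify that $\psi_\loc\Delta(\check{F}^{p+1}_\eu)$ lies in $\psi_\loc(\check{F}^p_\loc) = H^{\le 2n-2p}(X)$. By Theorem \ref{thm:pairing,vs,quantum,serre} one has $\psi_\loc \circ \Delta = \rho \circ \psi_\eu$, and Corollary \ref{cor:Z} factors $\rho = c_1(X)\cup$ as $\iota_* \circ \iota^*$. Chaining these with the computation above,
\[
\psi_\loc\Delta(\check{F}^{p+1}_\eu) = \iota_*\iota^*\psi_\eu(\check{F}^{p+1}_\eu) = \iota_*\bigl(H^{\le 2(n-1)-2p}_{\amb}(Z)\bigr) \subset H^{\le 2n-2p}(X),
\]
where in the last step I use that $\iota_*$ raises cohomological degree by $2$.

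The main obstacle is essentially just a careful bookkeeping of degrees: all the geometric content has already been assembled in the preceding theorem, Theorem \ref{thm:pairing,vs,quantum,serre}, and Corollary \ref{cor:Z}, so the argument reduces to a diagram chase checking that the various gradings match up correctly.
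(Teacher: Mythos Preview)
Your proof is correct and follows exactly the route the paper indicates: the paper does not write out a proof but simply says the corollary is obtained by combining the preceding theorem with Corollaries \ref{cor:Z} and \ref{cor:anticanonical_hypersurface}, and you have filled in precisely those details together with the identity $\psi_\loc\circ\Delta=\rho\circ\psi_\eu$ from Theorem \ref{thm:pairing,vs,quantum,serre}. One small remark: for the inclusion $\Delta(\check{F}^{p+1}_\eu)\subset\check{F}^p_\loc$ you could bypass the factorization $\rho=\iota_*\iota^*$ and simply observe that $\rho=c_1(X)\cup$ raises cohomological degree by $2$, but your argument via Corollary \ref{cor:Z} is equally valid and matches the paper's citation.
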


\section{Quintic in $\pp^{4}$}
\label{sec:quintic-pp4}
In this section, we make our result explicit in the case of 
$X=\pp^{4}$ and $E=\cO(5)$. This example was 
also studied by Dubrovin \cite[\S 5.4]{Dubrovin-Almost-Frob-2004}. 
Let $H=c_1(\cO(1)) \in H^2(\pp^4)$ be the hyperplane 
class and let $t$ denote the co-ordinate on $H^2(\pp^4)$ dual to $H$. 
We use the basis 
\[
\{T_0,T_1,T_2,T_3,T_4\}=\{1, H, H^2, H^3,H^4\}
\]
of $H^{\ev}(\pp^4)$. 
The small quantum connection of $\pp^4$ is given by: 
\begin{align*}
  \nabla_{\partial_{t}}^{(\sigma-1)}
=\partial_{t}+z^{-1} (H\bullet_{t}), \qquad 
\nabla_{z\partial_{z}}^{(\sigma-1)} 
=z\partial_{z}-z^{-1} 
5(H \bullet_{t})+ \left(\mu + \frac{1}{2} - \sigma\right) 
\end{align*}
where 
\[
H \bullet_t = 
\begin{pmatrix} 
0 & 0 & 0 & 0 & e^t \\ 
1 & 0 & 0 & 0 & 0 \\ 
0 & 1 & 0 & 0 & 0 \\ 
0 & 0 & 1 & 0 & 0 \\
0 & 0 & 0 & 1 & 0 
\end{pmatrix}
\qquad 
\mu = \begin{pmatrix} 
-2 & 0 & 0 & 0 & 0  \\
0 & -1 & 0 & 0 & 0 \\ 
0 & 0 & 0 & 0 & 0 \\ 
0 & 0 & 0 & 1 & 0 \\ 
0 & 0 & 0 & 0 & 2 
\end{pmatrix} 
\]
\subsection{Fourier-Laplace transformation} 
We illustrate the Fourier-Laplace transformation 
in \S \ref{subsec:second_str_conn} for the small quantum 
connection of $\pp^4$. 
We write $\partial_t$, $z\partial_z$ for the action of the 
small quantum connection $\nabla_{\partial_t}^{(\sigma-1)}$, 
$\nabla_{z\partial_z}^{(\sigma-1)}$ respectively. 
We have 
\begin{align*}
  \partial_{t} T_0 &=z^{-1} T_{1}
&    {z\partial_{z}}T_{0}&=-5z^{-1}T_{1}-(\sigma+\tfrac{3}{2})T_{0} \\
  {\partial_{t}} T_{1}&=z^{-1}T_{2} 
&    {z\partial_{z}} T_{1}&=-5z^{-1}T_{2}-(\sigma+\tfrac{1}{2})T_{1} \\
  {\partial_{t}} T_{2}&=z^{-1}T_{3} 
&    {z\partial_{z}} T_{2}&=-5z^{-1}T_{3}-(\sigma-\tfrac{1}{2})T_{2} \\
  {\partial_{t}} T_{3}&=z^{-1}T_{4} 
&    {z\partial_{z}} T_{3}&=-5z^{-1}T_{4}-(\sigma- \tfrac{3}{2})T_{3} \\
  {\partial_{t}} T_{4}&=e^{t}z^{-1}T_{0}
&    {z\partial_{z}} T_{4}&=-5e^{t}z^{-1}T_{0}-(\sigma-\tfrac{5}{2})T_{4} 
\end{align*}
Under the Fourier-Laplace transformation 
$z\partial_{z}= x\partial_{x}+1$ and 
$z^{-1}= -\partial_{x}$, we have: 
\begin{align} 
\nonumber 
  \partial_{t}T_{0}&=(-\partial_{x})T_{1} 
&    {x\partial_{x}}T_{0}&=5\partial_{x}T_{1}-(\sigma+\tfrac{5}{2})T_{0} \\
\nonumber 
  {\partial_{t}} T_{1}&=(-\partial_{x})T_{2}
&    {x\partial_{x}} T_{1}&=5\partial_{x}T_{2}-(\sigma+\tfrac{3}{2})T_{1} \\
\label{eq:afterFL_P4}
  {\partial_{t}} T_{2}&=(-\partial_{x})T_{3}
   &    {x\partial_{x}} T_{2}&=5\partial_{x}T_{3}-(\sigma+\tfrac{1}{2})T_{2} \\
\nonumber 
  {\partial_{t}} T_{3}&=(-\partial_{x})T_{4}
&    {x\partial_{x}} T_{3}&=5\partial_{x}T_{4}-(\sigma-\tfrac{1}{2})T_{3} \\
\nonumber 
  {\partial_{t}} T_{4}&=e^{t}(-\partial_{x})T_{0}
&    {x\partial_{x}} T_{4}&=5e^{t}\partial_{x}T_{0}-(\sigma-\tfrac{3}{2})T_{4} 
\end{align} 
These formulas define the second structure connection 
$\chnabla^{(\sigma)}$: 
\begin{align*}
\chnabla^{(\sigma)}_{\partial_t} & = 
\partial_t + \frac{1}{5^5 e^t - x^5} 
\left(\mu - \frac{1}{2}  - \sigma \right) 
\begin{pmatrix} 
 5^4 e^t & 5^3 e^t x & 5^2 e^t x^2 & 5 e^t x^3 & e^t x^4\\ 
x^4 & 5^4 e^t & 5^3 e^t x & 5^2 e^t x^2 & 5 e^t x^2 \\ 
5 x^3 & x^4 & 5^4 e^t & 5^3 e^t x & 5^2 e^t x^2 \\ 
5^2 x^2 & 5 x^3 & x^4 & 5^4 e^t & 5^3 e^t x \\ 
 5^3 x & 5^2 x^2 & 5 x^3 & x^4 & 5^4 e^t 
\end{pmatrix} 
\\  
\chnabla^{(\sigma)}_{\partial_x} & = \partial_x -  
\frac{1}{5^5 e^t-x^5}
\left(\mu - \frac{1}{2} - \sigma\right) 
\begin{pmatrix} 
x^4 & 5^4 e^t & 5^3 e^t x & 5^2 e^t x^2 & 5 e^t x^3 \\ 
5 x^3 & x^4 & 5^4 e^t & 5^3 e^t x & 5^2 e^t x^2 \\ 
5^2 x^2 & 5 x^3 & x^4 & 5^4 e^t & 5^3 e^t x \\ 
5^3 x & 5^2 x^2 & 5 x^3 & x^4 & 5^4 e^t \\ 
5^4 & 5^3 x & 5^2 x^2 & 5 x^3 & x^4 
\end{pmatrix}
\end{align*} 
The second structure connection has poles along
the divisor $\Sigma = \{5^5 e^t - x^5 =0\}$. 
We replace all the $\partial_x$-actions in the second column 
of \eqref{eq:afterFL_P4}
with the $\partial_t$-actions using the first column and deduce: 
\begin{align*} 
e^{-t} x\partial_t T_4  & = (5\partial_t + \sigma + \tfrac{5}{2})T_0 \\
x \partial_t T_0 & = (5 \partial_t + \sigma + \tfrac{3}{2})T_1 \\ 
x \partial_t T_1 &= (5\partial_t + \sigma + \tfrac{1}{2}) T_2 \\ 
x \partial_t T_2 & = (5 \partial_t + \sigma - \tfrac{1}{2})T_3 \\ 
x \partial_t T_3 & =(5 \partial_t + \sigma - \tfrac{3}{2})T_4
\end{align*} 
From this we find the following differential equation for $T_0$: 
\begin{align}
\label{eq:diffeq_T0}
\left( (x\partial_{t})^{5}-e^{t}(5\partial_{t}+\sigma+\tfrac{13}{2})
(5\partial_{t}+\sigma+\tfrac{11}{2})
(5\partial_{t}+\sigma+\tfrac{9}{2}) 
(5\partial_{t}+\sigma+\tfrac{7}{2}) 
(5\partial_{t}+\sigma+\tfrac{5}{2})\right)
T_{0}&=0. 
\end{align}
A direct computation on computer (we used Maple) shows: 
\begin{lem} 
\label{lem:2ndstrconn_P4}
Let $\check{F}$ denote the trivial $H^*(\pp^4)$-bundle 
over $\cc^2 = H^2(\pp^4) \times \cc_x$. 
Suppose that $\sigma \notin \{-\frac{3}{2}, -\frac{1}{2},\frac{1}{2}, 
\frac{3}{2}\}$. 
Then the second structure connection 
$(\cO(\check{F})(*\Sigma), \chnabla^{(\sigma)})$ is generated by 
$T_0=1$ as an $\cO(*\Sigma)\langle \partial_t \rangle$-module 
and is defined by the relation \eqref{eq:diffeq_T0}. 
\end{lem}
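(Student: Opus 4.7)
The derivation preceding the lemma shows that $T_0$ is annihilated by the operator
\[
Q=(x\partial_{t})^{5}-e^{t}\prod_{k=0}^{4}\bigl(5\partial_{t}+\sigma+\tfrac{5}{2}+k\bigr)\in\cO(*\Sigma)\langle \partial_{t}\rangle,
\]
so there is a well-defined $\cO(*\Sigma)\langle \partial_{t}\rangle$-linear map
\[
\phi\colon D_{Q}:=\cO(*\Sigma)\langle \partial_{t}\rangle\big/\cO(*\Sigma)\langle \partial_{t}\rangle\cdot Q\longrightarrow \cO(\check{F})(*\Sigma), \qquad [P]\longmapsto P\cdot T_{0}.
\]
The lemma amounts to proving that $\phi$ is an isomorphism: surjectivity is the cyclicity of $T_0$, and injectivity says that $Q$ generates all relations on $T_0$.

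The leading coefficient of $Q$ in $\partial_t$ is $x^{5}-5^{5}e^{t}$, which is a unit in $\cO(*\Sigma)$. Because of this, division with remainder in the operator ring gives that $D_Q$ is a \emph{free} $\cO(*\Sigma)$-module of rank $5$ on the basis $\{1,\partial_{t},\partial_{t}^{2},\partial_{t}^{3},\partial_{t}^{4}\}$, while $\cO(\check{F})(*\Sigma)$ is free of rank $5$ on $\{T_0,\dots,T_4\}$. An $\cO(*\Sigma)$-linear map between free modules of the same rank is an isomorphism iff its determinant is a unit, so the plan reduces to checking that the $5\times 5$ Wronskian matrix
\[
W:=[T_{0},\ \partial_{t}T_{0},\ \partial_{t}^{2}T_{0},\ \partial_{t}^{3}T_{0},\ \partial_{t}^{4}T_{0}]
\]
(whose columns express the $\partial_t^k T_0$ in the basis $\{T_\alpha\}$) has $\det W\in \cO(*\Sigma)^{\times}$.

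The columns of $W$ are computed inductively from the explicit connection matrix $A$ of $\chnabla^{(\sigma)}_{\partial_t}$ displayed before the lemma, using $\partial_t(\sum f^\alpha T_\alpha)=\sum(\partial_t f^\alpha) T_\alpha + \sum f^\alpha (A T_\alpha)$. As a guiding sanity check, at the large radius limit $q:=e^t\to 0$ every $t$-dependent entry of $A$ carries a factor $q$, so all positive-order $t$-derivatives of $A$ vanish at $q=0$ and therefore $\partial_t^k T_0|_{q=0}=(A|_{q=0})^k T_0$. The matrix $A|_{q=0}$ is strictly sub-diagonal with $(\alpha+1,\alpha)$-entry $-c_{\alpha+1}/x$, where $c_\alpha:=\alpha-\tfrac{5}{2}-\sigma$, making $W|_{q=0}$ lower triangular with $\det W|_{q=0}= c_1^4 c_2^3 c_3^2 c_4/x^{10}$. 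This is nonzero precisely under the hypothesis $\sigma\notin\{-\tfrac{3}{2},-\tfrac{1}{2},\tfrac{1}{2},\tfrac{3}{2}\}$, and since its only singularity (at $x=0$, $q=0$) lies on $\Sigma$, it is a unit in the restriction of $\cO(*\Sigma)$ to $\{q=0\}$.

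The main obstacle is extending this verification to all of $\cc^{2}\setminus\Sigma$: a priori $\det W$ could acquire spurious zeros away from the large radius limit, which would destroy the surjectivity of $\phi$. Completing the proof therefore requires an explicit calculation confirming that $\det W$ is a power of $(5^{5}e^{t}-x^{5})$ up to an element of $\cO_{\cc^{2}}^{\times}$, a routine but tedious symbolic verification performed on a computer algebra system as noted in the statement.
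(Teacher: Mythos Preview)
Your proposal is correct and supplies considerably more structure than the paper, whose proof is the single line ``A direct computation on computer (we used Maple) shows'' preceding the lemma; you give the clean reduction to a Wronskian determinant, explain via the large-radius sanity check exactly where the hypothesis on $\sigma$ enters, and then defer the global verification to computer algebra just as the paper does.

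One minor correction: $A|_{q=0}$ is strictly \emph{lower triangular} rather than strictly sub-diagonal---its first column already has nonzero entries in rows $1,2,3,4$, coming from the terms $x^4,5x^3,5^2x^2,5^3x$ in the displayed matrix. This does not affect your conclusion: for any strictly lower triangular $A$, the $(k,0)$-entry of $A^k$ is still the product of sub-diagonal entries $A_{1,0}\cdots A_{k,k-1}$ (the unique length-$k$ increasing path from row $0$ to row $k$), so $W|_{q=0}$ is lower triangular with exactly the determinant $c_1^4 c_2^3 c_3^2 c_4/x^{10}$ you state.
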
 

\subsection{Euler-twisted and local (small) quantum $D$-modules}
Recall from Theorem \ref{thm:QS,with,I,Coates} 
that the second structure connection corresponds to 
the $(\e,K_{\pp^4}^{-1})$-twisted theory for $\sigma = \frac{5}{2}$ 
and to the local theory for $\sigma = -\frac{5}{2}$. 
For these cases, the differential equation \eqref{eq:diffeq_T0} 
specializes respectively to: 
\begin{align*} 
D_\eu & := (x\partial_{t})^{5}-e^{t}
(5\partial_{t}+9)(5\partial_{t}+8)(5\partial_{t}+7)
(5\partial_{t}+6)(5\partial_{t}+5)  
& & \text{(for $\sigma=\tfrac{5}{2}$),} 
\\ 
D_\loc & := (x\partial_{t})^{5}-e^{t} 
(5\partial_{t}+4) (5\partial_{t}+3)(5\partial_{t}+2)(5\partial_{t}+1)
(5\partial_{t}) 
& & \text{(for $\sigma=-\tfrac{5}{2}$).} 
\end{align*} 
The $I$-functions in Definition \ref{defi,I,functions} are given by 
\begin{align*} 
I^\eu_0(t,z) & = \sum_{d=0}^\infty e^{(d+H/z)t} 
\frac{\prod_{k=1}^{5d} (5 H + kz)}{\prod_{k=1}^d (H+kz)^5}, \\
I^\loc_0(t,z) & = \sum_{d=0}^\infty e^{(d+H/z)t} 
\frac{\prod_{k=0}^{5d-1} (-5H - kz)}{\prod_{k=1}^d (H+kz)^5}.  
\end{align*} 
The mirror maps \eqref{eq:mirrormaps} are given by 
\[
\Mir_\eu(t) = t+  \frac{g_1(e^t)}{g_0(e^t)}, 
\qquad 
\Mir_\loc(t) =  t + g_2(e^t)  
\]
where we set 
\[
g_0(e^t) = \sum_{d=0}^\infty e^{dt}\frac{(5d)!}{(d!)^{ 5}},  \quad 
g_1(e^t) = \sum_{d=1}^\infty e^{dt}\frac{(5d)!}{(d!)^{ 5}}5
\left(\sum_{m=d+1}^{5d}\frac{1}{m}\right), 
\quad 
g_2(e^t) = 5 \sum_{d=1}^\infty 
e^{dt} (-1)^d \frac{(5d-1)!}{(d!)^{5}}. 
\]
We define, as in \eqref{eq:map_pi}, 
\begin{align*} 
\pi_\eu(t,x) &= \Mir_\eu(t- 5 \log x) = 
t - 5 \log x + \frac{g_1(e^t x^{-5})}{g_0(e^t x^{-5})}, \\ 
\pi_\loc(t,x) &= \Mir_\loc(t - 5 \log x + 5 \pi \sqrt{-1}) 
= t - 5 \log x + 5\pi\sqrt{-1} + g_2(-e^t x^{-5}). 
\end{align*} 
These maps converge when $|e^t x^{-5}|<5^{-5}$. 
Theorem \ref{thm:QS,with,I,Coates} and 
Lemma \ref{lem:2ndstrconn_P4} together give 
the following isomorphisms:   
\begin{align*} 
\pi_\eu^* \SQDM_{(\e,K_{\pp^4}^{-1})}(\pp^4)\Big|_{z=1}  & \cong  
\left(\cO(\check{F}),\chnabla^{(\frac{5}{2})}\right) \cong 
\cO\langle \partial_t \rangle\big/  
\cO\langle \partial_t \rangle D_\eu, \\ 
\pi_\loc^* \SQDM(K_{\pp^4}) \Big|_{z=1} & \cong 
\left(\cO(\check{F}),\chnabla^{(-\frac{5}{2})}\right) \cong 
\cO\langle \partial_t \rangle \big/ 
\cO\langle \partial_t \rangle D_\loc
\end{align*} 
over the region $\{(t,x)\in \cc^2 : |e^t x^{-5}|<5^{-5}\}$. 
\subsection{The small quantum $D$-module of a quintic}
Recall from Theorem \ref{thm:euler,quantum,Serre,QDM} and 
\eqref{eq:morphism_rho} 
that we have a natural morphism: 
\[
5H  \colon \pi_\eu^* \SQDM_{(\e,K_{\pp^4}^{-1})}(\pp^4) 
\to \pi_\loc^* \SQDM(K_{\pp^4}) 
\]
By Theorem \ref{thm:pairing,vs,quantum,serre}, this  
corresponds to the map $\Delta$ between the second structure connections.  
Since $\Delta$ maps $T_0$ in $(\check{F},\chnabla^{(\frac{5}{2})})$ 
to $(-\partial_x)^5T_0 = e^{-t} \partial_t^5 T_0$ 
in $(\check{F},\chnabla^{(-\frac{5}{2})})$, 
the above morphism corresponds to the map: 
\[
\delta \colon \cO\langle \partial_t \rangle\big/\cO\langle \partial_t \rangle 
D_\eu \to 
\cO\langle \partial_t \rangle\big/\cO\langle \partial_t \rangle 
D_\loc, \qquad 
[f(t,x,\partial_t)] \mapsto [f(t,x,\partial_t) e^{-t} \partial_t^5].  
\]
This is well-defined since 
$D_\eu e^{-t} \partial_t^5 = \partial_t^5 e^{-t}D_\loc$. 
By Corollary \ref{cor:anticanonical_hypersurface}, we have 
\[  
\pi_\eu^* \SQDM_{\amb}(Z) \cong \im(\delta).  
\] 
for a quintic hypersurface $Z\subset \pp^4$. 
We can therefore view $\SQDM_{\amb}(Z)$ either as a quotient of the 
Euler-twisted quantum $D$-module or as a sub-$D$-module 
of the local quantum $D$-module. 
The former viewpoint yields a presentation: 
\[
\pi_\eu^* \SQDM_{\amb}(Z) \cong \cO\langle \partial_t \rangle\big/
\cO\langle \partial_t \rangle (x (x\partial_t)^4 - 5 e^t (5 \partial_t + 9) 
(5\partial_t + 8) (5 \partial_t + 7) ( 5 \partial_t +6)) 
\]
and the latter yields a (more familiar) presentation: 
\[
\pi_\eu^*\SQDM_{\amb}(Z) \cong 
\cO\langle \partial_t \rangle \big/ 
\cO\langle \partial_t \rangle (x(x\partial_t)^4 - 5 e^t (5 \partial_t +4) 
(5\partial_t + 3) (5\partial_t +2) (5\partial_t +1)). 
\]

\subsection{Solutions} 
\label{subsec:solutions}
For the Euler-twisted theory ($\sigma=\frac{5}{2}$), 
the cohomology-valued function
\begin{align*} 
\varphi(t,x) & = (-\partial_x )^{4}x^{-1}I^{\eu}_0(t-5\log x,1) 
 = \sum_{d=0}^\infty 
\frac{e^{t(H+d)}}{x^{5H+5d+5}}
\frac{\prod_{k=1}^{5d+4}(5H+k)}{\prod_{k=1}^{d} (H+k)^{5}} \\
\intertext{is a solution to the differential equation $D_\eu \varphi =0$; 
for the local theory ($\sigma = -\frac{5}{2}$), 
the cohomology-valued function} 
\varphi(t,x) &=  I^{\loc}_0(t-5\log x +5 \sqrt{-1}\pi, 1) = 
e^{5\pi \sqrt{-1}H}\sum_{d=0}^\infty 
\frac{e^{t(H+d)}}{x^{5H+5d}}\frac{\prod_{k=0}^{5d-1}(5H+k)}
{\prod_{k=1}^{d}(H+k)^{5}}
\end{align*} 
is a solution to the differential equation $D_\loc \varphi=0$. 
These functions are the images of $T_0$ respectively 
under the maps $\chK^{(\frac{5}{2},1)}$ and 
$e^{5\pi\sqrt{-1}H}\chK^{(-\frac{5}{2},0)}$ in Proposition 
\ref{prop:inverse,fundamental,solution,second,metric}.  
In terms of the quantum $D$-modules, these solutions 
correspond respectively to $L_\eu(\pi_\eu(t,x),1)^{-1}$ 
and $L_\loc(\pi_\loc(t,x),1)^{-1}$.

\subsection{Mirror maps and $f$}
Recall from Lemma \ref{lem:relation_mirrormaps} that the two 
mirror maps are related as follows:  
\begin{align*}
\Mir_\loc(t+5\pi\sqrt{-1}) 
& = \overline{f}(\Mir_\eu(t)) + 5 \pi \sqrt{-1} \\ 
\pi_\loc(t,x) & = \overline{f}(\pi_\eu(t,x)) + 5 \pi \sqrt{-1} 
\end{align*} 
where $\overline{f}$ is the map appearing in Lemma \ref{lem:equ,limit,f,inverse}: 
\[
\overline{f}(t)=t+\sum_{d=1}^\infty 
e^{dt} \langle H^{3},\widetilde{1} \rangle_{0,2,d}^{\eK}. 
\] 
Consider the exponentiated mirror maps and $\exp(\overline{f})$: 
\[
\frM_\eu(e^t) := \exp(\Mir_\eu(t)), 
\quad  
\frM_\loc(e^t) := \exp(\Mir_\loc(t)), \quad 
\overline{F}(e^t) := \exp(\overline{f}(t)). 
\]
These maps are related by $\frM_\loc(-q) = - \overline{F}(\frM_\eu(q))$. 
Surprisingly, they have Taylor expansions in $q=e^t$ 
with \emph{integral} coefficients \cite{Lian-Yau, Zhou:integrality_local}: 
\begin{align*} 
\frM_\eu(q) &= q + 770 q^2 + 1014275 q^3 + 1703916750 q^4 + 
3286569025625 q^5 + \cdots \\
\frM_\loc(q) & = q - 120 q^2 + 63900 q^3 - 63148000 q^4 + 
85136103750 q^5 + \cdots  \\ 
\overline{F}(q) & = q - 650 q^2 + 50625 q^3 - 
5377000 q^4 -49529975000 q^5 +\cdots.  
\end{align*}
We can also deduce the Gromov-Witten invariants $N_d := 
\langle H^{3},\widetilde{1} \rangle_{0,2,d}^{\eK}$ 
as in Table \ref{tab:GW_inv}. 
\begin{table}[htbp]
\begin{tabular}{|r|r|}
\hline 
$d$ & $N_d$\, \\ 
\hline 
1 & $-650$ \\ 
2 & $-160625$ \\ 
3 & $-337216250/3$ \\ 
4 & $- 217998840625/2$  \\ 
5 & $-125251505498880$ \\ 
6 & $-479299410776921825/3$ \\ 
7 & $-1531227197616745455000/7$ \\ 
8 & $-1260949629604284268280625/4$ \\
\hline
\end{tabular}  
\caption{Gromov-Witten Invariants 
$N_d = \langle H^{3},\widetilde{1} \rangle_{0,2,d}^{\eK}$}
\label{tab:GW_inv} 
\end{table}

\subsection{Hodge filtration} 
Recall from \S\ref{subsec:Hodge_filt} that we have Hodge 
filtrations on the second structure connections 
$(\check{F},\nabla^{(\frac{5}{2})})$ and 
$(\check{F},\nabla^{(-\frac{5}{2})})$ 
denoted respectively by $\check{F}^p_\eu$ and 
$\check{F}^p_\loc$. They are given by 
\begin{align*} 
\check{F}^0_\loc & = \check{F},  
& \check{F}_\eu^0 & = \check{F} 
\\ 
\check{F}^1_\loc & = \langle T_0, \partial_x T_0, \partial_x^2 T_0, 
\partial_x^3 T_0 \rangle, 
& \check{F}_\eu^1 & = (\check{F}_\loc^4)^\perp 
\\ 
\check{F}^2_\loc &= \langle T_0, \partial_x T_0, \partial_x^2 T_0 \rangle, 
& \check{F}_\eu^2 & = (\check{F}_\loc^3)^\perp 
\\ 
\check{F}^3_\loc & = \langle T_0, \partial_x T_0 \rangle, 
& \check{F}_\eu^3 & = (\check{F}_\loc^2)^\perp 
\\
\check{F}^4_\loc &= \langle T_0\rangle 
& \check{F}_\eu^4 & = (\check{F}_\loc^1)^\perp 
\end{align*} 
where $\perp$ means the orthogonal with respect to 
the second metric  $\check{g}(\gamma_1,\gamma_2) = 
\int_{\pp^4} \gamma_1 \cup (5H\bullet_t - x)^{-1}\gamma_2$ 
and $\partial_x$ means $\nabla^{(-\frac{5}{2})}_{\partial_x}$ 
in the first column. Using Maple, we find that 
\[
\check{F}^4_\eu  = \langle \tT_0 \rangle \quad 
\check{F}^3_\eu  = \langle \tT_0, \partial_x \tT_0\rangle, 
\quad 
\check{F}^2_\eu = \langle \tT_0, \partial_x \tT_0, \partial_x^2 \tT_0 
\rangle, \quad 
\check{F}^1_\eu = \langle \tT_0, \partial_x \tT_0, \partial_x^2 \tT_0, 
\partial_x^3 \tT_0 \rangle  
\]
where  $\partial_x =\nabla^{(\frac{5}{2})}_{\partial_x}$ and 
\begin{align*} 
\tT_0 & :=  T_0 - \tfrac{125}{3} x^{-1} T_1 + 
\tfrac{2125}{3}x^{-2} T_2 - 5625 x^{-3} T_3 + 15000 x^{-4}T_4, \\ 
\partial_x \tT_0 & = -5 x^{-1}T_0 + \tfrac{565}{3} x^{-2}T_1 
- \tfrac{8975}{3} x^{-3} T_2 + 
22875 x^{-4} T_3 - 60000 x^{-5} T_4, \\
\partial_x^2 \tT_0 & =  30 {x}^{-2} T_0  
-1030 {x}^{-3}T_1 + 15500 {x}^{-4}T_2 -115500 {x}^{-5} T_3 
+ 300000 {x}^{-6} T_4, \\ 
\partial_x^3 \tT_0 & = -210 {x}^{-3} T_0 +  
6610{x}^{-4}T_1 -95300 {x}^{-5}T_2 + 
697500 {x}^{-6}T_3 -1800000 {x}^{-7}T_4, \\
\partial_x^4 \tT_0 & = 
1680 {x}^{-4}T_0 -48680 {x}^{-5}T_1 + 
679000{x}^{-6}T_2 - 4905000 {x}^{-7}T_3 + 
12600000 {x}^{-8} T_4. 
\end{align*} 
One can check that $\tT_0$ corresponds to a multiple of the twisted $I$-function 
$I^\eu_0$ under the solution in \S \ref{subsec:solutions}: 
we have $\chK^{(\frac{5}{2},1)}(\tT_0) = 24 x^{-5} I_0^\eu(t-5\log x, 1)$. 
 
\bibliographystyle{alpha} 
\bibliography{biblio}

\def\cprime{$'$}
\begin{thebibliography}{CKYZ99}

\bibitem[BH13]{Borisov-Horja:applications}
Lev~A. Borisov and Paul~R. Horja.
\newblock Applications of homological mirror symmetry to hypergeometric
  systems: duality conjectures.
\newblock \href{http://arxiv.org/abs/1308.2238}{\texttt{arXiv:1308.2238
  [math.AG]}}, 2013.

\bibitem[CCIT09]{CCIT-computingGW}
Tom Coates, Alessio Corti, Hiroshi Iritani, and Hsian-Hua Tseng.
\newblock Computing genus-zero twisted {G}romov-{W}itten invariants.
\newblock {\em Duke Math. J.}, 147(3):377--438, 2009.

\bibitem[CG07]{Givental-Coates-2007-QRR}
Tom Coates and Alexander Givental.
\newblock Quantum {R}iemann-{R}och, {L}efschetz and {S}erre.
\newblock {\em Ann. of Math. (2)}, 165(1):15--53, 2007.

\bibitem[CK99]{Cox-Katz-Mirror-Symmetry}
David~A. Cox and Sheldon Katz.
\newblock {\em Mirror symmetry and algebraic geometry}, volume~68 of {\em
  Mathematical Surveys and Monographs}.
\newblock American Mathematical Society, Providence, RI, 1999.

\bibitem[CKYZ99]{LocalMirrorSymmetry-Chiang-Klemm-Yau-Zaslow-99}
T.-M. Chiang, A.~Klemm, S.-T. Yau, and E.~Zaslow.
\newblock Local mirror symmetry: calculations and interpretations.
\newblock {\em Adv. Theor. Math. Phys.}, 3(3):495--565, 1999.

\bibitem[DM69]{Deligne-Mumford}
P.~Deligne and D.~Mumford.
\newblock The irreducibility of the space of curves of given genus.
\newblock {\em Inst. Hautes \'Etudes Sci. Publ. Math.}, (36):75--109, 1969.

\bibitem[DS03]{DSgm1}
Antoine Douai and Claude Sabbah.
\newblock Gauss-{M}anin systems, {B}rieskorn lattices and {F}robenius
  structures. {I}.
\newblock {\em Ann. Inst. Fourier (Grenoble)}, (53)(4):1055--1116, 2003.

\bibitem[Dub96]{Dtft}
Boris Dubrovin.
\newblock Geometry of {$2$}{D} topological field theories.
\newblock In {\em Integrable systems and quantum groups (Montecatini Terme,
  1993)}, volume (1620) of {\em Lecture Notes in Math.}, pages 120--348.
  Springer, Berlin, 1996.

\bibitem[Dub04]{Dubrovin-Almost-Frob-2004}
Boris Dubrovin.
\newblock On almost duality for {F}robenius manifolds.
\newblock In {\em Geometry, topology, and mathematical physics}, volume 212 of
  {\em Amer. Math. Soc. Transl. Ser. 2}, pages 75--132. Amer. Math. Soc.,
  Providence, RI, 2004.

\bibitem[Giv95]{Givental:ICM}
Alexander~B. Givental.
\newblock Homological geometry and mirror symmetry.
\newblock In {\em Proceedings of the {I}nternational {C}ongress of
  {M}athematicians, {V}ol.\ 1, 2 ({Z}\"urich, 1994)}, pages 472--480.
  Birkh\"auser, Basel, 1995.

\bibitem[Giv96]{Givental-Equivariant-GW}
Alexander~B. Givental.
\newblock Equivariant {G}romov-{W}itten invariants.
\newblock {\em Internat. Math. Res. Notices}, (13):613--663, 1996.
\newblock alg-geom/9603021.

\bibitem[Giv98]{Givental-Elliptic-1998}
Alexander Givental.
\newblock Elliptic {G}romov-{W}itten invariants and the generalized mirror
  conjecture.
\newblock In {\em Integrable systems and algebraic geometry ({K}obe/{K}yoto,
  1997)}, pages 107--155. World Sci. Publ., River Edge, NJ, 1998.

\bibitem[Giv04]{Givental:symplectic}
Alexander~B. Givental.
\newblock Symplectic geometry of {F}robenius structures.
\newblock In {\em Frobenius manifolds}, Aspects Math., E36, pages 91--112.
  Friedr. Vieweg, Wiesbaden, 2004.

\bibitem[GP99]{Graber-Pandaripande-2003-virtual-localisation}
T.~Graber and R.~Pandharipande.
\newblock Localization of virtual classes.
\newblock {\em Invent. Math.}, 135(2):487--518, 1999.

\bibitem[Her02]{Hfm}
Claus Hertling.
\newblock {\em Frobenius manifolds and moduli spaces for singularities}, volume
  (151) of {\em Cambridge Tracts in Mathematics}.
\newblock Cambridge University Press, Cambridge, 2002.

\bibitem[Iri07]{iritani_convergence}
Hiroshi Iritani.
\newblock Convergence of quantum cohomology by quantum {L}efschetz.
\newblock {\em J. Reine Angew. Math.}, 610:29--69, 2007.

\bibitem[Iri09]{Iritani-2009-Integral-structure-QH}
Hiroshi Iritani.
\newblock An integral structure in quantum cohomology and mirror symmetry for
  toric orbifolds.
\newblock {\em Adv. Math.}, 222(3):1016--1079, 2009.

\bibitem[Iri11]{iritani_quantum_2011}
Hiroshi Iritani.
\newblock Quantum cohomology and periods.
\newblock {\em Ann. Inst. Fourier (Grenoble)}, 61(7):2909--2958, January 2011.

\bibitem[KKP03]{Kim-Kresch-Pantev}
Bumsig Kim, Andrew Kresch, and Tony Pantev.
\newblock Functoriality in intersection theory and a conjecture of {C}ox,
  {K}atz, and {L}ee.
\newblock {\em J. Pure Appl. Algebra}, 179(1-2):127--136, 2003.

\bibitem[KKP08]{Katzarkov-Pantev-Kontsevich-ncVHS}
L.~Katzarkov, M.~Kontsevich, and T.~Pantev.
\newblock Hodge theoretic aspects of mirror symmetry.
\newblock 78:87--174, 2008.

\bibitem[KM10]{Konishi-Minabe-localB}
Yukiko Konishi and Satoshi Minabe.
\newblock Local {B}-model and mixed {H}odge structure.
\newblock {\em Adv. Theor. Math. Phys.}, 14(4):1089--1145, 2010.

\bibitem[KM12]{Konishi-Minabe-mixed}
Yukiko Konishi and Satoshi Minabe.
\newblock Mixed {F}robenius structure and local {A}-model.
\newblock 2012.
\newblock \href{http://arxiv.org/abs/1209.5550}{\texttt{arXiv:1209.5550
  [math.AG]}}.

\bibitem[KM14]{Konishi-Minabe-local-quantum}
Yukiko Konishi and Satoshi Minabe.
\newblock Local quantum cohomology and mixed {F}robenius structure.
\newblock 2014.
\newblock \href{http://arxiv.org/abs/1405.7476}{\texttt{arXiv:1405.7476
  [math.AG]}}.

\bibitem[Kn{\"o}87]{Knorrer:CM}
Horst Kn{\"o}rrer.
\newblock Cohen-{M}acaulay modules on hypersurface singularities. {I}.
\newblock {\em Invent. Math.}, 88(1):153--164, 1987.

\bibitem[Kon95]{Kontsevich:enumeration}
Maxim Kontsevich.
\newblock Enumeration of rational curves via torus actions.
\newblock In {\em The moduli space of curves ({T}exel {I}sland, 1994)}, volume
  129 of {\em Progr. Math.}, pages 335--368. Birkh\"auser Boston, Boston, MA,
  1995.

\bibitem[LY98]{Lian-Yau}
Bong~H. Lian and Shing-Tung Yau.
\newblock Integrality of certain exponential series.
\newblock In {\em Algebra and geometry ({T}aipei, 1995)}, volume~2 of {\em
  Lect. Algebra Geom.}, pages 215--227. Int. Press, Cambridge, MA, 1998.

\bibitem[Man99]{Mfm}
Yuri Manin.
\newblock {\em Frobenius manifolds, quantum cohomology, and moduli spaces},
  volume (47) of {\em American Mathematical Society Colloquium Publications}.
\newblock American Mathematical Society, Providence, RI, 1999.

\bibitem[Mav00]{mavlyutov_chiral_2000}
Anvar~R Mavlyutov.
\newblock On the chiral ring of {Calabi-Yau} hypersurfaces in toric varieties.
\newblock October 2000.
\newblock \href{http://arxiv.org/abs/math/0010318}{\texttt{math/0010318
  [math.AG]}}.

\bibitem[MM11]{Mann-Mignon-2011-QDM-lci-nef}
E.~{Mann} and T.~{Mignon}.
\newblock {Quantum D-modules for toric nef complete intersections}.
\newblock December 2011.
\newblock \href{http://arxiv.org/abs/1112.1552}{\texttt{arXiv:1112.1552
  [math.AG]}}.

\bibitem[Mor97]{Morrison-mathematical-aspects}
David~R. Morrison.
\newblock Mathematical aspects of mirror symmetry.
\newblock In {\em Complex algebraic geometry ({P}ark {C}ity, {UT}, 1993)},
  volume~3 of {\em IAS/Park City Math. Ser.}, pages 265--327. Amer. Math. Soc.,
  Providence, RI, 1997.

\bibitem[Mum70]{Mumford:abelian}
David Mumford.
\newblock {\em Abelian varieties}.
\newblock Tata Institute of Fundamental Research Studies in Mathematics, No. 5.
  Published for the Tata Institute of Fundamental Research, Bombay; Oxford
  University Press, London, 1970.

\bibitem[Orl06]{Orlov:equivalence_between_LG}
D.~O. Orlov.
\newblock Triangulated categories of singularities, and equivalences between
  {L}andau-{G}inzburg models.
\newblock {\em Mat. Sb.}, 197(12):117--132, 2006.

\bibitem[Pan98]{Pand-after-Givental}
Rahul Pandharipande.
\newblock Rational curves on hypersurfaces (after {A}. {G}ivental).
\newblock {\em Ast\'erisque}, (252):Exp.\ No.\ 848, 5, 307--340, 1998.
\newblock S{\'e}minaire Bourbaki. Vol. 1997/98, math/9806133.

\bibitem[Sab02]{Sdivf}
Claude Sabbah.
\newblock {\em D\'eformations isomonodromiques et vari\'et\'es de {F}robenius}.
\newblock Savoirs Actuels (Les Ulis). [Current Scholarship (Les Ulis)]. EDP
  Sciences, Les Ulis, 2002.
\newblock Math\'ematiques (Les Ulis). [Mathematics (Les Ulis)].

\bibitem[Sti98]{Stienstra-resonant}
Jan Stienstra.
\newblock Resonant hypergeometric systems and mirror symmetry.
\newblock In {\em Integrable systems and algebraic geometry ({K}obe/{K}yoto,
  1997)}, pages 412--452. World Sci. Publ., River Edge, NJ, 1998.

\bibitem[Tho93]{Thomason-excess-intersection-K-theory}
R.~W. Thomason.
\newblock Les {$K$}-groupes d'un sch\'ema \'eclat\'e et une formule
  d'intersection exc\'edentaire.
\newblock {\em Invent. Math.}, 112(1):195--215, 1993.

\bibitem[Zho12]{Zhou:integrality_local}
Jian Zhou.
\newblock Integrality properties of mirror maps.
\newblock In {\em Fifth {I}nternational {C}ongress of {C}hinese
  {M}athematicians. {P}art 1, 2}, volume~2 of {\em AMS/IP Stud. Adv. Math., 51,
  pt. 1}, pages 461--474. Amer. Math. Soc., Providence, RI, 2012.

\end{thebibliography}

\end{document}